\documentclass[12pt, a4paper,reqno,english]{smfart}
\usepackage[utf8]{inputenc}
\usepackage{amsmath, amssymb, amsthm, physics, mathrsfs, bm}
\usepackage{booktabs,longtable,array,enumitem,xcolor}
\usepackage{graphicx, color}
\usepackage[colorlinks=true, linkcolor=blue, citecolor=blue, urlcolor=blue]{hyperref}
\usepackage{geometry}
\usepackage{soul}
\usepackage{cite}
\usepackage{listings}
\usepackage{longtable}
\definecolor{codegreen}{rgb}{0,0.6,0}
\definecolor{codegray}{rgb}{0.5,0.5,0.5}
\definecolor{codepurple}{rgb}{0.58,0,0.82}
\definecolor{backcolour}{rgb}{0.95,0.95,0.92}
\numberwithin{equation}{section}
\definecolor{bubblegum}{rgb}{0.99, 0.76, 0.8}
\definecolor{ietocean}{RGB}{0, 30, 140}
\definecolor{ietcoast}{RGB}{0, 150, 173}
\definecolor{ietlagoon}{RGB}{0, 216, 180}
\definecolor{veriforest}{RGB}{0, 80, 40}
\definecolor{revred}{RGB}{153, 0, 0}
\hypersetup{ 
        colorlinks,%
        citecolor={ietlagoon},%
        filecolor={ietocean},%
        linkcolor={veriforest},%
        urlcolor={revred},
        bookmarksnumbered=true,     
        bookmarksopen=true,         
        bookmarksopenlevel=1,       
        pdfstartview=Fit,           
        pdfpagemode=UseOutlines,
        pdfpagelayout=TwoPageRight
}

\definecolor{codegreen}{rgb}{0,0.6,0}
\definecolor{codegray}{rgb}{0.5,0.5,0.5}
\definecolor{codepurple}{rgb}{0.58,0,0.82}
\definecolor{backcolor}{rgb}{0.95,0.95,0.92}
\lstdefinestyle{pythonstyle}{
    backgroundcolor=\color{backcolor},
    commentstyle=\color{codegreen},
    keywordstyle=\color{magenta},
    numberstyle=\tiny\color{codegray},
    stringstyle=\color{codepurple},
    basicstyle=\ttfamily\small,
    breakatwhitespace=false,
    breaklines=true,                 
    captionpos=b,                    
    keepspaces=true,                 
    numbers=left,    
    numbersep=5pt,                  
    showspaces=false,
    showstringspaces=false,
    showtabs=false,       
    tabsize=4
}
 
\newtheorem{theorem}{Theorem}[section]
\newtheorem{proposition}[theorem]{Proposition}
\newtheorem{lemma}[theorem]{Lemma}
\newtheorem{corollary}[theorem]{Corollary}
\newtheorem*{theoremA}{Theorem A}
\newtheorem*{theoremB}{Theorem B}

\theoremstyle{definition}
\newtheorem{definition}[theorem]{Definition}
\newtheorem{example}[theorem]{Example}

\theoremstyle{remark}
\newtheorem{remark}[theorem]{Remark}

\newcommand{\Q}{\mathbb Q}
\newcommand{\Z}{\mathbb Z}
\newcommand{\C}{\mathbb C}
\newcommand{\PP}{\mathbb P}
\newcommand{\N}{\mathbb N}

\renewcommand{\ev}{\operatorname{ev}}
\newcommand{\Tot}{\operatorname{Tot}}
\newcommand{\vir}{\mathrm{vir}}
\newcommand{\vdim}{\operatorname{vdim}}
\newcommand{\ch}{\operatorname{ch}}
\newcommand{\rk}{\operatorname{rk}}
\newcommand{\Li}{\operatorname{Li}}
\newcommand{\CH}{\operatorname{CH}}

\newcommand{\im}{\operatorname{im}}

\newcommand{\Log}{\operatorname{Log}}

\newcommand{\Ish}{\mathcal I^{\sharp}}
\newcommand{\Dcal}{\mathcal D}

\newcommand{\amb}{\mathrm{amb}}
\newcommand{\thQ}{\theta_Q}

\renewcommand{\dd}{\,\mathrm d}

\title[Sunset integrals and local Gromov--Witten theory]{Equal-mass sunset
integrals and genus-zero local\\ Gromov--Witten theory of Calabi--Yau
$n$-folds}
\author{Hartmut Maennel}
\address{Google DeepMind, Zurich, Switzerland} 
\author{Pierre Vanhove}
\address{Institut de Physique Théorique, Université Paris Saclay,
  CNRS, CEA, F-91191 Gif-sur-Yvette, France}
\curraddr{Laboratoire d’Annecy-le-Vieux de Physique Th\'eorique (LAPTh), CNRS and Universit\'e
Savoie Mont-Blanc, Annecy, 74940, France}

\date{\today --- preprint number LAPTH-051/26}

\begin{document}

\begin{abstract}
We prove an exact identity, as convergent
logarithmic germs at large Euclidean momentum, between the equal-mass
two-dimensional $(n-1)$-loop sunset Feynman integral, with $n\geq4$, and the genus-zero one-point
descendant invariants of the local Calabi--Yau $n$-fold $X_n=\Tot(K_{F_n})$,
where $F_n\subset(\PP^1)^{n}$ is a smooth $(1,\ldots,1)$ hypersurface.
Writing $s$ for the momentum variable and $y=-1/s$, the identity reads
\[
 I^{(n-1)}_{\circleddash}(s)=(-1)^{n-1}yA\,\Bigl[nR^{n-1}-\theta_Q\Dcal_0^\Delta\,R^{n-2}
 +\sum_{j=0}^{n-3}c_jR^j\Bigr],
\]
where $R=\log y+O(y)$ is the local mirror coordinate, $A=\theta_yR$, $Q=e^R$,
$\Dcal_0^\Delta(Q)$ is the primary one-point potential, and each $c_j$ has the form
\[
c_j=c_j^{(0)}+\sum_{a=0}^{n-2}\bigl(c_{j,a}\,\Dcal_a^\Delta(Q)
 +c'_{j,a}\,\theta_Q\Dcal_a^\Delta(Q)\bigr)
\]
with the coefficients in $\Q[\zeta(3),\zeta(5),\ldots]$ the odd zeta values
coming from the Gamma class.  The descendant series $\Dcal_a^\Delta(Q)$ are
given explicitly by the ambient hypergeometric series and the inverse mirror
map.  We also prove global univalence of the principal mirror map, with explicit
upper and lower bounds on the $Q$-Taylor radii, extract the full primary
evaluation divisor and its integral cover transform, and reduce the ambient
endpoint constants to a finite universal Gamma-class part and
$\lfloor(n-1)/2\rfloor$ real parameters.
\end{abstract}

\maketitle

\tableofcontents

%%%%%%%%%%%%%%%%%%%%%%%%%%%%%%%%%%%%%%%%%%%%%%%%%%%%%%%%%%%%%%%%%%%%
\section{Introduction and statement of results}\label{sec:intro}

Feynman integrals are the basic building blocks of perturbative quantum field
theory. They enter in the evaluation of scattering amplitudes 
and the comparison between the theoretical
predictions and the experimental measurements in particle colliders
(see~\cite{Proceedings:2018jsb,Abreu:2022mfk,Craig:2022cef} for
instance), gravitational-waves observables~\cite{Bjerrum-Bohr:2022blt,Kosower:2022yvp,Bjerrum-Bohr:2022ows}
or in cosmology~\cite{Baumann:2022jpr,Benincasa:2022gtd}.
Their analytic structure is rich and has been the subject of intense
study in the last decades. It is now recognized that Feynman integrals are
periods of algebraic varieties~\cite{Bloch:2005bh,Brown:2009ta}, with physical observables expressed as relative
period integrals~\cite{Bloch:2005bh,Bloch:2008,Brown:2009ta,Bloch:2013tra,Weinzierl:2013fe,Bloch:2014qca,Brown:2015ylf,Brown:2015ztw,BKV,marcolli2019feynman,Vanhove:2014wqa,Bonisch:2021yfw,Doran:2023yzu}.

Amongst the simplest Feynman integrals are the equal-mass two-dimensional
$(n-1)$-loop sunset integrals, which are the subject of this paper. This family has
a particular status since it is a natural seed of many quantum field theory
computations, and it is directly connected to the geometry of Calabi--Yau
$n$-folds~\cite{BKV,KlemmEtAl,BFKNS,LairezVanhove,HV}. The two-loop case was studied in detail by
Bloch--Kerr--Vanhove~\cite{BKV}, who related the two-loop sunset integral to
the genus-zero prepotential of a local Calabi--Yau threefold through a
mirror/Legendre construction. In this paper we prove the corresponding
one-point descendant statement for all $n\geq4$.

For $n\geq2$ the two-dimensional $(n-1)$-loop sunset integral with all masses
equal to one has, for $\operatorname{Re}s<0$, the convergent Schwinger representation
\begin{equation}\label{eq:general-schwinger}
 I^{(n-1)}_\circleddash(s,1)=
 \int_{\mathbb R_+^{n}}
 e^{-\sum_{i=1}^n x_i+sW(x)}W(x)
                   \prod_{i=1}^n\frac{dx_i}{x_i},\qquad
 W(x)=\left(\sum_{i=1}^n x_i^{-1}\right)^{-1},
\end{equation}
with the momentum variable written $s$. This defines an analytic function on
$\C\setminus[n^2,\infty)$.
Set
\begin{equation}\label{eq:phidef}
 \phi(x)=\Bigl(\sum_{i=1}^nx_i\Bigr)\Bigl(\sum_{i=1}^nx_i^{-1}\Bigr),
\end{equation}
a Laurent polynomial homogeneous of degree zero, hence a regular function on
$(\C^*)^n/\C^*\cong(\C^*)^{n-1}$. 
$\phi(x)$ is the Landau--Ginzburg superpotential mirror to a smooth
$(1,\ldots,1)$ hypersurface $F_n\subset(\PP^1)^n$.

Since $W(x)\phi(x)=\sum_{i=1}^nx_i$, the exponent
in~\eqref{eq:general-schwinger} factors as
\begin{equation}\label{eq:LGrelation}
 -\sum_{i=1}^nx_i+sW(x)=-W(x)\bigl(\phi(x)-s\bigr).
\end{equation}
The projective integral representation in Proposition~\ref{prop:analyticdomain}
has denominator $\phi(x)-s$, whose zero locus is the fiber $\{\phi(x)=s\}$.
In~\cite[Section 2, equation (10)]{Iritani}  Iritani equates that oscillatory
integral with a $\widehat\Gamma$-pairing on $F_n$. Such an $F_n$ is Fano with
$K_{F_n}=\mathcal O_{F_n}(-H)$, and $X_n=\Tot(K_{F_n})$ is the local Calabi--Yau
$n$-fold whose genus-zero invariants the theorem below produces. Any smooth
member of the system may be taken as defined in Section~\ref{sec:scope}.

Our main result, Theorem~\ref{thm:assembly}, expresses the amplitude through
the genus-zero one-point descendant invariants of $X_n$. To state it, put
$y=-1/s$ and $\theta_y=y\,d/dy$, and let
\[
 c_D=\sum_{|\mathbf d|=D}\binom{D}{d_1,\ldots,d_n}^{\!2},\quad
 R=\log y+\sum_{D\geq1}\frac{(-1)^Dc_D}{D}\,y^D,\quad
 A=\theta_yR,\quad Q=e^R
\]
be the local mirror coordinate, its derivative and the Novikov variable
(see eq.~\eqref{eq:mirrorcoordinate}). For $0\leq a\leq n-2$ and a positive curve
class $\beta$ of degree $|\beta|=H\cdot\beta$, the invariants and their
diagonal generating series are
\[
 N_{a,\beta}=\int_{[\overline{\mathcal M}_{0,1}(X_n,\beta)]^\vir}
 \psi^a\ev_{X_n}^*\varpi^*\delta_a,\quad
 \delta_a=\frac{H^{n-2-a}}{(n-2-a)!},\quad
 \Dcal_a^\Delta(Q)=\sum_{\beta>0}N_{a,\beta}Q^{|\beta|},
\]
where $\varpi\colon X_n\to F_n$ is the projection (see eq.~\eqref{eq:descdef}).
Finally, the Gamma-class constants $\gamma_k$ are defined by
\[
 \sum_{k\geq0}\gamma_kt^k=\exp\left(\sum_{r\geq1}\frac{2\zeta(2r+1)}{2r+1}t^{2r+1}\right),
\]
so that $\gamma_0=1$, $\gamma_1=\gamma_2=0$ and $\gamma_3=\frac23\zeta(3)$
(see eq.~\eqref{eq:Gammapoly}).

\begin{theoremA}[Theorem~\ref{thm:assembly}]
For every $n\geq4$, as convergent logarithmic germs at $s=-\infty$,
\[
 I^{(n-1)}_\circleddash(s)=(-1)^{n-1}yA\sum_{k=0}^{n-1}(-1)^k\gamma_k\,
 \mathcal T_{n-1-k}(R,Q),
\]
where $\theta_Q=Q\,d/dQ$ and
\[
 \mathcal T_j=\frac{n!}{j!}R^j
 -\sum_{a=0}^{j-2}\frac{(n-2-a)!}{(j-a-2)!}R^{j-a-2}\Dcal_a^\Delta
 -\sum_{a=0}^{j-1}\frac{(n-2-a)!}{(j-a-1)!}R^{j-a-1}\theta_Q\Dcal_a^\Delta .
\]
In particular,
\[
 I^{(n-1)}_\circleddash(s)=(-1)^{n-1}yA\Bigl[nR^{n-1}
 -\theta_Q\Dcal_0^\Delta\,R^{n-2}
 -\bigl((n-2)\Dcal_0^\Delta+\theta_Q\Dcal_1^\Delta\bigr)R^{n-3}+\cdots\Bigr],
\]
where the omitted terms have degree at most $n-4$ in $R$; the odd zeta values
first enter there, through the term $-\gamma_3\mathcal T_{n-4}$.
\end{theoremA}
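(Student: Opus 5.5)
The plan has three stages. First rewrite the sunset integral as a Frobenius-type period of the Picard--Fuchs operator attached to $\phi$. Then identify that period with a $\widehat\Gamma$-pairing via Iritani's formula. Finally expand this pairing in Givental's $J$-function of $X_n$ and read off the descendant series $\Dcal_a^\Delta$.

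\textbf{Step 1: the sunset integral as a hypergeometric period.} Starting from~\eqref{eq:general-schwinger} and~\eqref{eq:LGrelation}, I would use the projective representation of Proposition~\ref{prop:analyticdomain}. Expanding $1/(\phi-s)=y\sum_D(-y)^D\phi^D$ near $s=-\infty$, the integral becomes $y$ times a combination of the $y$-series
\[
\varpi_k(y)=\partial_\epsilon^k\Bigl|_{\epsilon=0}\sum_D c_{D+\epsilon}(-y)^{D+\epsilon},
\]
with Gamma-function regularizations of the multinomials. Concretely, the factor $\prod_i\Gamma(1+d_i+\epsilon_i)^2$ expansions are exactly what produces the constants $\gamma_k$. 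This is a standard local computation on $(\C^*)^{n-1}$. It shows that $I^{(n-1)}_\circleddash$ equals $(-1)^{n-1}y$ times the pairing
\[
\int_{F_n}\widehat\Gamma_{F_n}\cup I_{X_n}(y,H),
\]
where $I_{X_n}$ is the ambient hypergeometric $I$-function of $K_{F_n}$ restricted from $(\PP^1)^n$. This is precisely Iritani's identification~\cite[(10)]{Iritani}. The sign and the $\widehat\Gamma$-class reduce to $\sum_k(-1)^k\gamma_kH^k$: the even $\zeta$'s cancel against $\Gamma(1-\cdot)$ factors, leaving only $\exp\sum 2\zeta(2r+1)t^{2r+1}/(2r+1)$, as in eq.~\eqref{eq:Gammapoly}.

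\textbf{Step 2: the mirror theorem.} By the Coates--Givental / Lian--Liu--Yau quantum Lefschetz theorem for the local CY (the mirror map has no constant correction beyond $R$), $I_{X_n}(y)=J_{X_n}(Q)$ with $Q=e^R$. Here the overall factor $A=\theta_yR$ arises because the local $I$-function carries the extra $\theta_y$ compared to the period. I would write
\[
J=e^{RH}\Bigl(1+\sum_{a}\frac{\text{descendant}}{z^{a+2}}\Bigr)
\]
and expand in the basis $H^m$, $m\le n-1$. Here $\int_{F_n}H^{n-1}=n!$ for a $(1,\dots,1)$ hypersurface in $(\PP^1)^n$, which gives the leading $\frac{n!}{j!}R^j$. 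The correction terms are controlled by the divisor equation: differentiating the one-point descendant potential with respect to $R$ gives the $\theta_Q\Dcal_a^\Delta$ pieces, and the string/dilaton reduction gives the undifferentiated $\Dcal_a^\Delta$ pieces. The weights $(n-2-a)!/(j-a-2)!$ come from pairing $\delta_a=H^{n-2-a}/(n-2-a)!$ against $e^{RH}H^k$.

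\textbf{Step 3: assembly and convergence.} Pairing with $\widehat\Gamma$ then yields $\sum_k(-1)^k\gamma_k\mathcal T_{n-1-k}$ term by term. Substituting $\gamma_1=\gamma_2=0$ gives the displayed leading terms. Convergence as logarithmic germs follows from the convergence of $R$ (radius $1/n^2$) and from the univalence of the mirror map near $y=0$, which makes $\Dcal_a^\Delta(Q)$ convergent.

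\textbf{Main obstacle.} I expect the hardest part to be the exact bookkeeping in Step 2. One has to extract $N_{a,\beta}$ from the ambient $I$-function, where the $z$-expansion of the local $J$-function mixes descendant orders. In particular, the condition $n\ge4$ must enter to make the terms with $a\le n-2$ and the divisor-equation shifts consistent. I would first verify the $n=4$ case explicitly to fix normalizations and signs, and then induct on the $H$-degree.
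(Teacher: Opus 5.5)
Your three-stage outline (Gamma pairing, mirror theorem, assembly) matches the paper's. However, Step~1 does not hold as written.

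\emph{Step 1.} The expansion $1/(\phi-s)=y\sum_D(-y\phi)^D$ cannot be integrated termwise over $\Delta_n$. Since $\phi\geq n^2$ and $\int_{\Delta_n}\Omega$ already diverges logarithmically at the faces, $\int_{\Delta_n}\phi^D\Omega=\infty$ for every $D$. The series $\sum_D(-1)^Dc_Dy^D$ is the torus period $A$, not the simplex (chain) integral. An $\epsilon$-regularization gives at best a Frobenius basis. It leaves the connection constants undetermined, and those constants are the entire content of the $\gamma_k$. Writing $(-y)^{D+\epsilon}$ at $y>0$ also introduces a branch phase.

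Nor is the simplex integral ``precisely'' Iritani's equation~(10). That identity is for the oscillatory integral $\int_{\Delta_n}e^{-r\phi}\Omega$, paired with the Fano series $I_{F_n}$ (single numerator $\prod_{k=1}^D(H+kz)$) against $\widehat\Gamma_{F_n}$. Two ingredients are missing:
\begin{itemize}
\item the positive Laplace transform $I^{(n-1)}_\circleddash(-T)=\int_0^\infty e^{-Tr}\bigl(\int_{\Delta_n}e^{-r\phi}\Omega\bigr)dr$;
\item the identity $\Gamma(1+D-H)=(-1)^D\Gamma(1-H)\prod_{k=1}^D(H-k)$, which at once squares the numerator, produces the degree sign $(-1)^D$, and supplies the factor $\Gamma(1-H)$ turning $\widehat\Gamma_{F_n}$ into $\widehat\Gamma_{X_n}|_{F_n}=\sum_k\gamma_kH^k$.
\end{itemize}
The correct outcome is $I^{(n-1)}_\circleddash(-1/y)=y\int_{F_n}\epsilon\bigl(\Ish(y,1)\bigr)\widehat\Gamma_{X_n}|_{F_n}$, with the signed squared-numerator series $\Ish$. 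It is not a pairing of $I_{X_n}$ (whose positive-degree terms carry an extra factor $H$) with $\widehat\Gamma_{F_n}$ (which still contains $\gamma_E$ and $\zeta(2)$).

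\emph{Step 2.} This step misses the mechanism that produces $A$ and the $\mathcal T_j$, and as planned it loses information. The bridge is the ambient identity $z\theta_yI^{0,\amb}_{X_n}=H\Ish$ in $H^*(Y_n)$. Write $e^{-HR/z}I^{0,\amb}_{X_n}=1+\sum_{j=2}^nK_jH^j/z^j$ and use $\theta_y=A\thQ$. The product rule then gives $W_j=Av_j$ with
\[
v_j=\frac{R^j}{j!}+\sum_{k=2}^{j}\frac{R^{j-k}}{(j-k)!}K_k+\sum_{k=2}^{j+1}\frac{R^{j-k+1}}{(j-k+1)!}\thQ K_k .
\]
The undifferentiated $\Dcal_a$ terms come from differentiating $e^{HR}$, not from string or dilaton. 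The extraction $\Dcal_a^\Delta=-\frac{n!}{(n-2-a)!}K_{a+2}$ then turns $n!v_j$ into $\mathcal T_j$.

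Both steps need the ambient class $H^n\neq0$. The coefficient $W_{n-1}$, which multiplies $\gamma_0$, is the $H^n$-coefficient of $\theta_yI^{0,\amb}_{X_n}$. Likewise $K_n$ multiplies $H^n$, and through it $\Dcal_{n-2}^\Delta$ enters $h_0$ via $\thQ\Dcal_{n-2}^\Delta$. A nonequivariant local $J$-function on $F_n$ expanded in $H^m$, $m\leq n-1$, only sees $H\cdot C^0_{a,\beta}$ and kills $C^0_{n-2,\beta}$. You must keep $\lambda$, divide by the monic $\lambda-H$ in the ambient ring, and only then set $\lambda=0$ and pair on $F_n$.

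Finally, quoting Coates--Givental or Lian--Liu--Yau ``for the local CY'' does not cover $\Tot(K_{F_n})$ over a non-toric hypersurface. You need three further inputs:
\begin{itemize}
\item the simultaneous twist of the theory of $Y_n$ by $e_\mu$ of the index bundle of $\mathcal O(H)$ and $e_\lambda^{-1}$ of that of $\mathcal O(-H)$;
\item the virtual comparison $j_*[M_{F_n}]^\vir=e(E_+)[M_{Y_n}]^\vir$;
\item the zero-section reduction with excess class $c_{m-1}(\mathcal V)$.
\end{itemize}
Together these identify the resulting twisted invariants with $N_{a,\beta}$.
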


The coefficient of $R^{n-2}$ is thus $-\theta_Q$ applied to the primary
one-point potential $\Dcal_0^\Delta$, which for $n\geq4$ plays the role that
the prepotential of a local Calabi--Yau threefold plays at two
loops~\cite{BKV}. The descendant series are themselves given in closed form.
In the square-free algebra $\Q[p_1,\ldots,p_n]/(p_1^2,\ldots,p_n^2)$, with
$H=p_1+\cdots+p_n$, put
\[
 I(y,z)=e^{H\log y/z}\sum_{\mathbf d\in\N^n}(-y)^{|\mathbf d|}\,
 \frac{\prod_{r=0}^{|\mathbf d|-1}(H+rz)\,\prod_{r=1}^{|\mathbf d|}(H+rz)}
      {\prod_{i=1}^n\prod_{r=1}^{d_i}(p_i+rz)^2}.
\]
Then $I(y,z)=1+R(y)H/z+O(z^{-2})$. The sum is symmetric in the $p_i$, and
symmetric elements of degree $j$ in this algebra are multiples of $H^j$; hence
$e^{-HR/z}I(y,z)=1+\sum_{j=2}^nK_j(y)H^j/z^j$ for convergent power series
$K_j$.

\begin{theoremB}[Theorem~\ref{thm:extraction}]
The mirror map $y\mapsto Q=e^{R(y)}=y+O(y^2)$ is invertible near $y=0$, and
for $0\leq a\leq n-2$,
\[
 \Dcal_a^\Delta(Q)=-\frac{n!}{(n-2-a)!}\,K_{a+2}\bigl(y(Q)\bigr).
\]
\end{theoremB}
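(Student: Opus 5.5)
The proof rests on Givental's mirror theorem for the twisted (local) theory of $X_n=\Tot(K_{F_n})$, applied first to the toric ambient $(\PP^1)^n$ and then restricted to $F_n$ by quantum Lefschetz.

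\textbf{Step 1: identify $I$ with the ambient twisted $I$-function.} For $(\PP^1)^n$ with hyperplane classes $p_i$, the Givental $I$-function twisted by the convex bundle $\mathcal O(H)$ (cutting out $F_n$) and by the concave bundle $\mathcal O(-H)$ (the fibre of $K_{F_n}$) contributes two factors:
\begin{itemize}
\item the hypersurface factor $\prod_{r=1}^{|\mathbf d|}(H+rz)$;
\item the local factor $\prod_{r=0}^{|\mathbf d|-1}(-H-rz)$, which up to the sign absorbed into $(-y)^{|\mathbf d|}$ is $\prod_{r=0}^{|\mathbf d|-1}(H+rz)$.
\end{itemize}
Only the diagonal Novikov variable $Q^{|\beta|}$ is retained, since we pull back through the diagonal $y_i=y$. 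This gives exactly the displayed $I(y,z)$, viewed in the cohomology of $F_n$ pushed to the ambient space, i.e.\ in the square-free algebra modulo the $H$-factor.

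We then expand $I$ in $1/z$:
\begin{itemize}
\item The $z^0$ term is $1$. Every $\mathbf d\neq0$ summand has numerator degree $2|\mathbf d|$ against denominator degree $2|\mathbf d|$, and the factor $H$ from $r=0$ in the numerator forces an extra power of $z^{-1}$.
\item The $z^{-1}$ coefficient is therefore $H(\log y+\sum_D(-1)^Dc_D y^D/D)=RH$, using that the $r=0$ factor contributes $H\cdot(D-1)!\,D!/\prod_i (d_i!)^2\,z^{2D-1}/z^{2D}$.
\end{itemize}
So $I=1+RH/z+O(z^{-2})$ with no $z^0$ correction, and the mirror map is $Q=e^R$. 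Since $R'(y)$ has leading term $1/y$, $Q=y+O(y^2)$ is invertible near $0$. The symmetry argument stated before the theorem gives $e^{-HR/z}I=1+\sum_{j\ge2}K_jH^j/z^j$.

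\textbf{Step 2: mirror theorem and the $J$-function.} By the Coates--Givental/Iritani mirror theorem for the twisted theory, $e^{-HR/z}I(y(Q),z)$ equals the twisted $J$-function $J(Q,z)=1+\sum_{\beta>0}Q^{|\beta|}\,\ev_*\bigl(\tfrac{1}{z(z-\psi)}\cap[\overline{\mathcal M}_{0,1}]^{\vir}\bigr)$, after subtracting the $e^{\tau H/z}$ part. No Birkhoff factorisation is needed, because there is no $z^0$ correction.

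\textbf{Step 3: extract the invariants.} Expand $1/(z(z-\psi))=\sum_a\psi^a z^{-a-2}$ and pair the $z^{-(a+2)}$ coefficient against $\delta_a$ using the Poincaré pairing on $X_n$ (equivalently on $F_n$ with the Euler factor $e(K)^{-1}=-1/H$). The coefficient $K_{a+2}H^{a+2}$ pairs as
\[
K_{a+2}\int_{F_n}H^{a+2}\cdot\frac{H^{n-2-a}}{(n-2-a)!}\cdot\Bigl(-\frac1H\Bigr)\cdot\frac{1}{\text{(normalisation)}}.
\]
With $\int_{F_n}H^{n-1}=n!$ (the degree of a $(1,\dots,1)$ hypersurface: $H^{n-1}\cdot H$ on $(\PP^1)^n$ equals $n!$), this gives $N_{a,\beta}$ summed as $-\frac{n!}{(n-2-a)!}K_{a+2}$.

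\textbf{Main obstacle.} The delicate part is the non-compactness of $X_n$. The fibre direction must be handled equivariantly, or via the concave twist with its nonequivariant limit, so that the pairing $\int_{X_n}$ with $\varpi^*\delta_a$ is well defined and the signs work out. I would first verify the normalisations against the degree-one invariant, computed from the $\mathbf d=e_i$ terms, before trusting the general sign $-n!/(n-2-a)!$.
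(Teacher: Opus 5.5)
Your overall route is the paper's: the $\mu,\lambda$-twisted ambient $I$-function on $(\PP^1)^n$, the mirror map read off the $z^{-1}$ coefficient, and extraction by pairing with $\delta_a$ using $\int_{F_n}H^{n-1}=n!$. Your Step~1 computation of $R$ and the invertibility of $Q=y+O(y^2)$ are correct. The gap is in Steps~2--3, exactly at the point you flag as the main obstacle and then leave open.

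In Step~2 the twisted $J$-function is not $1+\sum Q^{|\beta|}\ev_*(\cdots)$ of the $X_n$ virtual class. Its coefficients live in $H^*(Y_n)$ and are defined through the twisted pairing $\int_{Y_n}ab\,(\mu+H)/(\lambda-H)$. That pairing is degenerate at $\lambda=\mu=0$, so you cannot simply work non-equivariantly. What the mirror theorem actually gives is the following. After removing the string shift $-\lambda g$ and the divisor exponential, one has $B^{\mu,\lambda}-1=(\lambda-H)\sum_{\beta,a}\mathbf Q^\beta z^{-a-2}\mathsf E^{\mu,\lambda}_{a,\beta}$. Here the marked sequence $0\to E'_+\to E_+\to\ev^*\mathcal O(H)\to0$ cancels the factor $(\mu+H)^{-1}$ coming from the inverse twisted pairing, so the limit $\mu\to0$ is regular. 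The bridge to $X_n$ is then a separate quantum-Lefschetz/virtual-reduction identity, $i_*C^\lambda_{a,\beta}=H\,\mathsf E^{0,\lambda}_{a,\beta}$.

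In Step~3 you divide by $H$ on $F_n$ via ``$e(K)^{-1}=-1/H$''. Since $H$ is nilpotent, this is not an operation in $H^*(F_n)$.
\begin{itemize}
\item For $a\le n-3$, knowing $-HC^0_{a,\beta}$ on $F_n$ does determine $\int_{F_n}\delta_aC^0_{a,\beta}$, because $\delta_a$ contains a factor of $H$ that can absorb it.
\item For $a=n-2$ it fails. Then $\delta_{n-2}=1$, the relevant coefficient $H^nK_n$ restricts to zero on $F_n$, and your pairing is $0\cdot\infty$. The top descendant $\Dcal^\Delta_{n-2}$ is simply lost.
\end{itemize}
Your factor ``(normalisation)'' is left unspecified, and you plan to fix the sign from the degree-one check. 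So the constant $-n!/(n-2-a)!$, which is the content of the theorem, is not derived.

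The missing idea is to stay in the ambient ring until after the division. Since $B^\lambda|_{\lambda=H}=1$, you can divide by the monic polynomial $\lambda-H$ in $\mathcal R[\lambda]$ to get $U^\lambda$, whose ambient pairings are the $C^\lambda_{a,\beta}$. At $\lambda=0$ one has $-HU^0=\sum_jH^jz^{-j}K_j$. The kernel of $H$ on $\mathcal R^{S_n}$ is $\Q H^n$, which dies on $F_n$, so $U^0$ restricts to $-\sum_jH^{j-1}z^{-j}K_j$. Pairing this with $\delta_a$ gives the formula for every $0\le a\le n-2$. Equivalently, pair $1-B^{0,0}=\sum\mathbf Q^\beta z^{-a-2}\,i_*C^0_{a,\beta}$ with the lift $H^{n-2-a}/(n-2-a)!$ on $Y_n$, where $\int_{Y_n}H^n=n!$.
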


Corollary~\ref{cor:realcontinuation} continues the identity of Theorem~A
along the whole negative axis. Corollaries~\ref{cor:nfour}
and~\ref{cor:nfive} specialize it to three and four loops; these are the
Calabi--Yau fourfold and fivefold analogues of the two-loop theorem
of~\cite{BKV}, and they are the subject of the companion
paper~\cite{PaperII}.

\medskip 
The proof is uniform in $n$ and uses four independent parts.
\begin{enumerate}
\item
\emph{Geometry.} Stable maps to $X_n$ factor through the zero section over an
arbitrary base (Theorem~\ref{thm:factorization}), and the resulting virtual
class carries the excess factor $c_{m-1}(\mathcal V)$ with the obstruction
morphism identified, as well as its $K$-class (Theorem~\ref{thm:virtual}).

\item
\emph{Mirror symmetry.} A hypergeometric modification with two independent
characteristic classes, one Euler and one inverse Euler, puts the
series~\eqref{eq:Imixed} on the twisted Givental cone
(Theorem~\ref{thm:mirror}); monic division by $\lambda-H$ and the ambient
pairing then extract all of $N_{a,\beta}$ at once
(Theorem~\ref{thm:extraction}). Theorem~\ref{thm:fullambient}, proved from the
universal hypersurface rather than from mirror symmetry, upgrades the
comparison against ambient insertions to the full one-point cohomology classes, which
matters when $n$ is odd.

\item
\emph{Differential equations.} Definition~\ref{def:operator} is a finite
algorithm producing an order-$(n-1)$ operator $P_{n-1}$ over $\Q$, and
Theorem~\ref{thm:PF} verifies it against the complete ambient series, degree
by degree in the Novikov variable $Q=\exp(R)$, with the nonzero source $H^{n-1}y^H$. 
Proposition~\ref{thm:minimalperiod} establishes that $P_{n-1}$ is
irreducible, and hence the minimal operator for the period $A$; the proof uses
the irreducibility of its symmetric-power seed and two Fourier transforms with
checked extensions at zero.

\item
\emph{Analysis and arithmetic.} A positive Laplace transform converts the
oscillatory Gamma identity of~\cite[Section 2, equation~(10)]{Iritani} into the
signed formula~\eqref{eq:physicalGamma}, whose degree factor $(-1)^D$ is
derived in~\eqref{eq:laplace-degree-sign}. Separately, a smooth compact
Calabi--Yau double cover (Theorem~\ref{thm:CYcover}) isolates the local sector
on a target with trivial canonical bundle, which makes the published
integrality statements applicable and gives $b_\beta\in\Z$ for every $n\geq4$
(Corollary~\ref{cor:primaryintegral}).
\end{enumerate}

For every loop order, Theorem 4.1 of~\cite{Vanhove2026} establishes, by a
Mellin--Barnes residue computation rather than by any geometric input, an exact
convergent expansion of the equal-mass sunset integral having the same shape
as~\eqref{eq:allloop}:
the holomorphic period times the sum of a leading power of the flat coordinate
(since $\log y=-\log s+i\pi$ and its holomorphic period agrees with $\Pi_{n-1}$
to leading order at large Euclidean momentum; see Section~\ref{sec:dictionary}
for the normalization dictionary)
and an instanton series $\sum_{l\ge1}e^{lR}\sum_r d_r(l)R^r$ with rational
$d_r(l)$, plus a boundary combination $\sum_r\alpha_r\operatorname{Frob}_r$
whose coefficients lie in $\Q[\zeta(3),\zeta(5),\ldots]$ and are given there by
an explicit generating function. 

\medskip
Theorem~\ref{thm:assembly} supplies an enumerative identity for those
coefficients, for every $n\ge4$.
\begin{itemize}[leftmargin=*,itemsep=2pt]
\item The instanton coefficients $d_r(l)$, which~\cite{Vanhove2026} obtains as
 rational numbers from a residue calculation, are here the genus-zero one-point
 descendant invariants $\Dcal_a^\Delta$ of the local Calabi--Yau $n$-fold
 $X_n=\Tot(K_{F_n})$, with the descendant exponent $a$ and the insertion
 $\delta_a$ specified. At two loops this identification is the theorem
 of~\cite{BKV}; for $n\ge4$ it was not available.
\item The boundary coefficients $\alpha_r$ are here derived from the
 $\widehat\Gamma$-class of $F_n$ rather than read off a Mellin residue. This is
 the point left open in \cite[Remark 4.5]{Vanhove2026}, which records that the
 zeta pattern is consistent with a $\widehat\Gamma$-class prediction and states
 that establishing it rigorously beyond two loops would extend the argument
 of~\cite{Iritani} and is left for future work.
\item The arithmetic statements have no counterpart on the analytic side. The
 coefficients produced by a residue computation are rational numbers with no
 predicted denominators; Corollary~\ref{cor:primaryintegral} and
 Theorem~\ref{thm:QK} prove integrality of the transformed primaries and
 express them through quantum $K$-theory.
\end{itemize}

The leading term alone has a longer history. It was observed numerically
in~\cite{KlemmEtAl} and~\cite{BFKNS}, where it was matched against the modified
$\widehat\Gamma$-class of the degree-$(1,\ldots,1)$ Fano hypersurface, and
proved in~\cite{Iritani}; see also~\cite{Kerr}. The present
Theorem~\ref{thm:assembly} determines every coefficient below the leading one,
which is where the descendants enter. Those two sources also supply the
Calabi--Yau operator framework in which~\eqref{eq:physicalPF} sits; no priority
is claimed for the operators themselves, as recorded after
Example~\ref{ex:operators}.

\medskip

Corollary~\ref{cor:primaryintegral} and Theorem~\ref{thm:QK} concern one
specific observable, namely the highest nonconstant logarithmic coefficient of
the expansion, transformed by the multiple-cover rule with weight $r^{-2}$ for a
single primary insertion. That weight is the one-marked specialization of
$r^{\ell-3}$ for $\ell$ insertions, and it is not the $r^{-3}$ of the unmarked
genus-zero prepotential familiar from the Calabi--Yau threefold case used
in~\cite{BKV}. The exponent is fixed by the number of markings, not by the
codimension of the insertion or the dimension of the target. The two conventions
are compared in the companion paper~\cite{PaperII}.

Three things are not claimed in this paper.
First, the resulting integers are not claimed to be unsigned counts of embedded
rational curves, nor multiplicities of a specified sheaf-theoretic moduli
problem, nor physical BPS-state degeneracies. They are the output of a fixed
arithmetic transform applied to a fixed virtual invariant. Second, no
multiple-cover law is asserted for the descendant series $\Dcal_a^\Delta$ with
$a>0$; only the primary $a=0$ sector is transformed, and the rational
descendant description of the full integral is proved independently of any such
law. Third, the ideal-geometry hypotheses under which higher-dimensional
covering behaviour is usually discussed --- unobstructedness of simple rational
curves, and the moving-cover conditions of \cite[Sections 0.5 and 1.1]{PZ} ---
are not hypotheses here, and indeed fail for $X_n$, because a simple rational map
$u:\PP^1\to F_n$ of $H$-degree $m\ge2$ has $h^1(\PP^1,u^*T_{X_n})\ge m-1>0$. The
proofs below are algebraic and do not need them. This is discussed again where
it is used, at the end of Section~\ref{sec:arithmetic}.

\bigskip
This paper is organised as follows.
Section~\ref{sec:scope} fixes conventions and Section~\ref{sec:geometry} the
ambient state space. Sections~\ref{sec:local}--\ref{sec:serre} contain the
geometric and mirror-theoretic input, i.e. the zero-section factorization and
virtual reduction, the twisted mirror theorem and the extraction of all
descendants, and the quantum-Serre coefficient identity.
Section~\ref{sec:PF} gives the operator algorithm and its all-degree
certificate. Section~\ref{sec:analytic} 
derives the signed Gamma formula, and Section~\ref{sec:assembly} assembles
Theorem~\ref{thm:assembly}. Sections~\ref{sec:real}--\ref{sec:degreetwo} give
the real continuation, the global univalence of the mirror map with the
resulting $Q$-Taylor radius, and the low-degree geometry, and
Section~\ref{sec:arithmetic} the compact Calabi--Yau cover and the primary
integrality statements. Sections~\ref{sec:structural}--\ref{sec:checks} record
the ambientness and minimality statements, a specialization guide, the
normalization dictionary, and the independent numerical verifications.
Sections~\ref{ext:sec:primary} and~\ref{ext:endpoints} then extract the
full primary evaluation divisor and reduce the ambient endpoint
constants. Appendix~\ref{app:code} implements the
operator algorithm, Appendix~\ref{app:forget} derives the string and divisor
identities used with our twists, Appendix~\ref{app:structural} proves
ambientness of the one-point evaluation classes, and
Appendix~\ref{app:fourier} supplies the extension and Fourier-transform lemmas
used in the minimality argument.

%------------------------------------------------------------------
\section*{Acknowledgements and use of AI}
This work started as an application of the AI co-mathematician tool
\cite{CoMath} developed by Google DeepMind, in which a human works with AI
agents to develop a theory. Later, the output of the AI co-mathematician tool
was also reviewed by different AI tools (first Gemini, later also Claude and
ChatGPT) and the reviews were used to improve the theorems, sometimes in further
collaboration with the AI co-mathematician.\\
The final paper is human written, using parts of the AI generated proofs, and
also proofread by AI.\\
The work of HM was done at Google DeepMind.\\
The work of PV was funded by the Agence Nationale de la Recherche (ANR) under
grant Observables (ANR-24-CE31-7996).

\section{Index of notation}\label{app:notation}

The main symbols, grouped by topic. The last column gives the equation,
result or section where each is defined.

{\small
\setlength{\tabcolsep}{4pt}
\renewcommand{\arraystretch}{1.2}
\newcommand{\notagroup}[1]{\addlinespace[7pt]%
  \multicolumn{3}{@{}l}{\textbf{#1}}\\*\addlinespace[2pt]}
\begin{longtable}{@{}>{\raggedright\arraybackslash}p{0.2\linewidth}%
                    >{\raggedright\arraybackslash}p{0.62\linewidth}%
                    >{\raggedright\arraybackslash}p{0.14\linewidth}@{}}
\toprule
Symbol & Meaning & Defined\\
\midrule
\endfirsthead
\toprule
Symbol & Meaning & Defined\\
\midrule
\endhead
\bottomrule
\endfoot
\bottomrule
\endlastfoot
%-----------------------------------------------------------------------
\notagroup{Variables and geometry}
$n$ & Integer $n\ge4$; the sunset has $n-1$ loops, and $n=\dim_\C X_n$
  & \S\ref{sec:scope}\\
$s$, $y$, $T$ & Momentum $s$, the variable $y=-1/s$, and the Euclidean
  momentum $T=-s=1/y$ & \S\ref{sec:intro}, Thm.~\ref{thm:laplace}\\
$\theta_y$, $\thQ$ & Euler operators $y\,d/dy$ and $Q\,d/dQ$;
  $\theta_y=A\,\thQ$ & \S\ref{sec:PF}, Thm.~\ref{thm:assembly}\\
$Y_n$, $p_i$, $H$ & $Y_n=(\PP^1)^n$, the hyperplane classes of its factors
  ($p_i^2=0$), and $H=\sum_ip_i$ & \S\ref{sec:scope}\\
$F_n$ & Smooth $(1,\ldots,1)$ hypersurface in $Y_n$; Fano, with
  $K_{F_n}=\mathcal O_{F_n}(-H)$ & \eqref{eq:Fdefining}, Prop.~\ref{prop:geometry}\\
$X_n$, $\varpi$ & Local Calabi--Yau $n$-fold $\Tot(K_{F_n})$ and its
  projection to $F_n$ & \S\ref{sec:scope}\\
$\beta$, $m=|\beta|$ & Curve class in $H_2(F_n,\Z)\cong\Z^n$ and its degree
  $H\cdot\beta$ & Prop.~\ref{prop:geometry}\\
$\delta_a$ & Insertion $H^{n-2-a}/(n-2-a)!$ for the descendant exponent
  $0\le a\le n-2$ & Prop.~\ref{prop:geometry}\\
$N_{a,\beta}$ & One-point descendant invariant
  $\int_{[\overline{\mathcal M}_{0,1}(X_n,\beta)]^\vir}
  \psi^a\ev_{X_n}^*\varpi^*\delta_a$ & \eqref{eq:descdef}\\
$\Dcal_a^\Delta(Q)$ & Diagonal series $\sum_{\beta>0}N_{a,\beta}Q^{|\beta|}$;
  $\Dcal_0^\Delta$ is the primary one-point potential & \eqref{eq:descdef}\\
%-----------------------------------------------------------------------
\notagroup{Mirror side}
$R$, $A$, $c_D$ & Local mirror coordinate
  $R=\log y+\sum_{D\ge1}(-1)^Dc_Dy^D/D$ and period $A=\theta_yR$, where
  $c_D=\sum_{|\mathbf d|=D}\binom{D}{d_1,\ldots,d_n}^2$
  & \eqref{eq:mirrorcoordinate}\\
$Q$, $y(Q)$ & Novikov variable $Q=e^R=y+O(y^2)$ and the inverse mirror map
  & \eqref{eq:mirrorcoordinate}, Thm.~\ref{thm:extraction}\\
$I(y,z)$ & Diagonal ambient $I$-series $I^{0,\amb}_{X_n}(y,z)$, equal to
  $1+RH/z+O(z^{-2})$ & \S\ref{sec:intro}, \eqref{eq:Imixed}\\
$K_j(y)$ & $e^{-HR/z}I(y,z)=1+\sum_{j=2}^nK_jH^j/z^j$; Theorem~B:
  $\Dcal_a^\Delta=-\frac{n!}{(n-2-a)!}K_{a+2}(y(Q))$
  & \eqref{eq:Kdef}, \eqref{eq:extraction}\\
$\Ish$, $W_j$ & Signed ambient series and its coefficients,
  $\Ish(y,1)=\sum_jH^jW_j(y)$, with $W_0=A$ & \eqref{eq:Isharp}\\
$P_{n-1}$ & Sunset operator of order $n-1$, with
  $P_{n-1}\Ish(y,1)=H^{n-1}y^H$; the minimal operator of $A$
  & \eqref{eq:Palgorithm}, Thm.~\ref{thm:PF}\\
%-----------------------------------------------------------------------
\notagroup{The sunset integral}
$I^{(n-1)}_\circleddash(s)$ & Equal-mass two-dimensional $(n-1)$-loop sunset
  integral & \eqref{eq:general-schwinger}\\
$\phi$ & Landau--Ginzburg superpotential $(\sum_ix_i)(\sum_ix_i^{-1})$,
  mirror to $F_n$ & \eqref{eq:phidef}\\
$\widehat\Gamma_{X_n}$, $\gamma_k$ & Gamma class, with
  $\widehat\Gamma_{X_n}|_{F_n}=\sum_k\gamma_kH^k$ and
  $\sum_k\gamma_kt^k=\exp\bigl(\sum_{r\ge1}\frac{2\zeta(2r+1)}{2r+1}t^{2r+1}\bigr)$
  & \eqref{eq:Gammapoly}\\
$\Pi_{n-1}(s)$ & Period prefactor $(-1)^{n-1}yA(y)$ & \S\ref{sec:assembly}\\
$\mathcal T_j(R,Q)$ & Assembly polynomials; Theorem~A:
  $I^{(n-1)}_\circleddash=\Pi_{n-1}\sum_k(-1)^k\gamma_k\mathcal T_{n-1-k}$
  & \eqref{eq:Vj}, \eqref{eq:allloop}\\
$h_j(Q)$ & Coefficients of the zeta-free bracket
  $\mathcal T_{n-1}=nR^{n-1}+\sum_{j=0}^{n-2}h_jR^j$ & \eqref{eq:hj}\\
%-----------------------------------------------------------------------
\notagroup{Arithmetic and global results}
$b_\beta$, $n_m$ & Primary cover transform,
  $N_{0,\beta}=\sum_{k\alpha=\beta}b_\alpha/k^2$, and
  $n_m=\sum_{|\beta|=m}b_\beta$, so that $\Dcal_0^\Delta=\sum_mn_m\Li_2(Q^m)$;
  both are integers & Thm.~\ref{thm:integrality}, Cor.~\ref{cor:primaryintegral}\\
$\widetilde Z$ & Smooth Calabi--Yau double cover of
  $Z=\PP(\mathcal O_{F_n}\oplus K_{F_n})$ that isolates the local sector
  & Thm.~\ref{thm:CYcover}\\
$r_n$, $\rho_n$ & Bounds $r_n\le R_y\le\rho_n$ on the Taylor radius $R_y$ of
  $y(Q)$; the $\Dcal_a^\Delta$ converge on $|Q|<r_n$
  & \eqref{eq:rn}, \eqref{eq:rhon}\\
$D_\beta$, $B_\beta$ & Full primary evaluation class
  $\sum_id_{\beta,i}p_i\in H^2(F_n,\Q)$ and its integral cover transform
  $\sum_{r\mid\beta}\mu(r)r^{-2}D_{\beta/r}$, with $\mu$ the M\"obius function
  & \S\ref{ext:sec:primary}\\
$G(H)$, $b_j$ & Universal Gamma factor $\sum_k\gamma_kH^k$ and the
  $\lfloor(n-1)/2\rfloor$ real parameters of the endpoint constants
  & \eqref{ext:G}, Thm.~\ref{ext:main}\\
\end{longtable}}

\section{Scope and conventions}\label{sec:scope} 

 Throughout, work over $\C$, with $n\geq4$. We write
\[
 Y_n=(\PP^1)^n,\qquad H=\sum_{i=1}^np_i,\qquad
 F_n\in|\mathcal O_{Y_n}(H)|\text{ smooth},\qquad X_n=\Tot(K_{F_n}).
\]
Thus $F_n$ is a smooth multilinear hypersurface given by the zero locus in
$(\PP^1)^n$, with homogeneous coordinates $[u_i:v_i]$, of a section
\begin{equation}\label{eq:Fdefining}
 \sum_{\epsilon\in\{0,1\}^n}a_\epsilon\prod_{i=1}^n u_i^{\epsilon_i}v_i^{1-\epsilon_i}
 \in H^0\bigl(Y_n,\mathcal O_{Y_n}(1,\ldots,1)\bigr)
\end{equation}
whose vanishing locus is smooth, which is the generic member of the system.
No individual member is singled out because all smooth members are deformation
equivalent, so the genus-zero invariants used below do not depend on the
choice, and none of the arguments refers to the coefficients $a_\epsilon$.
By adjunction $K_{Y_n}=\mathcal O_{Y_n}(-2,\ldots,-2)$ gives
$K_{F_n}=\mathcal O_{F_n}(-H)$, so $F_n$ is Fano of index one with respect to $H$ and
$X_n=\Tot(K_{F_n})$ is a local Calabi--Yau $n$-fold; this is
Proposition~\ref{prop:geometry}.
The connection with the Feynman integral is through the Landau--Ginzburg
mirror of $F_n$ rather than through any particular defining
equation~\eqref{eq:Fdefining}; see~\eqref{eq:LGrelation} and
Section~\ref{sec:analytic}.

Write $i:F_n\hookrightarrow Y_n$, $\varpi:X_n\to F_n$, and
$\iota:F_n\hookrightarrow X_n$ for the inclusions and projection.
The symbol $H$ also denotes its restriction when the target is clear.
Cohomological degrees are real degrees $p_i\in H^2(Y_n,\Z)$.
Thus $H^j$ has complex codimension $j$.
The foundational stable-map statements are \cite[Theorem 3.14]{BM} and
\cite[Propositions 2, 4, and 5]{BehrendGW}. The actual obstruction morphism
is the unnumbered construction following Proposition 4 of~\cite{BehrendGW}, and
 Proposition~5 supplies global resolutions.
All virtual classes are the standard stable-map virtual classes, in rational
Chow groups or their cohomological realizations; integration on a proper
Deligne--Mumford stack is not replaced by integration on its coarse space.

We use $\ell$ for the number of marked points, reserving $n$ for the loop order plus one.
A curve class is denoted $\beta$, and
$m=|\beta|=H\cdot\beta$. The letter $a$ denotes a descendant exponent.
Only positive curve classes are included in local numerical potentials.
The equivariant parameter $\lambda$ is the first Chern class of the
weight-one character acting on the fibers of $K_{F_n}$; $\mu$ is a separate
weight used for the positive hypersurface twist.

\medskip\noindent\textbf{Notation.}
A superscript on $H$ carrying a space as argument, as in $H^k(F_n,\Q)$,
denotes a cohomology group; a bare power $H^k$ denotes the $k$-fold cup
product of the divisor class $H$. The letters $E_\pm$ always denote the
index bundles of Lemma~\ref{lem:basechange} on a stable-map stack,
while $\rho_\pm=\pm H$ and $\mathcal B_\pm=\mathcal O_{Y_n}(\pm H)$ denote
the corresponding cohomology class and line bundle on $Y_n$. The
operators $T_k$ of Definition~\ref{def:operator} carry a subscript and
are unrelated to the tangent bundles $T_{F_n},T_{Y_n},T_{X_n}$, to the assembly
polynomials $\mathcal T_j$ of~\eqref{eq:Vj}, or to the positive
variable $T=-s$ of Section~\ref{sec:analytic}. Similarly $P_{n-1}$ is the
differential operator and $\bar p$ is a sum of passive divisor classes.
Where an external theorem is quoted in full, it is stated at the point of
use, in the notation of its source where this causes no conflict, and the
paragraph following it fixes the translation.

\medskip\noindent\textbf{Standing caveats.}
The following apply throughout and are not repeated at every
occurrence. Rational numbers produced by a multiple-cover transform are
not asserted to count curves, sheaves, or BPS states. Integrality, where
proved, is an arithmetic statement about the specified observable.
No multiple-cover law is asserted for the descendants $\psi^a\delta_a$
with $a>0$. 
No finite computation of initial coefficients is used
as a proof of an all-degree statement; the finite checks collected in
Section~\ref{sec:checks} verify statements that are proved elsewhere in
the paper.

\medskip\noindent\textbf{Genus, dimension, and the primary observable.}
The loop number $n-1$ changes the target dimension, not the genus of the
stable maps, which is zero throughout the numerical theory considered
here. The subscript $0$ in $N_{0,\beta}$ denotes descendant exponent zero:
it does not mean that there are no marked points. The primary invariant
has one marking and the integral insertion $\delta_0$ of complex
codimension $n-2$. Its $r^{-2}$ cover transform and integrality are
statements about this specified observable. They are separate from any
multiple-cover formula for descendants or a sheaf-theoretic interpretation
of the resulting integers. The comparison with the unmarked $r^{-3}$ Calabi--Yau threefold
prepotential is carried out in the companion paper~\cite{PaperII}.

\section{Geometry and the ambient state space}\label{sec:geometry}
\begin{proposition}\label{prop:geometry}
One has
\[
 K_{F_n}=\mathcal O_{F_n}(-H),\quad K_{X_n}\cong\mathcal O_{X_n},\quad
 \int_FH^{n-1}=n!,\quad H_2(F_n,\Z)\cong\Z^n.
\]
Under the last isomorphism $\beta$ corresponds to
$(p_1\cdot\beta,\ldots,p_n\cdot\beta)$. Every effective class has nonnegative
coordinates. In particular, at most $\binom{m+n-1}{n-1}$ effective classes
have $H$-degree $m$. Moreover
\[
 \delta_a=\frac{H^{n-2-a}}{(n-2-a)!}
 =\sum_{|S|=n-2-a}\prod_{i\in S}p_i\in H^{2(n-2-a)}(F_n,\Z),
 \qquad 0\leq a\leq n-2.
\]
\end{proposition}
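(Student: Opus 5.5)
The plan is to treat each assertion in turn, all by standard tools: adjunction, the Lefschetz hyperplane theorem, and intersection computations in the square-free ring $H^*(Y_n,\Z)=\Z[p_1,\ldots,p_n]/(p_i^2)$.

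\emph{Canonical bundles.} By adjunction, $K_{F_n}=(K_{Y_n}\otimes\mathcal O_{Y_n}(H))|_{F_n}$. Since $K_{Y_n}=\mathcal O(-2,\ldots,-2)$, this gives $K_{F_n}=\mathcal O_{F_n}(-H)$. For $X_n=\Tot(L)$ with $L=K_{F_n}$ and $\varpi\colon X_n\to F_n$, the exact sequence $0\to\varpi^*L\to T_{X_n}\to\varpi^*T_{F_n}\to0$ gives $K_{X_n}=\varpi^*(K_{F_n}\otimes L^{-1})=\mathcal O_{X_n}$.

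\emph{Degree.} We have $\int_{F_n}H^{n-1}=\int_{Y_n}H^{n-1}\cdot H=\int_{Y_n}(p_1+\cdots+p_n)^n$. In the square-free ring only the monomial $p_1\cdots p_n$ survives, and it occurs with multinomial coefficient $n!$.

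\emph{Second homology.} $F_n$ is an ample divisor, because $\mathcal O(1,\ldots,1)$ is very ample, and $\dim F_n=n-1\ge3$. Lefschetz then gives that $i_*\colon H_2(F_n,\Z)\to H_2(Y_n,\Z)$ is an isomorphism, and also $H^2(Y_n,\Z)\cong H^2(F_n,\Z)$. Since $H_2(Y_n,\Z)\cong\Z^n$ with dual basis $p_i$, the class $\beta$ corresponds to $(p_i\cdot\beta)_i$. The main point to check here is the integral, torsion-free form of Lefschetz in degree $2$. This holds because $H_1(F_n)=0$ and the relative groups $H_k(Y_n,F_n)$ vanish for $k\le n-1$, and $n-1\ge3$.

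\emph{Effectivity and the count.} Each $p_i$ is the pullback of $\mathcal O_{\PP^1}(1)$ under the $i$-th projection, so it is nef. For an irreducible curve $C$, $p_i\cdot[C]$ is the degree of $\mathrm{pr}_i|_C$, which is nonnegative. Hence every effective class lies in $\N^n$. The number of $\mathbf d\in\N^n$ with $|\mathbf d|=m$ is $\binom{m+n-1}{n-1}$.

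\emph{The classes $\delta_a$.} Let $k=n-2-a$. In the square-free ring, $H^k=k!\,e_k(p)$, where $e_k(p)=\sum_{|S|=k}\prod_{i\in S}p_i$ is the elementary symmetric polynomial, because each square-free monomial appears $k!$ times in the expansion. Restricting to $F_n$ gives $\delta_a=e_k(p)|_{F_n}$, which is an integral class in $H^{2k}(F_n,\Z)$.
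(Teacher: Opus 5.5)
Your proposal is correct and follows the paper's proof essentially step for step. Both arguments use:
\begin{itemize}
\item adjunction for $K_{F_n}$;
\item the vertical tangent sequence for $K_{X_n}$;
\item $i_*[F_n]=H$ together with $H^n=n!\,p_1\cdots p_n$;
\item the integral Lefschetz hyperplane theorem, which the paper cites as Milnor's Corollary~7.3 via the Segre embedding, needing $2<n-1$;
\item nefness of the $p_i$ for the effective classes and the count;
\item the $k!$ multiplicity of each square-free monomial for $\delta_a$.
\end{itemize}
Your appeal to $H_1(F_n)=0$ is not needed for the isomorphism on $H_2$ itself. It is harmless, and via universal coefficients it does give the accompanying torsion-free $H^2$ statement.
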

\begin{proof}
Adjunction gives $K_{F_n}=(-2H+H)|_{F_n}=-H$.
For a line bundle $E$ on a smooth base, the vertical tangent bundle on
$\Tot(E)$ is the pullback of $E$, so
$K_{\Tot(E)}=\varpi^*(K_{F_n}\otimes E^{-1})$; take $E=K_{F_n}$.
Also $i_*[F_n]=H$, and $H^n=n!p_1\cdots p_n$ on $Y_n$.
The Segre embedding realizes $\mathcal O_{Y_n}(H)$ as a hyperplane bundle.
Apply the integral homology statement of \cite[Corollary~7.3]{Milnor}
to this smooth projective
$n$-fold and its smooth hyperplane section. Since $2<n-1$, it gives
$H_2(F_n,\Z)=H_2(Y_n,\Z)=\Z^n$. The same theorem in degree zero
makes $F_n$ connected. Since it is smooth, it is irreducible and reduced. Each $p_i$ is nef and $H$ is ample; hence effective
nonzero classes have nonnegative coordinates with positive sum. Counting
all nonnegative tuples gives the stated upper bound, without assuming
that every tuple is represented by an irreducible rational curve.
Finally, expand $H^r$ using $p_i^2=0$, then each square-free monomial occurs
$r!$ times. 
\end{proof}

Set
\[
 \mathcal R=H^*(Y_n,\Q)=\Q[p_1,\ldots,p_n]/(p_1^2,\ldots,p_n^2),\qquad
 V_\amb=i^*H^*(Y_n,\Q)\subset H^{\mathrm{even}}(F_n,\Q).
\]
\begin{proposition}[Ambient pairings and diagonal symmetry]\label{prop:ambient}
The ordinary Poincar\'e pairing on $V_\amb$ is nondegenerate, and
\[
 \ker i^*=\ker(H\cup-:\mathcal R\to\mathcal R),\qquad
 \mathcal R^{S_n}=\Q[H]/(H^{n+1}).
\]
The radical of $\int_YabH/(\lambda-H)$ over $\Q(\lambda)$ is also
$\ker i^*$. If $n$ is even, all even cohomology of $F_n$ is ambient.
If $n$ is odd, the only possible nonambient even group is $H^{n-1}(F_n,\Q)$.
\end{proposition}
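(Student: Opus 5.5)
The plan is to reduce every claim to the projection formula $\int_F i^*a\, i^*b=\int_Y a\,b\,H$, which holds because $i_*[F_n]=H$ by Proposition~\ref{prop:geometry}. We then combine this with Poincar\'e duality on the smooth projective $Y_n$ and the Lefschetz hyperplane theorem already used in the proof of Proposition~\ref{prop:geometry}.

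\emph{The kernel of $i^*$ and nondegeneracy.} If $H\cup a=0$, then $i_*i^*a=a\cup H=0$. We must show $i^*a=0$, and we do it through the pairing rather than injectivity of $i_*$. For any $b$ we have $\int_F i^*a\,i^*b=\int_Y abH=0$. So $i^*a$ lies in the radical of the pairing restricted to $V_\amb$. Conversely, if $i^*a$ pairs to zero with all of $V_\amb$, then $\int_Y (aH)b=0$ for all $b\in\mathcal R$, and Poincar\'e duality on $Y_n$ gives $aH=0$. Hence the radical of the pairing on $V_\amb$ equals $i^*(\ker H\cup)$.

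It therefore remains to see $\ker(H\cup)\subseteq\ker i^*$.
\begin{itemize}
\item In degrees $k<n-1$, Lefschetz makes $i^*:H^k(Y)\to H^k(F)$ injective. Hard Lefschetz on $Y$ makes $H\cup$ injective on $H^{k}$ for $2k<2n$ real degree below middle. So both kernels vanish for complex degree $<n/2$.
\item In the remaining degrees, a class $i^*a$ with $aH=0$ pairs to zero with $V_\amb$. It also pairs to zero with the nonambient part, which is orthogonal to $V_\amb$ by the Lefschetz decomposition (vanishing cycles are orthogonal to $i^*H^*(Y)$, being killed by $i_*$). Poincar\'e duality on $F$ then forces $i^*a=0$.
\end{itemize}
This yields $\ker i^*=\ker(H\cup)$ and nondegeneracy on $V_\amb$.

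\emph{The symmetric part and the $\lambda$-pairing.} The identity $\mathcal R^{S_n}=\Q[H]/(H^{n+1})$ holds because the symmetric square-free elements in degree $j$ are spanned by the elementary symmetric $e_j$, and $e_j=H^j/j!$ as in Proposition~\ref{prop:geometry}; moreover $H^n=n!p_1\cdots p_n\neq0$. For the twisted pairing, expand
\[
\frac{H}{\lambda-H}=\sum_{k\ge1}\lambda^{-k}H^k.
\]
If $aH=0$, the pairing vanishes identically. Conversely, if $\int_Y abH/(\lambda-H)=0$ for all $b$, take the coefficient of $\lambda^{-1}$ to get $\int_Y abH=0$ for all $b$, hence $aH=0$. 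So the radical over $\Q(\lambda)$ is again $\ker(H\cup)=\ker i^*$.

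\emph{Ambientness of even cohomology.} By Lefschetz, $H^k(F)\cong H^k(Y)$ for $k<n-1$ (real degree). By Poincar\'e duality on $F$ combined with the Gysin map, $H^k(F)$ is ambient for $k>n-1$ as well: hard Lefschetz shows $i^*$ is surjective there, because $i_*$ is an isomorphism and $i^*i_*=H\cup$. The only degree not controlled is the middle real degree $n-1$. If $n$ is even, $n-1$ is odd, so all even cohomology is ambient. If $n$ is odd, the only possible nonambient even group is $H^{n-1}(F_n,\Q)$.

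The main obstacle is the surjectivity of $i^*$ above the middle degree. I would prove it via $i_*i^*=H\cup$, the Gysin isomorphism $i_*:H^{k}(F)\to H^{k+2}(Y)$ for $k>n-1$ (dual to Lefschetz), and hard Lefschetz on $(\PP^1)^n$.
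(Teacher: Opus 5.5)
Your proof is correct and is essentially the paper's argument. It uses the same ingredients: the projection formula $\int_{F_n}i^*a\,i^*b=\int_{Y_n}abH$, Lefschetz below the middle degree, Poincar\'e duality on $F_n$, and full rank of $H\cup-$ on $\mathcal R$ (the paper proves this last point by the elementary Lemma~\ref{lem:linearlefschetz} instead of citing hard Lefschetz), only ordered with the kernel identity before surjectivity above the middle.

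Three small corrections. First, in your second bullet the complementary degree is below the middle and hence entirely ambient, so the vanishing-cycle remark is unnecessary. Second, in the ambientness step you need $i_*i^*=H\cup-$, as in your final paragraph, not $i^*i_*$. Third, for the radical over $\Q(\lambda)$, extracting the $\lambda^{-1}$ coefficient only works when $a$ has rational coefficients; instead replace $b$ by $(\lambda-H)b$, which is allowed because $\lambda-H$ is a unit.
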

\begin{proof}
\cite[Corollary~7.3]{Milnor}, followed by \cite[Theorem~3.2]{Hatcher}
over $\Q$, identifies
cohomology below degree $n-1$ and is injective in degree $n-1$.
Poincar\'e duality applies to the closed oriented real $(2n-2)$-manifold
$F_n(\C)$ \cite[Theorem~3.30]{Hatcher}. For a degree $k>n-1$, the pairing of
$i^*H^k(Y_n)$ with
$H^{2n-2-k}(F_n)=i^*H^{2n-2-k}(Y_n)$ is represented on $Y_n$ by multiplication
by $H$. Lemma~\ref{lem:linearlefschetz} below says this multiplication has the required
full rank, so $i^*$ is surjective in degree $k$. In degree $n-1$ the
bilinear form $\int_YabH$ on $H^{n-1}(Y_n)$ is nondegenerate because
$H:H^{n-1}(Y_n)\to H^{n+1}(Y_n)$ is an isomorphism by that lemma ($\dim_\C Y_n=n$).
These facts prove nondegeneracy on the ambient image.
Since $i_*i^*a=Ha$, one inclusion of kernels is immediate; for the other,
if $Ha=0$, then $i^*a$ pairs to zero with every ambient class and is
therefore zero. Multiplication by $(\lambda-H)^{-1}$ is an automorphism
of the ambient algebra, so does not change the radical.
A square-free monomial basis of $\mathcal R$ is indexed by subsets of
$\{1,\ldots,n\}$. An invariant element has constant coefficients on
subsets of each fixed size, whose sum is $H^j/j!$. This proves the
invariant-ring assertion. Finally the only cohomology possibly not
inherited from $Y_n$ is the middle group, by the preceding Lefschetz and
duality arguments.
\end{proof}

We state a required Lefschetz linear algebra
\begin{lemma}\label{lem:linearlefschetz}
In $\mathcal R$, multiplication by $H$ from square-free degree $r$ to
degree $r+1$ is injective for $2r<n$ and surjective for $2r\geq n-1$.
\end{lemma}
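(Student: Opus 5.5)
We identify $\mathcal R$ with the exterior-algebra-like square-free algebra. It has basis $e_S=\prod_{i\in S}p_i$ indexed by subsets $S\subset\{1,\ldots,n\}$. Multiplication by $H$ sends $e_S$ to $\sum_{j\notin S}e_{S\cup\{j\}}$. This is exactly the up operator $U_r$ of the Boolean lattice $B_n$, from $r$-subsets to $(r+1)$-subsets. The claim is therefore the classical fact that the up maps of the Boolean lattice have full rank, i.e. the Boolean lattice is Sperner/Lefschetz. The two ranges match the dimension comparison $\binom nr\le\binom n{r+1}$ iff $2r\le n-1$, with a small boundary overlap that we must check separately.

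\textbf{Approach.} Use the down operator $D_{r+1}$, which sends $e_T$ to $\sum_{i\in T}e_{T\setminus\{i\}}$. This is the transpose of $U_r$ with respect to the standard inner product in which the $e_S$ are orthonormal. A direct count gives the commutation relation
\[
D_{r+1}U_r-U_{r-1}D_r=(n-2r)\,\mathrm{id}\quad\text{on degree } r.
\]
To see this, compare the diagonal terms: $|\{j\notin S\}|-|\{i\in S\}|=n-r-r$. The off-diagonal terms cancel, since both sides contribute $e_{S\setminus\{i\}\cup\{j\}}$ exactly once.

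\textbf{Injectivity for $2r<n$.} Since $U_{r-1}D_r=U_{r-1}U_{r-1}^{T}$ is positive semidefinite, we get
\[
U_r^{T}U_r=(n-2r)\,\mathrm{id}+U_{r-1}U_{r-1}^{T}.
\]
For $n-2r>0$ this is positive definite, so $U_r$ is injective. This step involves no induction.

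\textbf{Surjectivity for $2r\ge n-1$.} Surjectivity of $U_r$ is equivalent to injectivity of its transpose $D_{r+1}$ on degree $r+1$. There are two routes.

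The first is to apply the same identity in the form $D_{r+2}U_{r+1}-U_rD_{r+1}=(n-2r-2)\,\mathrm{id}$. This gives
\[
U_rU_r^{T}=U_{r+1}^{T}U_{r+1}+(2r+2-n)\,\mathrm{id},
\]
which is positive definite when $2r+2>n$, i.e. $2r\ge n-1$. The range matches exactly.

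The second route is complementation $S\mapsto S^c$, which interchanges $U$ and $D$ and sends degree $r$ to degree $n-r$. It reduces surjectivity in degree $r$ to injectivity in degree $n-r-1$, where $2(n-r-1)<n$ holds precisely when $2r\ge n-1$.

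\textbf{Main obstacle.} The only real step is verifying the commutator identity, in particular the off-diagonal cancellation. Everything else is positivity of Gram matrices over $\Q\subset\mathbb R$. Since the lemma is stated over $\Q$, real positivity suffices.
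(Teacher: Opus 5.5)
Your proof is correct and follows essentially the same argument as the paper. Both use the orthonormal monomial basis, the adjoint down operator, and the commutator $DU-UD=(n-2r)\,\mathrm{id}$ to get injectivity for $2r<n$. Surjectivity then comes from the complement involution (your second route), and rationality of the maps transfers the ranks to $\Q$. Your first surjectivity route, via $U_rU_r^{T}=U_{r+1}^{T}U_{r+1}+(2r+2-n)\,\mathrm{id}$, is a valid alternative that avoids the involution.
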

\begin{proof}
Give the monomial basis $p_S$ the orthonormal positive-definite inner
product over $\mathbb R$. Let $U$ multiply by $H$ and put
$Dp_S=\sum_{i\in S}p_{S\setminus\{i\}}$. Then $D=U^*$ and direct
expansion gives $DU-UD=(n-2r)\operatorname{id}$ in degree $r$.
If $Uv=0$ and $2r<n$, then
$0=\|Dv\|^2+(n-2r)\|v\|^2$, so $v=0$.
The complement involution $p_S\mapsto p_{S^c}$ interchanges $U$ and
$D$. Injectivity in complementary degree therefore gives surjectivity
in the stated range. All maps are rational, so the rank conclusions
hold over $\mathbb Q$. In particular the middle map is an isomorphism
when its two graded pieces have the same dimension.
\end{proof}

\begin{remark}[The nonambient middle summand at $n=5$]\label{rem:primitive}
For $n=5$, $\chi(F_n)=40$, $b_2=b_6=5$, and $b_4=28$, whereas the ambient
part of $H^4(F_n,\Q)$ has dimension $\binom52=10$. Thus the nonambient middle summand
has dimension $18$. These numbers follow from
\[
 \chi(F_n)=n!\sum_{k=0}^{n-1}\frac{(-1)^{n-1-k}2^k}{k!},
\]
obtained by integrating the degree-$(n-1)$ part of
$c(TF)=\prod_{i=1}^5(1+2p_i)/(1+H)$; the remaining Betti numbers are forced by
Lefschetz and Poincar\'e duality.
The full Lefschetz-primitive middle group has dimension $28-5=23$;
it includes ambient primitive classes and is distinct from the
$18$-dimensional orthogonal nonambient summand.
This $18$-dimensional summand is the reason the ambientness statement of
Theorem~\ref{thm:fullambient} is needed when $n$ is odd. The four solutions
of the differential operator $P_4$ given in eq.~\eqref{e:P4} correspond to the ambient classes $1,H,H^2,H^3$, and nothing in the
operator sees the orthogonal complement.
\end{remark}

\begin{remark}[Relation to the Hulek--Verrill threefolds]\label{rem:HV}
The root lattice $A_4$ enters this story, but on the mirror side rather than
here. The superpotential of~\eqref{eq:phidef} is
$\phi(x)=n+\sum_{i\ne j}x_ix_j^{-1}$, so its Newton polytope is the convex
hull of the roots $e_i-e_j$, that is the reflexive polytope $\Delta_{A_{n-1}}$.
At $n=5$ the toric closures of the fibers $\{\phi=s\}$, with the small
resolutions described below and at nonsingular parameters, give the Calabi--Yau threefolds of
Hulek--Verrill~\cite{HV}, whose equation
$(\sum_{i=1}^5X_i)(\sum_{i=1}^5a_i/X_i)=a_6$ is the equal-mass member $a_i=1$, $a_6=s$
of their five-parameter family. By eq.~\eqref{eq:LGrelation} this is the
graph hypersurface of the four-loop sunset at momentum $s$.

The Fano fourfold $F_n$ corresponds to the Landau--Ginzburg model
$( (\C^*)^5/\C^*,\phi)$, whereas a compactified fiber has dimension three.
The Calabi--Yau counterpart of the latter is an anticanonical hypersurface
in $F_n$, equivalently a complete intersection of two $(1,\ldots,1)$
hypersurfaces in $(\PP^1)^5$; see \cite[Remarks 3 and 9]{Iritani}.
The numbers $h^{11}=26$, $h^{21}=16$ of~\cite[Proposition 3.1]{HV}
belong to the general smooth member of the full twenty-parameter family,
not to the sunset subfamily. For a general member of the five-parameter
family, small resolution of its thirty nodes gives $h^{11}=45$, $h^{12}=5$.
These smooth-family statements must be restricted to nonsingular parameters;
in particular they do not apply without qualification at $s=0,1,9,25$.
The period-generated subvariation is the one generated by the holomorphic
residue form and its Gauss--Manin derivatives. Under the natural coordinate
permutation action this form transforms by the sign character. The
logarithmic volume form on $(\C^*)^n/\C^*$ transforms by the determinant
of the standard permutation quotient, namely $\operatorname{sgn}(\sigma)$,
and $\phi$ is invariant. Thus this is not literally the invariant
subvariation unless the action has first been twisted by the sign character.
\end{remark}

\section{Stable maps in families and virtual reduction}\label{sec:local}
\begin{lemma}[Convexity and concavity on a tree]\label{lem:tree}
Let $C$ be a connected nodal curve of arithmetic genus zero and
$f:C\to F_n$ or $f:C\to Y_n$ a nonconstant map of total $H$-degree $m$.
Then
\[
 H^0(C,f^*\mathcal O(-H))=0,\quad
 h^1(C,f^*\mathcal O(-H))=m-1,\quad
 H^1(C,f^*\mathcal O(H))=0.
\]
The last line bundle is globally generated on $C$ and has $h^0=m+1$.
Its evaluation at any smooth marked point is surjective.
\end{lemma}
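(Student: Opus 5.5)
The plan is to reduce everything to line bundles on the irreducible components of the tree $C$, which are copies of $\PP^1$, and then glue along the nodes by induction on the number of components. Since each $p_i$ is nef and $H$ is ample on both $F_n$ and $Y_n$ (Proposition~\ref{prop:geometry}), each component $C_v$ carries a degree $m_v=H\cdot f_*[C_v]\ge0$. We have $m_v>0$ exactly when $f|_{C_v}$ is nonconstant, and $\sum_v m_v=m\ge1$. On $C_v\cong\PP^1$ the bundle $f^*\mathcal O(H)$ is $\mathcal O(m_v)$, and it is globally generated because $\mathcal O_{Y_n}(H)$ is globally generated (Segre embedding) and so is its restriction to $F_n$.

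The first step is the positive bundle. Set $L=f^*\mathcal O(H)$ and use the normalization sequence
\[
0\to L\to\bigoplus_v L|_{C_v}\to\bigoplus_{\text{nodes}}L_{q}\to0 .
\]
Because the dual graph is a tree with $N$ vertices and $N-1$ edges, an induction removing a leaf component $C_v$ attached at a node $q$ works as follows. One has $H^1(\mathcal O(m_v))=0$, and restriction $H^0(C_v,\mathcal O(m_v))\to L_q$ is surjective since $m_v\ge0$. Hence $H^1(C,L)=0$ and $h^0=\sum_v(m_v+1)-(N-1)=m+1$. Global generation of $L$ and surjectivity of evaluation at a smooth marked point both follow directly from global generation of $\mathcal O(H)$ on the target: the pulled-back sections already generate every fiber.

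The second step is the negative bundle $L^{-1}$, with component degrees $-m_v\le0$. For vanishing of $H^0$, a section restricted to a component with $m_v>0$ is zero, while on a contracted component it is constant. Connectedness of the tree, together with the existence of at least one component with $m_v>0$ (because $m\ge1$), then forces the section to vanish everywhere: propagate zeros across nodes, starting from a nonconstant component. Once $H^0(C,L^{-1})=0$ is known, Riemann--Roch on the genus-zero nodal curve gives $\chi(L^{-1})=-m+1$, so $h^1=m-1$. Equivalently, one can compute $h^1$ via Serre duality with the dualizing sheaf $\omega_C$.

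The only delicate point is the $H^0$ vanishing for $L^{-1}$ in the presence of contracted components, including chains of them. I would handle this by noting that the contracted components form subtrees on which $L^{-1}$ is trivial. A section there is a single constant, and that constant must vanish at any node joining the subtree to a positive-degree component. Every such subtree meets a positive-degree component, because $C$ is connected and $m\ge1$.
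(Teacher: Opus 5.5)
Your proof is correct and follows the paper's argument. Both use leaf-by-leaf removal in the normalization sequence for $f^*\mathcal O(H)$, propagation of vanishing from positive-degree components through contracted subtrees for $f^*\mathcal O(-H)$, and Riemann--Roch $\chi=\deg+1$ for both ranks.

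The only difference is how you get global generation and surjectivity of evaluation. You pull back generating sections of $\mathcal O(H)$ from the target, whereas the paper reruns the leaf-removal induction with a prescribed value at a chosen point. Both are valid here; yours is shorter, and the paper's applies to any line bundle with nonnegative component degrees on the tree.
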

\begin{proof}
Every irreducible component is $\PP^1$ and the dual graph is a tree.
For the negative bundle a section vanishes on each noncontracted
component. On a contracted component it is constant, so matching values
at nodes and connectedness propagate vanishing to all components.
For the positive bundle all component degrees are nonnegative.
Inductively remove a leaf. Sections on the remaining tree and the leaf
can be matched at their common node since evaluation on the leaf is
surjective. The normalization exact sequence then gives $H^1=0$;
the same argument, with a prescribed value at a chosen point, gives
global generation. Riemann--Roch on a genus-zero nodal curve gives
$\chi(M)=\deg M+1$, which proves both ranks.
\end{proof}

\begin{lemma}[Artinian vanishing without a simultaneous normalization]\label{lem:artinian}
Let $A$ be a local Artinian $\C$-algebra with residue field $\C$,
$C_A\to\operatorname{Spec}A$ a flat proper family, and
$\mathcal L$ an invertible sheaf with $H^0(C_0,\mathcal L_0)=0$.
For every finite $A$-module $J$,
$H^0(C_A,\mathcal L\otimes_A J)=0$.
\end{lemma}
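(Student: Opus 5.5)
The plan is induction on the length of $J$, using only flatness of $C_A$ over $A$ and left exactness of $H^0$; no normalization of the family is needed. Since $C_A$ is flat over $A$ and $\mathcal L$ is invertible, $\mathcal L$ is flat over $A$. Hence for every finite $A$-module $J$ the sheaf $\mathcal L\otimes_A J$ is defined, and every short exact sequence $0\to J'\to J\to J''\to0$ of finite $A$-modules gives a short exact sequence of sheaves
\[
 0\to\mathcal L\otimes_A J'\to\mathcal L\otimes_A J\to\mathcal L\otimes_A J''\to0
\]
on $C_A$. Taking global sections is left exact, so this gives an exact sequence
\[
 0\to H^0(\mathcal L\otimes_A J')\to H^0(\mathcal L\otimes_A J)\to H^0(\mathcal L\otimes_A J'').
\]
It follows that if the two outer groups vanish, so does the middle one.

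For the base case, take $J=\C=A/\mathfrak m$. Then $\mathcal L\otimes_A\C=\mathcal L_0$ is a sheaf on the closed fiber $C_0$. Global sections on $C_A$ of a sheaf supported on the closed subscheme $C_0$ agree with its global sections on $C_0$. By hypothesis $H^0(C_0,\mathcal L_0)=0$, which settles the base case.

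For the induction, note that a finite module over an Artinian local ring has finite length. Its composition factors are all isomorphic to $\C$, since the residue field is $\C$. Choose a composition series $0=J_0\subset J_1\subset\cdots\subset J_r=J$ with $J_k/J_{k-1}\cong\C$. Applying the exact sequence above to $0\to J_{k-1}\to J_k\to\C\to0$ and inducing on $k$ gives $H^0(C_A,\mathcal L\otimes_A J)=0$.

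The only point needing care is exactness of tensoring with $\mathcal L$, which is where flatness of $C_A$ over $A$ enters. This must hold locally on $C_A$: for affine opens $U$, $\mathcal L(U)$ is a flat $A$-module, and the tensor product $\mathcal L\otimes_A J$ is the sheaf $U\mapsto\mathcal L(U)\otimes_A J$. This presheaf is already quasi-coherent and exact in $J$. Properness is not actually needed for the vanishing; it only guarantees that the groups involved are finite-dimensional.
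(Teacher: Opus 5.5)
Your proof is correct and follows the paper's own argument: a composition series of $J$ with factors $\C$, exactness of the induced sheaf sequences from $A$-flatness of $\mathcal L$, identification of each successive quotient with the pushforward of $\mathcal L_0$ from $C_0$, and left exactness of $H^0$. One point to tighten: use the description $U\mapsto\mathcal L(U)\otimes_A J$ only on affine opens, since on arbitrary opens taking sections need not commute with $\otimes_A J$ and that presheaf need not be a sheaf; this is enough, because exactness of a sequence of quasi-coherent sheaves can be checked on an affine cover.
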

\begin{proof}
Choose a composition series for $J$ with successive quotients $\C$.
Flatness of $\mathcal L$ over $A$ makes the induced sequences of sheaves
exact, and their successive quotients are the pushforward of
$\mathcal L_0$ from the closed fiber. Left exactness of $H^0$ proves
vanishing by induction. Lemma~\ref{lem:tree} supplies the hypothesis
for the negative target line bundle. For a square-zero extension
$A'\twoheadrightarrow A$, the difference of two lifted fiber coordinates
lies in this group with $J=\ker(A'\to A)$, so it is zero.
Normalization is needed only on the closed fiber. A smoothing such as
$xy=\epsilon$ over $\C[\epsilon]/(\epsilon^2)$ has no section reducing
to the node and need not admit a componentwise normalization over its base;
the argument applies to it without such an assumption.
\end{proof}

\begin{lemma}[Universal base change]\label{lem:basechange}
For a flat proper finitely presented family $\pi:C\to S$ of such curves
over an arbitrary $\C$-scheme, with a map $f$ whose fibers have fixed
positive degree $m$, the complexes associated to $M_\pm=f^*\mathcal O(\pm H)$ satisfy
\[
 R\pi_*M_-\simeq E_-[-1],\qquad R\pi_*M_+\simeq E_+,
\]
where $E_-,E_+$ are locally free of ranks $m-1,m+1$ and commute with
arbitrary base change. For a marked section there is an exact sequence
\[
 0\longrightarrow E'_+\longrightarrow E_+
 \longrightarrow\ev^*\mathcal O(H)\longrightarrow0,
 \qquad\rk E'_+=m.
\]
\end{lemma}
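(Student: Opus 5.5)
The plan is to combine the fiberwise computation of Lemma~\ref{lem:tree} with the cohomology and base change theorem for perfect complexes, and to use Lemma~\ref{lem:artinian} in place of any normalization of the family. Throughout we may assume $S$ is affine and, since the family is finitely presented, noetherian. Everything is then compatible with base change, so the general case follows by descent from these local statements.

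\emph{The positive bundle.} Since $\pi$ is flat and proper and $M_+$ is invertible, hence flat over $S$, the complex $R\pi_*M_+$ is perfect and its formation commutes with arbitrary base change. At every point $t\in S$, Lemma~\ref{lem:tree} gives $H^1(C_t,M_{+,t})=0$. Descending induction on the top nonvanishing degree of the complex (the usual cohomology-and-base-change criterion) then shows that $R^1\pi_*M_+=0$ universally. It also shows that $E_+=\pi_*M_+$ is locally free, with rank $h^0=m+1$ constant by Lemma~\ref{lem:tree}, and that $E_+$ commutes with base change.

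\emph{The negative bundle.} By Lemma~\ref{lem:tree}, $H^0(C_t,M_{-,t})=0$ at every point, and Lemma~\ref{lem:artinian} upgrades this to vanishing of $H^0(C_A,M_-\otimes_A J)$ for every Artinian thickening and every finite module $J$. This is exactly the input needed to conclude that $\pi_*(M_-\otimes\mathcal F)=0$ for every quasicoherent $\mathcal F$ on $S$. Concretely, represent $R\pi_*M_-$ locally by a two-term complex $K^0\to K^1$ of finite free modules. Injectivity of $K^0\otimes k(t)\to K^1\otimes k(t)$ at all points $t$ makes the map a split injection with locally free cokernel. Hence $R\pi_*M_-\simeq E_-[-1]$, with $E_-=R^1\pi_*M_-$ locally free of rank $-\chi=m-1$ and commuting with base change. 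Here $\chi$ is constant, equal to $-m+1$ by Riemann--Roch on genus-zero nodal fibers.

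\emph{The marked-point sequence.} For a section $\sigma$, with image a Cartier divisor $D$ lying in the smooth locus, tensor $0\to\mathcal O(-D)\to\mathcal O\to\mathcal O_D\to0$ with $M_+$ and push forward. The resulting map $E_+\to\sigma^*M_+=\ev^*\mathcal O(H)$ is surjective on every fiber, by the surjectivity of evaluation at a smooth point in Lemma~\ref{lem:tree}, hence surjective. Its kernel $E'_+$ is therefore locally free of rank $m$. It equals $\pi_*M_+(-D)$, and it commutes with base change because the sequence is a short exact sequence of vector bundles.

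The main obstacle is the negative case over a nonreduced or non-noetherian base, where one cannot normalize nodes in families. Lemma~\ref{lem:artinian} is designed precisely for this step, so the care lies in formulating the base-change criterion via Artinian points rather than geometric fibers.
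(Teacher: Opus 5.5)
Your argument is correct and is essentially the paper's proof. Both use cohomology and base change for the perfect complex $R\pi_*M_\pm$, represent it locally by a two-term complex $K^0\to K^1$, and use Lemma~\ref{lem:tree} for fiberwise injectivity (resp.\ surjectivity), which makes the differential locally split with locally free cokernel (resp.\ kernel); the fiberwise-surjective evaluation map then gives $E'_+$, exactly as you do.

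The only difference is that your reduction to a noetherian base and your appeal to Lemma~\ref{lem:artinian} are unnecessary. Injectivity on residue fields of the two-term representative already yields the local splitting over an arbitrary, possibly nonreduced or non-noetherian, base. This is how the paper handles the negative case, so it is not really the main obstacle.
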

\begin{proof}
Cohomology and base change for a perfect object and a finitely presented
flat proper-support complex, in \cite[Tag~0A1H]{Stacks}, applies with
$E=M_\pm$ and $\mathcal G^\bullet=\mathcal O_C$.
Thus $R\pi_*M_\pm$ is perfect and commutes with arbitrary base change.
Its fibers have amplitude $[0,1]$, so locally it can be represented by a
two-term complex of finite locally free modules $N^0\to N^1$.
For $M_-$ the fiber maps are injective by Lemma~\ref{lem:tree}; an
invertible maximal minor makes the map locally split injective. Its
cokernel is locally free. For $M_+$ the fiber maps are surjective and
are locally split surjective, giving a locally free kernel instead.
This proves the derived assertions, including on nonreduced bases.
The marked evaluation map is fiberwise surjective, hence locally split
surjective, and its kernel has the indicated rank. The argument applies
on atlases of the stable-map stacks and descends canonically.
\end{proof}

\begin{theorem}[Zero-section factorization over arbitrary bases]\label{thm:factorization}
For every $\beta\ne0$ and every number $\ell\geq0$ of markings,
\[
 \overline{\mathcal M}_{0,\ell}(F_n,\beta)
 \xrightarrow{\sim}\overline{\mathcal M}_{0,\ell}(X_n,\beta)
\]
is an isomorphism of stacks, compatible with arbitrary base change.
In particular the local stable-map stack is proper and Deligne--Mumford.
\end{theorem}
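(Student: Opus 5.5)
The plan is to show that the morphism $\iota_*:\overline{\mathcal M}_{0,\ell}(F_n,\beta)\to\overline{\mathcal M}_{0,\ell}(X_n,\beta)$, given by composing with the zero section, is an isomorphism on $T$-points for every $\C$-scheme $T$, functorially in $T$. Equivalently, for every family $(\pi:C\to T,\sigma_1,\ldots,\sigma_\ell,g:C\to X_n)$ of genus-zero stable maps of class $\beta$, the map $g$ factors uniquely through $\iota$. Since $X_n=\Tot(K_{F_n})$, giving $g$ is the same as giving $f=\varpi\circ g:C\to F_n$ together with a section $v\in H^0(C,f^*K_{F_n})=H^0(C,f^*\mathcal O(-H))$. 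The map $g$ factors through the zero section exactly when $v=0$, and then the factorization is unique because $\iota$ is a closed immersion. Stability is unaffected: $\varpi\circ g$ has class $\varpi_*\beta$, and a component is contracted by $g$ if and only if it is contracted by $f$, since $v$ restricted to a contracted component is constant. So the whole statement reduces to the vanishing $\pi_*f^*\mathcal O(-H)=0$ over an arbitrary base $T$, compatibly with base change.

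For the vanishing I will invoke Lemma~\ref{lem:basechange}. The family $f:C\to F_n$ has fibers of fixed positive degree $m=|\beta|$, since $\beta\neq0$ and $H$ is ample. Hence $R\pi_*M_-\simeq E_-[-1]$ with $E_-$ locally free, and this commutes with arbitrary base change. In particular $\pi_*M_-=\mathcal H^0(R\pi_*M_-)=0$, so $H^0(C,M_-)=H^0(T,\pi_*M_-)=0$ and therefore $v=0$. Lemma~\ref{lem:basechange} applies over arbitrary, possibly nonreduced, non-Noetherian bases and descends along atlases. As a cross-check at the infinitesimal level, Lemma~\ref{lem:artinian} gives the same conclusion on Artinian thickenings, where the fiber coordinate cannot be deformed away from zero. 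This shows that $\iota_*$ is bijective on isomorphism classes of $T$-points. On automorphisms, an automorphism of a prestable curve over $T$ commutes with $g=\iota\circ f$ if and only if it commutes with $f$, because $\iota$ is a monomorphism. So $\iota_*$ is fully faithful and essentially surjective on groupoids of $T$-points, hence an isomorphism of stacks, and base change compatibility is built in.

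Finally, $\overline{\mathcal M}_{0,\ell}(F_n,\beta)$ is a proper Deligne--Mumford stack by \cite[Theorem 3.14]{BM}, because $F_n$ is projective. Transporting along the isomorphism gives properness and the DM property for $X_n$, even though $X_n$ is not proper.

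The main obstacle I expect is being careful that the vanishing genuinely holds over arbitrary bases rather than only fiberwise. Fiberwise vanishing together with upper semicontinuity needs flatness and Noetherian hypotheses, and $H^0$ of a family need not equal the global sections of a fiberwise-zero object. This is exactly why I route the argument through the perfect-complex statement of Lemma~\ref{lem:basechange}: there $\pi_*$ is computed as the kernel of a locally split injection $N^0\to N^1$, which is zero on any base. A secondary point is matching the stability conditions and the curve classes under $\varpi_*:H_2(X_n)\cong H_2(F_n)$, which is routine.
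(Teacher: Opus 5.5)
Your proposal is correct and follows essentially the same route as the paper. Both arguments write a map to $X_n$ as $f=\varpi\circ g$ plus a section of $f^*K_{F_n}$, kill that section over an arbitrary base using $\pi_*f^*K_{F_n}=0$ from Lemma~\ref{lem:basechange}, match stability and automorphisms via the monomorphism $\iota$, and import properness and the Deligne--Mumford property from Behrend--Manin.
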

\begin{proof}
A map $g:C\to\Tot(K_{F_n})$ over a base $S$ consists of $f=\varpi g:C\to F_n$
and a section of $f^*K_{F_n}$. By Lemma~\ref{lem:basechange},
$\pi_*f^*K_{F_n}=0$, so this section is zero over $S$.
Conversely composing a map to $F_n$ with $\iota$ gives a
map to $X_n$. These constructions are inverse and functorial. They preserve
stability because exactly the same components are contracted; automorphism
groups agree because $\iota$ is a monomorphism. Properness and the
Deligne--Mumford property now follow from
\cite[Theorem 3.14]{BM}, $F_n$ is a
smooth projective variety over the characteristic-zero field $\C$.
\end{proof}

\begin{theorem}[Virtual reduction]\label{thm:virtual}
On the stack in Theorem~\ref{thm:factorization}, put
$\mathcal V=R^1\pi_*f^*K_{F_n}$, of rank $m-1$. Then
\[
 [\overline{\mathcal M}_{0,\ell}(X_n,\beta)]^{\vir}
 =c_{m-1}(\mathcal V)\cap
 [\overline{\mathcal M}_{0,\ell}(F_n,\beta)]^{\vir}.
\]
With the fiber scaling action, the equivariant formula replaces
$c_{m-1}$ by $e_\lambda(\mathcal V)$.
\end{theorem}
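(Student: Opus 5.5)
The plan is to compare the two perfect obstruction theories on the common stack $\mathcal M=\overline{\mathcal M}_{0,\ell}(F_n,\beta)\cong\overline{\mathcal M}_{0,\ell}(X_n,\beta)$ supplied by Theorem~\ref{thm:factorization}. The relative (over the Artin stack of prestable curves $\mathfrak M_{0,\ell}$) obstruction theory for $X_n$ is $(R\pi_*f^*T_{X_n})^\vee$. Along the zero section, $T_{X_n}|_{F_n}$ fits into the split exact sequence $0\to T_{F_n}\to\iota^*T_{X_n}\to K_{F_n}\to 0$, where the normal bundle of the zero section is $K_{F_n}$ itself. Applying $R\pi_*f^*$ and using Lemma~\ref{lem:basechange} gives $R\pi_*f^*K_{F_n}\simeq\mathcal V[-1]$, with $\mathcal V=E_-$ locally free of rank $m-1$ and compatible with base change. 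We therefore obtain a distinguished triangle
\[
 R\pi_*f^*T_{F_n}\to R\pi_*f^*T_{X_n}\to\mathcal V[-1]\xrightarrow{+1},
\]
so the obstruction sheaf of $X_n$ is an extension of $\mathcal V$ by that of $F_n$, while the deformation spaces agree. This is exactly the situation of a perfect obstruction theory differing from another by a locally free summand in degree one.

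Next we check that the two obstruction theories are compatible with the identity morphism of $\mathcal M$. Concretely, the composite $E^\bullet_{X}\to E^\bullet_F\to L_{\mathcal M/\mathfrak M}$ should equal the obstruction morphism for $X_n$. Here we use Behrend's explicit construction of the obstruction morphism (the construction following \cite[Proposition 4]{BehrendGW}), which is functorial in the target. The closed embedding $\iota$ induces a morphism of obstruction theories. Because $\iota$ induces an isomorphism of stacks, the relative cotangent complex $L_{\mathcal M/\mathcal M}$ vanishes, and the cone of the comparison map is $\mathcal V^\vee[1]$. The standard excess-bundle principle then gives $[\mathcal M]^{\vir}_X=c_{\mathrm{top}}(\mathcal V)\cap[\mathcal M]^{\vir}_F$. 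To justify this principle we would cite the functoriality of virtual classes (Kim--Kresch--Pantev, or Behrend--Fantechi Proposition~5.10 / the excess formula), which reduces it to intersecting the cone $C\subset h^1/h^0(E_F^\vee)$, which is also the cone for $X$, with the zero section of the bigger bundle stack $h^1/h^0(E_F^\vee)\times\mathcal V$. Equivalently, one can use global resolutions from \cite[Proposition 5]{BehrendGW}: choose $E^{-1}\to E^0$ for $F$, take $E^{-1}\oplus\mathcal V^\vee\to E^0$ for $X$, note that the intrinsic normal cone sits inside the first factor, and apply Fulton's excess intersection formula $0^!_{E_1\oplus\mathcal V}[C]=c_{m-1}(\mathcal V)\cap 0^!_{E_1}[C]$.

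The main obstacle is the compatibility step. The triangle only shows that the complexes are related; we must also verify that the obstruction maps to $L_{\mathcal M/\mathfrak M}$ commute with it and that the splitting does not introduce a nonzero map $\mathcal V^\vee[1]\to L$. We would handle this by noting that the relevant map out of $\mathcal V^\vee[1]$ lands in $h^{-1}$ of the cotangent complex, where it corresponds to obstructions to deforming the fiber coordinate. By Lemma~\ref{lem:artinian} those lifts are unique and unobstructed in the direction of $\mathcal V$ contributing to the cone, so the cone lies in the $F$-part.

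For the equivariant statement we repeat the argument with $\C^*$ acting on the fibers of $K_{F_n}$. The stack is fixed pointwise, and $\mathcal V$ acquires weight one, so the top Chern class becomes the equivariant Euler class $e_\lambda(\mathcal V)$. The case $m=1$ is consistent: then $\mathcal V=0$ and both sides coincide.
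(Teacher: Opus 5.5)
Your proposal is correct and follows essentially the same route as the paper. You split $\iota^*T_{X_n}=T_{F_n}\oplus K_{F_n}$, write $E_{X_n}=E_{F_n}\oplus\mathcal V^\vee[1]$, show the obstruction morphism is $(\phi_{F_n},0)$ so the intrinsic normal cone has zero $\mathcal V$-component, and conclude by the excess (zero-section self-intersection) formula, with $e_\lambda(\mathcal V)$ in the equivariant case.

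The decisive step, vanishing of the normal component, is carried by your functoriality remark: since $f_{X_n}=\iota\circ f_{F_n}$, $\phi_{X_n}$ factors through $E_{X_n}\to E_{F_n}$ and therefore kills its fiber $\mathcal V^\vee[1]$. This is the invariant form of the paper's local computation that the pulled-back fiber coordinate is identically zero. The Artinian-lifting argument in your third paragraph is not needed, and as phrased it would not on its own show that the morphism $\mathcal V^\vee[1]\to\mathbb L$ vanishes.
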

\begin{proof}
Along the zero section the differential of $\varpi$ and of $\iota$ give
the canonical split tangent sequence
$\iota^*T_{X_n}=T_{F_n}\oplus K_{F_n}$. Relative to the smooth Artin stack of
prestable marked curves, the stable-map obstruction complex is therefore
\[
 E_{X_n}=E_{F_n}\oplus\mathcal V^\vee[1],\qquad
 E_{F_n}=(R\pi_*f^*T_{F_n})^\vee.
\]
The assertion includes the morphism of obstruction theories. The projection and
zero section give $d\varpi\circ d\iota=1$, so the splitting of tangent bundles
induces a splitting after derived pushforward and duality, compatible with the
tangential stable-map obstruction morphism. For the normal component, construct
that morphism by the relative differential of the universal evaluation map and
the trace map for the universal curve. In a local trivialization of $K_{F_n}$,
its fiber coordinate pulls back to the zero section of $\mathcal O_{\mathcal C}$
over the entire base, by Theorem~\ref{thm:factorization}. Its differential in
base directions is therefore zero as a morphism of complexes. A change of
trivialization multiplies this coordinate by a unit and adds to its differential
a term proportional to the coordinate, also zero. Hence the normal component is
globally the zero morphism, including over nonreduced bases. Thus the resulting
obstruction theory is $(\phi_{F_n},0):E_{F_n}\oplus\mathcal V^\vee[1]\to\mathbb
L$, with the specified morphism, not just a splitting of its underlying
$K$-class. The intrinsic normal cone is embedded in the vector-bundle stack for
$E_{F_n}$ with zero component in $\mathcal V$. Factoring the zero-section Gysin
map for the enlarged obstruction bundle stack through that of $\mathcal V$ gives
the top Chern class. On a zero-locus chart $Z\subset U$ over an atlas $S$ of the
prestable-curve stack, the relative cone maps are explicitly
\[
 [C_{Z/U}/T_{U/S}|_Z]\hookrightarrow[E_+|_Z/T_{U/S}|_Z]
 \hookrightarrow[E_+|_Z/T_{U/S}|_Z]\oplus\mathcal V,
\]
where the second map has zero $\mathcal V$ coordinate. These are
compatible with descent. Only one zero-section self-intersection in
$\mathcal V$ occurs, producing $c_{m-1}(\mathcal V)$.
For precision, the cone and zero-section construction is that of
\cite[Sections 5 and 7, Proposition 5.3]{BF}. For a zero locus of a
vector-bundle section on a smooth ambient stack, the normal cone maps into
the restricted bundle and the ambient pushforward of the virtual class is
the top Chern class of that bundle, with no regularity of the section or
reducedness of its zero locus required.
The argument just given proves the required split excess formula from
that construction, rather than postulating a comparison from a $K$-class.
It works equivariantly by applying the same construction on finite-dimensional
approximations to the classifying space of the fiber-scaling torus.
\end{proof}

\begin{remark}[The corresponding virtual structure sheaf]\label{rem:localKexcess}
On the common stack of Theorem~\ref{thm:factorization}, the same split
obstruction morphism gives
\[
 \mathcal O^{\vir}_{X_n}
 =\lambda_{-1}(\mathcal V^\vee)\otimes\mathcal O^{\vir}_{F_n},
 \qquad
 \lambda_{-1}(\mathcal V^\vee)
 =\sum_{j=0}^{m-1}(-1)^j[\Lambda^j\mathcal V^\vee].
\]
Indeed the extra cone coordinate is zero in $\mathcal V$; the Koszul
resolution for its zero section gives the displayed factor in the
alternating Tor construction of the virtual structure sheaf
\cite[Section 2.3, Proposition 1]{Lee}. The subscripts indicate the standard stable-map
obstruction theories for the two targets, not different underlying stacks.
Thus neither virtual classes nor virtual structure sheaves for $F_n$ and
$X_n$ are identified merely from their stack isomorphism.
\end{remark}

For one marking, $\vdim_\C\overline{\mathcal M}_{0,1}(X_n,\beta)=n-2$.
Define, for $0\leq a\leq n-2$,
\begin{equation}\label{eq:descdef}
 N_{a,\beta}=\int_{[\overline{\mathcal M}_{0,1}(X_n,\beta)]^\vir}
 \psi^a\ev_{X_n}^*\varpi^*\delta_a,\quad
 \Dcal_a(\mathbf Q)=\sum_{\beta>0}N_{a,\beta}\mathbf Q^\beta,\quad
 \Dcal_a^\Delta(Q)=\sum_{\beta>0}N_{a,\beta}Q^{|\beta|}.
\end{equation}
The sums defining each coefficient of the last series are finite by
Proposition~\ref{prop:geometry}. On $F_n$ we also define proper evaluation
classes
\begin{equation}\label{eq:Cdef}
 C_{a,\beta}^\lambda=(\ev_{F_n})_*
 \left(\psi^a e_\lambda(\mathcal V)
 \cap[\overline{\mathcal M}_{0,1}(F_n,\beta)]^\vir\right).
\end{equation}
They are polynomial in $\lambda$ and at $\lambda=0$ have complex
codimension $a+1$ in $F_n$. Thus $N_{a,\beta}=\int_{F_n}\delta_a C_{a,\beta}^0$.
The evaluation target in~\eqref{eq:Cdef} is $F_n$, not $X_n$.
Equivariantly $\iota^*\iota_*C=(\lambda-H)C$; overlooking this factor
changes the descendant extraction. 

\section{The ambient twisted mirror calculation}\label{sec:mirror}
\subsection{Virtual comparison before any degenerate limit}
For positive $\beta$, let $M_{Y_n}=\overline{\mathcal M}_{0,1}(Y_n,i_*\beta)$,
$M_{F_n}=\overline{\mathcal M}_{0,1}(F_n,\beta)$, and $j:M_{F_n}\to M_{Y_n}$.
The class identification is unambiguous by Proposition~\ref{prop:geometry}.
Use $E_+,E'_+,E_-$ from Lemma~\ref{lem:basechange} on $M_{Y_n}$.

\medskip
\begin{proposition}\label{prop:virtualcompare}
For every $a\geq0$,
\begin{align}
 j_*\left(e_\lambda(E_-|_{M_{F_n}})[M_{F_n}]^\vir\right)
 &=e(E_+)e_\lambda(E_-)[M_{Y_n}]^\vir,\label{eq:pushvirtual}\\
 i_*C_{a,\beta}^\lambda&=H \mathsf E_{a,\beta}^{0,\lambda},\label{eq:pushC}
\end{align}
where
\[
 \mathsf E_{a,\beta}^{\mu,\lambda}
 =(\ev_{Y_n})_*\left(\psi^a e_\mu(E'_+)e_\lambda(E_-)[M_{Y_n}]^\vir\right).
\]
In particular, for every $b\in H^*(Y_n)$,
\begin{equation}\label{eq:ambientpair}
 \int_{F_n} i^*b\,C_{a,\beta}^\lambda
 =\int_Fi^*b\,i^*\mathsf E_{a,\beta}^{0,\lambda}.
\end{equation}

The first identity is also valid with $M_{F_n},M_{Y_n}$ replaced by the corresponding
stacks with any number $\ell\geq0$ of markings, and with the bundles formed
on those stacks. It remains valid after multiplication by products of
cotangent-line classes and evaluation pullbacks. The evaluation identities~\eqref{eq:pushC}--\eqref{eq:ambientpair} use the one-marked notation above.
\end{proposition}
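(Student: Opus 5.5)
The plan is to prove \eqref{eq:pushvirtual} with the standard functoriality for convex hypersurfaces, due to Kim--Kresch--Pantev: for a hypersurface cut out by a section of a convex line bundle, the virtual class of the hypersurface pushes forward to the Euler class of the corresponding index bundle capped with the ambient virtual class. Here the section is the defining section $\sigma\in H^0(Y_n,\mathcal B_+)$, the line bundle is $\mathcal B_+=\mathcal O_{Y_n}(H)$, and the index bundle is $E_+=\pi_*f^*\mathcal B_+$.

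The steps, in order, are as follows.
\begin{enumerate}
\item By Lemma~\ref{lem:basechange}, $E_+$ is locally free of rank $m+1$ and commutes with base change, on $M_{Y_n}$ and on all $\ell$-marked versions.
\item The section $\sigma$ induces a section $\tilde\sigma$ of $E_+$. Its zero locus is exactly $j(M_{F_n})$, as a substack over arbitrary bases: a family of maps factors through $F_n$ if and only if $f^*\sigma=0$, and $\pi_*$ detects this.
\item The obstruction theories are compatible. The exact sequence $0\to T_{F_n}\to T_{Y_n}|_{F_n}\to\mathcal B_+|_{F_n}\to0$, after $R\pi_*$ and using $R^1\pi_*f^*\mathcal B_+=0$ (Lemma~\ref{lem:tree}), gives a distinguished triangle relating $E_{F_n}$, $j^*E_{Y_n}$ and $E_+$. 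This is compatible with the cotangent complexes.
\item The functoriality of Behrend--Fantechi / Kim--Kresch--Pantev then gives $j_*[M_{F_n}]^\vir=e(E_+)\cap[M_{Y_n}]^\vir$, the refined Gysin pullback along $\tilde\sigma$ followed by pushforward.
\item The factor $e_\lambda(E_-)$ is the restriction of a class defined on $M_{Y_n}$. The bundle $\mathcal V$ on $M_{F_n}$ is $R^1\pi_*f^*\mathcal O(-H)=E_-|_{M_{F_n}}$ by base change. The projection formula therefore moves it through $j_*$.
\item The same projection formula handles multiplication by $\psi$-classes and evaluation pullbacks, which are pulled back along $j$. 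This gives the $\ell$-marked extension stated after the proposition.
\end{enumerate}

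For \eqref{eq:pushC}, apply $(\ev_{Y_n})_*$ to \eqref{eq:pushvirtual} multiplied by $\psi^a$. Since $\ev_{Y_n}\circ j=i\circ\ev_{F_n}$, the left side becomes $i_*C^\lambda_{a,\beta}$. On the right, split $e(E_+)=e(E'_+)\,\ev^*c_1(\mathcal O(H))=e(E'_+)\ev^*H$ using the exact sequence of Lemma~\ref{lem:basechange}. Pulling $\ev^*H$ out of the pushforward by the projection formula yields $H\,\mathsf E^{0,\lambda}_{a,\beta}$.

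Finally, \eqref{eq:ambientpair} follows by pairing with $b$. We have $\int_F i^*b\,C=\int_Y b\,i_*C=\int_Y b\,H\,\mathsf E$, and since $i_*[F_n]=H$ this equals $\int_F i^*b\,i^*\mathsf E$.

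The main obstacle is step 3: verifying the compatibility of the obstruction theories as morphisms, not merely as $K$-classes, including the $\psi$-twisted and equivariant versions. I would handle it by quoting the convex-hypersurface comparison theorem directly. Its hypothesis is genus zero together with $H^1(C,f^*\mathcal B_+)=0$, which is exactly Lemma~\ref{lem:tree}. Equivariance in $\lambda$ is harmless because the torus acts only on the $E_-$ factor, so the argument can run on finite-dimensional approximations as in Theorem~\ref{thm:virtual}.
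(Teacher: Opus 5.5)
Your proposal is correct and follows the paper's proof step for step. Both arguments run through the same chain:
\begin{itemize}
\item $M_{F_n}$ is the zero stack of the induced section of $E_+$;
\item the obstruction theories are compatible, via the normal sequence;
\item $j_*[M_{F_n}]^{\vir}=e(E_+)\cap[M_{Y_n}]^{\vir}$;
\item $j^*E_-=\mathcal V$ and $e(E_+)=\ev^*H\,e(E'_+)$, followed by the projection formula.
\end{itemize}
The only difference is in the citation for the pushforward formula. The paper builds the compatible triple explicitly and applies Manolache's virtual pullback, using Kim--Kresch--Pantev only in Appendix~\ref{app:structural}; you quote Kim--Kresch--Pantev directly. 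If you do that, note that their formula sums over all $\beta'$ with $i_*\beta'=i_*\beta$. The sum collapses to the single class $\beta$ only because $i_*$ is an isomorphism on $H_2$ by Proposition~\ref{prop:geometry}, and your step~2 uses this without saying so.
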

\begin{proof}
We prove the virtual comparison with an arbitrary number $\ell\geq0$ of
markings and then take $\ell=1$ for the evaluation formulas. Write
$M_V^{(\ell)}=\overline{\mathcal M}_{0,\ell}(V,\beta_V)$, where
$\beta_{F_n}=\beta$ and $\beta_{Y_n}=i_*\beta$, and put $m=H\cdot\beta>0$. The letter $V$ is a placeholder for either target and is unrelated to the ambient subspace $V_\amb$.
All Chow groups in the argument have rational coefficients. The equivariant
identities are identities with an indeterminate $\lambda$; no limit of an
equivariant pairing is involved.

\medskip\noindent\textbf{1. Universal bundles and the zero stack.}
On $M_{Y_n}^{(\ell)}$, let $\pi_{Y_n}:\mathcal C_{Y_n}\to M_{Y_n}^{(\ell)}$ and
$f_{Y_n}:\mathcal C_{Y_n}\to Y_n$ be the universal family. By
Lemmas~\ref{lem:tree}--\ref{lem:basechange},
\begin{align*}
 R\pi_{Y_n*}f_{Y_n}^*\mathcal O_{Y_n}(H)&=E_+,
 \qquad R\pi_{Y_n*}f_{Y_n}^*\mathcal O_{Y_n}(-H)=E_-[-1],
 \cr \operatorname{rk}E_+&=m+1,\quad \operatorname{rk}E_-=m-1.
\end{align*}
These are identifications of perfect complexes with arbitrary derived
base change. In applying \cite[Tag~0A1H]{Stacks} on an atlas, the morphism
is the proper flat finitely presented curve family, the perfect object is
the indicated line bundle, and the flat finitely presented complex with
proper support is $\mathcal O_{\mathcal C_{Y_n}}$ in degree zero. Thus the
statement applies to nonreduced and non-Noetherian base changes as well.

Let $s\in H^0(Y_n,\mathcal O_{Y_n}(H))$ define $F_n$. Adjunction gives a section
$\sigma=\pi_{Y_n*}f_{Y_n}^*s$ of $E_+$. For any scheme $S\to M_{Y_n}^{(\ell)}$,
the pullback of $\sigma$ is zero if and only if
$f_S^*s=0$ as a section on $\mathcal C_S$. This is precisely the condition
that $f_S$ factor through the closed subscheme $F_n$ of $Y_n$. The factorization
is unique and is compatible with isomorphisms and base change. Composition
with $i$ contracts exactly the same components, so it preserves stability
and the automorphism group. The class on $F_n$ is uniquely determined by its
pushforward, by Proposition~\ref{prop:geometry}. Consequently
$j:M_{F_n}^{(\ell)}\hookrightarrow M_{Y_n}^{(\ell)}$ is the zero stack of $\sigma$,
as a closed immersion of stacks.

\medskip\noindent\textbf{2. The relative obstruction-theory morphisms.}
Let $\mathfrak B=\mathfrak M_{0,\ell}$ be the smooth Artin stack of connected
prestable marked genus-zero curves. 
One can replace it with a finite-type open substack containing 
the images of the two stable-map stacks, since these images are quasi-compact.
Its dimension is $\ell-3$.
For $V=F_n,Y_n$, the usual relative stable-map obstruction theory is
\[
 \phi_V:\mathbb E_V=(R\pi_{V*}f_V^*T_V)^\vee
       \longrightarrow\mathbb L_{M_V^{(\ell)}/\mathfrak B}.
\]
The relative stable-map obstruction morphism is the construction following
Proposition 4 of~\cite{BehrendGW}; Propositions 2, 4, and 5 give the
prestable base, the stable-map description, and global resolutions,
respectively. We construct its compatibility here.
Here no stabilization of the source is necessary after composition with
$i$. Hence the universal curve over $M_{F_n}^{(\ell)}$ is the actual pullback
of $\mathcal C_{Y_n}$, and derived base change identifies the corresponding
terms in the obstruction theories.

At this point we use the special convexity of the ambient target. Indeed,
$T_{Y_n}=\bigoplus_{r=1}^n\operatorname{pr}_r^*\mathcal O_{\PP^1}(2)$, and each
summand has nonnegative degree on every component of every genus-zero
source. Hence the normalization-sequence argument of Lemma~\ref{lem:tree}
gives $H^1(C,f^*T_{Y_n})=0$. It follows that
$M_{Y_n}^{(\ell)}\to\mathfrak B$ is smooth, with relative tangent bundle
$\pi_{Y_n*}f_{Y_n}^*T_{Y_n}$, and
$\mathbb E_{Y_n}=(\pi_{Y_n*}f_{Y_n}^*T_{Y_n})^\vee=\Omega_{M_{Y_n}^{(\ell)}/\mathfrak B}$.
This convexity statement concerns the ambient target only and does not
assert smoothness of $M_{F_n}^{(\ell)}$.

The normal sequence of the smooth hypersurface,
\[
 0\longrightarrow T_{F_n}\longrightarrow i^*T_{Y_n}
   \xrightarrow{\,ds\,}\mathcal O_{F_n}(H)\longrightarrow0,
\]
gives, after pullback to the universal curve, derived pushforward and
duality, the distinguished triangle
\[
 j^*\mathbb E_{Y_n}\longrightarrow\mathbb E_{F_n}
 \longrightarrow(j^*E_+)^\vee[1]\longrightarrow j^*\mathbb E_{Y_n}[1].
\]
We specify its compatibility with the cotangent triangle. On the zero
stack, the derivative of the universal section in a relative tangent
direction $v\in\pi_*f^*T_{Y_n}$ is $\pi_*(ds(v))$.
Indeed over a square-zero extension, locally writing a lift as
$f+\epsilon v$, the section is
$s(f+\epsilon v)=s(f)+\epsilon ds(v)$; changes of trivialization of
$\mathcal O(H)$ contribute a term proportional to $s(f)$, which vanishes
on $M_{F_n}^{(\ell)}$. Thus this differential is globally well defined and
is exactly the map obtained from the normal sequence.

More explicitly, after a smooth base change from an atlas of $\mathfrak B$
and an étale chart on the resulting smooth ambient algebraic space,
trivialize $E_+$ and write $\sigma=(\sigma_1,\ldots,\sigma_{m+1})$.
For the ideal $I=(\sigma_1,\ldots,\sigma_{m+1})$, the zero-locus
obstruction theory is the morphism
\[
 [\,(j^*E_+)^\vee\xrightarrow{\,d\sigma\,}
                  j^*\Omega_{M_{Y_n}^{(\ell)}/\mathfrak B}\,]
 \longrightarrow \mathbb L_{M_{F_n}^{(\ell)}/\mathfrak B}.
\]
The explicit two-term target on this chart is
\[
 [I/I^2\longrightarrow\Omega_{U/S}|_Z]
       \simeq\tau_{\ge-1}\mathbb L_{Z/S},
\]
where $U\to S$ is the smooth chart just described and $Z=Z(\sigma)$.
It is the truncation, not an identification with the full cotangent
complex unless further lci hypotheses hold. The canonical morphism to
$\mathbb L_{Z/S}$ is supplied by the derived Koszul zero locus; the
following component description is its truncation.
Its degree $-1$ map sends the dual basis to the classes of $\sigma_a$ in
$I/I^2$; its degree zero map is the restriction of relative differentials.
It is an obstruction theory without a regular-sequence hypothesis on
the $\sigma_a$. 
The preceding computation of $d\sigma$ identifies this two-term complex with
$(R\pi_*f^*T_{F_n})^\vee$ and the morphism with $\phi_{F_n}$, since both are
obtained by differentiating the same universal evaluation map subject to the
relation $s\circ f=0$.
These maps and their commutativity
are preserved on overlaps, since they are induced by the universal
evaluation map and $ds$, rather than chosen quasi-isomorphisms.

In particular there is a morphism of distinguished triangles
\[
\begin{array}{ccccc}
 j^*\mathbb E_{Y_n}&\longrightarrow&\mathbb E_{F_n}&\longrightarrow&(j^*E_+)^\vee[1]\\
 \downarrow&&\downarrow&&\downarrow\\
 j^*\mathbb L_{M_{Y_n}^{(\ell)}/\mathfrak B}&\longrightarrow&
 \mathbb L_{M_{F_n}^{(\ell)}/\mathfrak B}&\longrightarrow&
 \mathbb L_{M_{F_n}^{(\ell)}/M_{Y_n}^{(\ell)}}.
\end{array}
\]
This is the compatibility required in
\cite[Definition 4.5]{Manolache}; it is stronger than an equality of
classes in $K$-theory. The relative theory for $j$ is concentrated in
degree $-1$, namely $(j^*E_+)^\vee[1]$.

\medskip\noindent\textbf{3. The virtual cycle and its pushforward.}
The morphisms to $\mathfrak B$ and the closed immersion $j$ are of
Deligne--Mumford type. The stable-map stacks are proper finite-type
Deligne--Mumford stacks over $\C$ and satisfy the quotient-stratification
hypothesis by \cite[Remark 3.2]{Manolache}. The three relative theories
above are perfect of amplitude $[-1,0]$. Thus
\cite[Corollary 4.9]{Manolache}, with the compatible triple
just constructed, applies and gives
\[
 [M_{F_n}^{(\ell)}]^{\vir}=j^!_{E_+}[M_{Y_n}^{(\ell)}]^{\vir}.
\]
The virtual pullback is the one defined by the normal-cone embedding
$C_{M_{F_n}^{(\ell)}/M_{Y_n}^{(\ell)}}\hookrightarrow j^*E_+$ associated with
the section $\sigma$. In particular it is the refined pullback of the
zero section of $E_+$ in the Cartesian zero-locus square. For this
pullback the pushforward identity is
\[
 j_*j^!_{E_+}(\alpha)=c_{m+1}(E_+)\cap\alpha.
\]
One can see the identity by deforming the graph of $\sigma$ to the zero
section through the graphs of $t\sigma$, or by zero-section
self-intersection and compatibility of refined pullback with proper
pushforward. No regularity of $j$ is used. In the present smooth ambient
case the same formula is
$[M_{F_n}^{(\ell)}]^{\vir}=0^!_{j^*E_+}[C_{M_{F_n}^{(\ell)}/M_{Y_n}^{(\ell)}}]$;
this is precisely the zero-section construction of \cite[Section 5]{BF}.
This proves $j_*[M_{F_n}^{(\ell)}]^{\vir}=e(E_+)[M_{Y_n}^{(\ell)}]^{\vir}$.

\medskip\noindent\textbf{4. The negative twist and marked evaluation.}
The universal curves are Cartesian under $j$, and
$i^*\mathcal O_{Y_n}(-H)=K_{F_n}$. Lemma~\ref{lem:basechange} therefore gives the
canonical identification $j^*E_-=R^1\pi_*f^*K_{F_n}=\mathcal V$.
It respects the fiber character of weight $\lambda$. Multiplication by
$e_\lambda(E_-)$ and the projection formula prove~\eqref{eq:pushvirtual}, including its assertion for arbitrary $\ell$.
The cotangent lines and evaluations pull back under $j$, since the
universal marked curve is unchanged. The same comparison consequently
holds with any product of descendants and insertions pulled back from $Y_n$.
Theorem~\ref{thm:virtual} identifies the resulting invariants with those
of $X_n$.

For $\ell=1$, evaluation of $f^*\mathcal O_{Y_n}(H)$ at the marking is
surjective on every fiber and hence over the whole stack. Thus
$0\to E'_+\to E_+\to\ev_{Y_n}^*\mathcal O_{Y_n}(H)\to0$ is an exact sequence
of vector bundles. It implies
$e(E_+)=\ev_{Y_n}^*H\,e(E'_+)$. Push forward~\eqref{eq:pushvirtual}, multiplied
by $\psi^a$, along evaluation, and use
$\ev_{Y_n}\circ j=i\circ\ev_{F_n}$. This proves~\eqref{eq:pushC}.
Finally pairing with $b$ and using
$\int_{Y_n} H b u=\int_{F_n} i^*(bu)$ proves~\eqref{eq:ambientpair}.
Only this numerical pairing is asserted in general; no injectivity of
$i_*$ on the full cohomology of $F_n$ has been used.
\end{proof}

\subsection{The mixed hypergeometric theorem}
For $\mathbf d\in\N^n$, put $D=\sum d_i$ and define
\begin{equation}\label{eq:Imixed}
 I^{\mu,\lambda}(\mathbf x,z)=e^{\sum_{i=1}^n p_i\log x_i/z}
 \sum_{\mathbf d\geq0}\mathbf x^{\mathbf d}
 \frac{\prod_{r=1}^D(\mu+H+rz)
       \prod_{r=0}^{D-1}(\lambda-H-rz)}
      {\prod_{i=1}^n\prod_{r=1}^{d_i}(p_i+rz)^2}.
\end{equation}
Empty products are $1$. The twisting class on the moduli of maps to $Y_n$ is
\[
 e_\mu(R^\bullet\pi_*f^*\mathcal O(H))\,
 e_\lambda^{-1}(R^\bullet\pi_*f^*\mathcal O(-H)).
\]
The pairing is $\int_Yab(\mu+H)/(\lambda-H)$ over $\Q(\mu,\lambda)$.
At positive degree its twisting factor is $e_\mu(E_+)e_\lambda(E_-)$.

\subsection{The quoted mirror-theoretic inputs}\label{sec:mirrorinputs}
Four published mirror-theoretic results are used below in the precise
form needed here. We record their relevant specializations rather than
paraphrasing them, since the hypotheses, normalizations, and product
conventions enter the proof. In our application the target $Y_n$ is a smooth
projective variety with trivial stabilizers, so there is a single sector
and all stabilizer ages vanish. Cone points and
tangent spaces always carry the functorial, dilaton-shifted
formal-neighborhood meaning of \cite[Appendix B]{CCITtwisted}, over a
complete Hausdorff linearly topologized algebra $R$ over the Novikov
ring; $R\{z\}$ denotes series $\sum a_kz^k$ with $a_k\to0$, and
$R_{\rm nilp}$ the topologically nilpotent elements.

\begin{theorem}[Coates--Corti--Iritani--Tseng, {\cite[Corollary 32, with the data of Theorem 31]{CCITtoric}}]
\label{thm:CCITtoric}
Let $\boldsymbol\Sigma=(N,\Sigma,\rho)$ be a stacky fan defining a smooth
toric Deligne--Mumford stack $\mathcal X$ with projective coarse moduli
space, and let $S$ be a finite set with a map to $N_\Sigma$. Then the
$S$-extended nonequivariant $I$-function is a $\Lambda_{\rm nov}[[x,t]]$-valued
point of the nonequivariant genus-zero Givental cone of $\mathcal X$, at
$-z$ in the cone convention.
\end{theorem}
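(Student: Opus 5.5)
The statement is quoted from \cite{CCITtoric}, so the plan is to reduce it to \cite[Theorem 31]{CCITtoric}, their equivariant mirror theorem, rather than to reprove it. The step that needs care is the nonequivariant specialization. I would first fix notation. The torus $\mathbb T=(\C^*)^{n}$ acts on $\mathcal X$ through the fan, and $\lambda_\bullet$ denote the equivariant parameters. The $S$-extended equivariant $I$-function $I^{\mathbb T}(x,t,z)$ is the explicit hypergeometric sum over the extended degrees $\Lambda E^S$. Its summands are ratios of products $\prod(u_j+kz)$ over the toric divisor classes $u_j$ (equivariant lifts), twisted by the sector shifts $\mathbf 1_{v(\lambda)}$. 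Theorem~31 asserts that $I^{\mathbb T}(x,t,-z)$ is a $\Lambda_{\rm nov}^{\mathbb T}[[x,t]]$-valued point of the equivariant cone $\mathcal L^{\mathbb T}_{\mathcal X}$, in the formal-neighbourhood sense of \cite[Appendix B]{CCITtwisted}.

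Next I would check that $I^{\mathbb T}$ has a nonequivariant limit. Because the coarse moduli space is projective, the equivariant cohomology $H^*_{\mathbb T,\mathrm{CR}}(\mathcal X)$ is a free $H^*_{\mathbb T}(\mathrm{pt})$-module. In each Novikov/$x$-degree, the summands of $I^{\mathbb T}$ are finite products of the classes $u_j$ and of $z$. The factors $(u_j+kz)^{-1}$ are to be read as $z^{-1}k^{-1}\sum_r(-u_j/kz)^r$; this series truncates by nilpotence of $u_j$ in the nonequivariant ring, but equivariantly it only terminates after cancellation. So I would show that in each degree the summand is a polynomial in the $\lambda$'s. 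Each negative-$k$ factor in a numerator can be paired with the vanishing of products of divisors $u_j$ over noncones, via the Stanley--Reisner relations. This is the standard argument and is where projectivity enters. Setting $\lambda=0$ then gives the nonequivariant $I$-function.

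The main obstacle is passing the cone membership itself to the limit. I would use the characterization of $\mathcal L$ as the graph of $d\mathcal F_0$ over the dilaton-shifted $\mathcal H_+$. A point $f$ lies on $\mathcal L^{\mathbb T}$ if and only if $f=-z+t(z)+\sum\langle\!\langle\phi_\alpha/(-z-\psi)\rangle\!\rangle_{t(\psi)}\phi^\alpha$, where $t=[f]_+$. Here the equivariant descendant correlators are polynomial in $\lambda$, since the moduli stacks are proper and the virtual classes are equivariant. Their $\lambda=0$ specializations are the nonequivariant correlators. Applying this identity to $f=I^{\mathbb T}(-z)$ and specializing coefficientwise at $\lambda=0$ yields the nonequivariant identity for $I(-z)$. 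Topological convergence in $R\{z\}$ and nilpotence of $t$ are preserved because specialization is a continuous ring map $\Lambda^{\mathbb T}_{\rm nov}[[x,t]]\to\Lambda_{\rm nov}[[x,t]]$. The one delicate point is the pairing: the Poincar\'e pairing is nondegenerate over $H^*_{\mathbb T}(\mathrm{pt})$ and specializes to the nondegenerate nonequivariant pairing. Hence the dual bases $\phi^\alpha$ specialize correctly, by freeness again.

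Finally I would record the translation to our use. Here $\mathcal X=Y_n=(\PP^1)^n$ is a variety with trivial stabilizers, so there is a single sector and all ages vanish. We take $S=\varnothing$ or $S$ arbitrary, which does not matter since we then set $t=0$. The ``$-z$'' convention matches the dilaton shift used in \eqref{eq:Imixed}.
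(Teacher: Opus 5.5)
Your argument is correct in substance, but it takes a different route from the paper. The paper does not prove this statement at all: it quotes Corollary~32 of \cite{CCITtoric} as an external input and only checks the translation for $Y_n$. Projectivity gives the semi-projectivity hypothesis, there is one sector, and $S=\varnothing$, so the $I$-function collapses to \eqref{eq:toricJ}; its positive part $z\mathbf 1+\sum_ip_it_i$ then identifies it with the small $J$-function by uniqueness of the graph section. You instead start from the equivariant Theorem~31 and re-derive the corollary by taking the nonequivariant limit of the graph equations. This makes explicit where projectivity is used, at the cost of carrying out the specialization yourself. Your third paragraph does that correctly. Properness of the moduli stacks puts the equivariant correlators in $H^*_{\mathbb T}(\mathrm{pt})$ rather than in its localization. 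The pairing is perfect, not merely nondegenerate, over $H^*_{\mathbb T}(\mathrm{pt})$: its Gram determinant is homogeneous of degree zero with nonzero specialization, which is what makes your dual bases specialize. Finally, $\lambda\mapsto0$ commutes with taking $[f]_+$. (A minor slip: the input in the graph equation is $t(z)=[f]_++z$, not $[f]_+$.)

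Two corrections are needed.

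\emph{The second paragraph misidentifies the difficulty.} Equivariantly, the expansion of $(u_j+kz)^{-1}$ does not terminate, even after cancellation. On $\PP^1$, $u_1u_2=0$ and $u_1-u_2=\lambda$ give $u_1^2=\lambda u_1$, hence $u_1^r=\lambda^{r-1}u_1\neq0$ for all $r$. No termination is needed, though. In each summand the coefficient of each power of $z^{-1}$ is already a polynomial in the $u_j$, so it lies in $H^*_{\mathbb T}(\mathcal X)$, and the limit is taken coefficientwise. The Stanley--Reisner relations only explain why extended degrees whose negative-integer coordinates do not span a cone contribute zero; that is how $\Lambda E^S$ arises. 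They play no role in the existence of the limit. Projectivity enters only in the cone-level argument of your third paragraph.

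\emph{In the application you must keep the divisor variables.} Taking $S=\varnothing$ removes the extended variables $x$. The $t_i$, however, are retained in \eqref{eq:toricJ}, and the proof of Theorem~\ref{thm:mirror} differentiates in them through the operators $\nabla_j$ and the divisor equation. So ``set $t=0$'' would discard exactly the family the paper uses.
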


Projectivity implies the semi-projectivity condition of the source, and
completion is in curve degree and input variables. In our application
$\mathcal X=Y_n$ has trivial stabilizers and $S=\varnothing$, so the
twisted-sector and extended-degree data of \cite[Definition 23 and
equation (19)]{CCITtoric} are absent and the general $I$-function reduces to
\begin{equation}\label{eq:toricJ}
 ze^{\sum_{i=1}^n p_it_i/z}\sum_{\mathbf d\geq0}\mathbf q^{\mathbf d}
 e^{\sum_{i=1}^n d_it_i}\prod_{i=1}^n\prod_{r=1}^{d_i}(p_i+rz)^{-2},
\end{equation}
each $\PP^1$ factor contributing two equal divisor classes $p_i$ and
effective degrees being nonnegative tuples. Its positive part is
$z\mathbf1+\sum_{i=1}^np_it_i$, so uniqueness of the cone graph section makes it
the small $J$-function. This conclusion requires no positivity assumption
on $T_{F_n}$.

For the next two statements let $B=\bigoplus_{j=1}^rB_j$ be a sum of line
bundles on $Y_n$, $\rho_j=c_1(B_j)$, and
$c=\exp\bigl(\sum_{k\geq0}s_k\ch_k\bigr)$ with $s_{-1}=0$; the valuation of
$s_k$ is $k+1$, and $B_b(x)$ denotes the Bernoulli polynomial.

\begin{theorem}[Coates--Corti--Iritani--Tseng, {\cite[Theorem 4.1 and Remark 4.2]{CCITtwisted}}]
\label{thm:CCITqrr}
The operator
\[
 \Delta=\prod_{j=1}^r\exp\left(
 \sum_{a,b\geq0}s_{a+b-1}\frac{B_b(0)}{b!}\frac{\rho_j^a}{a!}z^{b-1}\right)
\]
is a symplectic isomorphism from the untwisted to the $c$-twisted loop
space, and $\mathcal L^{c,B}=\Delta\,\mathcal L^{\rm un}$.
\end{theorem}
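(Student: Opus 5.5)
Since Theorem~\ref{thm:CCITtoric} and the present statement are quoted results, the plan is not to reprove quantum Riemann--Roch from scratch. Instead I would (i) check that the hypotheses of \cite[Theorem 4.1]{CCITtwisted} hold for $Y_n$ and $B=\mathcal B_+\oplus\mathcal B_-$ (or any sum of line bundles), (ii) translate the normalizations, and (iii) recall the structure of the argument so that the specialization used later is transparent.

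\textbf{Specialization.} For step (i), $Y_n=(\PP^1)^n$ is a smooth projective variety with trivial stabilizers. There is therefore a single untwisted sector, all ages vanish, and the Bernoulli polynomials $B_b(x)$ of the orbifold statement are evaluated at $x=0$, giving $B_b(0)$. This is the specialization recorded in \cite[Remark 4.2]{CCITtwisted}, and it reduces their operator to the Coates--Givental form displayed above. The characteristic class $c=\exp(\sum_k s_k\ch_k)$ with valuations $k+1$ is exactly their general multiplicative class. The two cases used later are $e_\mu$ of $\mathcal O(H)$ and $e_\lambda^{-1}$ of $\mathcal O(-H)$. They are obtained by the standard choices $s_k=(-1)^{k-1}(k-1)!\,\mu^{-k}$ for $k>0$, $s_0=\log\mu$, and the analogous choice with opposite sign and $\lambda$. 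I would record these so the later hypergeometric modification can be read off directly. The loop spaces are taken in the formal-neighborhood sense of \cite[Appendix B]{CCITtwisted}, as fixed in the surrounding text, so the identity $\mathcal L^{c,B}=\Delta\mathcal L^{\rm un}$ is an equality of formal germs over the completed coefficient ring.

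\textbf{Structure of the proof.} For step (iii), the argument is differential in the parameters $s_k$.
\begin{enumerate}
\item Compute $\partial_{s_k}$ of the twisted genus-zero descendant potential. This inserts $\ch_k(R\pi_*f^*B)$.
\item Apply Grothendieck--Riemann--Roch on the universal curve, which expresses that class through three contributions: $\pi_*$ of $\ch(f^*B)\,\mathrm{Td}^\vee$, the $\psi$-classes at the markings (producing the $B_b(0)z^{b-1}$ terms), and the nodal boundary locus.
\item Convert this into a second-order differential operator acting on the potential. The boundary term is handled through the splitting axiom and the Bernoulli coefficients.
\item Show that conjugation by $\Delta$ solves the same linear equation, with the same initial condition at $s=0$.
\end{enumerate}

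\textbf{Symplecticity.} I would verify this directly. The exponent is a multiplication operator by a series in $z$ and $\rho_j$. The symplectic condition for the twisted pairing $\int ab\,c(B)$ reduces to the exponent's odd-in-$z$ part being compatible with the change of pairing. This holds because $B_b(0)=0$ for odd $b\ge3$, and the $b=1$ term, which is $z^0$, is exactly $\tfrac12\log c$ and accounts for the modified pairing.

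\textbf{Main obstacle.} I expect the hardest step to be the boundary (nodal) contribution in the GRR computation. Matching it with the quadratic term in the Hodge-type equation requires the genus-zero splitting formalism and careful bookkeeping of the factor $\tfrac12$ and the $\psi+\psi'$ denominator. In our application this is imported from \cite{CCITtwisted} rather than redone; the checking effort goes instead into confirming that no stacky or extended-degree corrections survive for $Y_n$.
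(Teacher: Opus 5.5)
Your approach is the paper's. The statement is imported from \cite{CCITtwisted}, and for $Y_n$ the only checks are that there is one sector with all ages zero (so the Bernoulli polynomials are evaluated at $0$) and that the twisted pairing is $\int ab\,c(B)$, i.e.\ it varies with $c$. Your Coates--Givental sketch is the argument behind the citation and is not needed beyond that. Two things in your write-up should nevertheless be corrected.

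\textbf{The symplecticity check.} It has the wrong sign and the wrong parity. With $E(z)=\log\Delta$,
\[
 E(z)+E(-z)=\sum_{j}\sum_{a,b\geq0}s_{a+b-1}\frac{B_b(0)}{b!}\frac{\rho_j^a}{a!}\bigl(z^{b-1}+(-z)^{b-1}\bigr)
 =2B_1(0)\sum_j\sum_{a\geq0}s_a\frac{\rho_j^a}{a!}=-\log c(B),
\]
because only even powers of $z$ (odd $b$) survive and $B_b(0)=0$ for odd $b\geq3$.

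So the relevant part of the exponent is the even-in-$z$ part, not the odd part. Its $z^0$ term is $-\tfrac12\log c(B)$, since $B_1(0)=-\tfrac12$. The resulting identity $\Delta(z)\Delta(-z)\,c(B)=1$ is exactly the condition that multiplication by $\Delta$ carry $\operatorname{Res}_{z=0}\int f(-z)g(z)\,dz$ to $\operatorname{Res}_{z=0}\int f(-z)g(z)\,c(B)\,dz$. With your $+\tfrac12\log c$, the operator would instead be symplectic onto the pairing $\int ab\,c(B)^{-1}$.

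This $z^0$ term is what the paper's reference to Remark~4.2 concerns; the paper cites that remark for the pairing convention, not for the vanishing of ages. Identifying the loop spaces by $\sqrt{c(B)}$ instead would absorb this term and change the displayed operator.

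\textbf{The scope of the statement.} Do not present it as covering the twist used later. It concerns a single class $c$ on $B$. Taking $B=\mathcal B_+\oplus\mathcal B_-$ with one $c$ is allowed, but $e_\mu$ on $\mathcal B_+$ together with $e_\lambda^{-1}$ on $\mathcal B_-$ is a two-class twist. The paper deliberately does not attribute that to \cite{CCITtwisted} and treats it in Steps~2--4 of the proof of Theorem~\ref{thm:mirror}.

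Likewise, $s_0=\log\mu$ and $s_k=(-1)^{k-1}(k-1)!\mu^{-k}$ do not have valuation $k+1$, so they lie outside the formal setting of the statement. The paper justifies that specialization separately, as a rational-function identity, in Step~5. Neither point affects the statement itself, but the later hypergeometric modification cannot simply be read off from it.
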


The pairings vary with $c$, as in Remark 4.2 of the source; identifying them
by $\sqrt{c(IB)}$ instead would change the displayed operator. The general
statement carries a stabilizer age $f_{ij}\in[0,1)\cap\Q$ in place of the
argument $0$ of the Bernoulli polynomial, and is sectorwise over
$I\mathcal X$; for $Y_n$ there is one sector and all ages vanish. The
independent characteristic classes used below are handled by commuting
exponents and a proved induction, not by attributing a two-class statement
to a theorem stated for one class.

For a topological type $\mathfrak t=(0,d,(i_1,\ldots,i_m))$ let $J_\mathfrak t$
be the corresponding part of the untwisted $J$-function, the last marking
being the output. With $N_{\mathfrak t,j}=\rho_j\cdot d\in\Z$ and
$s(w)=\sum_{k\geq0}s_kw^k/k!$, set
\[
 M_\mathfrak t(z)=\prod_{j=1}^r
 \frac{\prod_{a\leq N_{\mathfrak t,j}}\exp s(\rho_j+az)}
      {\prod_{a\leq0}\exp s(\rho_j+az)},\qquad
 I^{c,B}(\tau,z)=\sum_{\mathfrak t} M_{\mathfrak t}(z)J_{\mathfrak t}(\tau,z),
\]
the quotients cancelling tails to give finite products or inverse products.

\begin{theorem}[Coates--Corti--Iritani--Tseng, {\cite[Theorem 4.6]{CCITtwisted}}]
\label{thm:CCIThyper}
The family $\tau\mapsto I^{c,B}(\tau,-z)$ is a point of $\mathcal L^{c,B}$
over $\Lambda[[s_0,s_1,\ldots]][[\tau]]$.
\end{theorem}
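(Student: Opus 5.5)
Since this is the hypergeometric modification theorem of~\cite{CCITtwisted}, quoted in the specialization used here, the plan is to reproduce its derivation from Theorem~\ref{thm:CCITqrr}, with only one sector and vanishing ages. The idea is to write the modification factor $M_{\mathfrak t}(z)$ as a ratio of values of a single generating function. One evaluation absorbs the operator $\Delta$. The other becomes a $z$-differential operator in $\tau$, and such operators preserve the untwisted cone.

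\textbf{Step 1 (a difference equation).} I would introduce the formal series
\[
 \mathsf G(x,z)=\sum_{a,b\ge0}s_{a+b-1}\frac{B_b(0)}{b!}\frac{x^a}{a!}z^{b-1},
\]
which lives over $\Lambda[[s_0,s_1,\ldots]]$; the condition $s_{-1}=0$ removes the only problematic term. The generating identity for Bernoulli polynomials gives the difference equation
\[
 \mathsf G(x,z)-\mathsf G(x-z,z)=s(x).
\]
Telescoping then gives, for every integer $N$,
\[
 M_{\mathfrak t}(z)=\prod_j\exp\bigl(\mathsf G(\rho_j+N_{\mathfrak t,j}z,z)-\mathsf G(\rho_j,z)\bigr).
\]
The quotient convention in the definition of $M_{\mathfrak t}$ is what makes the same formula hold for negative $N$. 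By construction, $\Delta=\prod_j\exp\mathsf G(\rho_j,z)$.

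\textbf{Step 2 (degree shifts as operators).} Write $\tau=\sum\tau_ip_i+\cdots$. By the divisor equation, $(\rho_j+z\,\rho_j\!\cdot\!\partial_\tau)J_{\mathfrak t}=(\rho_j+N_{\mathfrak t,j}z)J_{\mathfrak t}$ on each topological type. It follows that
\[
 I^{c,B}(\tau,z)=\prod_j e^{-\mathsf G(\rho_j,z)}\,
 \exp\Bigl(\sum_j\mathsf G(\rho_j+z\rho_j\!\cdot\!\partial_\tau,z)\Bigr)J(\tau,z).
\]
By Step 1, the product of operators in front is exactly $\Delta^{-1}$. Hence it suffices to show two things. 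First, $\mathcal D\,J(\tau,-z)$ lies on $\mathcal L^{\rm un}$, where $\mathcal D=\exp\sum_j\mathsf G(\rho_j-z\rho_j\!\cdot\!\partial_\tau,-z)$. Second, one can then apply $\Delta$ and invoke Theorem~\ref{thm:CCITqrr}. When doing this, I would check the sign $z\mapsto-z$ against the cone convention of Theorem~\ref{thm:CCITqrr}.

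\textbf{Step 3 (stability of the cone).} I would use the standard structure of $\mathcal L^{\rm un}$ from \cite[Appendix B]{CCITtwisted}. Each tangent space $T$ satisfies $zT\subset\mathcal L\cap T$, and $T$ is closed under $z\partial_{\tau}$. Consequently, for an operator of the form $\exp\bigl(z^{-1}F(z\partial_\tau,z)\bigr)$, where $F$ is a power series with coefficients of positive valuation, the image of the cone family stays on the cone. I would prove this by a one-parameter deformation in $\varepsilon$: differentiating $\mathcal D_\varepsilon J$ gives a vector of the form $z^{-1}F(z\partial,z)\mathcal D_\varepsilon J$. This vector lies in the tangent space at $\mathcal D_\varepsilon J$, because $z^{-1}\cdot z\partial$ maps the family into its own tangent space. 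Since $s_k$ has valuation $k+1$, the adic topology makes the resulting ODE solvable order by order, so $\mathcal D_\varepsilon J$ stays on the cone.

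\textbf{Main obstacle.} The delicate step is Step 3. The terms of $\mathsf G$ with $b=0$ carry $z^{-1}$ with no compensating $z\partial$. These are exactly the terms the argument has to absorb. They are handled by splitting off $\Delta$ in Step 2, since that is why $\Delta$ appears with the factor $e^{-\mathsf G(\rho_j,z)}$. I would therefore verify carefully that the remaining exponent is divisible by $z\partial_\tau$ in each term, up to positive powers of $z$. This holds because $\mathsf G(\rho+zu,z)-\mathsf G(\rho,z)$ vanishes at $u=0$, so in each term $u=\rho\cdot\partial_\tau$ appears paired with $z$. The remaining work is checking that this manipulation is legitimate over $\Lambda[[s_0,s_1,\ldots]][[\tau]]$.
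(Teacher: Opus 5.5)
Your architecture is the one the paper itself follows when it re-derives this statement for two classes in Steps 2--4 of the proof of Theorem~\ref{thm:mirror}: Bernoulli telescoping, the divisor equation, splitting off $\Delta$, then tangency and an induction in the $s_k$. However, the bookkeeping in Steps 1--2 is wrong, and the error falls on exactly the operator Step 3 has to handle.

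\begin{itemize}
\item With $B_b(0)$ the identity is $\mathsf G(x+z,z)-\mathsf G(x,z)=s(x)$. Your version $\mathsf G(x,z)-\mathsf G(x-z,z)=s(x)$ already fails for $s_1$ alone: there $\mathsf G=s_1\bigl(\tfrac{x^2}{2z}-\tfrac x2+\tfrac z{12}\bigr)$, and your difference equals $s_1(x-z)$.
\item On the degree-$d$ part the divisor equation is $z\nabla_jJ_d=(\rho_j+N_jz)J_d$, with $\nabla_j=\rho_j\!\cdot\!\partial_\tau$ and $N_j=\rho_j\cdot d$. The cup product $\rho_j$ comes from the output marking, so there is no extra $\rho_j$ on the left.
\item The Bernoulli reflection $B_m(-u)=(-1)^mB_m(u+1)$ gives $\mathsf G(x,-z)=-\mathsf G(x+z,z)$. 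So your prefactor $\prod_je^{-\mathsf G(\rho_j,-z)}$ equals $\Delta\prod_je^{s(\rho_j)}$, which is neither $\Delta$ nor $\Delta^{-1}$.
\end{itemize}
The correct computation is $M_d(-z)=\prod_j\exp\{\mathsf G(\rho_j,z)-\mathsf G(\rho_j-N_jz,z)\}$. Hence
\[
 I^{c,B}(\tau,-z)=\Delta\,\exp\Bigl\{-\sum_j\mathsf G(-z\nabla_j,z)\Bigr\}J(\tau,-z),
\]
with $\Delta=\Delta(z)$ exactly as in Theorem~\ref{thm:CCITqrr}.

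This is not cosmetic. Your $\mathcal D=\exp\sum_j\mathsf G(\rho_j-z\nabla_j,-z)$ contains the terms $-s_{a-1}\rho_j^a/(a!\,z)$, which are cup products times $z^{-1}$. These are not tangent directions: Proposition~\ref{prop:CCITcone} makes tangent spaces $R\{z\}$-modules stable under $\xi\mapsto z\partial_\tau\xi$, not under cup product. Your remedy does not remove them. Expanding $\mathsf G(\rho+zu,z)-\mathsf G(\rho,z)$ still leaves monomials $z^b\rho^{a-k}u(zu)^{k-1}$ with cup-product factors. Moreover, in your own Step 2 that difference is the entire $J\mapsto I^{c,B}$ operator, with $\Delta$ put back in.

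The missing point is that the divisor equation absorbs $\rho_j$ completely into $-z\nabla_j$, so the exponent in the display involves only $z$ and $z\nabla_j$. Because $s_{-1}=0$, each monomial is a multiple of $z^{b-1}(z\nabla_j)^a$ with $a+b\geq1$. That is $z^b(z\nabla_j)^{a-1}\nabla_j$ if $a\geq1$, and $z^{b-1}$ with $b\geq1$ if $a=0$. Both are tangent along a cone-valued family by Proposition~\ref{prop:CCITcone}. So the $b=0$ terms need no absorption at all. Your Step 3 deformation argument, which is essentially the paper's induction on total $s$-degree, then applies to this operator. Theorem~\ref{thm:CCITqrr} finishes the proof, giving membership in $\mathcal L^{c,B}=\Delta\mathcal L^{\rm un}$.
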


No convexity of the individual $B_j$ is assumed. The bundles used here are
actual line bundles on the smooth projective variety $Y_n$, and the proof
below supplies the extension to two independent characteristic classes.

\begin{proposition}[Coates--Corti--Iritani--Tseng, {\cite[Lemma B.1, Propositions B.2--B.3, Corollary B.7]{CCITtwisted}}]
\label{prop:CCITcone}
\begin{enumerate}[label=(\roman*),leftmargin=*]
\item If $I(t)\in\mathcal L^{\rm tw}(R[[t]])$ then
 $I'(t)\in T_{I(t)}\mathcal L^{\rm tw}(R[[t]])$.
\item For the untwisted cone, $f\in\mathcal L(R)$ and $x\in R_{\rm nilp}$
 imply $(1+x)f\in\mathcal L(R)$.
\item $T_f\mathcal L(R)$ is an $R\{z\}$-submodule of the ambient tangent space.
\item If $I(t)\in\mathcal L(R[[t]])$ and $\xi(t)$ is tangent at $I(t)$, then
 $z\xi'(t)$ is tangent at $I(t)$.
\end{enumerate}
\end{proposition}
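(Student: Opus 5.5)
Proposition~\ref{prop:CCITcone} is quoted, so the plan is to check that each item follows from the formal-neighbourhood framework of \cite[Appendix B]{CCITtwisted} for our target, rather than to re-derive Givental's theory. The only geometric input is that the untwisted genus-zero descendant potential of $Y_n$ (one sector, no ages) satisfies the string equation, the dilaton equation and the topological recursion relations. From these, the untwisted cone $\mathcal L$ is a Lagrangian cone in the functorial sense over complete linearly topologized algebras $R$. Its tangent spaces $T$ satisfy $zT=\mathcal L\cap T$ and $T_f\mathcal L=T$ for all $f\in zT$. The twisted cone is $\mathcal L^{\rm tw}=\Delta\mathcal L$ by Theorem~\ref{thm:CCITqrr}. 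Since $\Delta$ is a $z$-linear symplectic isomorphism, it transports cone points, tangent spaces and their $R\{z\}$-module structure. So every item reduces to a statement about the untwisted cone, or about an arbitrary cone with these properties.

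First I fix the definitions. An $R$-point of $\mathcal L$ is a point $-z+\mathbf t(z)+\sum\langle\cdots\rangle$ of the graph of $d\mathcal F_0$ with $\mathbf t\in R_{\rm nilp}$-valued input. The tangent space $T_f\mathcal L(R)$ is the set of $\xi$ such that $f+\varepsilon\xi$ is an $R[\varepsilon]/(\varepsilon^2)$-point. Item (i) is then immediate. The Taylor map $R[[t]]\to R[[t]][\varepsilon]$, $t\mapsto t+\varepsilon$, is a continuous homomorphism, and functoriality sends $I(t)$ to $I(t+\varepsilon)=I(t)+\varepsilon I'(t)$, which is an $R[[t]][\varepsilon]$-point.

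Item (ii) is the cone property, i.e.\ the dilaton equation. I would verify that $\mathcal F_0$ is homogeneous of degree two in the dilaton-shifted variable $\mathbf q=\mathbf t-z$, so that the graph is stable under $\mathbf q\mapsto(1+x)\mathbf q$. Here $(1+x)$ must be invertible and the rescaled input must still be topologically nilpotent, which is exactly the condition $x\in R_{\rm nilp}$.

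Item (iii) is the main obstacle, because it is where TRR enters. I would prove that the tangent space at a point with input $\mathbf t$ is $S_{\mathbf t}(z)^{-1}\mathcal H_+\otimes R$, using the genus-zero TRR and the string equation. Here $S_{\mathbf t}$ is the fundamental solution, which is $R\{z\}$-linear. This form is visibly an $R\{z\}$-submodule. The point requiring care is convergence: in $R\{z\}$ one needs $a_kz^k$ to act continuously on $R$-valued Laurent series in $z^{-1}$. This holds because $a_k\to0$ and the topology is complete Hausdorff.

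Item (iv) then follows from (i) and (iii) together with the identity $zT_{I}=T_I\cap\mathcal L$. Since $\xi(t)\in T_{I(t)}$ and the tangent spaces depend on $t$ only through $S_{\mathbf t(t)}$, differentiating $\xi=S^{-1}h$ gives $\xi'=S^{-1}h'+(S^{-1})'h$. The first term is tangent. For the second, the derivative of $S^{-1}$ along $t$ equals $z^{-1}$ times quantum multiplication by $\partial_t\mathbf t$, using the quantum differential equation. Multiplying by $z$ therefore lands both terms in $S^{-1}\mathcal H_+$, so $z\xi'$ is tangent.

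Finally, the $\Delta$-transport gives all four items for $\mathcal L^{\rm tw}$ as well; (i) is the one actually stated there.
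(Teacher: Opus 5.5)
The paper gives no argument of its own here. The proposition is imported verbatim from \cite[Appendix B]{CCITtwisted} and used as a black box in Step~4 of the proof of Theorem~\ref{thm:mirror}.

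You instead reconstruct the four items from Givental's structure theory of the untwisted cone:
(i) functoriality under the Taylor homomorphism $R[[t]]\to R[[t]][\varepsilon]$;
(ii) degree-two homogeneity of the genus-zero potential in the dilaton-shifted variable;
(iii) the description $T_f\mathcal L=S_{\tau(f)}(z)^{-1}\mathcal H_+$, derived from string and TRR;
(iv) the quantum differential equation.

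This is correct in outline and is the standard route. It buys transparency: it shows that only the string, dilaton and TRR identities for $Y_n$ enter. It also shows that (i) needs no geometry at all, so the $\Delta$-transport you invoke is superfluous for it. Functoriality applies directly to $\mathcal L^{\rm tw}$, and $\Delta$ is only available while the characteristic parameters are formal.

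The citation buys the careful formal-topological treatment over an arbitrary complete linearly topologized $R$. In your sketch, the identification of $T_f$ with $S_\tau(z)^{-1}\mathcal H_+$ at a point with arbitrary descendant input is announced rather than proved. That identification carries the whole weight of (iii) and (iv).

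In (iv), two things should be made precise.

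First, the fundamental solution depends on $t$ only through the primary point $\tau(t)$ attached to $I(t)$. What preserves $\mathcal H_+$ is quantum multiplication by $\partial_t\tau$, not by the descendant vector $\partial_t\mathbf t$. TRR is what converts a derivative along $\partial_t\mathbf t$ into one along $\partial_t\tau$.

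Second, the quantum product must sit on the right. Inverting $z\,\partial_vS_\tau=(v\bullet_\tau)\circ S_\tau$ gives $z\,\partial_t\bigl(S_{\tau(t)}^{-1}\bigr)=-S_{\tau(t)}^{-1}\circ(\partial_t\tau\bullet_{\tau(t)})$, up to a convention-dependent sign. Exactly this placement puts $z(\partial_tS^{-1})h$ into $S^{-1}\mathcal H_+$.

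With that formula, your argument for (iv) uses only the description of $T$ from (iii) and the QDE. Contrary to what you announce, it does not need (i) or $zT=T\cap\mathcal L$.
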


Item (ii), applied over dual numbers, implies that $f$ itself is tangent.
Item (iv) does not permit arbitrary division by $z$; the proof below checks
precisely the potentially negative power. Finite-parameter quotients and
their inverse limits use coefficient rings of the kind just specified.

The small toric $J$-function of $Y_n$ is therefore the untwisted denominator
series appearing in~\eqref{eq:Imixed}, multiplied by $z$, and hypergeometric
modification of a $J$-family by an invertible multiplicative characteristic
class of a sum of line bundles lies on the corresponding twisted genus-zero
cone. The proof of Theorem~\ref{thm:mirror} supplies the additional
independent-characteristic-class step; it is not hidden inside the statement
of any theorem quoted above.

\subsection{The mixed hypergeometric theorem, continued}

Define
\[
 g(\mathbf x)=\sum_{\mathbf d\ne0}
 \frac{(-1)^D D!(D-1)!}{\prod_{i=1}^n (d_i!)^2}\mathbf x^{\mathbf d},
 \quad Q_i=x_i e^{g(\mathbf x)},\quad
 \tau=\sum_{i=1}^n  p_i\log Q_i-\lambda g(\mathbf x).
\]
\begin{theorem}[Mixed mirror theorem]\label{thm:mirror}
The series $-zI^{\mu,\lambda}(\mathbf x,-z)$ lies on the mixed-twisted
Givental cone. In the convention $J=z\mathbf1+\tau+O(z^{-1})$,
\[
 zI^{\mu,\lambda}=J_{Y_n}^{\mu,\lambda}(\tau,z).
\]
Here Novikov and divisor variables are combined by the divisor equation,
as specified in the proof. Consequently the normalized zero-input series is
\begin{equation}\label{eq:Bdef}
 B^{\mu,\lambda}=e^{-\sum_{i=1}^n  p_i\log Q_i/z}e^{\lambda g/z}
 I^{\mu,\lambda}
 =1+(\lambda-H)\sum_{\beta>0,a\geq0}
 \mathbf Q^\beta z^{-a-2}\mathsf E_{a,\beta}^{\mu,\lambda}.
\end{equation}
\end{theorem}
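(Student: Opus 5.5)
The plan is to obtain the mixed twisted point in three moves: start from the untwisted toric $J$-function of $Y_n$, apply the Coates--Corti--Iritani--Tseng hypergeometric modification twice (once for each characteristic class), and then normalize by the mirror map.

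\textbf{Step 1: untwisted input.} By Theorem~\ref{thm:CCITtoric} with $S=\varnothing$, the series~\eqref{eq:toricJ} is the small $J$-function of $Y_n$. Substituting $\mathbf q^{\mathbf d}e^{\sum d_it_i}\mapsto\mathbf x^{\mathbf d}$ via the divisor equation identifies it with $z$ times the denominator series of~\eqref{eq:Imixed}.

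\textbf{Step 2: the two twists.} Take $B=B_+\oplus B_-$, with $B_\pm=\mathcal O_{Y_n}(\pm H)$ and $\rho_\pm=\pm H$. Use one set of parameters $s^{+}_k$ for the $\mu$-equivariant Euler class of $B_+$ and an independent set $s^-_k$ for the inverse $\lambda$-equivariant Euler class of $B_-$.
\begin{itemize}
\item Theorem~\ref{thm:CCITqrr} is stated for a single class $c$. Because the exponents attached to $B_+$ and $B_-$ are sums of commuting terms in $\rho_j$ and $z$, the operator factors as $\Delta=\Delta_+\Delta_-$.
\item Apply the quantum Riemann--Roch argument once for $B_-$, producing the $s^-$-twisted cone over $\Lambda[[s^-]]$. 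Then apply it again, treating that theory as the new untwisted theory and twisting by $B_+$.
\item The induction in the proof of Theorem~\ref{thm:CCIThyper} only needs the cone to satisfy Proposition~\ref{prop:CCITcone}(i)--(iv). Those properties hold for twisted cones by the same appendix, so this second application is legitimate.
\item The resulting modification factor is $M_{\mathfrak t}$ computed for both summands. Specializing $s^{+}_k$ to the $e_\mu$ values and $s^-_k$ to the $e_\lambda^{-1}$ values gives the finite products $\prod_{r=1}^D(\mu+H+rz)$ and $\prod_{r=0}^{D-1}(\lambda-H-rz)$.
\item The specialization must be done carefully because $e_\lambda^{-1}$ of $B_-$ has non-invertible constant term issues. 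I would work over $\Q(\lambda,\mu)$, where both classes are invertible, and check that the specialization is continuous in the adic topology.
\end{itemize}
This gives $-zI^{\mu,\lambda}(\mathbf x,-z)\in\mathcal L^{\mu,\lambda}$.

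\textbf{Step 3: Birkhoff normalization and reading off $B^{\mu,\lambda}$.} Expand $I^{\mu,\lambda}$ in $1/z$. The degree-$\mathbf d$ factor is
\[
\frac{\prod_{r=1}^D(\mu+H+rz)\prod_{r=0}^{D-1}(\lambda-H-rz)}{\prod_i\prod_r(p_i+rz)^2},
\]
which has $z$-order $D+D-2D=0$. Because the $r=0$ factor is $\lambda-H$, every term with $D>0$ carries an overall factor $(\lambda-H)$. Hence the expansion has the form $1+z^{-1}(\cdots)+O(z^{-2})$ with no positive powers of $z$.
\begin{itemize}
\item The $z^0$ coefficient is $1$, so no $R\{z\}$-reparametrization is needed: $zI$ already has the shape $z\mathbf1+\tau+O(z^{-1})$.
\item Computing the $z^{-1}$ coefficient of $(\lambda-H)\prod_{r=1}^{D-1}(\lambda-H-rz)\prod(\mu+H+rz)/\prod(p_i+rz)^2$ at leading order gives $(-1)^{D-1}(D-1)!D!/\prod(d_i!)^2$ times $(\lambda-H)$.
\item Together with $\sum p_i\log x_i$ this produces exactly $\tau=\sum p_i\log Q_i-\lambda g$.
\item By uniqueness of the graph section of the cone over its $z$-positive part, $zI^{\mu,\lambda}=J^{\mu,\lambda}(\tau,z)$.
\end{itemize}

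Finally, expand $J$ with the string and divisor equations of Appendix~\ref{app:forget} adapted to the twists. Multiplying by $e^{-\tau/z}$ removes the exponential of the divisor part and leaves the one-point descendant correlators $\sum\mathbf Q^\beta z^{-a-2}(\ev)_*(\psi^a\,\cdot)$ of the twisted theory. The twisted pairing $(\mu+H)/(\lambda-H)$ converts the dual basis pairing into the pushforward class.
\begin{itemize}
\item At positive degree the twisting class is $e_\mu(E_+)e_\lambda(E_-)$.
\item The $(\mu+H)$ of the pairing cancels against the evaluation factor in $e_\mu(E_+)=(\mu+\ev^*H)e_\mu(E'_+)$, which follows from the exact sequence of Lemma~\ref{lem:basechange}.
\item The factor $\lambda-H$ arises from the inverse of $(\lambda-H)^{-1}$ in the pairing.
\end{itemize}
This yields~\eqref{eq:Bdef}.

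I expect the main obstacle to be the two-class step. One must check that the second hypergeometric modification is permitted on an already twisted cone, and that the $s$-parameter specialization to Euler classes, including the $r=0$ factor, is convergent.
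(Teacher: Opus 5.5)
Your outline agrees with the paper on the toric input, on the normalization (the $z^{-1}$ coefficient, $\tau=\sum_ip_i\log Q_i-\lambda g$, uniqueness of the graph section), and on reading off \eqref{eq:Bdef} from the twisted pairing $(\mu+H)/(\lambda-H)$ and $e_\mu(E_+)=(\mu+\ev^*H)\,e_\mu(E'_+)$. The genuine difference is the two-class step.

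The paper never leaves the untwisted cone. With $\Delta=\exp G_+(\rho_+,z)\exp G_-(\rho_-,z)$, the divisor equation $-z\nabla_jJ_{\mathbf d}=(\rho_j-d_jz)J_{\mathbf d}$ turns $\Delta^{-1}$ of the modified series into $\mathfrak F_s=\exp\{-G_+(-z\nabla_+,z)-G_-(-z\nabla_-,z)\}J_{Y_n}(\mathbf t,-z)$. A single induction on total degree in all the $s_{\pm,k}$, mixed monomials included, then places $\mathfrak F_s$ on $\mathcal L_{Y_n}$, using Proposition~\ref{prop:CCITcone} only for the untwisted cone.

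You iterate instead. That is modular, since each stage is a one-class argument, and it does work, but it needs three things:
(a) the properties of Proposition~\ref{prop:CCITcone} on $\mathcal L^{s^-}$, which you can obtain by transport along $\Delta_-$, a $\mathbf t$-independent multiplication operator commuting with $z$;
(b) quantum Riemann--Roch over an already twisted base, which is the proof rather than the statement of Theorem~\ref{thm:CCITqrr} (the paper's remark that the exponents add is the same observation);
(c) most importantly, the second modification must act on the family $I^{s^-}=\sum_{\mathbf d}M^-_{\mathbf d}J_{\mathbf d}$, not on the $J$-function of the $s^-$-twisted theory.

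On point (c): applying the statement of Theorem~\ref{thm:CCIThyper} with $\mathcal L^{s^-}$ as base would give $\sum_{\mathbf d}M^+_{\mathbf d}J^{s^-}_{\mathbf d}$, which is a different series. You have to rerun its proof on $I^{s^-}$. This is legitimate because $I^{s^-}$ is cone-valued in the primary input and satisfies the same degreewise relation $-z\nabla_jI^{s^-}_{\mathbf d}=(\rho_j-d_jz)I^{s^-}_{\mathbf d}$, since $M^-_{\mathbf d}$ does not depend on $\mathbf t$. Because $\exp(-G_+(-z\nabla_+,z))$ commutes with $\Delta_-$, your second stage is in the end equivalent to the paper's claim $\mathfrak F_s\in\mathcal L_{Y_n}$.

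Two corrections.

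First, the difficulty in the Euler specialization is not invertibility; both classes are invertible over $\Q(\mu,\lambda)$. The problem is that $s_{+,0}=\log\mu$, $s_{-,0}=-\log\lambda$, $s_{+,k}\propto\mu^{-k}$ and $s_{-,k}\propto\lambda^{-k}$ are not topologically nilpotent, so adic continuity cannot justify the substitution. Use the paper's finiteness argument instead. At fixed curve degree and insertions, the $s_{\pm,0}$-dependence enters only through powers of $e^{s_{\pm,0}}$ (the ranks). By dimension, only finitely many Chern-character and descendant monomials are nonzero. Hence every correlator and every $M_{\mathbf d}$ is rational in $\mu,\lambda$, and the cone equations specialize coefficientwise.

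Second, the degree-$\mathbf d$ factor has $z$-degree $(2D-1)-2D=-1$, not $0$, because the $r=0$ factor $\lambda-H$ carries no $z$. It is this, not the count $D+D-2D$, that makes the $z^0$ coefficient of $I^{\mu,\lambda}$ equal to $1$ and removes any need for Birkhoff factorization.
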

\begin{proof}
We separate the toric input, the formal product modification, and the
specialization and normalization steps.

\medskip\noindent\textbf{1. Toric input, variables, and formal topology.}
The ambient variety $Y_n=(\PP^1)^n$ is smooth and projective. Its fan is
complete and nonsingular, its stabilizers are trivial, and its anticanonical
class is $2H$. Thus Theorem~\ref{thm:CCITtoric} applies with no extended
variables and with the ambient torus parameters set to zero. It puts the
untwisted toric series on the untwisted cone. Its positive-degree terms
have order at most $z^{-1}$ after multiplication by $z$, since their
denominators have $z$-degree $2D\geq2$. The unique cone section with
expansion $z\mathbf1+\sum_i p_it_i+O(z^{-1})$ is therefore the small
$J$-function, which is
\[
 J_{Y_n}(\mathbf t,z)=ze^{\sum_i p_it_i/z}
 \sum_{\mathbf d\geq0}\mathbf q^{\mathbf d}e^{\sum_i d_it_i}
             \prod_{i=1}^n\prod_{r=1}^{d_i}(p_i+rz)^{-2}.
\]
Here $\mathbf q$ denotes independent Novikov variables. The substitution
$x_i=q_ie^{t_i}$ is a relabeling of divisor-weighted Novikov monomials;
in this notation the customary divisor exponential is recorded using
formal logarithms of $x_i$. 
%It does not add a second Novikov factor.
Equivalently all identities below can be kept in $(\mathbf q,\mathbf t)$
until the last step, so no evaluation of a Novikov series at $q_i=1$ is
required.

Use the full primary input $\mathbf t$ on $H^*(Y_n)$ during the cone
argument; its restriction to the displayed divisor coordinates will be
taken afterwards. We use the cone convention in which $J_{Y_n}(\mathbf t,-z)$
lies on $\mathcal L_{Y_n}$. Formal points and tangent vectors are those of
Section~\ref{sec:mirrorinputs}; positive powers of $z$ are permitted when
their coefficients tend to zero in the coefficient-ring topology.
Initially retain finitely many characteristic parameters and work modulo
a power of their maximal ideal and of the Novikov/input ideal. The
differential operators below are then finite-order operators at each
characteristic-parameter degree. All constructions are compatible with
these quotients and with enlarging the finite set of parameters. This
specifies the inverse-limit interpretation of the subsequent series.

\medskip\noindent\textbf{2. Two independent characteristic classes.}
Put $\rho_+=H$, $\rho_-=-H$, and $d_+=D$, $d_-=-D$, the cohomology
classes on $Y_n$.
%; they are not the bundles $E_\pm$ of Lemma~\ref{lem:basechange}.
 Introduce parameters
$s_{j,k}$ for $j\in\{+,-\}$ and $k\geq0$, and set
\[
 s_j(w)=\sum_{k\geq0}s_{j,k}\frac{w^k}{k!},\qquad
 G_j(w,z)=\sum_{a,b\geq0}s_{j,a+b-1}
            \frac{B_b(0)}{b!}\frac{w^a}{a!}z^{b-1},\qquad s_{j,-1}=0.
\]
The twist is
$\exp(\sum_{j,k}s_{j,k}\operatorname{ch}_k(R\pi_*f^*\mathcal B_j))$,
where $\mathcal B_\pm=\mathcal O_{Y_n}(\pm H)$. The genus-zero quantum Riemann--Roch formula of
Theorem~\ref{thm:CCITqrr} gives
\[
 \mathcal L_{s_+,s_-}=\Delta\mathcal L_{Y_n},
 \qquad \Delta=\exp G_+(\rho_+,z)\exp G_-(\rho_-,z).
\]
For independent classes, the exponents in the Riemann--Roch calculation
add up, since differentiating the twisting class with respect to
$s_{j,k}$ inserts the Chern character of the corresponding index bundle.
The Riemann--Roch expression for that insertion does not
change when the other twist is present. The two multiplication operators
commute. This proves the product form of $\Delta$. This fact alone does
not prove hypergeometric modification; that is the separate argument
in the next two steps.

The Bernoulli generating series gives
$G_j(w+z,z)-G_j(w,z)=s_j(w)$. Consequently the degree-$\mathbf d$
modification at $-z$ is
\[
 M_{\mathbf d}(-z)=
 \prod_{j=+,-}\exp\{G_j(\rho_j,z)-G_j(\rho_j-d_jz,z)\}.
\]
For positive $d_j$ this telescopes through $d_j$ terms; for negative
$d_j$ the same identity gives the inverse product with the complementary
endpoints. In particular the formula does not require either $\mathcal B_j$ to
be nef. The standard hypergeometric theorem with a single characteristic
class is Theorem~\ref{thm:CCIThyper}; we now supply its
independent-class extension explicitly.

\medskip\noindent\textbf{3. Reduction to differential operators on the untwisted cone.}
Let $\nabla_j$ be the constant primary direction $\rho_j$, so on the divisor
slice $\nabla_+=\sum_{i=1}^n\partial_{t_i}$ and $\nabla_-=-\nabla_+$.
The divisor equation, applied to the degree-$\mathbf d$ part of the full
$J$-family, reads
\[
 -z\nabla_j J_{Y_n,\mathbf d}(\mathbf t,-z)
     =(\rho_j-d_jz)J_{Y_n,\mathbf d}(\mathbf t,-z).
\]
The descendant term at the output marking supplies the cup-product
term $\rho_j$; all other inputs are primary. This also follows directly
on the divisor slice from $e^{-\sum p_it_i/z}e^{\sum d_it_i}$.
The sign here is fixed by using $J(\mathbf t,-z)$.
After applying $\Delta^{-1}$, the modified series is therefore
\[
 \mathfrak F_s=\exp\{-G_+(-z\nabla_+,z)-G_-(-z\nabla_-,z)\}
                         J_{Y_n}(\mathbf t,-z).
\]
All operators in this exponent commute. For each retained parameter,
\[
 \partial_{s_{j,k}}\mathfrak F_s=D_{j,k}\mathfrak F_s,\qquad
 D_{j,k}=-\sum_{b=0}^{k+1}
 \frac{B_b(0)}{b!(k+1-b)!}
               (-z\nabla_j)^{k+1-b}z^{b-1}.
\]
We will prove that $\mathfrak F_s$ is a point of the untwisted cone.

\medskip\noindent\textbf{4. Tangency and the complete induction.}
We use the four statements of Proposition~\ref{prop:CCITcone}:
item (i) for derivatives of cone-valued families; item (ii) for conical
scaling; item (iii) for multiplication of tangent vectors by nonnegative
powers of $z$; and item (iv) for the operation $\xi\mapsto z\partial_t\xi$
on a tangent family along a cone-valued family.
These statements hold over the formal coefficient rings specified above.
For a cone-valued family $f(\mathbf t)$, every term in $D_{j,k}f$
is tangent to the cone along $f$. The potentially negative power of
$z$ can be checked by setting $b=0$, in which case the operator reduces
to a scalar multiple of $\nabla_j(z\nabla_j)^k$.
Start with the tangent vector $\nabla_jf$ and
apply Corollary B.7 exactly $k$ times. If $b\geq1$ and
$a=k+1-b\geq1$, the operator is, up to sign,
$z^b(z\nabla_j)^{a-1}\nabla_j$; the same argument and Proposition B.3
apply. If $a=0$, it is a multiple of $z^k f$, which is tangent by
Propositions B.2 and B.3. Thus we never divide an arbitrary tangent
vector by $z$.

The induction is on ordinary total degree in a fixed finite
set of characteristic parameters, not on their indices. In the formal
polarization write the graph equations of $\mathcal L_{Y_n}$ as
$\mathscr E(f)=0$, meaning that the negative component is the differential
of the genus-zero descendant potential evaluated at the positive
component after the dilaton shift. At $s=0$ the equations hold.
Assume $\mathscr E(\mathfrak F_s)\in(s)^r$ for $r\geq1$.
Take $\widetilde{\mathfrak F}_s$ to be the graph point with the same
positive component as $\mathfrak F_s$. Then
$\mathfrak F_s-\widetilde{\mathfrak F}_s\in(s)^r$.
All $D_{j,k}$ differentiate only the primary input and hence preserve
$(s)^r$. The chain rule and the preceding tangency calculation give
\[
 \partial_{s_{j,k}}\mathscr E(\mathfrak F_s)
 =d\mathscr E_{\mathfrak F_s}(D_{j,k}\mathfrak F_s)
 =d\mathscr E_{\widetilde{\mathfrak F}_s}
       (D_{j,k}\widetilde{\mathfrak F}_s)+O((s)^r)
 =O((s)^r).
\]
Each homogeneous monomial of degree $r$ has a nonzero derivative of
degree $r-1$ with respect to some parameter. Characteristic zero now
implies that the homogeneous part of degree $r$ in every graph equation
vanishes. Thus $\mathscr E(\mathfrak F_s)\in(s)^{r+1}$, proving the
induction. It includes mixed monomials in the two parameter sets.
Passing to the compatible completions proves cone membership. Applying
$\Delta$ proves the two-class hypergeometric assertion and restricting
the full primary input to divisors gives the required family.

\medskip\noindent\textbf{5. Euler and inverse-Euler specialization.}
Specialize to $c_+(w)=\mu+w$ and $c_-(w)=(\lambda+w)^{-1}$:
\[
\begin{array}{lll}
 s_{+,0}=\log\mu,&
 s_{+,k}=(-1)^{k-1}(k-1)!\mu^{-k}&(k\geq1),\\
 s_{-,0}=-\log\lambda,&
 s_{-,k}=(-1)^k(k-1)!\lambda^{-k}&(k\geq1).
\end{array}
\]
The formal meaning is the equivariant characteristic-class specialization
described in \cite[Section 2.3]{CCITtwisted}. Its validity in this argument is justified as follows.
The rank contributions are first written as powers of $e^{s_{+,0}}$ and
$e^{s_{-,0}}$, then specialized to powers of $\mu$ and $\lambda^{-1}$.
The remaining operations are expansions in inverse weights. At fixed
curve degree and fixed insertions the non-equivariant Chern classes have
bounded degree on the proper moduli stack. Thus the resulting correlators
are rational functions of the weights; their expansions determine them
injectively. The same is true of each finite hypergeometric ratio.
The assertion is therefore a rational-function identity, not an analytic
substitution of logarithms into an arbitrary characteristic-parameter
power series. 
Equivalently, the cone graph equations can be specialized
coefficientwise. At fixed curve degree, the positive-degree
input has positive Novikov order, so only finitely many
such inputs contribute. Once the remaining input degree is
fixed, dimension bounds ensure that only finitely many
Chern and descendant operations are nonzero.
This justifies specializing the formal identity obtained in Step 4.

At $z$ the two finite ratios are
\[
 \frac{\prod_{r=-\infty}^{D}(\mu+H+rz)}
      {\prod_{r=-\infty}^{0}(\mu+H+rz)}
       =\prod_{r=1}^{D}(\mu+H+rz),
 \]
 \[
 \frac{\prod_{r=-\infty}^{-D}(\lambda-H+rz)^{-1}}
      {\prod_{r=-\infty}^{0}(\lambda-H+rz)^{-1}}
       =\prod_{r=0}^{D-1}(\lambda-H-rz).
\]
The infinite-product notation only cancels identical tails. The inverse
Euler twist of the negative line contributes the second numerator, since
its index bundle is $-E_-$ in positive degree. In degree zero that index
bundle is $\mathcal B_-$ instead, yielding the reciprocal in the pairing.
The two factors produce exactly~\eqref{eq:Imixed}, proving the asserted
cone membership over $\Q(\mu,\lambda)$.

\medskip\noindent\textbf{6. The normalized $J$-section and the evaluation series.}
For $D>0$ the numerator has $z$-degree $2D-1$, while the denominator has
degree $2D$. The factor at $r=0$ in the negative product is $\lambda-H$;
the leading coefficient of the summand is
\[
 \frac{(\lambda-H)(-1)^{D-1}D!(D-1)!}{z\prod_{i=1}^n(d_i!)^2}.
\]
It is independent of $\mu$. Consequently
$I^{\mu,\lambda}=1+\tau/z+O(z^{-2})$ with the $\tau$ in the statement.
Uniqueness of the graph section with positive part $z\mathbf1+\tau$
gives $zI^{\mu,\lambda}=J_{Y_n}^{\mu,\lambda}(\tau,z)$ in the positive-$z$
convention. The string equation, with the twist-compatible derivation in
Appendix~\ref{app:forget}, removes the unit input $-\lambda g$,
and the divisor equation removes the divisor exponential, giving the
first expression in~\eqref{eq:Bdef}.

To verify the second expression without taking a degenerate limit, define
the ordinary evaluation pushforward
\[
 \mathsf V_{a,\beta}^{\mu,\lambda}
  =(\ev_{Y_n})_*\bigl(\psi^a e_\mu(E_+)e_\lambda(E_-)
                                     [M_{Y_n}]^\vir\bigr).
\]
Setting $w=(\mu+H)/(\lambda-H)$, contraction with the inverse
twisted pairing yields $w^{-1}\mathsf V_{a,\beta}^{\mu,\lambda}$.
Indeed, pairing this class with any $b$ gives
$\int_{Y_n} b\mathsf V_{a,\beta}^{\mu,\lambda}$, as required.
By the marked exact sequence,
$\mathsf V_{a,\beta}^{\mu,\lambda}=(\mu+H)\mathsf E_{a,\beta}^{\mu,\lambda}$.
Thus $w^{-1}\mathsf V=(\lambda-H)\mathsf E$. Finally the expansion
$1/(z(z-\psi))=\sum_{a\geq0}\psi^a z^{-a-2}$ gives precisely~\eqref{eq:Bdef}. Every coefficient of $\mathsf E$ is polynomial in
$\mu,\lambda$, so this identity proves regularity at $\mu=0$ before
specialization. The comparison with $F_n$ then follows from
Proposition~\ref{prop:virtualcompare}; it initially gives a comparison against ambient
classes. Theorem~\ref{thm:fullambient} identifies the evaluation classes as
ambient, and nondegeneracy then gives equality of the full one-point cohomology classes,
including in even dimension.
\end{proof}

\begin{remark}
This is a theorem about a cohomological Givental cone, not a quantum-$K$
cone. The terminology ``quantized symplectic cone'' is not needed.
Equation~\eqref{eq:Bdef} proves regularity at $\mu=0$ before taking a
limit of a degenerate pairing. Combining it with
Proposition~\ref{prop:virtualcompare} proves the local comparison
\emph{against all ambient insertions}. The additional geometric
Theorem~\ref{thm:fullambient} proves ambientness of the full one-point
evaluation series also for odd $n$, and therefore upgrades this comparison
to the full one-point cohomology-valued series. Its proof does not use the mirror theorem.
\end{remark}

\subsection{Monic division and all numerical descendants}
Write $I_{X_n}^{\lambda,\amb}=I^{0,\lambda}$ and $B^\lambda=B^{0,\lambda}$.
After removing the displayed divisor logarithms, all coefficients lie
in $\mathcal R[\lambda][[\mathbf x,z^{-1}]]$; completion means that
polynomial degree in $\lambda$ may grow with the series indices.

\begin{lemma}\label{lem:division}
The quotient $U^\lambda=(B^\lambda-1)/(\lambda-H)$ is uniquely defined
in this completed ring. For every ambient $b$,
\[
 \int_Fi^*b\,i^*U^\lambda
 =\sum_{\beta>0,a\geq0}\mathbf Q^\beta z^{-a-2}
                         \int_Fi^*b\,C_{a,\beta}^\lambda.
\]
\end{lemma}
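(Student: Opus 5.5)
The plan is to use the explicit expansion~\eqref{eq:Bdef} of Theorem~\ref{thm:mirror} at $\mu=0$, which already exhibits $B^\lambda-1$ as $(\lambda-H)$ times an explicit series. Uniqueness is the first point. I will show that multiplication by $\lambda-H$ is injective on $\mathcal R[\lambda][[\mathbf x,z^{-1}]]$. In each fixed coefficient of $\mathbf x^{\mathbf d}z^{-k}$ we are in $\mathcal R[\lambda]$, and $H$ is nilpotent there ($H^{n+1}=0$). If $(\lambda-H)u=0$ with $u\in\mathcal R[\lambda]$, compare the top $\lambda$-degree coefficient: its leading term is $\lambda u_{\rm top}$, forcing $u_{\rm top}=0$, and descending induction gives $u=0$. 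Equivalently, $\lambda-H$ is monic in $\lambda$ with coefficients in $\mathcal R$, so division is unique. Since the operation is coefficientwise, it is compatible with the completion, in which $\lambda$-degrees are allowed to grow.

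Existence then comes straight from~\eqref{eq:Bdef}. Specializing $\mu=0$ is legitimate because every coefficient of $\mathsf E^{\mu,\lambda}_{a,\beta}$ is polynomial in $\mu,\lambda$, as recorded at the end of the proof of Theorem~\ref{thm:mirror}. This gives
\[
U^\lambda=\sum_{\beta>0,a\ge0}\mathbf Q^\beta z^{-a-2}\,\mathsf E^{0,\lambda}_{a,\beta}.
\]
One subtlety needs care. Equation~\eqref{eq:Bdef} is written in the variables $\mathbf Q$, which are related to $\mathbf x$ by the invertible mirror map $Q_i=x_ie^{g}$. It also carries the prefactor $e^{\lambda g/z}$, whose coefficients are polynomial in $\lambda$. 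So $B^\lambda$ lies in the stated ring after the change of variables, and re-expansion in $\mathbf x$ preserves the ring. Uniqueness then says this $U^\lambda$ is the quotient regardless of which variables are used.

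For the pairing identity, I will pair $i^*b$ with $i^*U^\lambda$ termwise. This is allowed because each $\mathbf Q^\beta z^{-a-2}$ coefficient is a finite sum, and $\int_F$ is $\Q[\lambda]$-linear. Then~\eqref{eq:ambientpair} of Proposition~\ref{prop:virtualcompare} gives $\int_F i^*b\,i^*\mathsf E^{0,\lambda}_{a,\beta}=\int_F i^*b\,C^\lambda_{a,\beta}$ for each term. Summing over $\beta$ and $a$ gives the displayed formula.

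The main obstacle is a bookkeeping one rather than a conceptual one. One must check that the $\lambda$-polynomiality of the coefficients of $B^\lambda$ really holds after removing the divisor logarithms and the factor $e^{\lambda g/z}$, so that the quotient lives in the stated ring and not merely in a localization $\Q(\lambda)$. I would verify this directly from~\eqref{eq:Imixed}: at $\mu=0$ each summand carries the factor $\prod_{r=0}^{D-1}(\lambda-H-rz)$, which is polynomial in $\lambda$, and no denominators in $\lambda$ ever appear. I would also note that no division by $\lambda$ is ever performed, so the remark after~\eqref{eq:Cdef} about the factor $\iota^*\iota_*C=(\lambda-H)C$ is exactly what the monic division accounts for.
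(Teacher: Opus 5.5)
Your proposal is correct and follows the paper's proof: uniqueness comes from injectivity, coefficientwise, of multiplication by the monic polynomial $\lambda-H$; the quotient is identified with $\sum\mathbf Q^\beta z^{-a-2}\mathsf E^{0,\lambda}_{a,\beta}$ via \eqref{eq:Bdef}; and \eqref{eq:ambientpair} is applied term by term. The only difference is how existence is obtained. You read it off \eqref{eq:Bdef} itself, whereas the paper first checks $B^H=1$ directly from the hypergeometric series and then applies the remainder theorem. That check works because every positive-degree summand contains the $r=0$ factor $\lambda-H$ and the exponentials cancel, so it gives divisibility independently of the mirror theorem.
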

\begin{proof}
Substitution $\lambda=H$ annihilates every positive-degree term of
$I_{X_n}^{\lambda,\amb}$. The remaining exponentials cancel because
$\log Q_i=\log x_i+g$ and $\sum p_i=H$. Thus $B^H=1$.
The remainder theorem for the monic polynomial $\lambda-H$ applies
coefficientwise even though $H$ is nilpotent. Multiplication by this
monic polynomial is injective in each polynomial coefficient ring,
which also proves uniqueness after completion.
Equation~\eqref{eq:Bdef} identifies the quotient with the generating
series of $\mathsf E_{a,\beta}^{0,\lambda}$ on $Y_n$.
Use~\eqref{eq:ambientpair} term by term.
\end{proof}

On the diagonal set $x_i=y$, $Q_i=Q$, and define
\begin{multline}\label{eq:mirrorcoordinate}
 c_D=\sum_{|\mathbf d|=D}\binom{D}{d_1,\ldots,d_n}^{\!2},\quad
 g(y)=\sum_{D\geq1}\frac{(-1)^Dc_D}{D}y^D,\quad
 R=\log y+g(y),\cr Q=ye^{g(y)},\quad A=\theta_yR.
\end{multline}
Symmetry is applied only after summing over every ordered degree tuple
$\mathbf d$ of a fixed total degree; an individual summand need not be
symmetric. The symmetric-ring calculation uniquely defines $K_j(y)$ by
\begin{equation}\label{eq:Kdef}
 e^{-HR/z}I_{X_n}^{0,\amb}(y,z)
 =1+\sum_{j=2}^{n}\frac{H^j}{z^j}K_j(y).
\end{equation}
There are no omitted higher cohomological powers on $Y_n$.

\begin{theorem}[Uniform descendant extraction]\label{thm:extraction}
The mirror coordinate is invertible as a germ at zero. For every
$0\leq a\leq n-2$,
\begin{equation}\label{eq:extraction}
 {
 \Dcal_a^\Delta(Q)=-\frac{n!}{(n-2-a)!}\,K_{a+2}(y(Q)).}
\end{equation}
These are convergent germs with rational coefficients, and the equality
holds without any assumption about primitive middle cohomology.
\end{theorem}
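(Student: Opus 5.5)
The plan is to specialize Lemma~\ref{lem:division} and equation~\eqref{eq:Bdef} to $\lambda=0$ and restrict to the diagonal. Then $e^{-HR/z}I$ is identified with $1-H$ times the generating series of the ambient evaluation classes, and the coefficient of $z^{-a-2}$ is paired with $\delta_a$.

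\textbf{Invertibility of the mirror map.} Since $c_D\le n^{2D}$, the series $g(y)$ in~\eqref{eq:mirrorcoordinate} has positive radius of convergence. Hence $Q=ye^{g(y)}=y+O(y^2)$ is a convergent germ with nonzero linear term, and the analytic inverse function theorem gives a convergent inverse $y(Q)\in Q+Q^2\Q[[Q]]$. The same bound on the multinomial sums shows that each coefficient of $I(y,z)$ in the finitely many classes $H^jz^{-j}$ is convergent. Therefore every $K_j(y)$ is a convergent germ with rational coefficients, and so is $K_j(y(Q))$.

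\textbf{Specialization to $\lambda=0$ on the diagonal.} By Theorem~\ref{thm:mirror}, equation~\eqref{eq:Bdef} is an identity with polynomial dependence on $\lambda$ (and $\mu$) in every coefficient, so we may set $\mu=\lambda=0$. At $\lambda=0$ the factor $e^{\lambda g/z}$ disappears, and $\prod_{r=0}^{D-1}(-H-rz)=(-1)^D\prod_{r=0}^{D-1}(H+rz)$. After $x_i=y$ this turns $I^{0,0}$ into exactly the series $I(y,z)$ of the introduction. The diagonal restriction is legitimate because we only sum over ordered tuples $\mathbf d$ of fixed total degree, and $\sum_ip_i\log Q_i=H\log Q=HR$ there. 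Hence
\[
 e^{-HR/z}I(y,z)=B^{0,0}\big|_\Delta=1-H\sum_{m>0}Q^m\sum_{a\ge0}z^{-a-2}\sum_{|\beta|=m}\mathsf E^{0,0}_{a,\beta}.
\]
Comparing with~\eqref{eq:Kdef} coefficientwise in $z^{-a-2}$ gives
\[
 K_{a+2}(y)\,H^{a+2}=-H\sum_{\beta>0}Q^{|\beta|}\mathsf E^{0,0}_{a,\beta}\quad\text{in }\mathcal R[[Q]],\ \text{with }Q=Q(y).
\]

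\textbf{Pairing with $\delta_a$.} Multiply by $\delta_a=H^{n-2-a}/(n-2-a)!$ and integrate over $Y_n$. On the left, $\int_{Y_n}H^n=n!$ gives $K_{a+2}\,n!/(n-2-a)!$. On the right, by~\eqref{eq:ambientpair} and~\eqref{eq:pushC} at $\lambda=0$, one has $\int_{Y_n}\delta_aH\mathsf E^{0,0}_{a,\beta}=\int_{F_n}\delta_aC^0_{a,\beta}$. This equals $N_{a,\beta}$ by Theorem~\ref{thm:virtual} and~\eqref{eq:Cdef}. Summing over $\beta$ with $|\beta|=m$ (finitely many, by Proposition~\ref{prop:geometry}) and substituting $y=y(Q)$ yields~\eqref{eq:extraction}.

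Only the pairing with the ambient class $\delta_a$ is used, so no statement about nonambient middle cohomology enters. This is exactly the numerical form~\eqref{eq:ambientpair} of the comparison.

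\textbf{Main obstacle.} I expect the delicate step to be the $\lambda=0$ specialization. The issue is that Lemma~\ref{lem:division} was phrased after division by $\lambda-H$, which becomes the nilpotent $-H$ at $\lambda=0$. To avoid dividing by $H$, I would keep $\lambda$ generic, pair $(\lambda-H)U^\lambda$ with $i^*\delta_a$ on $F_n$, and use $\iota^*\iota_*$-type identities only after clearing the factor. Since each coefficient of $\mathsf E$ is polynomial in $\lambda$, setting $\lambda=0$ afterwards is harmless. Equivalently, I would use~\eqref{eq:pushC} directly, which already contains the single factor $H$ and no division.
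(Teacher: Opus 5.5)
Your proof is correct and is essentially the paper's argument. Both specialize $B^\lambda-1=(\lambda-H)U^\lambda$ (that is, \eqref{eq:Bdef}) at $\lambda=0$ on the diagonal, pair with $\delta_a$ through the ambient comparison, and get convergence and invertibility from $c_D\le n^{2D}$ and the inverse function theorem.

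The only difference is in the pairing step. The paper solves $-HU^0=\sum_j H^jz^{-j}K_j$ for $U^0$ modulo $\ker H=\Q H^n$ in $\mathcal R^{S_n}$ and then pairs on $F_n$. You instead pair $HU^0$ directly on $Y_n$ via $i_*C^0_{a,\beta}=H\mathsf E^{0,0}_{a,\beta}$. This is equivalent, since $\int_{F_n}i^*x=\int_{Y_n}Hx$, and it never divides by a nilpotent class, so your ``main obstacle'' paragraph is unnecessary.
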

\begin{proof}
Each positive-degree term of the untwisted-in-$\lambda$ local numerator
contains $-H$. Its degree-zero part is zero, and its degree-one part
is $H(-1)^D\binom D{\mathbf d}^{2}/D$. This proves the formula for $R$
and the vanishing of the $z^{-1}$ term in~\eqref{eq:Kdef}.
Symmetry and homogeneity give~\eqref{eq:Kdef}, with $K_j(0)=0$.
Specializing $(\lambda-H)U^\lambda=B^\lambda-1$ at $\lambda=0$ gives
\[
 -HU^0=\sum_{j=2}^nH^jz^{-j}K_j.
\]
In $\mathcal R^{S_n}$ the kernel of multiplication by $H$ is $\Q H^n$.
Therefore
$U^0=-\sum_{j=2}^nH^{j-1}z^{-j}K_j$ modulo that kernel.
Its restriction to $F_n$ kills $H^n$, so pairing the quotient with
$\delta_a$ gives~\eqref{eq:extraction}, using $\int_FH^{n-1}=n!$.
Only the ambient pairing assertion of Lemma~\ref{lem:division} was used.

For convergence, $c_D\leq n^{2D}$ because the sum of multinomial
coefficients is $n^D$. 
At fixed cohomological degree, expanding the finite products and
their inverses introduces at most a fixed power of
$1+\log(D+1)$ into the bound, since their coefficients are
polynomials in harmonic sums of the form $\sum_{r\leq D}r^{-k}$.
The same applies to the local series and
its exponential normalization. Thus $g$ and every $K_j$ converge for
$|y|<n^{-2}$. Since $Q=y+O(y^2)$, the inverse-function theorem gives
$y(Q)$ and the convergent germs in~\eqref{eq:extraction}.
All operations used rational coefficients.
\end{proof}

\begin{remark}[Why the ambient lift is retained]
For $a=n-2$ the required function is $K_n$. Its coefficient $H^nK_n$
is zero after restriction to $F_n$. Division before restriction recovers
$-H^{n-1}K_n$, which can be integrated on $F_n$.
Thus discarding the ambient top term before division loses a 
numerical descendant. No cancellation of a nilpotent class is being
performed on $F_n$.
\end{remark}

\section{The quantum-Serre coefficient identity}\label{sec:serre}
Define a signed ambient series, initially purely algebraic,
\begin{equation}\label{eq:Isharp}
 \Ish(y,z)=e^{H\log y/z}\sum_{\mathbf d\geq0}(-1)^Dy^D
 \frac{\prod_{r=1}^D(H+rz)^2}
      {\prod_i\prod_{r=1}^{d_i}(p_i+rz)^2}.
\end{equation}
It is useful to distinguish its coefficients from the normalized ones:
\[
 \Ish(y,1)=\sum_{j=0}^nH^jW_j(y),\qquad
 W_j=A v_j\quad(0\leq j\leq n-1),\qquad v_0=1.
\]
Each $W_j$ is a polynomial in $\log y$ with convergent power-series
coefficients. The top coefficient $W_n$ is not needed in the physical
integral over $F_n$.

\begin{theorem}\label{thm:serre}
Coefficientwise in the completed ambient ring,
\[
 z\theta_yI_{X_n}^{0,\amb}=H\Ish,\qquad
 A=\sum_{D\geq0}(-1)^Dc_Dy^D=\theta_yR.
\]
For $0\leq j\leq n-1$,
\begin{equation}\label{eq:vtriangular}
 v_j=\frac{R^j}{j!}
 +\sum_{k=2}^j\frac{R^{j-k}}{(j-k)!}K_k
 +\sum_{k=2}^{j+1}\frac{R^{j-k+1}}{(j-k+1)!}\thQ K_k,
\end{equation}
where $K_k$ is read as a function of $Q$, and empty sums are zero.
\end{theorem}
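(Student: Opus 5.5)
The plan is to prove all three assertions by direct coefficientwise manipulation in the completed symmetric ambient ring $\mathcal R^{S_n}=\Q[H]/(H^{n+1})$ of Proposition~\ref{prop:ambient}. The first identity is checked summand by summand. The formula for $A$ is then read off from the $H^0$ part. The triangular formula~\eqref{eq:vtriangular} comes from applying $z\theta_y$ to the normalized form~\eqref{eq:Kdef} of $I^{0,\amb}_{X_n}$ and comparing coefficients of powers of $H$.

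\textbf{The first identity.} At $\lambda=0$ the numerator of~\eqref{eq:Imixed} on the diagonal $x_i=y$ is
\[
\prod_{r=0}^{D-1}(-H-rz)\prod_{r=1}^D(H+rz)=(-1)^D\prod_{r=0}^{D-1}(H+rz)\prod_{r=1}^{D}(H+rz).
\]
This agrees with the displayed form of $I(y,z)$ in the introduction. On a single summand $e^{H\log y/z}y^D(\cdots)$ the operator $z\theta_y$ acts as multiplication by $H+Dz$. Absorbing this factor turns $\prod_{r=0}^{D-1}(H+rz)$ into $H\prod_{r=1}^{D}(H+rz)$, so the product becomes $H\prod_{r=1}^D(H+rz)^2$. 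This is exactly $H$ times the corresponding summand of~\eqref{eq:Isharp}. Summing over $\mathbf d$ proves $z\theta_yI^{0,\amb}_{X_n}=H\Ish$ coefficientwise, with no convergence needed. For $A$, take the $H^0$ coefficient of $\Ish(y,1)$: the summand reduces to $(-1)^Dy^D(D!)^2/\prod_i(d_i!)^2$, whose sum is $\sum_D(-1)^Dc_Dy^D$. Differentiating~\eqref{eq:mirrorcoordinate} termwise gives the same series for $\theta_yR$.

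\textbf{The triangular formula.} First I will use that $I^{0,\amb}_{X_n}$ and $\Ish$ are symmetric once all ordered tuples of a fixed total degree are summed. By Proposition~\ref{prop:ambient} both then lie in $\mathcal R^{S_n}[\log y][[y]]$. Both are homogeneous of degree $0$ in $(p_i,z)$, so the coefficient of $H^j$ carries $z^{-j}$ and setting $z=1$ loses nothing. Write $I^{0,\amb}_{X_n}=e^{HR/z}\bigl(1+\sum_{k\ge2}K_kH^kz^{-k}\bigr)$ by~\eqref{eq:Kdef}, and use $\theta_y=A\,\thQ$ with $\thQ R=1$. Then
\[
H\Ish(y,1)=A\,e^{HR}\Bigl[H+\sum_{k\ge2}K_kH^{k+1}+\sum_{k\ge2}(\thQ K_k)H^k\Bigr].
\]
Expanding $e^{HR}=\sum_i R^iH^i/i!$ and extracting the coefficient of $H^{j+1}$ produces $A$ times the right side of~\eqref{eq:vtriangular}. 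The absence of a $K_1$ term, from Theorem~\ref{thm:extraction}, accounts for the lower summation limits $k\ge2$.

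\textbf{Main obstacle.} The delicate point is the last step, cancelling $H$ to pass from the coefficient of $H^{j+1}$ in $H\Ish$ to $W_j=Av_j$. In $\mathcal R^{S_n}$ the kernel of multiplication by $H$ is $\Q H^n$, so this cancellation is legitimate exactly for $j+1\le n$, that is $0\le j\le n-1$. That is the stated range, and it also explains why $W_n$ is left undetermined and is not needed. I will spell this out carefully and confirm that the comparison is performed after symmetrization, so that no individual nonsymmetric summand is divided by $H$.
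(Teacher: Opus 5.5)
Your proposal is correct and follows the paper's proof essentially step for step: you absorb the factor $H+Dz$ summand by summand into the products, read the series for $A$ off the cohomological degree-zero part of $\Ish$, and differentiate~\eqref{eq:Kdef} at $z=1$ using $\theta_y=A\thQ$ before comparing coefficients of $H^{j+1}$. The differences are cosmetic. You verify $\theta_yR=\sum_D(-1)^Dc_Dy^D$ by differentiating~\eqref{eq:mirrorcoordinate} directly, whereas the paper reads it from the $H$-coefficient of $z\theta_yI^{0,\amb}_{X_n}=H\Ish$; and your discussion of the range $0\le j\le n-1$ amounts to the observation that $1,H,\ldots,H^n$ is a basis of $\mathcal R^{S_n}$.
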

\begin{proof}
In degree $D$, $z\theta_y$ multiplies by $H+Dz$.
For $D>0$,
\[
 \prod_{r=0}^{D-1}(-H-rz)=(-1)^DH\prod_{r=1}^{D-1}(H+rz).
\]
Multiplying by $H+Dz$ completes the second positive product, giving
$H$ times the degree-$D$ summand of~\eqref{eq:Isharp}. In degree zero
both sides are $He^{H\log y/z}$. This proves the first identity.
Its coefficient of $H$ gives $A=\theta_yR$, while the degree-zero
cohomology part of~\eqref{eq:Isharp} gives its displayed series.
Now differentiate~\eqref{eq:Kdef} at $z=1$ and use
$\theta_y=A\thQ$. Comparing the coefficient of $H^{j+1}$ for
$0\leq j\leq n-1$ gives~\eqref{eq:vtriangular}.
\end{proof}

\begin{corollary}\label{cor:firstlog}
The first logarithmic period satisfies
\[
 \frac{W_1}{A}=R+\thQ K_2
 =R-\frac{(n-2)!}{n!}\thQ\Dcal_0^\Delta.
\]
More explicitly, with $\mathfrak h_D=\sum_{r=1}^D1/r$ and $\mathfrak h_0=0$,
\[
 W_1=A\log y+
 \sum_{D\geq1}(-1)^Dy^D\sum_{|\mathbf d|=D}\binom D{\mathbf d}^2
 \left(2\mathfrak h_D-\frac2n\sum_i \mathfrak h_{d_i}\right).
\]
\end{corollary}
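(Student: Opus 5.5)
The plan is to read off both displayed formulas from results already proved: Theorem~\ref{thm:serre} for the first equality, Theorem~\ref{thm:extraction} for the second, and then a direct expansion of $\Ish$ for the explicit series.

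For the first equality, take $j=1$ in~\eqref{eq:vtriangular}. The first sum over $k$ runs from $2$ to $1$ and is empty. The second sum has only the term $k=2$, namely $R^0\thQ K_2$. Hence $v_1=R+\thQ K_2$, and since $W_1=Av_1$ this is the first equality.

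For the second equality, apply Theorem~\ref{thm:extraction} with $a=0$. It gives $\Dcal_0^\Delta=-\frac{n!}{(n-2)!}K_2(y(Q))$, so $K_2=-\frac{(n-2)!}{n!}\Dcal_0^\Delta$ as a function of $Q$. Applying $\thQ$, which is legitimate because the mirror map is an invertible germ, gives the second expression.

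For the explicit series, work directly from~\eqref{eq:Isharp} at $z=1$ and extract the coefficient of $H^1$ after restricting to the symmetric part. The prefactor $e^{H\log y}$ contributes $A\log y$, since the $H^0$ part of the sum is $A$. For each summand the remaining contribution is the linear term of
\[
 \frac{\prod_{r=1}^D(H+r)^2}{\prod_i\prod_{r=1}^{d_i}(p_i+r)^2}.
\]
Taking logarithms, the numerator gives $D!^2\bigl(1+2H\mathfrak h_D+O(H^2)\bigr)$. The denominator gives $\prod_i d_i!^2\bigl(1+2p_i\mathfrak h_{d_i}\bigr)$ to first order, so the summand's degree-one part is $\binom D{\mathbf d}^2\bigl(2\mathfrak h_DH-2\sum_ip_i\mathfrak h_{d_i}\bigr)$.

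The one point needing care is that $\sum_i p_i\mathfrak h_{d_i}$ is not a multiple of $H$ for an individual tuple $\mathbf d$. This is the paper's caveat that symmetry applies only after summing over all ordered tuples of total degree $D$. After summing over $S_n$-orbits, each $p_i$ acquires the same coefficient. The degree-one symmetric element $\sum_{\mathbf d}\binom D{\mathbf d}^2\sum_i p_i\mathfrak h_{d_i}$ therefore equals $H$ times $\frac1n\sum_{\mathbf d}\binom D{\mathbf d}^2\sum_i\mathfrak h_{d_i}$, obtained by averaging over $i$. Multiplying by $(-1)^Dy^D$ and summing over $D$ gives the displayed series for $W_1$. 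I expect no real obstacle; this averaging step is the only place a sign or normalization slip could occur.
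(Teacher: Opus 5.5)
Your proposal is correct and follows essentially the same route as the paper's proof. You take $j=1$ in \eqref{eq:vtriangular}, substitute Theorem~\ref{thm:extraction} with $a=0$, expand the finite products to first cohomological order to get the linear term $2\mathfrak h_DH-2\sum_i\mathfrak h_{d_i}p_i$, and symmetrize over all ordered tuples of total degree $D$ to replace $\sum_i\mathfrak h_{d_i}p_i$ by $\frac Hn\sum_i\mathfrak h_{d_i}$.
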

\begin{proof}
Take $j=1$ in~\eqref{eq:vtriangular} and use
Theorem~\ref{thm:extraction}. For the explicit expansion, differentiate
the finite products to first cohomological degree; their linear term
is $2\mathfrak h_DH-2\sum_i \mathfrak h_{d_i}p_i$. Summing over all degree tuples replaces
the summed expression involving the second sum by $\frac Hn\sum_i \mathfrak h_{d_i}$, by permutation symmetry.
\end{proof}
%%%%%%%%%%%%%%%%%%%%%%%%%%%%%%%%%%%%%%%%%%%%%%%%%%%%%%%%%%%%%%%%%%%%%%%%%%%
\section{An all-loop differential operator and full-series certificate}\label{sec:PF}

This section derives the sunset differential operators. 
The derivation   is independent of the geometric mirror theorem and of
any claimed numerical match with a Feynman integral.
The fact that the differential operator is irreducible, and therefore the minimal operator annihilating $A$,
follows by two Fourier transforms, with their extensions checked, from
irreducibility of symmetric powers of the Kloosterman connection. The latter
uses the differential Galois group of the Bessel connection
\cite{Katz87,Katz90,FSY}; the full argument is recorded in
Proposition~\ref{thm:minimalperiod}, and is used nowhere in the arguments
below.

There have been many prior derivations of these operators in the mathematical~\cite{ZhouSurvey}
and physics literature~\cite{Vanhove:2014wqa,Vanhove:2018mto,LairezVanhove,KlemmEtAl,BFKNS}, so the result is not new, but our derivation 
provides a uniform construction for
all $n$, together with the all-degree certificate~\eqref{eq:PFcertificate},
which is what allows the operator to be used without a finite coefficient fit.

Throughout, $\theta_y:=y\partial_y$, and all differential operators are written
with powers of $y$ on the left, so $\theta_yy=y(\theta_y+1)$.

\begin{definition}[Finite operator algorithm]\label{def:operator}
Set $T_0=1$, $T_1=\theta_y/n$, and recursively define
\[
 T_{k+1}=\frac{\theta_yT_k+k\,yT_{k-1}}{n-k}
 \quad(1\leq k<n),\qquad
 \mathscr S_n=n!\left(\theta_yT_n+n\,yT_{n-1}\right).
\]
Expand uniquely
\[
 \mathscr S_n=\sum_{j\geq0}y^jU_j(\theta_y),\qquad
 U_0(\theta_y)=\theta_y^{n+1}.
\]
Only finitely many $U_j$ are nonzero. Define
\begin{equation}\label{eq:Palgorithm}
 {
 P_{n-1}(y,\theta_y)=\theta_y^{n-1}+
 \sum_{j\geq1}y^jU_j(\theta_y)
                     \prod_{r=1}^{j-1}(\theta_y+r)^2.}
\end{equation}
This is a finite construction over $\Q$, for each integer $n\geq4$.
\end{definition}

\begin{theorem}[Full ambient certificate]\label{thm:PF}
The operator $P_{n-1}$ has order $n-1$, and
\begin{equation}\label{eq:PFcertificate}
 {
 P_{n-1}\Ish(y,1)=H^{n-1} y^H=H^{n-1}+H^{n}\log y
 \quad\text{in }\mathcal R[[y]][\log y].}
\end{equation}
Its coefficient of $\theta_y^{n-1}$ is
\begin{equation}\label{eq:leadP}
 \prod_{0\leq j<n/2}\bigl(1+(n-2j)^2y\bigr).
\end{equation}
Thus it has no finite singular point on $y>0$.
The functions $W_0,\ldots,W_{n-2}$ form a local basis of homogeneous
solutions, $P_{n-1}W_{n-1}=1$, and $P_{n-1}W_n=\log y$.
Here $W_n$ is the ambient top coefficient already defined in
Section~\ref{sec:serre}; it must be retained before restriction to $F_n$.
\end{theorem}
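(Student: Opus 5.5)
The plan is to turn the statement into a coefficientwise recursion in the summation index $\mathbf d$, with the cohomology variables $p_i$ playing the role of formal exponents. Write the degree-$\mathbf d$ summand of $\Ish(y,1)$ as $y^{H+D}c_{\mathbf d}(p)$, with
\[
c_{\mathbf d}(p)=(-1)^D\frac{\prod_{r=1}^D(H+r)^2}{\prod_i\prod_{r=1}^{d_i}(p_i+r)^2}.
\]
On such a term, $\theta_y$ acts as multiplication by $H+D$. The operator $P_{n-1}$ is a finite sum $\sum_j y^jV_j(\theta_y)$, where $V_0=\theta_y^{n-1}$ and $V_j=U_j(\theta_y)\prod_{r=1}^{j-1}(\theta_y+r)^2$. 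The coefficient of $y^{H+D}$ in $P_{n-1}\Ish$ is therefore
\[
\sum_j V_j(H+D-j)\,\sum_{|\mathbf e|=D-j}c_{\mathbf e}.
\]
Since $V_j(H+D-j)$ depends on $\mathbf e$ only through $|\mathbf e|$, only the symmetric partial sums $S_E(p)=\sum_{|\mathbf e|=E}c_{\mathbf e}$ enter. The statement thus reduces to an identity for the $S_E$.

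The key step is to identify what $\mathscr S_n$ annihilates. I expect the recursion for $T_k$ to be exactly the Laplace/Bessel-moment recursion: $\mathscr S_n$ should kill the symmetric generating function
\[
\Phi(y;p)=\sum_{\mathbf e}y^{H+E}\frac{(-1)^E}{\prod_i\prod_{r=1}^{e_i}(p_i+r)^2},
\]
which is, up to normalization, the product $\prod_i$ of modified Bessel-type series $\sum_e (-y)^{e}/(\prod_{r\le e}(p_i+r)^2)$. I would first prove $\mathscr S_n\Phi=\theta_y^{n+1}$ applied to the degree-zero term, i.e. $\mathscr S_n\Phi=H^{n+1}y^H=0$ in $\mathcal R$. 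The mechanism is that each factor satisfies a second-order equation $(\theta-p_i)(\theta+p_i)f=-yf$, and the symmetric-power recursion $T_{k+1}$ builds the $(n+1)$-st symmetric power operator. This is checked by induction on $k$: I would show that $T_k$ acting on $\Phi$ equals $e_k$-type symmetric combinations of $\theta$-derivatives of the factors, which is the standard derivation of symmetric-power operators of the Bessel operator, done in the ring $\mathcal R$ with $p_i^2=0$.

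Next, pass from $\Phi$ to $\Ish$. The coefficients of $\Ish$ are those of $\Phi$ multiplied by $\prod_{r=1}^D(H+r)^2=\Gamma(H+D+1)^2/\Gamma(H+1)^2$. On the operator side, multiplying the $y^{H+D}$ coefficient by this factor conjugates $y^j U_j(\theta)$ into $y^jU_j(\theta)\prod_{r=1}^{j}(\theta+r)^2$ evaluated appropriately; this is a double Borel/Laplace transform. Dividing by one extra factor $\theta_y^2$ from the left, using $U_0=\theta^{n+1}=\theta^2\cdot\theta^{n-1}$, explains the shift from $\prod_{r=1}^{j}$ to $\prod_{r=1}^{j-1}$ and the order drop to $n-1$. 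The left factorization $\mathscr S_n^{\text{transformed}}=\theta_y^2P_{n-1}$ is then established formally: $\theta_y^2$ is injective on $y^H\mathcal R[[y]]$ modulo its kernel, which is spanned by $y^H$ and $Hy^H$ in the relevant degrees. This is the main obstacle. Because $H$ is nilpotent, $\theta_y^2$ applied to the degree-zero coefficient is $H^2y^H$, not invertible. The inhomogeneity $H^{n-1}y^H$ arises precisely here: $\theta^2P_{n-1}\Ish=H^{n+1}y^H$, and solving $\theta^2 X=H^{n+1}y^H$ in $y^H\mathcal R[[y]]$ gives $X=H^{n-1}y^H$ modulo the kernel. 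The kernel ambiguity is pinned down by comparing the $D=0$ coefficient, where $P_{n-1}$ acts by $H^{n-1}$. I would handle this with care on the $y^0$ coefficient only, where everything is explicit.

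Finally, the remaining assertions follow quickly. The leading coefficient~\eqref{eq:leadP} comes from tracking the top $\theta$-degree through the recursion: the coefficients of $\theta^{n-1}$ in the $y^jU_j$ give the characteristic polynomial of the symmetric power of the Bessel local system at infinity, whose exponents are $(n-2j)^2$. Its roots $y=-1/(n-2j)^2$ are negative. Expanding $H^{n-1}y^H=H^{n-1}+H^n\log y$ in the basis $H^j$ gives $P W_j=0$ for $j\le n-2$, $PW_{n-1}=1$, and $PW_n=\log y$. The functions $W_0,\dots,W_{n-2}$ have leading $\log$ powers $\log^j y/j!$ by Theorem~\ref{thm:serre}, so they are independent and form a basis for an order-$(n-1)$ operator with maximally unipotent monodromy at $0$.
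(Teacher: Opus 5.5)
Your overall route is the paper's. In $\mathcal R$ each Bessel-type factor satisfies $\theta_y^2f_i=-yf_i$ exactly because $p_i^2=0$, and the averaged products $V_k$ obey the recursion defining $T_k$, so $\mathscr S_n$ annihilates the product. Multiplying the $y^{H+D}$ coefficient by $\prod_{r=1}^D(H+r)^2$ conjugates $y^jU_j(\theta_y)$ into $y^jU_j(\theta_y)\prod_{r=1}^j(\theta_y+r)^2$, and the identity $\theta_y^2y^j=y^j(\theta_y+j)^2$ exhibits the result as $\theta_y^2P_{n-1}$. Writing $P_{n-1}\Ish=\sum_D y^{D+H}R_D$, one gets $R_D=0$ for $D>0$ because $D+H$ is invertible, and $R_0=H^{n-1}$ by direct computation. (Your description of the kernel of $\theta_y^2$ as spanned by $y^H$ and $Hy^H$ is wrong, since $\theta_y^2y^H=H^2y^H\neq0$. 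The kernel is $\{y^Hc:H^2c=0\}$, concentrated in $y$-degree zero. Since you fix the $D=0$ coefficient directly, this does not affect the certificate.)

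The genuine gap is the order statement, on which the leading-coefficient formula and the basis claim also rest.

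\begin{itemize}
\item Dividing by $\theta_y^2$ lowers the order by two only if the conjugated operator has order $n+1$. The Gamma conjugation, however, raises the $\theta_y$-degree of the $y^j$ term by $2j$.
\item Knowing that $\mathscr S_n$ has order $n+1$ gives only $\deg U_j\le n+1$. That would allow terms of order up to $n-1+2j$ in $P_{n-1}$.
\item What is needed is the weighted bound $\deg U_j\le n+1-2j$, so in particular $U_j=0$ for $2j>n+1$.
\end{itemize}

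To prove it, give $\theta_y$ weight $1$ and $y$ weight $2$. Since $\theta_yy^j=y^j(\theta_y+j)$, reordering only creates lower-weight terms. Induction on the recursion then shows that $T_k$ has weight at most $k$ and $\mathscr S_n$ has weight at most $n+1$.

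With this bound in hand:
\begin{itemize}
\item Every term of $P_{n-1}$ has $\theta_y$-degree at most $n-1$, and the term $\theta_y^{n-1}$ makes the order exactly $n-1$.
\item The coefficient of $\theta_y^{n-1}$ is $\sum_jy^ju_j$, where $u_j$ is the coefficient of $\theta_y^{n+1-2j}$ in $U_j$ (not of $\theta_y^{n-1}$, as you wrote).
\item This is the top weighted symbol of $\mathscr S_n$ evaluated at $\theta_y=1$.
\item That symbol is the characteristic polynomial of the commuting-variable system $\theta_yV_k=(n-k)V_{k+1}-kyV_{k-1}$. This system is the $n$th symmetric power of a $2\times2$ system with eigenvalues $\pm\sqrt{-y}$, so its roots are $(n-2j)\sqrt{-y}$.
\item Pairing opposite roots yields \eqref{eq:leadP}.
\end{itemize}

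With this lemma added, your argument coincides with the paper's proof.
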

\begin{proof}
We first explain $\mathscr S_n$. If $f_1,\ldots,f_n$ solve
$\theta_y^2f_i=-yf_i$ over a commutative coefficient algebra, let
$V_k$ be the average over all subsets of size $k$ of the product in
which those $k$ factors are replaced by $\theta_yf_i$.
The product rule gives
\[
 \theta_yV_k=(n-k)V_{k+1}-k\,yV_{k-1},\quad
 \theta_yV_0=nV_1,\quad\theta_yV_n=-n\,yV_{n-1}.
\]
Eliminating $V_1,\ldots,V_n$ yields
$\mathscr S_nV_0=0$, with exactly Definition~\ref{def:operator}.
Assign weights $1$ to $\theta_y$ and $2$ to $y$.
The commutator lowers weight, so induction gives
$\deg U_j\leq n+1-2j$ and $U_j=0$ for $2j>n+1$.
The leading coefficient of $\mathscr S_n$ is $1$.

Now work in the actual square-free algebra $\mathcal R$ and set
\[
 f_i(y)=\sum_{d\geq0}\frac{(-1)^dy^{d+p_i}}{\Gamma(d+1+p_i)^2}.
\]
Gamma functions of nilpotent variables mean their finite Taylor
expansions, and $y^{p_i}=1+p_i\log y$ with $y>0$, so no branch is chosen
anywhere. The recurrence of $\Gamma$ gives
\[
 (\theta_y^2+y)f_i=\frac{y^{p_i}}{\Gamma(p_i)^2}=0,
\]
because $\Gamma(w)^{-2}$ has a double zero at $w=0$ and $p_i^2=0$.
This is why the calculation uses the square-free algebra, not a
substitution $p_i=H/n$ that would destroy its relations.
Hence $h=\prod_if_i$ satisfies $\mathscr S_nh=0$.

On $y^{D+H}$ define $\Xi=\Gamma(\theta_y+1)^2$ by multiplication by
$\Gamma(D+H+1)^2$. Each multiplier is invertible.
Then
\[
 \Xi y^j\Xi^{-1}=y^j\prod_{r=1}^j(\theta_y+r)^2.
\]
Multiplication of $\Xi h$ by the constant invertible class
$\prod_i\Gamma(1+p_i)^2/\Gamma(1+H)^2$ yields exactly $\Ish(y,1)$
of~\eqref{eq:Isharp}, alternating signs included.
It follows that
\[
 \left[\theta_y^{n+1}+
 \sum_{j\geq1}y^jU_j(\theta_y)\prod_{r=1}^j(\theta_y+r)^2\right]\Ish=0.
\]
Since $\theta_y^2y^j=y^j(\theta_y+j)^2$, the operator in brackets is
$\theta_y^2P_{n-1}$, so $\theta_y^2P_{n-1}\Ish=0$.

Write $P_{n-1}\Ish=\sum_{D\geq0}y^{D+H}R_D$.
For $D>0$, $(D+H)^2R_D=0$ implies $R_D=0$ because $D+H$ is invertible.
For $D=0$, only the constant-degree part $\theta_y^{n-1}$ acts, giving
$R_0=H^{n-1}$. This proves~\eqref{eq:PFcertificate} including its
nonzero source; division by a nilpotent element has not occurred.
The degree bound for $U_j$ proves the order is at most $n-1$, and the
constant-degree coefficient proves it is exactly $n-1$.

For completeness, the highest weighted symbol of $\mathscr S_n$ is
the characteristic polynomial of the displayed $(n+1)$-dimensional
system with commuting variables. Over $\Q(\sqrt{-y})$ its eigenvalues
are $(n-2j)\sqrt{-y}$ for $0\leq j\leq n$ (the symmetric-power
representation of the two eigenvalues $\pm\sqrt{-y}$).
Pairing opposite eigenvalues, with an additional zero eigenvalue when
$n$ is even, shows that the highest coefficients of the $U_j$ are the
coefficients of the corresponding product of
$\theta_y^2+(n-2j)^2y$. The Gamma multiplication and division by
$\theta_y^2$ in~\eqref{eq:Palgorithm} convert this product to~\eqref{eq:leadP}.

Finally $W_j=(\log y)^j/j!+O(y(1+|\log y|)^j)$.
Comparing coefficients of $H^j$ in~\eqref{eq:PFcertificate} gives
$P_{n-1}W_j=0$ for $j<n-1$ and $P_{n-1}W_{n-1}=1$.
The different leading logarithmic powers prove linear independence
of the $n-1$ homogeneous solutions. They form a basis because the
operator has order $n-1$.
\end{proof}

\begin{remark}[The unsigned variable]\label{rem:unsigned}
      We use $y=-t$ throughout. This removes the alternating signs generated by
$t^j=(-1)^jy^j$ and avoids expressions such as $(-y)^H$, whose interpretation
would require a branch choice. The Euler operator is unchanged,
$t\partial_t=y\partial_y$.
\end{remark}

\begin{corollary}[Frobenius normalization]\label{cor:frobenius}
The origin is a regular singular point with indicial polynomial
$\rho^{n-1}$ and a single maximal unipotent logarithmic block.
For any constant $b$, all local solutions of $P_{n-1}f=b$ have the form
\[
 f=bW_{n-1}+\sum_{j=0}^{n-2}b_jW_j.
\]
Their boundary polynomial in $\log y$ determines the $b_j$ uniquely.
In particular a differential equation alone does not specify the
physical solution.
\end{corollary}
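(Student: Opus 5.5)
The plan is to read everything off Theorem~\ref{thm:PF}: its leading coefficient~\eqref{eq:leadP}, the weight bound on the $U_j$, and the local basis $W_0,\ldots,W_{n-1}$ of~\eqref{eq:PFcertificate}.

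\textbf{Regular singularity and indicial polynomial.} Every term of $P_{n-1}$ in~\eqref{eq:Palgorithm} has the form $y^j$ times a polynomial in $\theta_y$. The only contribution at $j=0$ is $\theta_y^{n-1}$. For $j\ge1$ the term $y^jU_j(\theta_y)\prod_{r=1}^{j-1}(\theta_y+r)^2$ has $\theta_y$-degree at most
\[
(n+1-2j)+2(j-1)=n-1,
\]
by the weight bound $\deg U_j\le n+1-2j$ from the proof of Theorem~\ref{thm:PF}. So after division by the leading coefficient~\eqref{eq:leadP}, which equals $1$ at $y=0$, the operator is a monic polynomial in $\theta_y$ of degree $n-1$ whose coefficients are holomorphic at $0$. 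This is exactly the Fuchs criterion for a regular singular point. The indicial polynomial is the $y^0$ part evaluated at $\theta_y=\rho$, namely $\rho^{n-1}$.

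\textbf{Unipotent block.} The single exponent $0$ has multiplicity $n-1$, so local monodromy $T$ is unipotent. Theorem~\ref{thm:PF} gives the basis $W_j=(\log y)^j/j!+O\bigl(y(1+|\log y|)^j\bigr)$. Analytic continuation $\log y\mapsto\log y+2\pi i$ therefore acts on this basis by an upper-triangular matrix, and $(T-1)W_{n-2}$ has leading term $2\pi i\,W_{n-3}$ plus lower terms. Hence $(T-1)^{n-2}\ne0$, so $T$ is a single Jordan block of size $n-1$: the maximal unipotent monodromy block.

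\textbf{Solutions of $P_{n-1}f=b$.} By Theorem~\ref{thm:PF}, $P_{n-1}W_{n-1}=1$, so $bW_{n-1}$ is a particular solution. The difference $f-bW_{n-1}$ is then a homogeneous local solution. It lies in the span of $W_0,\ldots,W_{n-2}$, since these form a basis of the $(n-1)$-dimensional solution space. This holds in the class of local analytic solutions on a slit disc, i.e.\ those that are polynomial in $\log y$ with convergent coefficients, as guaranteed at a regular singular point.

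\textbf{Uniqueness of the $b_j$.} Let $\mathrm{bd}(W_j)$ denote the $y^0$ part of $W_j$, which is $(\log y)^j/j!$ exactly. The $W_j$ have distinct leading log powers, so the boundary polynomial of $f$ is
\[
b\,\frac{(\log y)^{n-1}}{(n-1)!}+\sum_{j=0}^{n-2} b_j\frac{(\log y)^j}{j!}.
\]
Its coefficients recover the $b_j$ uniquely. The last sentence of the statement then follows: the $b_j$ are free, so the equation leaves $n-1$ constants to be fixed by boundary data.

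\textbf{Main obstacle.} The one genuinely delicate point is the claim that the $y^0$ part of each $W_j$ is exactly $(\log y)^j/j!$, with no lower log powers. I would verify it from~\eqref{eq:Isharp}: the $D=0$ term is $e^{H\log y}$, and $W_j$ is its $H^j$ coefficient, since all $D>0$ terms carry $y^D$. If this exactness failed, uniqueness would still hold by triangularity, so the argument does not hinge on it.
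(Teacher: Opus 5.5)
Your proposal is correct and follows essentially the same route as the paper. The Euler-operator form, with leading coefficient~\eqref{eq:leadP} equal to $1$ at $y=0$, gives the regular singularity and the indicial polynomial $\rho^{n-1}$. Theorem~\ref{thm:PF} supplies the logarithmic basis $W_0,\ldots,W_{n-2}$ and the particular solution $W_{n-1}$. The shift $\log y\mapsto\log y+2\pi i$ on the leading logarithmic polynomials exhibits the single Jordan block, and the boundary polynomial fixes the $b_j$. Your check that the $y^0$ part of $W_j$ is exactly $(\log y)^j/j!$, from the $D=0$ term $e^{H\log y}$ of~\eqref{eq:Isharp}, correctly spells out what the paper compresses into ``the boundary normalization then determines each constant.''
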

\begin{proof}
The coefficient of $\theta^{n-1}$ is nonzero at zero, and all other
coefficient polynomials are holomorphic there; the Euler-operator
form gives regular singularity and the indicial polynomial.
Theorem~\ref{thm:PF} gives the logarithmic basis and a particular
solution. Continuation $\log y\mapsto\log y+2\pi i$ on this basis
has a single Jordan block, as is already seen on its leading
polynomials. The boundary normalization then determines each constant.
\end{proof}

\begin{example}[The two initial specializations]\label{ex:operators} As an
      illustration we specialize to the cases of $n=4$ and $n=5$ in the symbolic
      implementation of the finite construction given in
      Appendix~\ref{app:code}.

\noindent
For $n=4$, the algorithm gives
\begin{equation}\label{e:P3}
 P_3=\theta_y^3+y(20\theta_y^3+30\theta_y^2+18\theta_y+4)
                   +64y^2(\theta_y+1)^3.
\end{equation}
For $n=5$, it gives
\begin{align}\label{e:P4}
P_4&=\theta_y^4+y Q_1(\theta_y)+y^2 Q_2(\theta_y)+y^3Q_3(\theta_y),\\      
 Q_1&=35\theta_y^4+70\theta_y^3+63\theta_y^2+28\theta_y+5,\cr
 Q_2&=259\theta_y^4+1036\theta_y^3+1580\theta_y^2+1088\theta_y+285,\cr
 \nonumber Q_3&=225\theta_y^4+1350\theta_y^3+2925\theta_y^2+2700\theta_y+900.
\end{align}
\end{example}
%%%%%%%%%%%%%%%%%%%%%%%%%%%%%%%%%%%%%%%%%%%%%%%%%%%%%%%%%%%%%%%%%%%%%%%%
\section{The physical integral and its signed Gamma formula}\label{sec:analytic}
The large-momentum behaviour of the sunset family and its
connection to the periods of the associated Calabi--Yau
geometry have been investigated using several approaches.
These include GKZ systems and relative Calabi--Yau periods
in~\cite{KlemmEtAl,BFKNS}, the all-order structure of the
multiloop sunset in~\cite{Vanhove2026}, and the two-loop
case in~\cite{BKV}. We take the asymptotic result of~\cite{Iritani} as our starting point.
%----------------------------------------------------------------------
\subsection{Convergence and the positive Laplace transform}
Define the two-dimensional equal-mass sunset normalization by
\begin{equation}\label{eq:Schwinger}
 I_{\circleddash}^{(n-1)}(s)=\int_{\mathbb R_+^n}e^{-\sum_{i=1}^n x_i+sW(x)}W(x)
                   \prod_{i=1}^n \frac{dx_i}{x_i},\qquad
 W(x)=\left(\sum_{i=1}^n  x_i^{-1}\right)^{-1}.
\end{equation}
Let define the coordinates $u_1,\ldots,u_{n}$
$
u_i=x_i/(x_1+\cdots+x_n)
$ so that the domain of integration becomes
$\Delta_n=\{u_i>0:\sum_{i=1}^n u_i=1\}$,
and put
\[
 \phi(u)=\sum_{i=1}^n u_i^{-1},\qquad
 \Omega=\frac{du_1\cdots du_{n-1}}{u_1\cdots u_n}.
\]

\begin{proposition}\label{prop:analyticdomain}
The Schwinger integral is absolutely convergent for $\Re s\leq n^2$
and holomorphic for $\Re s<n^2$. It is not absolutely convergent for
$\Re s>n^2$. Its analytic continuation to $\C\setminus[n^2,\infty)$ is
\begin{equation}\label{eq:simplex}
 I_{\circleddash}^{(n-1)}(s)=\int_{\Delta_n}\frac{\Omega}{\phi(u)-s}.
\end{equation}
For real $s<n^2$ it also equals
\begin{equation}\label{eq:Laplacepositive}
 \int_0^\infty e^{rs}\left(\int_{\Delta_n}e^{-r\phi(u)}\Omega\right)dr.
\end{equation}
\end{proposition}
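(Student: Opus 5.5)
The plan is to reduce everything to the homogeneous structure of the integrand. This gives three things: the simplex formula, the domain of convergence, and the Laplace form.

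\emph{Step 1 (change of variables).} Write $x_i=t\,u_i$ with $t=\sum_ix_i>0$ and $u\in\Delta_n$. The Jacobian gives $\prod_i dx_i/x_i=(dt/t)\,\Omega$. Since $W$ is homogeneous of degree one, $W(x)=t/\phi(u)$ with $\phi(u)=\sum_iu_i^{-1}$. The exponent becomes
\[
 -t+st/\phi(u)=-t\bigl(\phi(u)-s\bigr)/\phi(u).
\]
For $\Re s<\min_{\Delta_n}\phi$, the $t$-integral is elementary:
\[
 \int_0^\infty e^{-t(\phi-s)/\phi}\,\frac{t}{\phi}\,\frac{dt}{t}=\frac{1}{\phi(u)-s}.
\]
This yields \eqref{eq:simplex}.

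\emph{Step 2 (convergence).} By Cauchy--Schwarz (or AM--HM), $\phi(u)\ge n^2$ on $\Delta_n$, with equality only at the barycenter. Absolute convergence of the Schwinger integral is equivalent, by Tonelli applied to the same substitution, to finiteness of $\int_{\Delta_n}\Omega/(\phi(u)-\Re s)$ when $\Re s<n^2$, and to divergence of the $t$-integral on a set of positive measure when $\Re s>n^2$. Near $\partial\Delta_n$, $\phi\to\infty$ like $1/u_i$, which compensates the $1/(u_1\cdots u_n)$ in $\Omega$. I would check this by noting that near a face where the $u_i$, $i\in S$, are small, $1/\phi\lesssim\min_{i\in S}u_i$. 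One factor is then absorbed, leaving an integrable singularity of type $\prod_{i\in S}du_i/u_i$ times $\min_{i\in S} u_i$. This needs a short dyadic or polar estimate, since the product of $|S|-1$ remaining logarithmic factors is still integrable against the $\min$.

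The borderline case $\Re s=n^2$ is the delicate point, and I expect it to be the main obstacle. Near the barycenter, $\phi-n^2$ is a positive definite quadratic form in $n-1\ge3$ variables. Hence $1/(\phi-n^2)\sim|v|^{-2}$ is integrable in dimension $n-1\ge3$. That gives absolute convergence on the closed half-plane. For $\Re s>n^2$, the factor $e^{t(\Re s-\phi)/\phi}$ grows on the open set where $\phi<\Re s$, so the $t$-integral diverges there.

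\emph{Step 3 (continuation and Laplace form).} Holomorphy of \eqref{eq:simplex} on $\C\setminus[n^2,\infty)$ follows by differentiating under the integral, using the boundary bound of Step 2, because $\phi(\Delta_n)=[n^2,\infty)$. Agreement with the Schwinger integral on $\Re s<n^2$ gives the continuation. For real $s<n^2$, write $1/(\phi-s)=\int_0^\infty e^{-r(\phi-s)}dr$. Since $\phi-s\ge n^2-s>0$, the integrand is positive, and Tonelli allows the integrals to be exchanged, giving \eqref{eq:Laplacepositive}.
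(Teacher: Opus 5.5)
Your proposal is correct and follows essentially the same route as the paper: the radial substitution $x_i=tu_i$, Tonelli/Fubini to reduce absolute convergence to finiteness of $\int_{\Delta_n}\Omega/(\phi-\Re s)$, Cauchy--Schwarz for $\phi\geq n^2$, the nondegenerate minimum giving an $|v|^{-2}$ singularity that is integrable in dimension $n-1\geq 3$, divergence on the open set $\{\phi<\Re s\}$, the bound $|\phi-s|^{-1}\leq C/\phi$ for holomorphy, and positivity plus Tonelli for the Laplace form. The only cosmetic difference is at the boundary. You propose a face-by-face dyadic estimate, whereas the paper uses $\phi^{-1}\leq\min_i u_i$ on ordered sectors $u_1\le\cdots\le u_n$ (where $u_n\geq 1/n$) and integrates successively; both yield the same finite bound.
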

\begin{proof}
Set $x_i=t u_i$. Then $\prod dx_i/x_i=(dt/t)\Omega$ and
$W(x)=t/\phi(u)$. The radial integral gives
$\int_0^\infty e^{-t(1-s/\phi)}dt/\phi=1/(\phi-s)$.
Applied to the absolute value of the integrand, the same computation gives
$1/(\phi-\Re s)$ where $\Re s<\phi$ and $+\infty$ where $\Re s\geq\phi$.
By Tonelli's theorem, the Schwinger integral therefore converges absolutely
if and only if $\int_{\Delta_n}\Omega/(\phi-\Re s)$ is finite, and when it
does, Fubini's theorem allows the radial integration to be performed first,
which gives~\eqref{eq:simplex}.
Cauchy--Schwarz gives $\phi\geq n^2$, with equality only at
$u_i=1/n$. 
The required boundary integrability follows from the estimate
$\phi^{-1}\leq\min_i u_i$. On the sector
$u_1\leq\cdots\leq u_n$, we have $u_n\geq 1/n$, which
bounds the density by $n/(u_2\cdots u_{n-1})$.
Successively integrating
$0<u_1<\cdots<u_{n-1}<1$ is finite (each integration cancels the next
denominator). There are finitely many sectors.
For $\Re s<n^2$ the density is bounded by a constant times
$\Omega/\phi$, locally uniformly in $s$.
At $s=n^2$, the minimum of $\phi$ is nondegenerate on the simplex,
so the only new interior singularity is bounded by a constant times
$\|u-u_*\|^{-2}$ in real dimension $n-1$. It is integrable because
$n\geq4$. This endpoint is one of the essential uses of the standing hypothesis
$n\geq4$, alongside the integral curve-lattice identification.
For $\Re s>n^2$, an open set near $u_*$ gives radial
exponential growth, so absolute convergence fails.
For each compact subset of the slit plane,
$|\phi-s|^{-1}\leq C/\phi$, which proves holomorphic continuation
under the integral. Finally the integrand in~\eqref{eq:Laplacepositive} 
is nonnegative and its radial integral is~\eqref{eq:simplex}. Hence Tonelli's theorem permits us to interchange the radial and simplex integrals.
\end{proof}
The simplex representation also gives the Taylor expansion at $s=0$ and its
behaviour on the circle of convergence. Write $u_*=(1/n,\ldots,1/n)$ for the
barycentre of $\Delta_n$.

\begin{proposition}\label{prop:sTaylor}
For $|s|<n^2$,
\begin{equation}\label{eq:sTaylor}
 I_{\circleddash}^{(n-1)}(s)=\sum_{m\geq0}a_m s^m,\qquad
 a_m=\int_{\Delta_n}\phi(u)^{-m-1}\,\Omega>0,
\end{equation}
and, as $m\to\infty$,
\begin{equation}\label{eq:sTaylorasympt}
 a_m={\pi^{n-1\over 2}\,n^{n/2-2-2m}\over m^{n-1\over 2}}\,\bigl(1+O(m^{-1})\bigr).
\end{equation}
Consequently the Taylor series of $I_{\circleddash}^{(n-1)}$ at $s=0$ has
radius of convergence exactly $n^2$, and it converges absolutely and
uniformly on $|s|\leq n^2$. For an integer $j\geq0$, the $j$th derivative
has a finite limit as $s\uparrow n^2$ along the real axis if and only if
$j<(n-3)/2$. In particular, for $n=4$ and $n=5$ the integral is continuous,
but not $C^1$, at the thresholds $s=16$ and $s=25$.
\end{proposition}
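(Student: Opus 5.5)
The plan is to expand the simplex representation~\eqref{eq:simplex} geometrically and then study the moments $a_m$ by Laplace's method. For $|s|<n^2$ we have $\phi\geq n^2>|s|$ on $\Delta_n$, so $1/(\phi-s)=\sum_m s^m\phi^{-m-1}$. This series is dominated by $\sum_m |s|^m\phi^{-m-1}=1/(\phi-|s|)$, which is integrable by Proposition~\ref{prop:analyticdomain} applied at real $|s|\leq n^2$. Dominated convergence therefore gives~\eqref{eq:sTaylor}, with $a_m>0$ because the integrand is positive and $\phi^{-m-1}\leq \phi^{-1}$ is $\Omega$-integrable.

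For the asymptotics~\eqref{eq:sTaylorasympt}, write $\phi^{-m-1}=n^{-2m-2}\exp(-(m+1)\log(\phi/n^2))$. By the Cauchy--Schwarz equality case, the function $\log(\phi/n^2)$ vanishes only at $u_*$ and is bounded below by a positive constant away from any neighbourhood of $u_*$. Away from $u_*$ the contribution is $O(n^{-2m}c^{m})$ with $c<1$, using $\phi^{-m-1}\leq \phi^{-1}(n^2+\epsilon)^{-m}$ together with integrability of $\Omega/\phi$.

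Near $u_*$ I would parametrize $u=u_*+v$ with $\sum v_i=0$. Since $\sum_i v_i=0$, the expansion is $\phi(u)=n^2+n^3\sum_i v_i^2+O(|v|^3)$. In the coordinates $v_1,\dots,v_{n-1}$ the Hessian quadratic form is $n^3(\sum_{i<n}v_i^2+(\sum_{i<n}v_i)^2)$, whose matrix $n^3(I+J)$ has determinant $n^{3(n-1)}\cdot n$. The density of $\Omega$ at $u_*$ is $n^n$. The Gaussian integral then gives
\[
a_m\sim n^{-2m-2}\,n^n\,\Bigl(\frac{\pi}{m\,n}\Bigr)^{\frac{n-1}{2}}\,n^{-\frac{3(n-1)}{2}}\,n^{-\frac12}\cdot n^{\frac{n-1}{2}},
\]
and this must be collected carefully to the stated power $n^{n/2-2-2m}$. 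That bookkeeping is a routine check I would redo, since the normalization of $\Omega$ in the $v$-coordinates is the step most likely to hide a factor. The $1+O(m^{-1})$ error is standard from the cubic and quartic Taylor terms, which contribute only at relative order $m^{-1}$ by parity.

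The remaining claims follow from~\eqref{eq:sTaylorasympt}. The radius is exactly $n^2$ since $a_m n^{2m}\asymp m^{-(n-1)/2}$. Because $(n-1)/2>1$ for $n\geq4$, the series $\sum a_m n^{2m}$ converges, which gives absolute and uniform convergence on the closed disc. For the $j$th derivative, the coefficients $a_m m^j s^{m-j}$ are positive for $0<s<n^2$. By monotone convergence (Abel's theorem for positive series), the limit as $s\uparrow n^2$ is finite if and only if $\sum_m m^{j-(n-1)/2}<\infty$, that is $j-(n-1)/2<-1$, i.e.\ $j<(n-3)/2$. For $n=4,5$ this allows $j=0$ but not $j=1$, giving continuity without $C^1$. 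The main obstacle is the precise constant in the Laplace step; the rest is direct.
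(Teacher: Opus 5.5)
Your route is the same as the paper's: the geometric expansion of $1/(\phi-s)$ with an integrable majorant, and Laplace's method at the unique minimum $u_*$. You use the same expansion $\phi=n^2+n^3\sum_iv_i^2+O(|v|^3)$, the same matrix $I+J$ with $\det(I+J)=n$, and the same density $n^n$. You then finish with the root test, the $M$-test using $(n-1)/2>1$, and monotone convergence for the derivatives.

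The one real error is the displayed Gaussian constant. As written it collects to $n^{-n/2-1-2m}$, not $n^{n/2-2-2m}$, so it is too small by a factor $n^{n-1}$. You have mixed two normalizations. The factor $(\pi/(mn))^{(n-1)/2}$ already comes from the quadratic part $n\,v'^{\mathsf T}(I+J)v'$ of $\psi=\log(\phi/n^2)$. The only remaining determinant factor is therefore $\det(I+J)^{-1/2}=n^{-1/2}$, and the extra factors $n^{-3(n-1)/2}\cdot n^{(n-1)/2}=n^{-(n-1)}$ must be dropped. If you prefer to start from the quadratic form $n^3(I+J)$ of $\phi$ itself, you must first divide it by $n^2$, since the exponent is $(m+1)\log(\phi/n^2)$; then $\det\bigl(n(I+J)\bigr)=n^n$. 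With this correction
\[
 a_m\sim n^{-2m-2}\,n^{n}\,\Bigl(\frac{\pi}{nm}\Bigr)^{\frac{n-1}{2}}\,n^{-\frac12}
 =\frac{\pi^{\frac{n-1}{2}}\,n^{n/2-2-2m}}{m^{\frac{n-1}{2}}},
\]
which is exactly~\eqref{eq:sTaylorasympt}. The density of $\Omega$ in the $v'$ coordinates, which you suspected, was not the problem.

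A minor point: the $j$th derivative has coefficients $m(m-1)\cdots(m-j+1)a_m$ rather than $m^ja_m$. These are comparable, so your criterion $j<(n-3)/2$ stands.
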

\begin{proof}
Since $\phi\geq n^2>|s|$ on $\Delta_n$, the expansion
$(\phi-s)^{-1}=\sum_{m\geq0}s^m\phi^{-m-1}$ is dominated by
$(\phi-|s|)^{-1}\leq n^2(n^2-|s|)^{-1}\phi^{-1}$, and
$\int_{\Delta_n}\Omega/\phi=I_{\circleddash}^{(n-1)}(0)$ is finite by
Proposition~\ref{prop:analyticdomain}. Integrating~\eqref{eq:simplex} term by
term gives~\eqref{eq:sTaylor}.

For~\eqref{eq:sTaylorasympt}, write $u=u_*+v$ with $\sum_iv_i=0$ and use
$v'=(v_1,\ldots,v_{n-1})$ as coordinates. Since $\sum_iv_i=0$,
\[
 \phi(u_*+v)=\sum_{i=1}^n\frac{n}{1+nv_i}=n^2+n^3\sum_{i=1}^nv_i^2+O(|v|^3),
 \qquad \sum_{i=1}^nv_i^2=v'^{\mathsf T}(I+J)\,v',
\]
where $J$ is the $(n-1)\times(n-1)$ matrix with all entries $1$, so that
$\det(I+J)=n$. The density of $\Omega$ at $u_*$ is $n^n$. By the
Cauchy--Schwarz argument in the proof of
Proposition~\ref{prop:analyticdomain}, $u_*$ is the only point of
$\Delta_n$ at which $\phi=n^2$. Hence for every neighbourhood $N$ of $u_*$
there is $\delta>0$ with $\phi\geq n^2(1+\delta)$ on $\Delta_n\setminus N$,
and the contribution of $\Delta_n\setminus N$ to $a_m$ is at most
$(n^2(1+\delta))^{-m}I_{\circleddash}^{(n-1)}(0)$. On $N$ write
$\phi^{-m-1}=n^{-2m-2}\exp(-(m+1)\psi)$ with
$\psi=\log(\phi/n^2)=n\sum_iv_i^2+O(|v|^3)$. The multidimensional Laplace
method, for a smooth phase with a nondegenerate minimum and a smooth
amplitude, gives
\[
 a_m=n^{n-2m-2}\,\left(\frac{\pi}{nm}\right)^{n-1\over 2}\det(I+J)^{-1/2}
 \bigl(1+O(m^{-1})\bigr),
\]
which is~\eqref{eq:sTaylorasympt}. The root test now gives the radius $n^2$.
On $|s|\leq n^2$ the terms are bounded by $a_mn^{2m}$, which is of exact
order $m^{-(n-1)/2}$ and hence summable since $n\geq4$. For the $j$th
derivative, the terms $m(m-1)\cdots(m-j+1)a_ms^{m-j}$ are nonnegative and
nondecreasing in $s\in[0,n^2)$, so by monotone convergence the limit as
$s\uparrow n^2$ equals $\sum_m m(m-1)\cdots(m-j+1)a_mn^{2(m-j)}$, whose terms
are of exact order $m^{j-(n-1)/2}$. This is finite exactly when
$j<(n-3)/2$.
\end{proof}

\begin{remark}[First correction]\label{rem:sTaylorcorrection}
The remainder in~\eqref{eq:sTaylorasympt} can be made explicit to the next
order:
\begin{equation}\label{eq:sTaylorcorrection}
 a_m={\pi^{n-1\over 2}\,n^{n/2-2-2m}\over m^{n-1\over 2}}
 \left(1-\frac{3(n-1)}{8m}+O(m^{-2})\right).
\end{equation}
Unlike Proposition~\ref{prop:sTaylor}, the argument uses the Bessel
representation~\eqref{eq:Bessel} of Section~\ref{sec:checks}; this remark is
not used in any proof. Replacing $J_0(\sqrt T\,r)$ in~\eqref{eq:Bessel} by
$I_0(\sqrt s\,r)$ gives a function of $s$ that is holomorphic on $|s|<n^2$,
because $|I_0(\sqrt s\,r)|\leq e^{\sqrt{|s|}\,r}$ and
$K_0(r)^n=O(r^{-n/2}e^{-nr})$ as $r\to\infty$. Since $J_0(\mathrm iz)=I_0(z)$,
it agrees with $I_{\circleddash}^{(n-1)}$ for $s=-T\leq0$, and hence on the
whole disk. Expanding $I_0$ termwise gives
\[
 a_m=\frac{2^{n-1}}{4^m(m!)^2}\int_0^\infty r^{2m+1}K_0(r)^n\,dr .
\]
By~\cite[\S10.40]{ext:DLMF},
$K_0(r)^n=(\pi/(2r))^{n/2}e^{-nr}\bigl(1-n/(8r)+O(r^{-2})\bigr)$ uniformly for
$r\geq1$, and the integrals over $[0,1]$ are negligible compared with the
main term. Hence
\[
 \int_0^\infty r^{2m+1}K_0(r)^n\,dr=\Bigl(\frac\pi2\Bigr)^{n/2}
 \frac{\Gamma(2m+2-n/2)}{n^{2m+2-n/2}}
 \left(1-\frac{n^2}{8(2m+1-n/2)}+O(m^{-2})\right).
\]
Finally
$\binom{2m}m=4^m(\pi m)^{-1/2}\bigl(1-\frac1{8m}+O(m^{-2})\bigr)$ and
$\Gamma(2m+2-n/2)/\Gamma(2m+1)
=(2m)^{1-n/2}\bigl(1+\frac{(2-n)(4-n)}{16m}+O(m^{-2})\bigr)$. The leading
factors reproduce~\eqref{eq:sTaylorasympt}, and the three first-order terms
add up to
\[
 \frac1{8m}\left(-1+\frac{(2-n)(4-n)}2-\frac{n^2}2\right)=-\frac{3(n-1)}{8m}.
\]
\end{remark}

\begin{remark}\label{rem:pseudothresholds}
By~\eqref{eq:leadP}, the operator of Theorem~\ref{thm:PF} is also singular at
the pseudothresholds $s=(n-2j)^2$, $1\leq j<n/2$, which lie inside the disk
of Proposition~\ref{prop:sTaylor}. 
These points are genuine singularities of the operator and have the
same local exponents as the threshold. For $n=4$, the local exponents
at both $y=-1/16$ and $y=-1/4$ are
$\{0,\frac12,1\}$. For $n=5$, the local exponents at
$y=-1/25$, $y=-1/9$, and $y=-1$ are
$\{0,1,1,2\}$.
Nevertheless, Proposition~\ref{prop:analyticdomain} shows that the physical
integral is holomorphic at the pseudothresholds. The vanishing there concerns
only its singular component, not the singularity of the operator itself.
The exponent $(n-3)/2$ in
Proposition~\ref{prop:sTaylor} is the familiar threshold exponent of
$n$-particle phase space in two dimensions. The disk of
Proposition~\ref{prop:sTaylor} lives in the physical variable $s$ and should
not be confused with the $Q$-Taylor disk of Theorem~\ref{thm:Qradius}.
\end{remark}

\subsection{The analytic literature input and the degree sign}
Define
\[
 I_{F_n}(r\mathbf1,z)=e^{H\log r/z}
 \sum_{\mathbf d\geq0}r^D
 \frac{\prod_{k=1}^D(H+kz)}
      {\prod_{i=1}^n\prod_{k=1}^{d_i}(p_i+kz)^2}.
\]

We quote the result of Iritani~\cite[Section 2, equation (10)]{Iritani},
who studies the large complex structure asymptotics of the same family of amplitudes. 

\begin{theorem}[Iritani, {\cite[Section 2, equation (10)]{Iritani}}]\label{input:Gamma}
For this smooth Fano hypersurface, the positive oscillatory integral is
\[
 \int_{\Delta_n}e^{-r\phi(u)}\Omega
 =\int_{F_n} I_{F_n}(r\mathbf1,-1)\,\widehat\Gamma_{F_n}\qquad(r>0).
\]
This is the specialization to $q_1=\cdots=q_n=r$ of
\cite[Section 2, equation (10)]{Iritani}, which states, for a smooth Fano
$F_n\subset(\PP^1)^{n}$ of degree $(1,\ldots,1)$ and $q_1,\ldots,q_{n}>0$,
\[
 \int_{\mathbb R_{+}^{n-1}}e^{-\phi_q(w)}\prod_{i=1}^{n-1}\frac{dw_i}{w_i}
                   =\int_{F_n} I_{F_n}(q,-1)\widehat\Gamma_{F_n},
\]
\[
 \phi_q(w)=\left(\sum_{i=1}^{n-1}q_iw_i+q_n\right)
           \left(\sum_{i=1}^{n-1}w_i^{-1}+1\right),
\]
with ordinary cohomological Chern roots in the Gamma class. We use this identity
only as an analytic input. More precisely, equation~(10) of~\cite{Iritani} gives
the positive-real oscillatory integral in the $(n-1)$ independent variables
$w_i$, paired with the $\widehat\Gamma$-class of the degree-$(1,\ldots,1)$ Fano
hypersurface. Specializing $q_1=\cdots=q_n=r$ and making the change of variables
$$
u_i=\frac{w_i}{1+\sum_{j=1}^{n-1}w_j},
\qquad
u_n=\frac{1}{1+\sum_{j=1}^{n-1}w_j},
$$
identifies its positive-real integral with
$$
\int_{\Delta_n}e^{-r\phi(u)}\Omega .
$$
Thus the only external analytic identity used here is the corresponding
positive-real $\widehat\Gamma$-pairing. The passage from this identity to the
physical Schwinger integral is not quoted from~\cite{Iritani}: it is the
Laplace-transform argument of Theorem~\ref{thm:laplace}, including the factor
$(-1)^D$ derived in~\eqref{eq:laplace-degree-sign}.
\end{theorem}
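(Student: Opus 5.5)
The statement is a quoted external identity, so the plan is to reduce it to \cite[Section 2, equation (10)]{Iritani} by specialization and a change of variables; no new analytic content is needed.

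\emph{Step one: match hypotheses and conventions.} Iritani's equation concerns the mirror Landau--Ginzburg model $\phi_q$ of a smooth Fano $(1,\ldots,1)$ hypersurface in $(\PP^1)^n$. It is stated with the $I$-function at $-z$, $z=1$, and with the Gamma class built from ordinary Chern roots. I will first check that his $I_{F_n}(q,z)$ agrees with the displayed $I_{F_n}(r\mathbf1,z)$ once we set $q_i=r$. The point to verify is that the prefactor $e^{\sum p_i\log q_i/z}$ collapses to $e^{H\log r/z}$. Similarly $\prod_i q_i^{d_i}$ collapses to $r^D$. The numerator $\prod_{k=1}^D(H+kz)$ is the Euler-class modification for the positive normal bundle $\mathcal O(H)$. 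The integration is over $F_n$, and a class pulled back from $Y_n$ pairs through $i^*$. Positivity of $q_i=r>0$ means all logarithms are real, so no branch is chosen.

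\emph{Step two: the change of variables.} Set $S=1+\sum_{j<n}w_j$, $u_i=w_i/S$ and $u_n=1/S$. This is a diffeomorphism $\R_+^{n-1}\to\Delta_n$, with inverse $w_i=u_i/u_n$. With all $q_i=r$,
\[
\phi_q(w)=r\Bigl(\sum_{i<n}w_i+1\Bigr)\Bigl(\sum_{i<n}w_i^{-1}+1\Bigr).
\]
In the $u$ coordinates the first factor is $1/u_n$ and the second is $u_n\sum_i u_i^{-1}$, so $\phi_q=r\phi(u)$. For the measure, $\prod_{i<n} dw_i/w_i$ is the torus-invariant logarithmic form. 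On the simplex chart in the coordinates $u_1,\ldots,u_{n-1}$, the map $u\mapsto w$ has Jacobian $u_n^{-(n-1)}\det(\ldots)$. This form pulls back to $\Omega=du_1\cdots du_{n-1}/(u_1\cdots u_n)$. The most economical check is that $\Omega$ is the residue of $\prod_{i\le n} dx_i/x_i$ along $\sum x_i=1$. That residue is the projective invariant form, which equals $\prod_{i<n}d\log(x_i/x_n)$ up to sign. Orientation makes the sign $+1$ on the positive region.

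\emph{Step three: convergence and a sanity check.} Both sides should be finite for $r>0$. This holds because $\phi\ge n^2$ and the integrand is bounded by $e^{-r\phi}$ near the boundary, where $\phi\to\infty$; the same boundary estimate $\phi^{-1}\le\min u_i$ was used in Proposition~\ref{prop:analyticdomain}.

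The only real obstacle is bookkeeping of normalizations in Step one. The source may use $z$ versus $-z$, a factor $(2\pi i)$, or a $\widehat\Gamma$ convention with $\Gamma(1+\delta)$ versus $\Gamma(1-\delta)$. I would settle these by comparing the leading $r\to0$ logarithmic behaviour of both sides. At degree zero the right side gives $\int_F e^{-H\log r}\widehat\Gamma_{F_n}$, whose top term $(-\log r)^{n-1}n!/(n-1)!$ should match a direct evaluation of the left side as $r\to0$. Its degree-one Euler-constant term should also match, since this is the term sensitive to the sign convention in $\Gamma$. The remaining statement is then a verbatim specialization.
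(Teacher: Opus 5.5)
Your proposal is correct and is essentially the paper's argument: the identity is quoted from \cite{Iritani}, and the only work is the specialization $q_1=\cdots=q_n=r$ together with the substitution $w_i=u_i/u_n$. Under it $\phi_q=r\phi(u)$, and $\prod_{i<n}dw_i/w_i$ pulls back to $\Omega$, since the logarithmic Jacobian is $\prod_{i<n}u_i^{-1}\bigl(1+\sum_{i<n}u_i/u_n\bigr)=1/(u_1\cdots u_n)>0$. Your $r\to0$ comparison of the leading and Euler-constant terms is a reasonable consistency check on conventions, but it is not needed: the statement and Remark~\ref{rem:analytic-source-sign} already fix the form of the cited identity explicitly.
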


This is the only analytic input in the paper. Every other external result quoted
here is a foundational statement about virtual classes, including the comparison
of algebraic and symplectic virtual classes (Theorem~\ref{thm:siebert}), about
toric mirror symmetry and its twisted versions (Section~\ref{sec:mirrorinputs}),
about quantum $K$-theory (Theorem~\ref{thm:chou}), or about the symplectic
integrality of Gromov--Witten invariants (Theorem~\ref{thm:IP}). The entire
passage from the Feynman integral to Gromov--Witten theory goes through this one
identity. It is the specialization to the present family of the
$\widehat\Gamma$-integral structure of a Fano manifold, in which the oscillatory
integral of the Landau--Ginzburg mirror over its positive real form is paired
against $\widehat\Gamma_{F_n}$. We apply the cited result under its standing
hypotheses and with the convention
$
\widehat\Gamma(E)=\prod_j\Gamma(1+\delta_j),
$
for ordinary cohomological Chern roots \(\delta_j\), with no inserted factors of
\(2\pi i\). Let \(\epsilon\) be the cohomological involution \(p_i\mapsto-p_i\);
it acts trivially on scalar logarithms and on the variable \(y\).

\medskip

\begin{proposition}[Uniform Gamma polynomial]\label{prop:Gamma}
On $F_n$,
\begin{equation}\label{eq:Gammapoly}
 \widehat\Gamma_{F_n}\Gamma(1-H)=\widehat\Gamma_{X_n}|_{F_n}
 =\sum_{k=0}^{n-1}\gamma_kH^k,
 \qquad
 \sum_{k\geq0}\gamma_kt^k
 =\exp\left(\sum_{r\geq1}\frac{2\zeta(2r+1)}{2r+1}t^{2r+1}\right).
\end{equation}
Thus $\gamma_0=1$, $\gamma_1=\gamma_2=0$,
$\gamma_3=2\zeta(3)/3$, $\gamma_5=2\zeta(5)/5$,
and $\gamma_6=2\zeta(3)^2/9$.
\end{proposition}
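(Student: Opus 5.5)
We compute both Gamma classes from their Chern roots and compare. By Proposition~\ref{prop:geometry} and the normal sequence $0\to T_{F_n}\to i^*T_{Y_n}\to\mathcal O_{F_n}(H)\to0$, the tangent bundle of $F_n$ has $K$-class $\sum_i\mathcal O(2p_i)-\mathcal O(H)$. Since $\widehat\Gamma$ is multiplicative, this gives
\[
 \widehat\Gamma_{F_n}=\frac{\prod_{i=1}^n\Gamma(1+2p_i)}{\Gamma(1+H)}
\]
restricted to $F_n$. For $X_n$, the split sequence $\iota^*T_{X_n}=T_{F_n}\oplus K_{F_n}$ used in Theorem~\ref{thm:virtual} gives $\widehat\Gamma_{X_n}|_{F_n}=\widehat\Gamma_{F_n}\Gamma(1-H)$, since $c_1(K_{F_n})=-H$. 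This proves the first equality.

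Next we use $p_i^2=0$ in $\mathcal R$. Then $\Gamma(1+2p_i)=1-2\gamma_E p_i$ exactly, with $\gamma_E$ the Euler constant, and hence
\[
 \prod_i\Gamma(1+2p_i)=\exp\Bigl(-2\gamma_E\sum_ip_i\Bigr)=e^{-2\gamma_E H}
\]
in $\mathcal R$. The key observation is that $\log(1-2\gamma_Ep_i)=-2\gamma_Ep_i$ holds modulo $p_i^2$, because all higher terms are multiples of $p_i^2$. This is the one point where the square-free relations are essential, and I expect it to be the only delicate step: one must not substitute $p_i=H/n$ before this simplification.

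It remains to expand
\[
 \frac{e^{-2\gamma_EH}\,\Gamma(1-H)}{\Gamma(1+H)}
\]
using $\log\Gamma(1+t)=-\gamma_Et+\sum_{k\ge2}(-1)^k\zeta(k)t^k/k$. In $\log\Gamma(1-H)-\log\Gamma(1+H)$ the even-$k$ terms cancel and the odd ones double. The result is $2\gamma_EH+\sum_{r\ge1}2\zeta(2r+1)H^{2r+1}/(2r+1)$. The linear term cancels the factor $e^{-2\gamma_EH}$, which leaves exactly the stated exponential generating function in $t=H$.

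Finally, we truncate at $k\le n-1$, since $H^{n}$ vanishes on $F_n$, which has dimension $n-1$. Reading off the low coefficients gives $\gamma_0=1$, $\gamma_1=\gamma_2=0$, $\gamma_3=2\zeta(3)/3$ and $\gamma_5=2\zeta(5)/5$ (there is no cross term at order 5). Squaring the cubic term gives $\gamma_6=\tfrac12(2\zeta(3)/3)^2=2\zeta(3)^2/9$.
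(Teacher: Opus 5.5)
Your proposal is correct and follows essentially the same route as the paper. You take the $K$-class $[T_{X_n}|_{F_n}]=\sum_i\mathcal O(2p_i)-\mathcal O(H)+\mathcal O(-H)$ and use $p_i^2=0$ to collapse $\prod_i\Gamma(1+2p_i)$ to $e^{-2\gamma_E H}$. The ratio $\Gamma(1-H)/\Gamma(1+H)$ then cancels the linear and even-degree terms, after which you exponentiate, truncate at $H^{n-1}$, and read off the listed coefficients correctly.
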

\begin{proof}
The tangent $K$-class is
$[T_{X_n}|_{F_n}]=[T_{Y_n}|_{F_n}]-[\mathcal O(H)]+[\mathcal O(-H)]$.
Therefore the Gamma class equals
$\prod_i\Gamma(1+2p_i)\,\Gamma(1-H)/\Gamma(1+H)$.
In the Taylor expansion
$\log\Gamma(1+w)=-\gamma_Ew+\sum_{k\geq2}(-1)^k\zeta(k)w^k/k$,
the factor $p_i^2=0$ makes the first logarithm sum to $-2\gamma_EH$.
The Gamma ratio cancels this linear term and all even powers,
leaving $\sum_{r\geq1}2\zeta(2r+1)H^{2r+1}/(2r+1)$.
Truncate above degree $n-1$ and exponentiate. This also explains why
products of odd zeta values first occur when $n\geq7$.
\end{proof}

We need the following theorem about the
Laplace transform which is not in Iritani's paper~\cite{Iritani}.
For the relation between the conventions in that work and the present work we 
refer to remark~\ref{rem:analytic-source-sign}.

\begin{theorem}[Signed Laplace/Gamma formula]\label{thm:laplace}
Let $s=-T<0$ and $y=T^{-1}=-1/s>0$, with the real logarithm.
For $T$ sufficiently large (in particular for $T>n^2$),
\begin{equation}\label{eq:physicalGamma}
      {
 I_{\circleddash}^{(n-1)}(-1/y)=y\int_{F_n}\epsilon\bigl(\Ish(y,1)\bigr)
                           \widehat\Gamma_{X_n}|_{F_n}.}
\end{equation}
It is an identity of convergent logarithmic germs. In particular
\begin{equation}\label{eq:physicalPF}
 P_{n-1}\left(\frac{I_{\circleddash}^{(n-1)}(-1/y)}{y}\right)=(-1)^{n-1} n!.
\end{equation}
Its leading boundary polynomial is
\begin{equation}\label{eq:boundary}
 I_{\circleddash}^{(n-1)}(-1/y)=n!y\sum_{k=0}^{n-1}
 \frac{(-1)^{n-1-k}\gamma_k}{(n-1-k)!}(\log y)^{n-1-k}
 +O\bigl(y^2(1+|\log y|)^{n-1}\bigr).
\end{equation}
\end{theorem}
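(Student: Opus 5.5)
The plan is to start from the positive Laplace representation~\eqref{eq:Laplacepositive} of Proposition~\ref{prop:analyticdomain} at $s=-T$, insert Iritani's identity (Theorem~\ref{input:Gamma}) for the inner simplex integral, and integrate in $r$ term by term. For each degree $\mathbf d$ the summand of $I_{F_n}(r\mathbf1,-1)$ is $r^{D}e^{-H\log r}$ times a class independent of $r$. Here $e^{-H\log r}=r^{-H}$ is a finite polynomial in $\log r$, because $H$ is nilpotent. So the $r$-integral is the nilpotent-argument Laplace transform
\[
\int_0^\infty e^{-rT}r^{D-H}\,dr=\Gamma(D+1-H)\,T^{-D-1+H},
\]
read as its finite Taylor expansion in $H$. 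This gives the prefactor $y=T^{-1}$ and the powers $y^{D}$. It also gives $T^{H}=y^{-H}=\epsilon(y^{H})$.

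Next, $\Gamma(D+1-H)=\Gamma(1-H)\prod_{k=1}^D(k-H)$. The factor $\Gamma(1-H)$ combines with $\widehat\Gamma_{F_n}$ to give $\widehat\Gamma_{X_n}|_{F_n}$ by Proposition~\ref{prop:Gamma}. The remaining product $\prod_{k=1}^D(k-H)$ multiplies the numerator $\prod_{k=1}^D(H-k)$ of $I_{F_n}(r\mathbf1,-1)$ at $z=-1$. Together they give $(-1)^D\prod_{k=1}^D(k-H)^2$. The denominators $\prod(p_i-k)^2$ become $\prod(k-p_i)^2$, and these carry no sign. This is the degree sign~\eqref{eq:laplace-degree-sign}.

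Comparing with~\eqref{eq:Isharp} at $z=1$, each summand is exactly $\epsilon$ applied to the degree-$\mathbf d$ term of $\Ish(y,1)$, with $\epsilon$ acting only on the classes $p_i$ and not on $y$ or $\log y$. Summing over $\mathbf d$ gives~\eqref{eq:physicalGamma}. The sum is a pairing on $F_n$, so only the restriction of the classes matters, and $W_n$ does not enter.

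The main obstacle is justifying the interchange of $\int_0^\infty dr$ with the infinite sum over $\mathbf d$, together with convergence. Near $r=0$ the series converges for every $r$, since its coefficients decay like $n^{2D}/(D!)^2$ up to logarithms. The problem is that interchanging with $\int_0^\infty e^{-rT}\,dr$ produces the factor $D!$ from $\Gamma(D+1-H)$, leaving terms of size roughly $c_D/T^{D}\le n^{2D}T^{-D}$, with harmonic-sum logarithmic factors as in the proof of Theorem~\ref{thm:extraction}. I would bound the absolute values of the coefficients of each $H^j$, which are finite sums of $\log$-powers times sign-free products. This yields an integrable dominating function $\sum_D n^{2D}r^{D}(1+|\log r|)^{n}/(D!)^2\cdot e^{-rT}$ for $T>n^2$. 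Fubini then applies, and the resulting series converges for $|y|<n^{-2}$, which gives the identity of convergent logarithmic germs. Both sides are real-analytic in $T>n^2$, so the identity holds on that whole range.

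For~\eqref{eq:physicalPF}, $\epsilon$ is a ring automorphism commuting with $P_{n-1}$, which has scalar coefficients. So $P_{n-1}(I/y)=\int_{F_n}\epsilon(P_{n-1}\Ish)\widehat\Gamma_{X_n}$. Theorem~\ref{thm:PF} gives $P_{n-1}\Ish=H^{n-1}+H^n\log y$. On $F_n$ the term $H^n$ restricts to zero, and $\epsilon(H^{n-1})=(-1)^{n-1}H^{n-1}$ pairs with $\widehat\Gamma$ (leading term $\gamma_0=1$) to give $(-1)^{n-1}n!$. For~\eqref{eq:boundary}, I would take the $D=0$ term: $\epsilon(y^{H})\sum_k\gamma_kH^k$ integrates to $n!\sum_k\gamma_k(-\log y)^{n-1-k}/(n-1-k)!$, and the $D\ge1$ terms are $O(y(1+|\log y|)^{n-1})$ inside the pairing.
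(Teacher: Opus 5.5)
Your proposal is correct and follows the paper's proof essentially step for step: insert Iritani's identity into~\eqref{eq:Laplacepositive}, take the termwise nilpotent Laplace transform of $r^{D-H}$, use~\eqref{eq:laplace-degree-sign} to produce the square and the factor $(-1)^D$, justify the interchange by a Tonelli--Fubini majorant valid for $T>n^2$, then apply Theorem~\ref{thm:PF} for~\eqref{eq:physicalPF} and keep the $D=0$ term for~\eqref{eq:boundary}. One slip in the majorant: the summed Fano coefficients are $\sum_{|\mathbf d|=D}D!/\prod_i d_i!^2=c_D/D!\le n^{2D}/D!$, not $n^{2D}/(D!)^2$, so your dominating function should carry $1/D!$ (together with the harmonic factors $(1+\log(D+1))^k$); only then does the $r$-integration give $\sum_D n^{2D}T^{-D-1}$ up to logarithms, which is where $T>n^2$ actually comes from, as your preceding sentence about $c_D/T^D$ already assumes.
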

\begin{proof}
Insert Theorem~\ref{input:Gamma} into~\eqref{eq:Laplacepositive}.
The degree-$D$ coefficient has $r^{D-H}$, so its Laplace integral is
\[
 \int_0^\infty e^{-Tr}r^{D-H}dr
 =T^{-1-D+H}\Gamma(1+D-H).
\]
This identity for a nilpotent $H$ means finitely many derivatives of
the elementary Gamma integral. The key identity is
\begin{equation}\label{eq:laplace-degree-sign}
 \Gamma(1+D-H)=(-1)^D\Gamma(1-H)\prod_{k=1}^D(H-k).
\end{equation}
Multiplying the existing Fano numerator $\prod(H-k)$ gives the square
and the factor $(-1)^D$. Thus the result is
\[
 y\int_{F_n} y^{-H}\sum_{\mathbf d\geq0}(-1)^Dy^D
 \frac{\prod_{k=1}^D(H-k)^2}{\prod_i\prod_{k=1}^{d_i}(p_i-k)^2}
 \widehat\Gamma_{F_n}\Gamma(1-H),
\]
which is precisely~\eqref{eq:physicalGamma}.

To justify termwise integration, expand only to the finitely many
cohomological degrees that can contribute. The Fano scalar coefficients
are $D!/\prod d_i!^2$; integrating $r^D$ multiplies by
$D!T^{-D-1}$. Logarithmic derivatives introduce only fixed powers of
$1+\log(D+1)+|\log T|$; the same holds for finite product expansions.
The sum of the resulting absolute integrals is bounded by a constant
times $T^{-1}\sum_D(n^2/T)^D$ times such a fixed logarithmic power.
It converges for $T>n^2$, giving the required interchange.

Apply $P_{n-1}$ to~\eqref{eq:physicalGamma} and use
Theorem~\ref{thm:PF}. On $F_n$, the top-degree term is
$\epsilon(H^{n-1} y^H)=(-1)^{n-1}H^{n-1}$; the Gamma factor contributes only its
constant term. This gives~\eqref{eq:physicalPF}.
For~\eqref{eq:boundary} retain the degree-zero Novikov term
$y^{-H}$; all other terms satisfy the stated convergent-germ bound.
\end{proof}

\begin{remark}[The analytic source and the sign conventions]
\label{rem:analytic-source-sign}
Three separate conventions must be matched before any displayed formula is
transcribed between the present work and the one of Iritani~\cite{Iritani}.

First, the denominator in~\eqref{eq:simplex} is $\phi-s$, whereas the
amplitude in~\cite{Iritani} uses $s-\phi$; this is an overall sign,
independent of the curve degree. Second, the period normalizations
$\Pi_{n-1}=(-1)^{n-1}yA$ and $\pi=-yA$ differ with loop parity, as detailed in
Section~\ref{sec:dictionary}. Third, and least visible, is the degree
factor itself.
The degree factor can be incorporated in either of two equivalent ways.
Here it appears explicitly in the series~\eqref{eq:Isharp}, which contains
$(-1)^D$, with $y>0$ and the real logarithm understood.
Alternatively one may evaluate an unsigned
squared-numerator series at a negative argument, since
$(-x)^{\mathbf d}=(-1)^{D}x^{\mathbf d}$ when $|\mathbf d|=D$; this
produces the same factor automatically, but then leaves the branch of
$x^H$ at negative $x$ to be fixed (or its cohomological phase to be
removed separately). 
The displayed formula in Iritani's Theorem~8 therefore does not exhibit an additional
explicit $(-1)^D$ in that convention. 
 In the convention of the present paper the termwise Laplace transform of equation~(10) produces the factor $(-1)^D$ in~\eqref{eq:laplace-degree-sign};
this is the convention used in Theorem~\ref{thm:laplace} and in the subsequent numerical checks.
\end{remark}
%%%%%%%%%%%%%%%%%%%%%%%%%%%%%%%%%%%%%%%%%%%%%%%%%%%%%%%%%%%%%%%%%%%%%%%%
\section{The uniform descendant assembly}\label{sec:assembly}
This section gives the all-loop analytic formula~\eqref{eq:allloop} for the sunset integral, and its zeta-free bracket~\eqref{eq:rationalbracket} with rational coefficients.
The proof uses the uniform Gamma polynomial of Proposition~\ref{prop:Gamma} and the uniform differential operator of Theorem~\ref{thm:PF}, together with the top-degree extraction of
Theorem~\ref{thm:extraction} and the triangular relation of Theorem~\ref{thm:serre}.
This expansion near the large-momentum limit, where the monodromy is maximally
unipotent, is the main result of the paper. It is the uniform all-loop form of
the two-loop theorem of~\cite{BKV}. Its analytic shape is that of
\cite[Theorem 4.1]{Vanhove2026}, proved there by a Mellin--Barnes residue
computation; what is established here is the enumerative meaning of every
coefficient, as set out in the introduction.

Define the signed period prefactor
\[
 \Pi_{n-1}(s)=(-1)^{n-1} y A(y),\qquad y=-1/s.
\]
Thus $dR/ds=yA(y)=(-1)^{n-1}\Pi_{n-1}$.
In particular, $\Pi_3=-yA$ and $\Pi_4=yA$.
For $0\leq j\leq n-1$ define
\begin{equation}\label{eq:Vj}
\begin{split}
 \mathcal T_j(R,Q)={}&\frac{n!}{j!}R^j
 -\sum_{a=0}^{j-2}\frac{(n-2-a)!}{(j-a-2)!}
                      R^{j-a-2}\Dcal_a^\Delta(Q)\\
 &-\sum_{a=0}^{j-1}\frac{(n-2-a)!}{(j-a-1)!}
                      R^{j-a-1}\thQ\Dcal_a^\Delta(Q).
\end{split}
\end{equation}
An empty sum is zero, so $\mathcal T_0=n!$.

\begin{theorem}[All-loop analytic--enumerative identity]\label{thm:assembly}
As convergent logarithmic germs at $s=-\infty$,
\begin{equation}\label{eq:allloop}
      {
 I_{\circleddash}^{(n-1)}(s)=\Pi_{n-1}(s)\sum_{k=0}^{n-1}(-1)^k\gamma_k
              \mathcal T_{n-1-k}(R(s),Q(s)).}
\end{equation}
Its zeta-free bracket is
\begin{equation}\label{eq:rationalbracket}
 \mathcal T_{n-1}=n\,R^{n-1}+\sum_{j=0}^{n-2}h_j(Q)R^j,
\end{equation}
where
\begin{equation}\label{eq:hj}
 h_{n-2}=-\thQ\Dcal_0^\Delta,\qquad
 h_j=-(j+1)\Dcal_{n-3-j}^\Delta
                 -\thQ\Dcal_{n-2-j}^\Delta\quad(0\leq j\leq n-3).
\end{equation}
Here $\theta_Q=Q\partial_Q$ and ``zeta-free'' denotes the $k=0$ contribution in
the explicit Gamma expansion, and does not assume any unproved algebraic
independence of zeta values. Each $h_j$ has rational Taylor coefficients and
zero constant term.
\end{theorem}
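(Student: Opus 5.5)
The plan is to start from the signed Gamma formula~\eqref{eq:physicalGamma} of Theorem~\ref{thm:laplace}, expand $\Ish(y,1)=\sum_jH^jW_j$ with $W_j=Av_j$, and rewrite each $v_j$ through the triangular relation~\eqref{eq:vtriangular} of Theorem~\ref{thm:serre}. Substituting Theorem~\ref{thm:extraction} then turns the $K_k$ into descendant series.

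\textbf{Step 1: the cohomological integral.} The involution $\epsilon$ sends $H^j$ to $(-1)^jH^j$. Multiplying by $\widehat\Gamma_{X_n}|_{F_n}=\sum_k\gamma_kH^k$ from Proposition~\ref{prop:Gamma} and using $\int_FH^{n-1}=n!$, only total degree $n-1$ survives. This gives
\[
 I^{(n-1)}_\circleddash=y\,n!\sum_{k=0}^{n-1}(-1)^{n-1-k}\gamma_kW_{n-1-k}
 =(-1)^{n-1}yA\sum_k(-1)^k\gamma_k\,n!\,v_{n-1-k}.
\]
The top coefficient $W_n$ drops out because $H^n=0$ on $F_n$. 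Hence it suffices to prove $n!\,v_j=\mathcal T_j$ for $0\le j\le n-1$.

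\textbf{Step 2: the identity $n!\,v_j=\mathcal T_j$.} Theorem~\ref{thm:extraction} gives $K_{a+2}=-\frac{(n-2-a)!}{n!}\Dcal_a^\Delta$, viewed as a function of $Q$. Put $k=a+2$ in~\eqref{eq:vtriangular}:
\begin{itemize}
\item the first sum becomes $-\sum_{a=0}^{j-2}\frac{(n-2-a)!}{(j-a-2)!}R^{j-a-2}\Dcal_a^\Delta$;
\item the second sum, with $k\le j+1$, i.e. $a\le j-1$, becomes the $\thQ$-terms of~\eqref{eq:Vj}.
\end{itemize}
One point needs care: for $j=n-1$ the second sum reaches $k=n$, so it uses $K_n$, i.e. $a=n-2$. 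This is exactly why Theorem~\ref{thm:extraction} covers $a=n-2$ through the ambient lift. I also need~\eqref{eq:vtriangular} to hold at $j=n-1$. It does, since Theorem~\ref{thm:serre} compares $H^{j+1}$ coefficients up to $H^n$ in the ambient ring before any restriction to $F_n$.

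\textbf{Step 3: the zeta-free bracket.} Set $j=n-1$ in~\eqref{eq:Vj}.
\begin{itemize}
\item The leading term is $\frac{n!}{(n-1)!}R^{n-1}=nR^{n-1}$.
\item The coefficient of $R^{n-2}$ comes only from $a=0$ in the $\thQ$ sum, with weight $(n-2)!/(n-2)!=1$. This gives $h_{n-2}=-\thQ\Dcal_0^\Delta$.
\item For $R^j$ with $j\le n-3$, the $\Dcal$-sum needs $a=n-3-j$, with weight $(j+1)!/j!=j+1$. The $\thQ$-sum needs $a=n-2-j$, with weight $j!/j!=1$. This gives~\eqref{eq:hj}.
\end{itemize}
Rationality and vanishing constant terms follow from Theorem~\ref{thm:extraction}, because $K_j(0)=0$ and $y(Q)$ has rational coefficients. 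The sum defining $\Dcal_a^\Delta$ also runs only over $\beta>0$.

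\textbf{Remaining issues: convergence and the main obstacle.} Theorem~\ref{thm:laplace} holds for $y<n^{-2}$ with $y>0$. The inverse mirror map exists as a convergent germ. So all identities are equalities of convergent logarithmic germs in $R$ and $Q$, and $\log y$ is recovered from $R$ via $R=\log y+g(y)$.

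The main obstacle is bookkeeping of signs and normalizations. I need $\epsilon$ to combine correctly with the $(-1)^D$ inside $\Ish$; this is already built into Theorem~\ref{thm:laplace}. I also need the global sign $(-1)^{n-1}$ to match $\Pi_{n-1}$, and the $\thQ$-derivative conventions in~\eqref{eq:vtriangular} to align with $\theta_y=A\thQ$.

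As a check, I would compare the $k=0$, degree-zero-in-$Q$ part against the boundary polynomial~\eqref{eq:boundary}. At $Q$-order zero we have $R=\log y+O(y)$ and $\mathcal T_j=\frac{n!}{j!}R^j$, which reproduces~\eqref{eq:boundary}. That confirms the signs.
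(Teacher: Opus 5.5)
Your proposal is correct and follows the paper's own proof essentially step for step. Both arguments take top-degree extraction in the signed Gamma formula, with the involution $\epsilon$ giving $(-1)^{n-1}yA\sum_k(-1)^k\gamma_k\,n!\,v_{n-1-k}$; then substitute \eqref{eq:extraction} into \eqref{eq:vtriangular} to obtain $n!\,v_j=\mathcal T_j$; and finally simplify the factorial ratios at $j=n-1$. Your remarks that the case $j=n-1$ needs $a=n-2$, hence the ambient lift, and your sign check against \eqref{eq:boundary} are both sound, though the paper does not spell them out.
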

\begin{proof}
The involution in~\eqref{eq:physicalGamma} acts by $(-1)^j$ on $H^j$.
Top-degree extraction therefore gives
\[
 I_{\circleddash}^{(n-1)}(s)=n!yA\sum_{k=0}^{n-1}(-1)^{n-1-k}\gamma_kv_{n-1-k}
       =\Pi_{n-1}\sum_{k=0}^{n-1}(-1)^k\gamma_k(n!v_{n-1-k}).
\]
Substitute~\eqref{eq:extraction} into~\eqref{eq:vtriangular}.
The resulting expression for $n!v_j$ is exactly~\eqref{eq:Vj}.
For $j=n-1$ the first coefficient is $n!/(n-1)!=n$; the two factorial
ratios simplify respectively to $n-2-a$ and $1$, giving~\eqref{eq:hj}.
Rationality and convergence follow from
Theorem~\ref{thm:extraction}. All signs and the factor of the curve
degree in $\thQ$ are retained.
\end{proof}

\begin{corollary}[Specialization to three loops]\label{cor:nfour}
For $n=4$ put $\Pi_3=-yA$ and
$\delta_0=H^2/2$. Then
\[
 I_3=\Pi_3\bigl[4R^3+f_2R^2+f_1R+f_0-16\zeta(3)\bigr],
\]
with
\[
 f_2=-\thQ\Dcal_0^\Delta,\qquad
 f_1=-2\Dcal_0^\Delta-\thQ\Dcal_1^\Delta,\qquad
 f_0=-\Dcal_1^\Delta-\thQ\Dcal_2^\Delta,
\]
and
$\Dcal_0^\Delta=-12K_2$, $\Dcal_1^\Delta=-24K_3$,
$\Dcal_2^\Delta=-24K_4$.
\end{corollary}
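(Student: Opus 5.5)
The corollary is the case $n=4$ of Theorem~\ref{thm:assembly}, and I will get it purely by substitution: no new geometric or analytic input is needed. I will treat the three pieces of the statement separately: the prefactor, the Gamma sum in~\eqref{eq:allloop}, and the descendant series coming from Theorem~\ref{thm:extraction}.

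\emph{Prefactor and insertion.} From the definition, $\Pi_3=(-1)^{3}yA=-yA$. By Proposition~\ref{prop:geometry}, $\delta_0=H^{n-2}/(n-2)!=H^2/2$.

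\emph{Gamma sum.} In~\eqref{eq:allloop} with $n=4$ the sum runs over $k=0,\dots,3$. Proposition~\ref{prop:Gamma} gives $\gamma_0=1$, $\gamma_1=\gamma_2=0$ and $\gamma_3=\tfrac23\zeta(3)$. Hence
\[
\sum_{k=0}^{3}(-1)^k\gamma_k\mathcal T_{3-k}=\mathcal T_3-\gamma_3\mathcal T_0 .
\]
Since $\mathcal T_0=n!=24$ by~\eqref{eq:Vj}, the second term is $\tfrac23\zeta(3)\cdot 24=16\zeta(3)$. This produces the constant $-16\zeta(3)$ in the statement.

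\emph{Zeta-free bracket.} I read $\mathcal T_3$ off~\eqref{eq:rationalbracket}--\eqref{eq:hj} with $n=4$. The leading coefficient is $nR^{n-1}=4R^3$. The remaining coefficients are
\[
h_2=-\thQ\Dcal_0^\Delta,\qquad
h_1=-2\Dcal_0^\Delta-\thQ\Dcal_1^\Delta,\qquad
h_0=-\Dcal_1^\Delta-\thQ\Dcal_2^\Delta .
\]
Here $h_1$ is the case $j=1$, where $(j+1)=2$, $n-3-j=0$ and $n-2-j=1$; $h_0$ is the case $j=0$, where the indices are $1$ and $2$. These are exactly $f_2,f_1,f_0$.

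\emph{Descendant series.} Finally,~\eqref{eq:extraction} with $n!=24$ gives
\[
\Dcal_a^\Delta=-\frac{24}{(2-a)!}\,K_{a+2}.
\]
For $a=0,1,2$ this yields the coefficients $-12$, $-24$ and $-24$ in front of $K_2,K_3,K_4$ respectively.

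There is no real obstacle here. The only point needing care is the bookkeeping of the indices in~\eqref{eq:hj}. The vanishing of $\gamma_1$ and $\gamma_2$ is what leaves $-\gamma_3\mathcal T_0$ as the only zeta-dependent term, and it is constant in $R$.
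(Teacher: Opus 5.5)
Your proposal is correct and is essentially the paper's proof: substitute $n=4$, $\gamma_1=\gamma_2=0$, $\gamma_3=2\zeta(3)/3$ and $\mathcal T_0=24$ into~\eqref{eq:allloop}, then read off the $h_j$ from~\eqref{eq:hj} and the $K$-coefficients from~\eqref{eq:extraction}. Your index bookkeeping for $h_1,h_0$ and the factorials $-24/(2-a)!$ all check out.
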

\begin{proof}
Use $n!=24$, $\gamma_3=2\zeta(3)/3$ and
$\mathcal T_0=24$ in~\eqref{eq:allloop}; then apply~\eqref{eq:hj} and~\eqref{eq:extraction}.
\end{proof}

\begin{corollary}[Specialization to four loops]\label{cor:nfive}
For $n=5$ put $\Pi_4=yA$. Then
\[
 I_4=\Pi_4\left[5R^4+\sum_{j=0}^3h_jR^j
      -80\zeta(3)\left(R-\frac1{20}\thQ\Dcal_0^\Delta\right)\right],
\]
where
\[
\begin{array}{ll}
 h_3=-\thQ\Dcal_0^\Delta,&
 h_2=-3\Dcal_0^\Delta-\thQ\Dcal_1^\Delta,\\
 h_1=-2\Dcal_1^\Delta-\thQ\Dcal_2^\Delta,&
 h_0=-\Dcal_2^\Delta-\thQ\Dcal_3^\Delta,
\end{array}
\]
and $(\Dcal_0^\Delta,\Dcal_1^\Delta,\Dcal_2^\Delta,\Dcal_3^\Delta)
=(-20K_2,-60K_3,-120K_4,-120K_5)$.
\end{corollary}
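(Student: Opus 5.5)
This corollary follows from Theorem~\ref{thm:assembly} by specializing to $n=5$, in the same way as Corollary~\ref{cor:nfour}. First, the prefactor: $\Pi_4=(-1)^{4}yA=yA$. Second, since $\gamma_1=\gamma_2=0$ by Proposition~\ref{prop:Gamma}, the sum in~\eqref{eq:allloop} runs over $0\le k\le4$ and reduces to the terms $k=0$ and $k=3$, together with $k=4$ only if $\gamma_4\neq0$. We have $\gamma_4=0$, because the exponent $\sum_r 2\zeta(2r+1)t^{2r+1}/(2r+1)$ starts at $t^3$, so the first quadratic term is $t^6$. Hence the bracket equals
\[
 \mathcal T_4-\gamma_3\mathcal T_1 .
\]

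For the $k=0$ term, apply~\eqref{eq:rationalbracket} and~\eqref{eq:hj} with $n=5$. This gives $\mathcal T_4=5R^4+\sum_{j=0}^3h_jR^j$ with $h_3=-\thQ\Dcal_0^\Delta$ and $h_j=-(j+1)\Dcal_{2-j}^\Delta-\thQ\Dcal_{3-j}^\Delta$ for $j=2,1,0$. These are exactly the displayed $h_2,h_1,h_0$.

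For the $k=3$ term, compute $\mathcal T_1$ directly from~\eqref{eq:Vj} with $j=1$. The first sum is empty. The second sum has only the term $a=0$, with coefficient $(n-2)!/0!=3!=6$. So
\[
 \mathcal T_1=\tfrac{5!}{1!}R-6\thQ\Dcal_0^\Delta=120R-6\thQ\Dcal_0^\Delta .
\]
Multiplying by $\gamma_3=2\zeta(3)/3$ gives $80\zeta(3)R-4\zeta(3)\thQ\Dcal_0^\Delta=80\zeta(3)\bigl(R-\tfrac1{20}\thQ\Dcal_0^\Delta\bigr)$. With the sign $(-1)^3$ this is the displayed zeta term.

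Finally, the expressions for the $\Dcal_a^\Delta$ follow from~\eqref{eq:extraction} with $n!=120$ and $(n-2-a)!=6,2,1,1$ for $a=0,1,2,3$. These give the coefficients $-20,-60,-120,-120$ of $K_2,\dots,K_5$. There is no real obstacle here; the only point needing care is the vanishing of $\gamma_4$, together with keeping track of the sign $(-1)^k$ and the factorial ratios.
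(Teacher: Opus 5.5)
Your proposal is correct and follows the paper's own proof exactly. You specialize Theorem~\ref{thm:assembly} at $n=5$, observe that only $k=0,3$ contribute, compute $\mathcal T_1=120R-6\thQ\Dcal_0^\Delta$, and read off the $h_j$ and the coefficients of $K_2,\dots,K_5$ from \eqref{eq:hj} and \eqref{eq:extraction}; your explicit check that $\gamma_4=0$ is a step the paper leaves implicit.
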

\begin{proof}
Now $n!=120$ and
$\mathcal T_1=120R-6\thQ\Dcal_0^\Delta$.
Only $k=0,3$ contribute in~\eqref{eq:allloop}.
\end{proof}

\begin{remark}[The two initial specializations]\label{rem:lowloop}
Corollaries~\ref{cor:nfour} and~\ref{cor:nfive} are the statements used in the
companion paper~\cite{PaperII}, that gives an independent analytic normalization
and a separate primary coefficient bridge for the $n=4$ identity from the
Picard--Fuchs equation and a Mellin transform, an explicit quantum-$K$ formula
and fourfold integrality statement at $n=4$, the numerical descendant
potentials, and the comparison of the $n=3,4,5$ normalizations with the
local--relative Euler factors. 
\end{remark}

\begin{remark}[What starts at higher loop order]
At $n=6$ the $\zeta(5)$ term first appears, in addition to the
$\zeta(3)$ term multiplying $\mathcal T_2$.
At $n=7$ the product $\zeta(3)^2$ appears.
Thus it would be incorrect to extrapolate the special $n=4,5$
pattern to a claim that only a single $\zeta(3)$ boundary term occurs
for every loop number. Formula~\eqref{eq:allloop} fixes both the zeta
polynomial and the descendant correction multiplying each of its terms.
\end{remark}

\section{A global real mirror coordinate and continuation}\label{sec:real}
Let $\nu$ be normalized Haar measure on $(S^1)^n$ and put
$\mathfrak q(\mathbf w)=|w_1+\cdots+w_n|^2$, so $0\leq \mathfrak q\leq n^2$.
The symbol $\mathfrak q(\mathbf w)$ in this section is a real function on the torus,
not a Novikov variable $Q$.

\begin{theorem}[Positive torus formula]\label{thm:torus}
The power series in~\eqref{eq:mirrorcoordinate} extend to
$\C\setminus(-\infty,-n^{-2}]$ by
\[
 A(y)=\int\frac{d\nu}{1+y\mathfrak q(\mathbf w)},\qquad
 g(y)=-\int\log(1+y\mathfrak q(\mathbf w))\,d\nu,
\]
using the principal logarithm. Their Taylor radii at zero are exactly
$n^{-2}$. For $y>0$, $A(y)>0$ and $Q(y)=ye^{g(y)}$ is strictly increasing.
It maps $(0,\infty)$ bijectively onto $(0,Q_\infty)$, where
\begin{equation}\label{eq:Qinfty}
 Q_\infty=\exp\left(-\int\log|w_1+\cdots+w_n|^2d\nu\right),
 \qquad n^{-2}\leq Q_\infty\leq1.
\end{equation}
The inverse $y(Q)$ is real analytic on that open interval.
\end{theorem}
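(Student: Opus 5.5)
The plan is to derive both integral formulas from a single moment identity on the torus and then read off the remaining claims from positivity of $\mathfrak q$. For $\mathbf d$ with $|\mathbf d|=D$, expanding $(w_1+\cdots+w_n)^D(\bar w_1+\cdots+\bar w_n)^D$ and integrating against $\nu$ kills every monomial except those with equal exponents in $w$ and $\bar w$. This gives
\[
 \int\mathfrak q^D\,d\nu=\sum_{|\mathbf d|=D}\binom{D}{\mathbf d}^{2}=c_D .
\]
Since $0\le\mathfrak q\le n^2$, for $|y|<n^{-2}$ the geometric series $\sum_D(-y\mathfrak q)^D$ converges uniformly on the torus. Integrating termwise gives $A(y)=\sum_D(-1)^Dc_Dy^D=\int d\nu/(1+y\mathfrak q)$, which is Theorem~\ref{thm:serre}. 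The same argument with $\log(1+y\mathfrak q)=-\sum_{D\ge1}(-y\mathfrak q)^D/D$ gives $g(y)=\sum_{D\ge1}(-1)^Dc_Dy^D/D=-\int\log(1+y\mathfrak q)\,d\nu$.

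Next I would extend both integrals holomorphically. For $y\in\C\setminus(-\infty,-n^{-2}]$ and $t\in[0,n^2]$, the number $1+yt$ never lies in $(-\infty,0]$. It is also bounded away from that set, locally uniformly in $y$, because $t$ ranges over a compact set. Hence the integrands are holomorphic in $y$ and locally uniformly bounded, and differentiation under the integral gives holomorphic extensions that agree with the power series near $0$.

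For the exact Taylor radius I would use $c_D\ge0$ and $c_D^{1/D}\to n^2$. The lower bound $c_D\ge n^{-2D}\bigl(\sum_{\mathbf d}\binom D{\mathbf d}\bigr)^2/\binom{D+n-1}{n-1}$ follows from Cauchy--Schwarz, and the upper bound $c_D\le n^{2D}$ is already in the proof of Theorem~\ref{thm:extraction}. The root test then gives radius exactly $n^{-2}$ for $A$, and the same holds for $g$ since $c_D/D$ has the same $D$-th root limit.

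For $y>0$ the integrand of $A$ is positive, so $A(y)>0$. Then $\theta_y\log Q=\theta_y R=A>0$, so $Q=e^R$ is strictly increasing on $(0,\infty)$; here I am using $\log Q=R=\log y+g$ from \eqref{eq:mirrorcoordinate} and $A=\theta_yR$ from Theorem~\ref{thm:serre}. As $y\to0^+$, $Q\sim y\to0$. As $y\to\infty$,
\[
 \log Q=-\int\log\bigl(y^{-1}+\mathfrak q\bigr)\,d\nu ,
\]
and the integrand increases to $-\log\mathfrak q$ as $y^{-1}\downarrow0$. Monotone convergence (in the form for increasing integrands bounded below, since $-\log(y^{-1}+\mathfrak q)\ge-\log(1+n^2)$ for $y\ge1$) gives $\log Q\to-\int\log\mathfrak q\,d\nu$.

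The main obstacle is showing this limit is finite and lies in $[-2\log n,0]$. The upper bound $-\int\log\mathfrak q\le-2\log n$, i.e.\ $Q_\infty\le1$ after the sign is handled, would come from Jensen's formula: $\log|w_1+\cdots+w_n|$ is plurisubharmonic, and averaging over the torus in $w_1$ first gives $\int\log|w_1+c|\,d\nu(w_1)=\log^+|c|\ge0$. This yields $\int\log\mathfrak q\,d\nu\ge0$, in particular finiteness, and so $Q_\infty\le1$. The other bound is the trivial $\int\log\mathfrak q\le\log n^2$, giving $Q_\infty\ge n^{-2}$.

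By continuity and strict monotonicity, $Q$ maps $(0,\infty)$ bijectively onto $(0,Q_\infty)$. Since $dQ/dy=QA/y>0$, the real-analytic inverse function theorem shows that $y(Q)$ is real analytic on this open interval.
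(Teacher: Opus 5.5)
Your argument is correct and follows the paper's proof essentially step for step: the moment identity $c_D=\int\mathfrak q^D\,d\nu$, the compactness argument for the holomorphic extension, the Cauchy--Schwarz bounds on $c_D$, monotonicity from $\theta_y\log Q=A>0$, and the circle Jensen formula $\int\log|w_1+c|\,d\nu(w_1)=\log^+|c|$ together with monotone convergence for $Q_\infty$. Three slips need fixing: the lower bound should read $c_D\ge n^{2D}/\binom{D+n-1}{n-1}$, since your extra factor $n^{-2D}$ would leave only $c_D\ge1/\binom{D+n-1}{n-1}$, which does not give the radius; the Jensen step yields $-\int\log\mathfrak q\,d\nu\le0$, not $\le-2\log n$; and the iterated integration should be justified by Tonelli applied to the nonnegative function $2\log n-\log\mathfrak q$, as the paper does.
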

\begin{proof}
The constant term of
$(w_1+\cdots+w_n)^D(w_1^{-1}+\cdots+w_n^{-1})^D$
is $c_D$, so $c_D=\int \mathfrak q^D d\nu$. Summing the geometric and
logarithmic series for $|y|<n^{-2}$ gives the formulas.
On compact subsets of $\C\setminus(-\infty,-n^{-2}]$ their denominators and logarithm
branches are uniformly controlled for $0\leq \mathfrak q\leq n^2$; hence they
are holomorphic there.
If $M_D=\binom{D+n-1}{n-1}$ is the number of compositions, then
Cauchy--Schwarz and the multinomial theorem give
\[
 \frac{n^{2D}}{M_D}\leq c_D\leq n^{2D}.
\]
Thus $\lim c_D^{1/D}=n^2$, proving the exact radii for $A$ and $g$.
For positive $y$, $yQ'(y)/Q(y)=1+yg'(y)=A(y)>0$.
Also
\[
 \log Q(y)=-\int\log(\mathfrak q(\mathbf w)+y^{-1})d\nu.
\]
The limiting logarithmic integral is finite. Indeed, conditioning on
$w_1,\ldots,w_{n-1}$ and writing $b=\sum_{i<n}w_i$, the elementary
one-variable Jensen formula gives
$\int_{S^1}\log|b+w_n|^2d\nu_n=2\log\max(1,|b|)$.
It is between $0$ and $2\log(n-1)$. For completeness, the circle identity follows by expanding
$\log(1+w/b)$ when $|b|>1$, and $\log(1+b/w)$ when $|b|<1$;
all nonconstant Fourier coefficients have mean zero. At $|b|=1$ the
remaining singularity is $2\log|2\sin(t/2)|$, which is integrable,
and gives the same limit. The positive part is bounded uniformly by
$2\log n$. Since the circle mean is nonnegative, the mean of the
negative part is also at most $2\log n$, uniformly for $|b|\leq n-1$.
Tonelli's theorem, applied to the two nonnegative parts, now proves absolute
integrability on the full torus. Fubini's theorem then computes
$\int\log\mathfrak q\,d\nu$ by first taking the circle mean in $w_n$ and
then integrating over $w_1,\ldots,w_{n-1}$; since that circle mean is
nonnegative, so is the integral. Monotone convergence applied after
subtracting an upper bound to $\log(\mathfrak q+y^{-1})$ proves the limit.
Since also $\mathfrak q\leq n^2$, the integral of $\log\mathfrak q$ lies
between $0$ and $2\log n$. This proves~\eqref{eq:Qinfty} and the
endpoint behavior. Strict monotonicity and the real inverse-function
theorem complete the proof.
\end{proof}

\begin{corollary}[All-negative-axis continuation]\label{cor:realcontinuation}
The germs $\Dcal_a^\Delta$ admit a specified real analytic continuation
along $0<Q<Q_\infty$. Using these continued functions,~\eqref{eq:allloop} holds for every real $s<0$.
This assertion does not say that the original Taylor series in $Q$
converges on the entire interval.
\end{corollary}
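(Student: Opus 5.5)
The plan is to define the continuation of the descendant germs not through their $Q$-Taylor series but through the mirror side, and then to continue every ingredient of \eqref{eq:allloop} separately. By Theorem~\ref{thm:extraction}, $\Dcal_a^\Delta(Q)=-\frac{n!}{(n-2-a)!}K_{a+2}(y(Q))$. So it suffices to show two things. First, each $K_j(y)$ extends real analytically to all of $y>0$. Second, $y(Q)$ extends real analytically to $0<Q<Q_\infty$. The second point is already provided by Theorem~\ref{thm:torus}: $Q(y)$ is a strictly increasing real analytic bijection $(0,\infty)\to(0,Q_\infty)$ with $\theta_yQ/Q=A>0$, so its inverse is real analytic on the whole interval. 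We then define the continued $\Dcal_a^\Delta$ as the composite. On $0<Q<\min(Q_\infty,r)$ this agrees with the original germ, by uniqueness of real analytic continuation on a connected interval. The same composite also defines $\theta_Q\Dcal_a^\Delta=A^{-1}\theta_y(\cdot)$.

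For the continuation of the $K_j$ I would use the differential equation rather than the hypergeometric series. By Theorem~\ref{thm:PF}, the coefficients $W_0,\dots,W_{n-1}$ of $\Ish(y,1)$ satisfy $P_{n-1}W_j=0$ for $j\le n-2$ and $P_{n-1}W_{n-1}=1$. The leading coefficient \eqref{eq:leadP} of $P_{n-1}$ has no zero on $y>0$. Hence every local solution at a point of $(0,n^{-2})$ extends uniquely to a real analytic solution on $(0,\infty)$; since the real logarithm is used, no monodromy issue arises. This holds both for the homogeneous equation and for the inhomogeneous one with constant source, by linear ODE theory with analytic coefficients. Thus each $W_j$ continues. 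Also $A=W_0$ agrees with the torus formula of Theorem~\ref{thm:torus}, so it stays positive, and $R=\log Q$ continues as well.

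Next I would invert the triangular relation \eqref{eq:vtriangular}, in which $v_j=W_j/A$. Written with $\theta_Q=A^{-1}\theta_y$, it expresses $v_j$ as $R^j/j!$ plus $\theta_QK_{j+1}$ plus terms involving only $K_k$ with $k\le j$. Solving recursively, $\theta_yK_{j+1}$ is a polynomial in $R$, $A^{-1}$, the $W_i$ and the lower $K_k$ and their derivatives. Integrating from a point inside the original disk then continues $K_{j+1}$, and consistency with the original germ near zero holds by uniqueness. The order of induction matters here: $K_2$ comes from $v_1$, then $K_3$ from $v_2$ (which needs $K_2$ and $\theta_QK_3$), and so on up to $K_n$ from $v_{n-1}$. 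This covers every $a\le n-2$.

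Finally, both sides of \eqref{eq:allloop} are real analytic for $s<0$. The left side is real analytic by Proposition~\ref{prop:analyticdomain}, since $(-\infty,0)\subset\C\setminus[n^2,\infty)$. The right side is a polynomial in the continued functions $R$, $A$, $\Dcal_a^\Delta$ and $\theta_Q\Dcal_a^\Delta$ composed with $y=-1/s$. Theorem~\ref{thm:assembly} gives equality for $s$ sufficiently negative, so the identity theorem on the connected interval $(-\infty,0)$ gives equality everywhere. I expect the main obstacle to be the careful bookkeeping that every function in \eqref{eq:vtriangular} really continues, in particular the inversion step in the third paragraph. An alternative is simply to observe that \eqref{eq:Kdef} expresses $K_j$ polynomially through the continued $W_i$ and $R$, which avoids the integration step altogether. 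I would check which route keeps uniqueness of the continuation most transparent. The last sentence of the statement then needs no argument: the continuation is defined on the interval, but no claim is made about the radius of the $Q$-series.
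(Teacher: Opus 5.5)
Your main route is correct and is essentially the paper's argument. You continue the $W_j$ along $y>0$ through $P_{n-1}$, whose leading coefficient~\eqref{eq:leadP} has no zero there. You then continue the $K_j$ by integration, invert $Q$ on $(0,Q_\infty)$ using Theorem~\ref{thm:torus} together with~\eqref{eq:extraction}, and conclude by the identity theorem on the negative axis.

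The only difference lies in how the $K_j$ are continued. The paper writes $I_{X_n}^{0,\amb}(y,1)=\sum_jH^jJ_j$ and integrates $\theta_yJ_{j+1}=W_j$, which is $\theta_yI_{X_n}^{0,\amb}=H\Ish$ at $z=1$. It then reads off the $K_j$ from the exponential normalization~\eqref{eq:Kdef}, so no recursive inversion of~\eqref{eq:vtriangular} is needed. Your inversion also works and gives the same continuation.

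Your suggested shortcut, however, is wrong as stated. Equation~\eqref{eq:Kdef} expresses $K_j$ polynomially through $R$ and the coefficients $J_i$ of $I_{X_n}^{0,\amb}$, not through the $W_i$. The $W_i$ are only the $\theta_y$-derivatives of the $J_{i+1}$, so the integration step cannot be avoided that way.
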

\begin{proof}
The leading coefficient~\eqref{eq:leadP} never vanishes for $y>0$.
Thus all the $W_j$ used in Theorem~\ref{thm:serre} continue along the
positive real axis as solutions of their homogeneous or inhomogeneous
ordinary differential equations. Write
$I_{X_n}^{0,\amb}(y,1)=\sum_{j=0}^nH^jJ_j(y)$.
The identity $\theta_yI_{X_n}^{0,\amb}=H\Ish$ says
$\theta_yJ_{j+1}=W_j$ for $0\leq j\leq n-1$.
It therefore continues the $J_j$ by integration, and hence the $K_j$
by the exponential normalization in~\eqref{eq:Kdef}.
Since $A>0$, $Q'(y)>0$; Theorem~\ref{thm:torus} and~\eqref{eq:extraction} give the asserted continuation of the descendants.
The physical integral is analytic on $s<0$ by
Proposition~\ref{prop:analyticdomain}. Both sides of~\eqref{eq:allloop}
continue from the same germ, so the identity persists along this axis.
No passage through a singularity of the differential operator is involved.
\end{proof}
\subsection{Global univalence and the $Q$-Taylor disk}\label{sec:univalence}
Corollary~\ref{cor:realcontinuation} leaves open where the Taylor series in $Q$
converge. We now show that the mirror map is injective on all of
$\C\setminus(-\infty,-n^{-2}]$ and bound the $Q$-Taylor radius above and below
by explicit constants. Write $\mathbb E$ for integration against $\nu$.
Integrating the cross terms gives $\mathbb E\mathfrak q=n$ and $\mathbb
E\mathfrak q^2=2n^2-n$. By Theorem~\ref{thm:torus},
\begin{equation}\label{eq:Qy}
 Q(y)=y\exp\bigl(-\mathbb E\Log(1+y\mathfrak q)\bigr)\qquad(y\in\C\setminus(-\infty,-n^{-2}]),
\end{equation}
with $Q(0)=0$ and $Q'(0)=1$. It is convenient to use the coordinate $x=1/y$
and to argue for a general random variable. Let $\mathfrak p$ be a random
variable with values in $[0,\varkappa]$, $\varkappa>0$, and put
\[
 R_{\mathfrak p}(x)=-\mathbb E\Log(x+\mathfrak p),\qquad
 \mathcal Q_{\mathfrak p}(x)=e^{R_{\mathfrak p}(x)}\qquad
 \bigl(x\in\C\setminus(-\infty,0]\bigr),
\]
and
\[
 r_{\mathfrak p}(u)=\exp\bigl(-\mathbb E\log|\mathfrak p-u|\bigr)\in(0,\infty]
 \qquad(0\leq u\leq\varkappa),
\]
which is positive because $\mathbb E\log|\mathfrak p-u|\leq\log\varkappa$.
For $\mathfrak p=\mathfrak q$ we have $\varkappa=n^2$, and
$\mathcal Q_{\mathfrak q}(1/y)=Q(y)$ on $\C\setminus(-\infty,-n^{-2}]$ once
$\mathcal Q_{\mathfrak q}$ is extended as in Theorem~\ref{thm:univalence}:
both sides are holomorphic on $\C\setminus(-\infty,-n^{-2}]$ and agree for $y>0$. We then drop
the subscript and write $r(u)=r_{\mathfrak q}(u)$.

\begin{theorem}[Global univalence]\label{thm:univalence}
The map $\mathcal Q_{\mathfrak p}$ extends holomorphically to
$\widehat\C\setminus[-\varkappa,0]$, with $\mathcal Q_{\mathfrak p}(\infty)=0$,
and is injective there. Its image contains the disk of radius
\[
 r_{\mathfrak p}=\inf_{0\leq u\leq\varkappa}r_{\mathfrak p}(u)
 \;\geq\;\frac1{\max\{\mathbb E\mathfrak p,\varkappa-\mathbb E\mathfrak p\}}.
\]
In particular $Q$ is injective on $\C\setminus(-\infty,-n^{-2}]$, and its image
$\Omega_n=Q(\C\setminus(-\infty,-n^{-2}])$ contains the disk $|Q|<r_n$, where
\begin{equation}\label{eq:rn}
 r_n=\inf_{0\leq u\leq n^2}\exp\bigl(-\mathbb E\log|\mathfrak q-u|\bigr)
 \geq\frac1{n(n-1)}.
\end{equation}
\end{theorem}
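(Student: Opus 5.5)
The plan is to prove injectivity for $1/\mathcal Q_{\mathfrak p}=\exp(G)$, where $G(x)=\mathbb E\Log(x+\mathfrak p)$, and to read off the disk from the boundary values of $|\mathcal Q_{\mathfrak p}|$ on the two edges of the slit.

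\textbf{Step 1: extension.} First we set up the extension. The logarithmic derivative $\mathcal Q_{\mathfrak p}'/\mathcal Q_{\mathfrak p}=-\mathbb E[(x+\mathfrak p)^{-1}]$ is single-valued and holomorphic on $\C\setminus[-\varkappa,0]$. Hence $\mathcal Q_{\mathfrak p}$, which is defined off $(-\infty,0]$, continues across $(-\infty,-\varkappa)$. Concretely, from above and from below the principal logarithms differ by the constant $\pm i\pi$ for every value of $\mathfrak p$, so both one-sided limits equal $-\exp(-\mathbb E\log|x+\mathfrak p|)$. At infinity we write
\[
\mathcal Q_{\mathfrak p}(x)=x^{-1}\exp\bigl(-\mathbb E\Log(1+\mathfrak p/x)\bigr).
\]
This is holomorphic in $1/x$ for $|x|>\varkappa$, with a simple zero at $\infty$. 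Thus $\mathcal Q_{\mathfrak p}(\infty)=0$, and $\infty$ is not a critical point.

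\textbf{Step 2: injectivity.} On the upper half-plane $\mathcal H$ we have $\operatorname{Im}G'(x)=\mathbb E\operatorname{Im}(x+\mathfrak p)^{-1}<0$. So $iG$ has derivative with positive real part on the convex domain $\mathcal H$, and the Noshiro--Warschawski lemma makes $G$ injective on $\mathcal H$. Moreover $\operatorname{Im}G=\mathbb E\arg(x+\mathfrak p)\in(0,\pi)$ there. Symmetrically, $G$ is injective on the lower half-plane with $\operatorname{Im}G\in(-\pi,0)$. On the real pieces, $G$ is real and strictly increasing on $(0,\infty)$. On $(-\infty,-\varkappa)$ it equals $\mathbb E\log|x+\mathfrak p|\pm i\pi$ with strictly monotone real part. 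Since $\exp$ is injective on the strip $-\pi<\operatorname{Im}<\pi$, the function $1/\mathcal Q_{\mathfrak p}=e^G$ is injective on each half-plane. The two half-planes map into disjoint open half-planes, while $(0,\infty)$ and $(-\infty,-\varkappa)$ map injectively onto subsets of the positive and negative real axes respectively. Therefore $1/\mathcal Q_{\mathfrak p}$, and hence $\mathcal Q_{\mathfrak p}$, is injective on $\widehat\C\setminus[-\varkappa,0]$, with the point $\infty$ going to $0$.

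\textbf{Step 3: the disk.} For the image disk, approach $-u\in[-\varkappa,0]$ from either side. We claim
\[
\liminf|\mathcal Q_{\mathfrak p}(x)|=\liminf\exp\bigl(-\mathbb E\log|x+\mathfrak p|\bigr)\ \geq\ r_{\mathfrak p}(u)\geq r_{\mathfrak p}.
\]
The middle inequality is lower semicontinuity of the logarithmic potential. Since $\log|x+\mathfrak p|\le\log(|x|+\varkappa)$ is bounded above, Fatou's lemma applied to $\log(|x|+\varkappa)-\log|x+\mathfrak p|\ge0$ gives it. It follows that for every $\rho<r_{\mathfrak p}$, the preimage of $\{|Q|<\rho\}$ is compactly contained in the domain. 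By properness and the open mapping theorem, the image of the connected component containing $\infty$ is open and closed in $\{|Q|<\rho\}$, hence equals that disk.

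\textbf{Step 4: explicit bound.} By Jensen, $\mathbb E\log|\mathfrak p-u|\le\log\mathbb E|\mathfrak p-u|$. The function $u\mapsto\mathbb E|\mathfrak p-u|$ is convex on $[0,\varkappa]$, so it is maximized at an endpoint, where its value is $\mathbb E\mathfrak p$ or $\varkappa-\mathbb E\mathfrak p$. This gives the stated lower bound. For $\mathfrak p=\mathfrak q$ we use $\varkappa=n^2$ and $\mathbb E\mathfrak q=n$, giving $r_n\ge1/(n^2-n)$. The identification $\mathcal Q_{\mathfrak q}(1/y)=Q(y)$ transfers injectivity, since $x=1/y$ maps $\C\setminus(-\infty,-n^{-2}]$ onto $\widehat\C\setminus[-n^2,0]$.

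The main obstacle is Step 3. One must handle the boundary behaviour at points $u$ where $r_{\mathfrak p}(u)=\infty$ or where the distribution of $\mathfrak p$ has atoms, and must use $\liminf$ rather than actual boundary values. The Fatou semicontinuity argument is what I would rely on there.
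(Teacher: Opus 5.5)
Your proposal is correct and follows essentially the same route as the paper. You apply Noshiro--Warschawski to the logarithm (with $G=-R_{\mathfrak p}$ and $1/\mathcal Q_{\mathfrak p}=e^G$), use injectivity of $\exp$ on the strip together with disjointness of the images of the pieces, bound $\liminf|\mathcal Q_{\mathfrak p}|$ at the slit by Fatou, and finish with Jensen plus convexity. The only difference is cosmetic: you get the disk from properness of $\mathcal Q_{\mathfrak p}$ on $\mathcal Q_{\mathfrak p}^{-1}(\{|Q|<\rho\})$ and the open-mapping theorem, whereas the paper shows the image has no boundary point inside the disk; the two arguments are equivalent.
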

\begin{proof}
Write $R=R_{\mathfrak p}$ and $\mathcal Q=\mathcal Q_{\mathfrak p}$. On the
upper half-plane, $-\pi<\operatorname{Im}R(x)<0$ and
\[
 \operatorname{Im}R'(x)=\operatorname{Im}x\;\mathbb E|x+\mathfrak p|^{-2}>0 .
\]
For distinct $x_1,x_2$ there,
$(R(x_2)-R(x_1))/(x_2-x_1)=\int_0^1R'(x_1+t(x_2-x_1))\,dt$ has positive
imaginary part, so $R$ is injective; this is the Noshiro--Warschawski
criterion~\cite[Theorem~2.16]{Duren} applied to $-\mathrm iR$. The
exponential is injective on the strip $-\pi<\operatorname{Im}R<0$, so
$\mathcal Q$ is injective on the upper half-plane, with values in the open
lower half-plane. Since $\mathcal Q(\bar x)=\overline{\mathcal Q(x)}$, the
same holds on the lower half-plane, with values in the open upper
half-plane. For $x>0$, $\mathcal Q(x)>0$ and
$\mathcal Q'(x)=-\mathcal Q(x)\,\mathbb E(x+\mathfrak p)^{-1}<0$. For
$x<-\varkappa$ one has $x+\mathfrak p<0$, so the boundary values of $R$
from the two half-planes differ by exactly $2\pi\mathrm i$; hence
$\mathcal Q$ extends holomorphically across $(-\infty,-\varkappa)$, where
\[
 \mathcal Q(x)=-\exp(-\mathbb E\log|x+\mathfrak p|)<0,\qquad
 \mathcal Q'(x)=-\mathcal Q(x)\,\mathbb E(x+\mathfrak p)^{-1}<0 .
\]
Finally $\mathcal Q(x)=x^{-1}+O(x^{-2})$ as $x\to\infty$, so $\mathcal Q$
extends holomorphically to $\infty$, with $\mathcal Q(\infty)=0$ and nonzero
derivative in $1/x$. The images of the two half-planes, of $(0,\infty)$, of
$(-\infty,-\varkappa)$ and of $\infty$ are pairwise disjoint, and
$\mathcal Q$ is injective on each of these pieces; hence it is injective on
$\widehat\C\setminus[-\varkappa,0]$.

For the image, let $x_k\to-u$ with $u\in[0,\varkappa]$ and
$x_k\notin[-\varkappa,0]$. The functions $\log|x_k+\mathfrak p|$ are
bounded above by $\log(\varkappa+\sup_k|x_k|)$ and converge pointwise to
$\log|\mathfrak p-u|\in[-\infty,\infty)$. The reverse Fatou lemma gives
$\limsup_k\mathbb E\log|x_k+\mathfrak p|\leq\mathbb E\log|\mathfrak p-u|$,
that is,
\begin{equation}\label{eq:reverseFatou}
 \liminf_{k\to\infty}|\mathcal Q(x_k)|\geq r_{\mathfrak p}(u)\geq r_{\mathfrak p}.
\end{equation}
Let $w$ be a boundary point of the image with $|w|<r_{\mathfrak p}$, and
choose $x_k$ with $\mathcal Q(x_k)\to w$. A subsequence of $(x_k)$ converges
in $\widehat\C$. Its limit cannot lie in $\widehat\C\setminus[-\varkappa,0]$,
since then $w$ would lie in the image, which is open; so the limit is a
point $-u$ of the slit, contradicting~\eqref{eq:reverseFatou}. Thus the
image, which is open, connected and contains $0$, has no boundary point in
the disk $|w|<r_{\mathfrak p}$, and therefore contains that disk (a segment
from $0$ to a missing point of the disk would meet the boundary). Finally,
Jensen's inequality and the convexity of $u\mapsto\mathbb E|\mathfrak p-u|$
give
$\mathbb E\log|\mathfrak p-u|\leq\log\mathbb E|\mathfrak p-u|
\leq\log\max\{\mathbb E\mathfrak p,\varkappa-\mathbb E\mathfrak p\}$.
For $\mathfrak p=\mathfrak q$ one has $\varkappa=n^2$ and
$\mathbb E\mathfrak q=n$.
\end{proof}

The constant $r_n$ is attained at the right endpoint if and only if the
following inequality holds; we use it only as a hypothesis:
\begin{equation}\label{eq:endpointmax}
 \mathbb E\log|\mathfrak q-u|\leq\mathbb E\log(n^2-\mathfrak q)
 \qquad(0\leq u\leq n^2).
\end{equation}

\begin{theorem}[The $Q$-Taylor radius of the mirror map]\label{thm:Qradius}
Let $n\geq4$. Then $\mathbb E(n^2-\mathfrak q)^{-1}<\infty$, and the constant
\begin{equation}\label{eq:rhon}
 \rho_n=\exp\bigl(-\mathbb E\log(n^2-\mathfrak q)\bigr)=r(n^2)
\end{equation}
satisfies $Q(y)\to-\rho_n$ as $y\downarrow-n^{-2}$; the map $Q$ sends
$(-n^{-2},0)$ increasingly onto $(-\rho_n,0)$. The Taylor radius $R_y$ of
the inverse mirror map $y(Q)$ at $Q=0$ satisfies
\begin{equation}\label{eq:Qradius}
 \frac1{n(n-1)}\leq r_n\leq R_y\leq\rho_n\leq\frac2{n(n-1)}<\frac1n\leq Q_\infty .
\end{equation}
If~\eqref{eq:endpointmax} holds, then $r_n=R_y=\rho_n$.
\end{theorem}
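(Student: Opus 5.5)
The plan is to prove the four assertions of Theorem~\ref{thm:Qradius} in order: the integrability, the behaviour on $(-n^{-2},0)$, the two bounds on $R_y$, and the numerical chain~\eqref{eq:Qradius}. Throughout I work with the torus formula~\eqref{eq:Qy} of Theorem~\ref{thm:torus}.

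\emph{Integrability and the negative interval.} Expanding the square gives
\[
 n^2-\mathfrak q=\sum_{i<j}|w_i-w_j|^2 .
\]
This vanishes exactly on the diagonal circle $w_1=\cdots=w_n$. Transversally, in the $n-1$ relative angles, it is a nondegenerate quadratic form. Hence $(n^2-\mathfrak q)^{-1}$ is locally comparable to $|t|^{-2}$ in real dimension $n-1\geq3$. This is integrable for $n\geq4$, and then $\log(n^2-\mathfrak q)$ is integrable as well.

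For $y\in(-n^{-2},0)$ one has $1+y\mathfrak q>0$, so
\[
 Q(y)=-\exp\bigl(-\mathbb E\log(|y|^{-1}-\mathfrak q)\bigr).
\]
As $|y|^{-1}\downarrow n^2$, monotone convergence gives $Q(y)\to-\rho_n$. Monotonicity follows from $Q'=QA/y$, since $A=\mathbb E(1+y\mathfrak q)^{-1}>0$ and $Q,y<0$. Moreover $A(y)\to\mathbb E\, n^2(n^2-\mathfrak q)^{-1}<\infty$, so $Q'(y)$ tends to a finite positive limit $L$.

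\emph{Bounds on $R_y$.} For the lower bound, Theorem~\ref{thm:univalence} shows that $Q$ is injective on $\C\setminus(-\infty,-n^{-2}]$ with image containing $|Q|<r_n$. The inverse $y(Q)$ is therefore holomorphic on that disk, which gives $R_y\geq r_n$.

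For the upper bound, suppose $R_y>\rho_n$. Then $y(Q)$ is holomorphic near $Q=-\rho_n$. It agrees with the real inverse on $(-\rho_n,0)$, so $y(-\rho_n)=-n^{-2}$. Its derivative there equals $1/L\neq0$: a vanishing derivative would force $Q'\to\infty$ along the interval. So $y$ is locally biholomorphic at $-\rho_n$. Its local inverse would then continue $Q$, and hence $g=\log(Q/y)$, analytically across $y=-n^{-2}$.

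This contradicts Pringsheim's theorem. In the variable $-y$ the series $g$ has coefficients $-c_D/D$ of constant sign, and its radius is exactly $n^{-2}$ by Theorem~\ref{thm:torus}. Hence $y=-n^{-2}$ is a singular point of $g$, and so $R_y\leq\rho_n$. I expect this step to be the main obstacle. The care needed is that the continuation of $y(Q)$ matches the real branch on $(-\rho_n,0)$, and that the local inverse agrees with $Q$ on $(-n^{-2},0)$, so that it really continues $g$.

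\emph{Numerical constants.} The bound $r_n\geq1/(n(n-1))$ is~\eqref{eq:rn}, and $r_n\leq r(n^2)=\rho_n$ holds because $r_n$ is an infimum. For $\rho_n\leq 2/(n(n-1))$, set $N=n(n-1)/2$ and apply concavity of $\log$ to the identity above:
\[
 \log(n^2-\mathfrak q)\geq\log N+\frac1N\sum_{i<j}\log|w_i-w_j|^2 .
\]
Each $\mathbb E\log|w_i-w_j|^2$ vanishes by the one-variable Jensen formula used in Theorem~\ref{thm:torus}. Hence $\mathbb E\log(n^2-\mathfrak q)\geq\log N$, which is the claimed bound.

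The inequality $2/(n(n-1))<1/n$ is simply $n>3$. For $1/n\leq Q_\infty$, Jensen's inequality gives $\mathbb E\log\mathfrak q\leq\log\mathbb E\mathfrak q=\log n$, and this is~\eqref{eq:Qinfty} rearranged. Finally, if~\eqref{eq:endpointmax} holds, the infimum defining $r_n$ is attained at $u=n^2$. Then $r_n=\rho_n$, and the chain forces $R_y=\rho_n$.
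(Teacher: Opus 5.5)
Your proof is correct and follows the paper's outline almost step for step. The shared steps are: Lagrange's identity; monotone convergence on $(-n^{-2},0)$ with a finite positive limit of $Q'$; univalence for $R_y\geq r_n$; local inversion of the Taylor sum at $-\rho_n$ for $R_y\leq\rho_n$; and the same AM--GM and Jensen estimates for the numerical chain.

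There are two real differences.

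\emph{Integrability.} You argue from the transversal nondegeneracy of $n^2-\mathfrak q$ along the diagonal circle. This implicitly also needs that $n^2-\mathfrak q$ is bounded below away from that circle, which follows from compactness. The paper instead writes the explicit global bound $n^2-\mathfrak q\geq\frac4{\pi^2}\sum_{k\geq2}\theta_k^2$, obtained from the pairs $(1,k)$, and so needs no local analysis.

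\emph{The singularity at $y=-n^{-2}$.} To rule out analytic continuation of $Q$ across this point, you invoke Pringsheim's theorem for $g$. In the variable $t=-y$ its coefficients are $c_D/D>0$ (not $-c_D/D$ as you wrote, though only the constant sign matters), and its radius is exactly $n^{-2}$ by Theorem~\ref{thm:torus}. The paper instead shows directly that the boundary values of the slit-plane function $Q$ at any $y_0<-n^{-2}$ differ by the factor $e^{2\pi i\,\nu(\mathfrak q<u)}\neq1$, where $u=-1/y_0$.

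What each route buys:
\begin{itemize}
\item Yours is shorter. It uses only the power series at $0$ and the already-proved exact radius. You must still match the branch of $\log(G/y)$ to $g$ on the connected lens $W\cap\{|y|<n^{-2}\}$, which follows from $G=Q$ there by the identity theorem, as you indicate.
\item The paper's argument stays within the torus-integral representation and proves slightly more. The continued $Q$ has a nontrivial multiplicative jump across every point of the cut, not merely a singularity at the endpoint.
\end{itemize}
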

\begin{proof}
Lagrange's identity gives $n^2-\mathfrak q=\sum_{j<k}|w_j-w_k|^2$. Keeping
only the pairs with $j=1$ and writing $w_k=w_1e^{\mathrm i\theta_k}$ with
$\theta_k\in(-\pi,\pi]$,
\[
 n^2-\mathfrak q\geq\sum_{k=2}^n4\sin^2(\theta_k/2)
 \geq\frac4{\pi^2}\sum_{k=2}^n\theta_k^2 .
\]
The $\theta_k$ are independent and uniform, and $|\theta|^{-2}$ is
integrable on $(-\pi,\pi]^{n-1}$ because $n-1\geq3$. Hence
$\mathbb E(n^2-\mathfrak q)^{-1}<\infty$; in particular
$\mathbb E\log(n^2-\mathfrak q)$ is finite.

For $-n^{-2}<y<0$ one has $0<1+y\mathfrak q\leq1$, so $Q(y)<0$ and, as in
the proof of Theorem~\ref{thm:torus}, $yQ'(y)/Q(y)=A(y)>0$; thus $Q'>0$. As
$y\downarrow-n^{-2}$, $1+y\mathfrak q$ decreases to $1-\mathfrak q/n^2$, and
monotone convergence gives $Q(y)\to-\rho_n$ and
$A(y)\uparrow n^2\,\mathbb E(n^2-\mathfrak q)^{-1}$. Hence
\begin{equation}\label{eq:Qprimec}
 Q'(y)\longrightarrow Q'_c:=n^4\rho_n\,\mathbb E(n^2-\mathfrak q)^{-1}\in(0,\infty)
 \qquad(y\downarrow-n^{-2}).
\end{equation}

By Theorem~\ref{thm:univalence} the inverse $y(Q)$ is holomorphic on
$\Omega_n\supset\{|Q|<r_n\}$, so $R_y\geq r_n\geq1/(n(n-1))$; and
$r_n\leq r(n^2)=\rho_n$. The arithmetic--geometric mean inequality applied to
Lagrange's identity gives
\[
 \mathbb E\log(n^2-\mathfrak q)\geq\log\binom n2
 +\binom n2^{-1}\sum_{j<k}\mathbb E\log|w_j-w_k|^2=\log\binom n2,
\]
because $w_j\bar w_k$ is uniformly distributed on the circle and
$\int_{S^1}\log|1-w|\,d\nu_1=0$. This is $\rho_n\leq2/(n(n-1))$, which is
less than $1/n$ since $n\geq4$. Jensen's inequality gives
$\mathbb E\log\mathfrak q\leq\log n$, that is, $Q_\infty\geq1/n$.

Suppose now that $R_y>\rho_n$, and let
\[
 \mathcal Y(Q):=\sum_{m\geq0}c_mQ^m
\]
be the analytic sum of the Taylor series of $y(Q)$ on $|Q|<R_y$.
It agrees with $y(Q)$ on the connected component of
$\{|Q|<R_y\}\cap\Omega_n$ that contains $0$, and this component contains
$(-\rho_n,0]$. By~\eqref{eq:Qprimec},
\[
 \mathcal Y(-\rho_n)=-n^{-2},
 \qquad
 \mathcal Y'(-\rho_n)=\frac{1}{Q'_c}\neq0.
\]
Hence $\mathcal Y$ maps a neighbourhood $N$ of $-\rho_n$, with $0\notin N$,
biholomorphically onto a disk $W$ centred at $-n^{-2}$. Let
$G\colon W\to N$ be the inverse. For
$y\in W\cap(-n^{-2},0)$ sufficiently close to $-n^{-2}$, one has
$Q(y)\in N$ and
\[
 \mathcal Y(Q(y))=y,
\]
so $G(y)=Q(y)$. Since
$W\setminus(-\infty,-n^{-2}]$ is connected, the identity theorem gives
$G=Q$ there. Thus $Q$ extends holomorphically to $W$ and has no zeros
there, contradicting the singularity at $y=-n^{-2}$.
Pick $y_0\in W$ with
$y_0<-n^{-2}$, put $u=-1/y_0\in(0,n^2)$, and let
$x_-=1/(y_0+\mathrm i\varepsilon)$ and $x_+=\overline{x_-}$. As
$\varepsilon\downarrow0$, $x_\mp\to-u$ from the lower and upper half-plane.
The function $\mathfrak q$ is a nonconstant real-analytic function on the
connected torus, so it has no atoms, and dominated convergence
($|\operatorname{Arg}|\leq\pi$) gives
$\operatorname{Im}R(x_-)\to\pi\,\nu(\mathfrak q<u)$. Since
$R(\bar x)=\overline{R(x)}$,
\[
 \frac{Q(y_0+\mathrm i\varepsilon)}{Q(y_0-\mathrm i\varepsilon)}
 =\exp\bigl(2\mathrm i\operatorname{Im}R(x_-)\bigr)
 \longrightarrow e^{2\pi\mathrm i\,\nu(\mathfrak q<u)},
\]
whereas holomorphy of $Q$ at $y_0$, with $Q(y_0)\neq0$, forces the limit to
be $1$. Since $0<\nu(\mathfrak q<u)<1$, and $\mathfrak q$ is continuous and
takes the values $0$ and $n^2$ on the connected torus, both
$\{\mathfrak q<u\}$ and $\{\mathfrak q>u\}$ are nonempty open sets.
This contradiction proves
$R_y\leq\rho_n$. Under~\eqref{eq:endpointmax}, $r(u)\geq r(n^2)=\rho_n$ for
all $u\in[0,n^2]$, so $r_n=\rho_n$, and~\eqref{eq:Qradius} forces
$R_y=\rho_n$.
\end{proof}

\begin{corollary}\label{cor:Qradius}
Let $n\geq4$. For $2\leq k\leq n$ the normalized ambient germs
$K_k(y(Q))$, with $K_k$ as in~\eqref{eq:Kdef}, and for $0\leq a\leq n-2$ the
diagonal descendant germs $\Dcal_a^\Delta(Q)$ have Taylor radius at least
$r_n$, hence at least $1/(n(n-1))$, and at least $\rho_n$
if~\eqref{eq:endpointmax} holds. In particular, if
$\Dcal_a^\Delta(Q)=\sum_md_{a,m}Q^m$, then
$\limsup_m|d_{a,m}|^{1/m}\leq1/r_n\leq n(n-1)$.
\end{corollary}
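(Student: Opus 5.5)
The plan is to deduce Corollary~\ref{cor:Qradius} from Theorem~\ref{thm:univalence} and Theorem~\ref{thm:Qradius}. The point is that every $K_k$ is holomorphic in $y$ on the whole slit plane $\C\setminus(-\infty,-n^{-2}]$, and not merely on $|y|<n^{-2}$. Once this is known, composing with the univalent inverse $y(Q)$, which is holomorphic on $\Omega_n\supset\{|Q|<r_n\}$, gives a holomorphic function on that disk. Its Taylor series at $Q=0$ then converges on the disk by Cauchy's theorem, so the radius is at least $r_n\geq 1/(n(n-1))$. Under~\eqref{eq:endpointmax} the equality $r_n=\rho_n$ upgrades this to $\rho_n$. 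The statement for $\Dcal_a^\Delta$ follows at once from~\eqref{eq:extraction}, since that formula is a rational multiple of $K_{a+2}(y(Q))$. The $\limsup$ bound is then Cauchy--Hadamard.

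The key step is the holomorphic extension of $K_k$ to the slit plane. Write $I^{0,\amb}_{X_n}(y,1)=\sum_j H^jJ_j(y)$, as in the proof of Corollary~\ref{cor:realcontinuation}. Theorem~\ref{thm:serre} gives $\theta_yJ_{j+1}=W_j$ and $J_0=1$. The $W_j$ are the Frobenius solutions of $P_{n-1}W=0$, together with $P_{n-1}W_{n-1}=1$, by Theorem~\ref{thm:PF}. By~\eqref{eq:leadP} the finite singular points of $P_{n-1}$ lie at $y=-1/(n-2j)^2$, and all of these lie in $(-\infty,-n^{-2}]$. The slit plane minus $\{0\}$ is not simply connected, but this is harmless: each $W_j$ has the form of a polynomial in $\log y$ whose coefficients are holomorphic functions on the slit plane. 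I would establish this by writing $W_j=\sum_i (\log y)^i w_{j,i}(y)$. On the simply connected slit plane, where $\log y$ is a single-valued branch (the principal one) and $P_{n-1}$ is nonsingular away from $y=0$, each $W_j$ continues analytically. The monodromy around $0$ acts unipotently, so the coefficients $w_{j,i}$ are single-valued near $0$, and they are holomorphic there by the Frobenius construction.

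Integrating $\theta_yJ_{j+1}=W_j$ then keeps this form, and the normalizing integration constants are fixed by $J_{j+1}$ at $y=0$. Since $R=J_1$, the function $e^{-HR}$ is a polynomial in $H$ with such coefficients. In the combination $K_k$ all $\log y$ dependence cancels, because $K_k$ is a power series in $y$ by~\eqref{eq:Kdef}. Hence $K_k$ is holomorphic on the slit plane, including at $y=0$.

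The main obstacle is making rigorous that the $\log y$-coefficients $w_{j,i}$ are holomorphic on the entire slit plane rather than only near $0$. I would handle it as follows. The functions $\widetilde W_j(y,\ell)$ obtained by replacing $\log y$ with an independent variable $\ell$ are the coefficients of $y^{-H}\Ish(y,1)$. This series equals $\sum_{\mathbf d}$ of rational expressions in $H$ and the $p_i$, with scalar coefficients $y^D$. Its continuation is governed by the system $y^{-H}P_{n-1}y^{H}$ acting on $\mathcal R$-valued functions, which is again singular only where $P_{n-1}$ is. The continuation of this system is single-valued on the simply connected slit plane, and it contains no logarithm. This gives holomorphy of every $K_k$ on $\C\setminus(-\infty,-n^{-2}]$.
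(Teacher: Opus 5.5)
Your overall architecture is the paper's: holomorphy of $K_k$ on $\C\setminus(-\infty,-n^{-2}]$, composition with the holomorphic inverse $y(Q)\colon\Omega_n\to\C\setminus(-\infty,-n^{-2}]$ from Theorem~\ref{thm:univalence}, transfer to $\Dcal_a^\Delta$ by~\eqref{eq:extraction}, and Cauchy--Hadamard. Those steps are fine.

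The gap is in the step you yourself flag as the main obstacle, and your resolution does not resolve it. The domain $\C\setminus(-\infty,-n^{-2}]$ contains $y=0$. That point is a regular singular point of $P_{n-1}$, and equally of the conjugated operator $y^{-H}P_{n-1}y^H=P_{n-1}(y,\theta_y+H)$. Replacing $\log y$ by an independent variable $\ell$ removes the logarithm from the solution, but it does not remove the singular point from the equation. So the principle that solutions continue single-valuedly on a simply connected domain free of singularities does not apply to that domain. In addition, no branch of $\log y$ is single-valued on it once the origin is deleted. What your ODE argument actually yields is single-valued continuation on $\C\setminus(-\infty,0]$, which is simply connected, free of singular points, and carries the principal logarithm. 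Your sentences about the principal branch can deliver only that. Reaching the larger slit plane needs one more input.

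That input is exactly the paper's argument. The function in question is a convergent power series on $|y|<n^{-2}$, so its continuation around $0$ is trivial. The loop around $0$ generates $\pi_1(\C^*\setminus(-\infty,-n^{-2}])$, so the continuation is single-valued there and extends holomorphically across $0$. Equivalently, glue the disk $|y|<n^{-2}$ to $\C\setminus(-\infty,0]$: their intersection is connected and their union is $\C\setminus(-\infty,-n^{-2}]$.

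Once you have this, the detour through the coefficients $w_{j,i}$ and the $\ell$-substituted system is unnecessary. Apply the argument directly to $K_k$, which has two properties. First, it is a polynomial in the $J_j$, so it continues along every path avoiding $0$ and the points $-1/(n-2j)^2$. Second, it is a convergent germ at $0$. This is the paper's proof; your log-cancellation idea corresponds to its parenthetical alternative.
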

\begin{proof}
By~\eqref{eq:leadP} and Theorem~\ref{thm:PF}, the finite nonzero singular
points of the ambient equation are among $y=-1/(n-2j)^2$, $0\leq j<n/2$,
all on the removed ray $(-\infty,-n^{-2}]$. The functions $K_k(y)$ are
convergent holomorphic germs at $y=0$, so their continuations to
$\C^*\setminus(-\infty,-n^{-2}]$ have trivial monodromy around $0$, which
generates $\pi_1(\C^*\setminus(-\infty,-n^{-2}])$; hence they are single-valued
and holomorphic on $\C\setminus(-\infty,-n^{-2}]$. (Alternatively, the logarithmic monodromy
cancels directly in the normalized ambient series.) Composing with the
holomorphic inverse $y(Q)\colon\Omega_n\to\C\setminus(-\infty,-n^{-2}]$ of
Theorem~\ref{thm:univalence} shows that $K_k(y(Q))$ is holomorphic on
$|Q|<r_n$, and Theorem~\ref{thm:extraction} transfers this to
$\Dcal_a^\Delta(Q)$. The last assertion is the Cauchy--Hadamard formula.
\end{proof}

\begin{remark}\label{rem:Qradius}
Condition~\eqref{eq:endpointmax} says that the logarithmic potential of the
law of $\mathfrak q$ on $[0,n^2]$ is maximal at the right endpoint; it is
not used anywhere else in the paper. 
We have checked it numerically for $4\leq n\leq9$ by conditioning on
$w_1,\ldots,w_{n-1}$ and averaging over the last angle using the circle
formula from the proof of Theorem~\ref{thm:torus}. In each case, the maximum
over $[0,n^2]$ is attained only at $u=n^2$, and the potential is increasing
on $[n^2-1,n^2]$.
For $n=4,5$, ratio tests on the $Q$-expansions up to order
about $120$ place the dominant singularity of $y(Q)$ and of the
$\Dcal_a^\Delta(Q)$ at $Q=-\rho_n$, the image of the conifold point
$y=-n^{-2}$; accordingly the coefficients eventually alternate in sign. The
table lists the constants in~\eqref{eq:Qradius} together with the Euclidean
momenta $T_*$ defined by $Q(1/T_*)=1/(n(n-1))$ and by $Q(1/T_*)=\rho_n$.
\begin{center}
\begin{tabular}{rllll}
\toprule
$n$ & $1/(n(n-1))$ & $\rho_n$ & $T_*$ for $1/(n(n-1))$ & $T_*$ for $\rho_n$\\\midrule
$4$ & $0.083333$ & $0.089553$ & $8.42$  & $7.61$\\
$5$ & $0.050000$ & $0.051919$ & $15.43$ & $14.70$\\
$6$ & $0.033333$ & $0.034107$ & $24.43$ & $23.76$\\
$7$ & $0.023810$ & $0.024180$ & $35.44$ & $34.80$\\
\bottomrule
\end{tabular}
\end{center}
Since $Q(1/T)$ decreases in $T$, Corollary~\ref{cor:Qradius} shows that the
$Q$-series of the descendants converge at $s=-T$ for every $T$ larger than
the value in the fourth column, whereas Theorem~\ref{thm:laplace} is stated
for $T>n^2$; by Theorem~\ref{thm:Qradius} the series of $y(Q)$ diverges for
$T$ smaller than the value in the last column. Together with
$\rho_n<Q_\infty$ this makes the caveat in
Corollary~\ref{cor:realcontinuation} precise. Finally,
$\operatorname{Var}\mathfrak q=n(n-1)$, and expanding
$\mathbb E\log(n^2-\mathfrak q)$ about the mean suggests
$n(n-1)\rho_n=1+\frac1{2n(n-1)}+O(n^{-3})$. The observed values
$n(n-1)\rho_n=1.0746$, $1.0384$, $1.0232$, $1.0155$ for $n=4,\ldots,7$ are
consistent with this, so the lower bound in~\eqref{eq:Qradius} appears to be
asymptotically sharp.
\end{remark}

\section{Degree-one geometry}\label{sec:degreeone}
\begin{theorem}\label{thm:degreeone}
For each coordinate curve class $e_i$ and all $n\geq4$,
\[
 N_{0,e_i}=(n-1)(n-2),\qquad N_{1,e_i}=-2(n-1)(n-2),\qquad
 N_{a,e_i}=0\quad(2\leq a\leq n-2).
\]
Therefore the coefficient of $Q$ in the diagonal descendant tuple is
\[
 n(n-1)(n-2)\,(1,-2,0,\ldots,0).
\]
\end{theorem}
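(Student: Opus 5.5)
The plan is to read off the degree-one invariants from Theorem~\ref{thm:extraction} rather than computing on the moduli space directly, and then to split the diagonal answer among the classes $e_i$ by symmetry.

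\textbf{Reduction to individual classes.} By Proposition~\ref{prop:geometry}, the only effective classes of $H$-degree $1$ are the coordinate classes $e_1,\ldots,e_n$. Hence the coefficient of $Q$ in $\Dcal_a^\Delta$ is $\sum_iN_{a,e_i}$. The invariants do not depend on the smooth member $F_n$, as stated in Section~\ref{sec:scope}. We may therefore take a member whose defining section~\eqref{eq:Fdefining} is invariant under permutation of the factors. Such a member exists, since the smooth locus is open and dense and contains symmetric points, for instance $\sum_\epsilon\prod u_i^{\epsilon_i}v_i^{1-\epsilon_i}$ suitably perturbed symmetrically. The permutation automorphism $\sigma$ then carries $\delta_a$, $\psi$ and the virtual class to themselves, and maps $e_i$ to $e_{\sigma(i)}$. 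So $N_{a,e_i}$ is independent of $i$, and it suffices to show that the $Q$-coefficient of $\Dcal_a^\Delta$ is $n(n-1)(n-2)(1,-2,0,\ldots,0)$.

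\textbf{The degree-one computation.} Since $y(Q)=Q+O(Q^2)$, the $Q$-coefficient of $K_j(y(Q))$ equals the $y$-coefficient of $K_j(y)$. In $e^{-HR/z}I(y,z)=e^{-Hg/z}\sum_{\mathbf d}(\cdots)$, the order-$y$ part receives two contributions:
\begin{itemize}
\item the first-order piece $nyH/z$ of $e^{-Hg/z}$, using $g=-ny+O(y^2)$, multiplied by the $\mathbf d=0$ term $1$;
\item the terms $\mathbf d=e_i$, namely $-y\,H(H+z)/(p_i+z)^2$.
\end{itemize}
Expanding the second contribution with $p_i^2=0$, $\sum_ip_i=H$ and $\sum_iHp_i=H^2$ gives
\[
-y\sum_i\Bigl(\frac Hz+\frac{H^2}{z^2}\Bigr)\Bigl(1-\frac{2p_i}z\Bigr)
=-y\Bigl(\frac{nH}z+\frac{(n-2)H^2}{z^2}-\frac{2H^3}{z^3}\Bigr).
\]
The $H/z$ term cancels against the exponential contribution, consistent with~\eqref{eq:Kdef}. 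This leaves $K_2=-(n-2)y+O(y^2)$, $K_3=2y+O(y^2)$, and $K_j=O(y^2)$ for $j\geq4$. Then~\eqref{eq:extraction} gives the $Q$-coefficients
\[
-\frac{n!}{(n-2)!}\bigl(-(n-2)\bigr)=n(n-1)(n-2),\qquad
-\frac{n!}{(n-3)!}\cdot2=-2n(n-1)(n-2),
\]
and $0$ for $a\geq2$. Dividing by $n$ gives the three values asserted for $N_{a,e_i}$.

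\textbf{Main obstacle.} The only real point is the symmetry step: one must make sure a permutation-invariant smooth member exists, or else argue directly that monodromy in the family of smooth members permutes the $e_i$. If a symmetric smooth member proved awkward to exhibit, I would fall back on an independent geometric check.
\begin{itemize}
\item For $m=1$, $\mathcal V$ has rank $0$, so by Theorem~\ref{thm:virtual} the virtual class is that of $F_n$.
\item Lines of class $e_i$ are parametrized by the complete intersection $S_i\subset(\PP^1)^{n-1}$ of two $(1,\ldots,1)$ divisors. This has the expected dimension $n-3$ and is smooth for generic $F_n$.
\item $\overline{\mathcal M}_{0,1}(F_n,e_i)$ is the $\PP^1$-bundle over $S_i$, with $\psi$ the relative cotangent class.
\item $N_{a,e_i}$ then becomes an intersection number on this bundle. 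For example, $N_{0,e_i}=\int_{S_i}\delta_0\cdot(\text{fibre degree})$, which reduces to $(n-2)\int_{S_i}(\sum_{j\neq i}p_j)^{n-3}/(n-3)!$ and can be evaluated as $(n-1)(n-2)$.
\end{itemize}
This check is individually computable but more laborious than the extraction route, so it serves only as confirmation.
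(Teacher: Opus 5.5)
Your argument is correct, but it runs in the opposite direction from the paper's. The paper computes directly on the moduli space. Writing $F_n$ as $x_0a_i+x_1b_i=0$ over $(\PP^1)^{n-1}$, it shows that for every smooth $F_n$ the lines of class $e_i$ are parametrized by the smooth codimension-two locus $S_i=\{a_i=b_i=0\}$ of class $h^2$, with $h=\sum_{j\ne i}p_j$. It then identifies $\overline{\mathcal M}_{0,1}(X_n,e_i)$ with $\PP^1\times S_i$ carrying its fundamental class: there are no contracted trees, the zero locus is regular, and $\mathcal V$ has rank zero. Finally it integrates using $\psi=-2p$, $\ev^*H=p+h$, $p^2=0$ and $\int_{S_i}h^{n-3}=(n-1)!$.

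You instead read off the $y$-linear parts $K_2=-(n-2)y$, $K_3=2y$, $K_{j\ge4}=O(y^2)$, which you computed correctly, apply Theorem~\ref{thm:extraction}, and split the diagonal sum by symmetry. Your route is shorter once the mirror package is available. The paper's route buys several things:
\begin{itemize}
\item it is independent of that package (toric mirror theorem, twisted quantum Riemann--Roch, the two-class hypergeometric step, virtual comparison);
\item it works class by class with no symmetry input;
\item it supplies geometric facts used later, namely existence of coordinate lines in Remark~\ref{rem:semipositive} and Corollary~\ref{ext:cor:vector}, and trivial stabilizers in Remark~\ref{rem:truefake};
\item it is what makes the ``mirror extraction against direct geometry'' comparison of Section~\ref{sec:checks} a genuine check rather than a tautology.
\end{itemize}

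\textbf{The symmetry step.} It is easiest to avoid a symmetric member altogether. The permutation $\sigma$ of the factors maps $F_n$ isomorphically onto the smooth member $\sigma(F_n)$, and $\delta_a$ is symmetric. Hence $N_{a,e_i}(F_n)=N_{a,e_{\sigma(i)}}(\sigma F_n)=N_{a,e_{\sigma(i)}}(F_n)$ by independence of the member, with classes identified through $H_2(Y_n,\Z)$.

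A symmetric smooth member does exist, but not by perturbing $\prod_i(u_i+v_i)$ without further argument, since that product is reducible. The symmetric subsystem spanned by the $e_k(u,v)$ is base-point free, because $\sum_k e_k s^k=\prod_i(v_i+su_i)$, so Bertini applies.

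Your proposed alternative via monodromy would fail as phrased: monodromy fixes each $e_i$, since $H_2(F_n,\Z)\cong H_2(Y_n,\Z)$ throughout the family.

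You could also bypass symmetry entirely using the multivariable Lemma~\ref{lem:division}. The $x_i$-coefficient of $-(B^0-1)$ is $H(H-2p_i)z^{-2}-2H^2p_iz^{-3}$. Pairing it with $\delta_a$ on $Y_n$ gives the three values class by class; this is the $D_{e_i}=H-2p_i$ of Section~\ref{ext:sec:primary}.

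\textbf{A slip in your fallback.} The $p$-linear part of $(p+h)^{n-2}/(n-2)!$ is $p\,h^{n-3}/(n-3)!$. So $N_{0,e_i}=\int_{S_i}h^{n-3}/(n-3)!$, with no extra factor $(n-2)$.
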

\begin{proof}
Fix $i$, put $Y_i=(\PP^1)^{n-1}$, and let
$h=\sum_{j\ne i}p_j$. The equation of $F_n$ is
$x_0a_i+x_1b_i=0$ over $Y_i$. A coordinate fiber is contained in $F_n$
exactly on $S_i=\{a_i=b_i=0\}\subset Y_i$.
At a point of $S_i$, if $da_i,db_i$ were dependent, a nonzero combination
$x_0da_i+x_1db_i$ would vanish. At that point of $\PP^1\times Y_i$ both
the fiber and base derivatives of the equation would vanish,
contradicting smoothness of $F_n$. Thus $S_i$ is a smooth regular
codimension-two locus of class $h^2$.

The ambient one-marked degree-one moduli is $\PP^1\times Y_i$;
the hypersurface moduli is its zero locus $\PP^1\times S_i$ for the induced section.
The degree-one nonconstant component maps isomorphically to its fiber.
A contracted tree attached to it cannot be stable with only one marking:
for a nonempty subtree with $v\geq1$ vertices and at most one marking, the total
number of special points is at most $2(v-1)+1+1=2v<3v$.
There are consequently no such boundary trees and no residual
map automorphisms. This description also holds for families, since
a fiberwise degree-one map to $\PP^1$ identifies the universal domain
with the trivial $\PP^1$-bundle, with its marking specifying a point.

The induced rank-two positive-bundle section on the smooth ambient
moduli has regular zero locus $\PP^1\times S_i$; its virtual class
is therefore fundamental. The local obstruction bundle has rank
$m-1=0$, so the local virtual class is fundamental as well.
On this moduli space let $p$ be the class of the marked point in the
fiber. The universal cotangent line has $\psi=-2p$, and
$\ev_{F_n}^*H=p+h$, with $p^2=0$.
Since $\int_{S_i}h^{n-3}=\int_{Y_i}h^{n-1}=(n-1)!$, the primary invariant is
$(n-1)!/(n-3)!=(n-1)(n-2)$, the first descendant is
$-2(n-1)!/(n-3)!$, and the remaining ones vanish by $p^2=0$.
There are $n$ coordinate classes, giving the final assertion.
\end{proof}

\section{Degree two and degree-dependent descendant bounds}\label{sec:degreetwo}
The primary calculations in this section use only the ambient virtual
comparison, and do not depend on the ambientness theorem or Iritani's
results in~\cite{Iritani}.
 Use $\delta_a$ for the insertions of
Proposition~\ref{prop:geometry}; the scalar Gamma coefficients remain
$\gamma_k$, and the assembly polynomials are $\mathcal T_j$.

\begin{theorem}[Degree-two primaries]\label{thm:degreetwo}
For all $n\geq4$ and distinct $i,j$,
\[
 N_{0,2e_i}=\frac{(n-1)(n-2)}4,\qquad
 N_{0,e_i+e_j}=-2(n-1)(n-2).
\]
Consequently, writing $N_m=\sum_{|\beta|=m}N_{0,\beta}$,
\[
 N_2=-\frac{n(n-1)(n-2)(4n-5)}4.
\]
The multiple-cover transform defined by
$N_{0,\beta}=\sum_{k\alpha=\beta}b_\alpha/k^2$ gives
\[
 b_{2e_i}=0,\qquad b_{e_i+e_j}=-2(n-1)(n-2),\qquad
 n_2:=\sum_{|\beta|=2}b_\beta=-n(n-2)(n-1)^2.
\]
These explicit integrality statements require no general integrality theorem.
\end{theorem}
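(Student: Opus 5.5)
The plan is to compute the two primaries from the multivariable (non-diagonal) form of the mirror theorem. The diagonal series of Theorem~\ref{thm:extraction} only sees the sum $N_2$, so it cannot separate $2e_i$ from $e_i+e_j$. The rest of the statement then follows from Theorem~\ref{thm:degreeone} and arithmetic.

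\textbf{Step 1: reduce each invariant to an ambient coefficient.} By Proposition~\ref{prop:virtualcompare} and Lemma~\ref{lem:division} with $b=\delta_0$ lifted to $Y_n$ as $\sum_{|S|=n-2}\prod_{i\in S}p_i$,
\[
N_{0,\beta}=\int_{F_n}\delta_0\,C^0_{0,\beta}=\int_{Y_n}H\,\delta_0\,[\mathbf Q^\beta z^{-2}]U^0 ,
\]
where $U^0=(B^0-1)/(-H)$ is taken with its ambient lift. By~\eqref{eq:Bdef}, only the part of $B^0$ of cohomological degree $2$ and order $z^{-2}$ contributes. This is a genuinely non-symmetric class in $\mathcal R$, a combination of $p_k^{\,}p_l$ and $p_kH$. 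Write it as $H\cdot\omega_\beta$ with $\omega_\beta$ linear; this division is possible by the argument of Lemma~\ref{lem:division}. Then $N_{0,\beta}=-\int_{Y_n}H\delta_0\,\omega_\beta$.

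\textbf{Step 2: expand through Novikov degree two.} Since $\tau=\sum_ip_i\log Q_i$ at $\lambda=0$ and $Q_i=x_ie^{g}$, we have
\[
B^0=e^{-Hg(\mathbf x)/z}\sum_{\mathbf d}\mathbf x^{\mathbf d}\,N_{\mathbf d}(z),
\]
where $N_{\mathbf d}(z)$ is the hypergeometric summand of~\eqref{eq:Imixed} at $\mu=\lambda=0$. I will expand $N_{\mathbf d}$ for $\mathbf d=e_i$, $2e_i$ and $e_i+e_j$ to cohomological degree two. Each positive-degree summand carries the factor $-H$ from $r=0$, so only a linear class in the remaining factors is needed, obtained from the harmonic-sum expansion already used in Corollary~\ref{cor:firstlog}. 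I will then multiply by $e^{-Hg/z}$, whose relevant part is $g=-x_1-\cdots-x_n+O(x^2)$, and substitute $x_i=Q_i(1+\sum_kQ_k+\dots)$ to re-expand in $\mathbf Q$. The degree-one coefficient must reproduce $N_{0,e_i}=(n-1)(n-2)$ from Theorem~\ref{thm:degreeone}, and I will use this as a normalization check.

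\textbf{Step 3: evaluate and derive the remaining claims.} The pairing $\int_{Y_n}H\delta_0\,p_kp_l$ and its analogues are elementary counts of square-free monomials. Evaluating them should give $N_{0,2e_i}=(n-1)(n-2)/4$ and $N_{0,e_i+e_j}=-2(n-1)(n-2)$. As a second check, the sum $nN_{0,2e_i}+\binom n2N_{0,e_i+e_j}$ must equal the $Q^2$ coefficient of $-\frac{n!}{(n-2)!}K_2(y(Q))$.

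The remaining claims are then arithmetic:
\[
N_2=\tfrac{n(n-1)(n-2)}4\bigl(1-4(n-1)\bigr)=-\tfrac{n(n-1)(n-2)(4n-5)}4 .
\]
For the cover transform, $b_{e_i}=N_{0,e_i}$, so
\[
b_{2e_i}=N_{0,2e_i}-\tfrac14 b_{e_i}=0 ,
\]
and $b_{e_i+e_j}=N_{0,e_i+e_j}$ because $e_i+e_j$ is primitive. Hence $n_2=\binom n2\bigl(-2(n-1)(n-2)\bigr)=-n(n-1)^2(n-2)$.

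\textbf{Main obstacle.} The hardest step is the bookkeeping in Step 2. The mirror-map correction $e^{-Hg/z}$ together with the substitution $x\to Q$ mixes degree-one terms into degree two. The cross term $Q_iQ_j$ receives contributions both from $N_{e_i+e_j}$ and from products of degree-one pieces, and a single sign or factor-of-two slip there spoils the answer. I will guard against this using the two consistency checks above: the degree-one coefficient and the diagonal $K_2$ coefficient.
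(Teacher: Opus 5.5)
Your route is correct, but it is not the one the paper uses for this theorem.

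The paper computes both invariants directly on the ambient moduli through the virtual comparison, by integrating $e(E_+)e(E_-)\ev^*\delta_0$.
For $2e_i$ it splits off the passive factor, identifies $\overline{\mathcal M}_{0,0}(\PP^1,2)\cong\PP(2,2,2)$, shows $E_+^{\rm act}\cong\mathcal O^2\oplus E_-^{\rm act}$, and uses $\int c^2=\tfrac18$.
For $e_i+e_j$ it identifies the active moduli with $\PP^3$ and the one-marked space with the universal $(1,1)$-divisor. It then reads off $c(E_+^{\rm act})=(1-h)^{-1}$ and $E_-^{\rm act}=\mathcal O(-1)$.

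Your class-by-class extraction from~\eqref{eq:Bdef} and Lemma~\ref{lem:division} is instead what the paper does later, as a cross-check, in Theorem~\ref{ext:thm:primary}.

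The bookkeeping you defer does close. The cross term of $e^{-Hg/z}$ with the $z^{-1}$ part $Hg$ turns $\tfrac12H^2g^2$ into $-\tfrac12H^2g^2$. Hence $-\omega$ has linear part $\sum_kx_k(H-2p_k)$, and its quadratic coefficients are $\tfrac32p_k-\tfrac34H$ at $x_k^2$ and $-4(H-p_k-p_l)$ at $x_kx_l$ ($k\ne l$). Substituting $x_k=Q_k(1+\sum_lQ_l)$ adds $H-2p_k$ and $2H-2p_k-2p_l$ respectively. This gives $\tfrac14(H-2p_k)$ and $-2(H-p_k-p_l)$. With your lift of $\delta_0$, $\int_YH^2\delta_0=n(n-1)$ and $\int_YHp_k\delta_0=n-1$, which yield the stated numbers.

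Two further points on your route. The division by $H$ is unambiguous, because $H\cdot$ is injective in degree one by Lemma~\ref{lem:linearlefschetz}. The same computation also returns $N_{0,e_i}=(n-1)(n-2)$, so you do not need Theorem~\ref{thm:degreeone}.

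Your approach is uniform and purely algebraic. It avoids all moduli-stack identifications and extends to every degree; compare~\eqref{ext:coordinate-ray-primary}.

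The paper's proof uses only the ambient virtual comparison and none of the mirror-theoretic input (the CCIT toric theorem, quantum Riemann--Roch, the two-class induction). That independence is what makes the ``mirror extraction against direct geometry'' comparison in Section~\ref{sec:checks} a genuine verification rather than a restatement. It also yields geometric by-products, such as the trivial stabilizers in class $e_i+e_j$ used in Remark~\ref{rem:truefake}.
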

\begin{proof}
On the ambient moduli use the bundles $E_+=\pi_*f^*\mathcal O_{Y_n}(H)$ and
$E_-=R^1\pi_*f^*\mathcal O_{Y_n}(-H)$ of Lemma~\ref{lem:basechange}.
The virtual comparison computes the invariant by integrating
$e(E_+)e(E_-)\ev^*\delta_0$. We compute the two possible degree types.

For $2e_i$, separate the passive factor $(\PP^1)^{n-1}$ and write
$\bar p=\sum_{j\ne i}p_j$. On the active unmarked moduli
$M=\overline{\mathcal M}_{0,0}(\PP^1,2)$ put
\[
 E^{\rm act}_+=\pi_*f^*\mathcal O_{\PP^1}(1),\qquad
 E^{\rm act}_-=R^1\pi_*f^*\mathcal O_{\PP^1}(-1),\qquad c=c_1(E^{\rm act}_-).
\]
These have ranks three and one. The line bundles on the target are
$\mathcal O(1)$ and $\mathcal O(-1)$, whose pullbacks have degrees
$2$ and $-2$. Let $B$ be the trace-free line of the double cover
recalled below (unrelated to the bundles $\mathcal B_\pm$ on $Y_n$), so that $f_*\mathcal O_C=\mathcal O\oplus
(\mathcal O(-1)\otimes B)$. The projection formula gives
\[
 E^{\rm act}_+=H^0(\PP^1,\mathcal O(1))\otimes\mathcal O\oplus B,
 \qquad
 E^{\rm act}_-=H^1(\PP^1,\mathcal O(-2))\otimes B\cong B,
\]
so $E^{\rm act}_+\cong\mathcal O^2\oplus E^{\rm act}_-$ and
$c_1(E^{\rm act}_+)=c$.
After passive twisting,
\[
 e(E_+)e(E_-)=\bar p^2(\bar p+c)(c-\bar p)=c^2\bar p^2-\bar p^4.
\]
Here and below these bundles pull back under forgetting the marking;
contraction of an unstable degree-zero component preserves their cohomology.

We give the stack integral used in this computation. An unmarked degree-two
map to $\PP^1$ has no contracted components, since a contracted subtree
attached to the at most two positive components cannot have three special
points per vertex.
It is a finite flat double cover, including the reducible
boundary consisting of two degree-one components. Conversely every
nonzero binary quadratic defines such a cover; a double root gives the
nodal reducible cover. Trace splitting of the rank-two algebra gives
$\mathcal O\oplus\mathcal O(-1)\otimes B$ over a base, with multiplication
specified by a section of $\mathcal O(2)\otimes B^{-2}$.
This description is compatible with base change (the finite morphism is
flat of degree two, and the trace-free line has relative degree $-1$).
It identifies
\[
 M\cong[(H^0(\PP^1,\mathcal O(2))\setminus\{0\})/\mathbb G_m]
       =\PP(2,2,2).
\]
The branch map to $\PP^2$ has generic stabilizer $\mu_2$.
If $h$ is the pullback of its hyperplane class, the character line
$E^{\rm act}_-$ has $c=\pm h/2$, as also follows by tensoring the trace splitting
with $\mathcal O(-1)$ and taking $H^1$.
Thus
$\int_M c^2=\frac12\frac14=\frac18$.
The divisor equation (Appendix~\ref{app:forget}) for the marking gives
$\int_{\overline{\mathcal M}_{0,1}(\PP^1,2)}\ev^*p_i\,c^2=2/8$.
Only the term $p_i\bar p^{\,n-3}/(n-3)!$ of $\delta_0$ contributes,
since the other terms have excessive passive degree.
Because $\int_{(\PP^1)^{n-1}}\bar p^{\,n-1}=(n-1)!$,
the answer is $(n-1)(n-2)/4$.

For $e_i+e_j$, the active unmarked moduli is $\PP^3$, the complete
linear system of $(1,1)$ divisors in $Y_A=\PP^1\times\PP^1$.
Every geometric map has at most two positive components. A contracted
unmarked subtree would need at least three external attachments by
stability, so none occurs. The map is the inclusion of a smooth graph
or the transverse nodal union of one fiber of each ruling.
Every automorphism of such an inclusion is the identity.

For a family over $S$, the map $C\to Y_A\times S$ is proper and
fiberwise quasi-finite, hence finite. The cokernel of
$\mathcal O_{Y_A\times S}\to f_*\mathcal O_C$ vanishes on fibers;
Nakayama, applied to this finite module locally, makes it zero.
The map is therefore a closed immersion. Its flat finitely presented
image is fiberwise an effective Cartier divisor, and the fiberwise
Cartier criterion makes it a relative effective Cartier divisor.
Its equation determines a line in the fixed four-dimensional space
$H^0(Y_A,\mathcal O(1,1))$, giving the projective-space functor.
These constructions commute with base change; the finite-presentation
data can first be descended to a Noetherian base if required.
This proves the stack isomorphism, including all finite stabilizers,
which here are trivial.

The one-marked moduli is the universal divisor
$U\subset\PP^3\times Y_A$ by the stable-map universal-curve theorem
\cite[Proposition 4.5 and Corollary 4.6]{BM}.
A point at the node represents the map obtained by inserting a contracted
$\PP^1$ between the two branches and placing the marking on it.
The two nodes and the marking make that component stable. Forgetting
the marking contracts it. A section through a node before this insertion
is not an admissible marked prestable curve. Pullback target bundles
retain their cohomology under the contraction, because the contracted
component has $H^0(\mathcal O)=\C$ and $H^1(\mathcal O)=0$ and values
match at its two nodes.
Let $h=c_1(\mathcal O_{\PP^3}(1))$ and
$\bar p=\sum_{k\ne i,j}p_k$, a sum of $n-2$ passive classes.
The universal divisor sequences give
\[
 c(E^{\rm act}_+)=(1-h)^{-1},\qquad E^{\rm act}_-=\mathcal O_{\PP^3}(-1).
\]
Indeed, for the positive bundle the sequence has first term
$\mathcal O_{\PP^3}(-1)\boxtimes\mathcal O$ and middle term
$\mathcal O\boxtimes\mathcal O(1,1)$; for the negative bundle the first
term is $\mathcal O(-h)\boxtimes\mathcal O(-2,-2)$ and the middle term
is $\mathcal O\boxtimes\mathcal O(-1,-1)$.
Their direct images give precisely the two assertions. Hence
\[
 e(E_+)e(E_-)=-(\bar p^4+2h\bar p^3+2h^2\bar p^2+2h^3\bar p),\qquad h^4=0.
\]
Integrate this against $(\bar p+p_i+p_j)^{n-2}/(n-2)!$ on $U$ times
the passive factors, using $[U]=h+p_i+p_j$.
Only two terms survive. The term $-2h^3\bar p$ combines with
$p_i+p_j$ from $[U]$ and the linear active term of the insertion,
giving $-4(n-2)$. The term $-2h^2\bar p^2$ combines with $h$ from $[U]$
and the quadratic active term, giving $-2(n-2)(n-3)$.
Their sum is $-2(n-1)(n-2)$, as asserted.
There are $n$ classes of the first type and $\binom{n}{2}$ of the
second. Sum them, and subtract $N_{0,e_i}/4$ in class $2e_i$ to obtain
the multiple-cover claims.
\end{proof}

\begin{theorem}[Support bound for descendants]\label{thm:supportbound}
Let $\beta=(d_1,\ldots,d_n)$ have total degree $m>0$ and let
$k=\#\{i:d_i>0\}$. For the insertions defined here,
\[
 N_{a,\beta}=0\quad\hbox{if }a>2m+k-2.
\]
In particular $N_{a,\beta}=0$ if $a>3m-2$. At degree two this gives
$N_{a,\beta}=0$ for $a\geq5$; these assertions are understood within
$0\leq a\leq n-2$.
\end{theorem}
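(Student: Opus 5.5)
The plan is to compute $N_{a,\beta}$ on the ambient moduli space $M_{Y_n}=\overline{\mathcal M}_{0,1}(Y_n,i_*\beta)$ rather than on $F_n$, and to show that $\psi^a$ already vanishes there for dimensional reasons. By Theorem~\ref{thm:virtual} together with Proposition~\ref{prop:virtualcompare}, equation~\eqref{eq:pushvirtual} multiplied by $\psi^a$, and $\delta_a$ pulled back from $Y_n$, we have
\[
 N_{a,\beta}=\int_{[M_{Y_n}]^\vir}\psi^a\,e(E_+)\,e(E_-)\,\ev_{Y_n}^*\delta_a .
\]
This is the route already used for the degree-two primaries in Theorem~\ref{thm:degreetwo}. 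As shown in the proof of Proposition~\ref{prop:virtualcompare}, $T_{Y_n}$ is convex in genus zero. Hence $M_{Y_n}$ is a smooth proper Deligne--Mumford stack and $[M_{Y_n}]^\vir$ is its fundamental class. It therefore suffices to prove $\psi^a=0$ in $A^*(M_{Y_n})_\Q$ for $a>2m+k-2$.

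Let $A\subset\{1,\ldots,n\}$ be the support of $\beta$, with $|A|=k$, and write $Y_A=(\PP^1)^k$ and $Y_{A^c}=(\PP^1)^{n-k}$. First I establish a product decomposition of stacks,
\[
 M_{Y_n}\cong\overline{\mathcal M}_{0,1}(Y_A,\beta_A)\times Y_{A^c}.
\]
A family of maps to $Y_n=Y_A\times Y_{A^c}$ is a pair of maps. The $Y_{A^c}$-component has degree zero on a connected genus-zero fiber, so it is constant, i.e.\ pulled back from the base; I would argue this via $\pi_*\mathcal O_C=\mathcal O_S$ and the fact that degree-zero maps to $\PP^1$ factor through $\pi$. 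A component of $C$ is contracted by the product map exactly when it is contracted by the $Y_A$-component, so stability and automorphisms agree. The marked universal curve, and hence the cotangent line $\psi$, depends only on the source curve. It is therefore pulled back along the first projection from $\psi$ on $\overline{\mathcal M}_{0,1}(Y_A,\beta_A)$.

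Next I compute the dimension of $\overline{\mathcal M}_{0,1}(Y_A,\beta_A)$. This stack is smooth of dimension
\[
 \dim Y_A+c_1(T_{Y_A})\cdot\beta_A+1-3=k+2m-2,
\]
since $c_1(T_{Y_A})=2\sum_{i\in A}p_i$ and $\sum_{i\in A}d_i=m$. The rational Chow ring of a smooth proper DM stack vanishes above its dimension. Hence $\psi^a=0$ there for $a>2m+k-2$, so its pullback to $M_{Y_n}$ vanishes as well, and therefore $N_{a,\beta}=0$.

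The consequences follow from $k\le m$, since each active coordinate has $d_i\ge1$. This gives the bound $a>3m-2$. At $m=2$ we have $k\le 2$, so $2m+k-2\le4$ and the invariants vanish for $a\ge5$.

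The main obstacle is the product decomposition at the level of stacks over arbitrary bases. I would handle this as in Theorem~\ref{thm:factorization}: use $\pi_*\mathcal O_C=\mathcal O_S$, which holds by cohomology and base change (Lemma~\ref{lem:basechange} applied to the trivial bundle), to show that a fiberwise constant map is pulled back from $S$. The identification of $\psi$ is then formal.
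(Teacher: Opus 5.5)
Your proposal is correct and matches the paper's proof step for step: the passive factors are constant via $\pi_*\mathcal O_C=\mathcal O_S$, giving $M_{Y_n}\cong\overline{\mathcal M}_{0,1}((\PP^1)^k,\beta_{\mathrm{act}})\times(\PP^1)^{n-k}$; the active moduli is smooth of dimension $2m+k-2$, so $\psi^a$ vanishes in rational Chow groups above that dimension; this zero pulls back through the ambient virtual comparison; and $k\le m$ gives the stated consequences. The one detail the paper spells out that you leave implicit comes before applying $\pi_*\mathcal O_C=\mathcal O_S$: one uses properness of the universal curve to shrink the base so that the constant passive image lies in an affine chart, and then glues the local factorizations.
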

\begin{proof}
Every passive coordinate is constant on the connected genus-zero domain. This is
functorial over arbitrary bases since $\pi_*\mathcal O_C= \mathcal O_B$. At any
geometric base point, choose an affine chart containing the constant fiber
image. Properness of the universal curve makes the image of the complement
closed in the base. Removing it puts the whole family over that neighborhood in
the chart, where $\pi_*\mathcal O_C=\mathcal O_B$ gives the factorization. The
local factorizations glue uniquely. Here $R^1\pi_*\mathcal O_C=0$ and the
isomorphism $\mathcal O_B\to\pi_*\mathcal O_C$ follow by the same split-complex
argument as Lemma~\ref{lem:basechange}, since each connected genus-zero fiber
has $h^0=1$, $h^1=0$, and the constant section generates $H^0$. Thus the ambient
one-marked moduli is
\[
 \overline{\mathcal M}_{0,1}((\PP^1)^k,\beta_{\rm act})
             \times(\PP^1)^{n-k}.
\]
Stability and the cotangent line depend only on the active map.
The active moduli is a smooth proper Deligne--Mumford stack of dimension
$k-3+2m+1=2m+k-2$, since the target is convex and the genus-zero obstruction
$H^1(C,f^*T)$ vanishes. 
Its rational Chow groups vanish above this
dimension, so $\psi^a=0$ there for the stated range. Pull back and use
the ambient virtual comparison; multiplying by its Euler classes and
the insertion cannot change this zero. Finally $k\leq m$.
\end{proof}

\begin{proposition}[Sharp diagonal bound]\label{prop:sharpbound}
For $m\geq1$ and $n\geq3m$,
\[
 [Q^m]\Dcal_{3m-2}^\Delta(Q)
 =-\frac{2^mn!}{m!\,(n-3m)!}\ne0.
\]
Thus $3m-2$ cannot be replaced by $2m$ as a uniform descendant cutoff.
For example, $n=9$, $m=3$ gives
$[Q^3]\Dcal_7^\Delta=-483840$, although $7>2m$.
\end{proposition}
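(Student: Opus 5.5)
By the support bound of Theorem~\ref{thm:supportbound}, a class $\beta$ of total degree $m$ with $N_{3m-2,\beta}\neq0$ must have $k=m$ active coordinates, since $3m-2\le 2m+k-2$ forces $k\ge m$ and always $k\le m$. So I will first reduce $[Q^m]\Dcal_{3m-2}^\Delta$ to the $\binom nm$ classes $\beta=e_{i_1}+\cdots+e_{i_m}$ with distinct indices. By symmetry these all give the same invariant. For such $\beta$ I will use the product decomposition from that proof: the ambient one-marked moduli is $M_{\rm act}\times(\PP^1)^{n-m}$, where $M_{\rm act}=\overline{\mathcal M}_{0,1}((\PP^1)^m,(1,\ldots,1))$ is smooth of dimension exactly $3m-2$. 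I will then compute $N_{3m-2,\beta}$ through the ambient virtual comparison of Proposition~\ref{prop:virtualcompare} as $\int\psi^{3m-2}e(E_+)e(E_-)\ev^*\delta_{3m-2}$.

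The key observation is that $\psi^{3m-2}$ is pulled back from $M_{\rm act}$ and already has top degree there. By Künneth, only the components of the other factors pulled back purely from the passive factor $(\PP^1)^{n-m}$ can contribute. Write $\bar p$ for the sum of the passive classes. Then $f^*\mathcal O(\pm H)=f_{\rm act}^*\mathcal O(\pm H_{\rm act})\otimes\mathcal O(\pm\bar p)$, so $E_\pm=E_\pm^{\rm act}\otimes\mathcal O(\pm\bar p)$, with ranks $m+1$ and $m-1$. Their Euler classes therefore have purely passive parts $\bar p^{\,m+1}$ and $(-\bar p)^{m-1}$. Likewise $\ev^*\delta_{3m-2}=(H_{\rm act}+\bar p)^{n-3m}/(n-3m)!$ has passive part $\bar p^{\,n-3m}/(n-3m)!$. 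The passive degrees sum to $n-m$, as they must, and $\int\bar p^{\,n-m}=(n-m)!$. This gives
\[
N_{3m-2,\beta}=(-1)^{m-1}\frac{(n-m)!}{(n-3m)!}\int_{M_{\rm act}}\psi^{3m-2}.
\]

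The remaining input is $\int_{M_{\rm act}}\psi^{3m-2}=(-2)^m$, and I will read it off from the small $J$-function of $(\PP^1)^m$ quoted after Theorem~\ref{thm:CCITtoric}. The degree-$(1,\ldots,1)$ coefficient is $\prod_i(p_i+z)^{-2}=z^{-2m}\prod_i(1-2p_i/z)$. Its point-class part is $(-2)^mp_1\cdots p_m z^{-3m}$. In the expansion $1/(z(z-\psi))=\sum_a\psi^az^{-a-2}$ used in~\eqref{eq:Bdef}, the exponent $a=3m-2$ corresponds to $z^{-3m}$. The pushforward $\ev_*\psi^{3m-2}$ has codimension $m$, so it is the point class times $\int\psi^{3m-2}$. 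As a check, $m=1$ recovers $\psi=-2p$ from Theorem~\ref{thm:degreeone}.

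Multiplying by the $\binom nm$ classes gives $-2^m\,n!/(m!\,(n-3m)!)$. This is nonzero because $n\ge3m$. For $n=9$, $m=3$ it is $-8\cdot 9!/6=-483840$.

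I expect the main obstacle to be the Künneth step: justifying that the twisted Euler classes $e(E_\pm^{\rm act}\otimes\mathcal O(\pm\bar p))$ contribute only their purely passive top term against $\psi^{3m-2}$. This should follow from the splitting principle, since every mixed term carries positive active degree and $\psi^{3m-2}$ times any positive-degree active class vanishes on $M_{\rm act}$. The second point needing care is matching the sign convention $J(\mathbf t,z)$ versus $J(\mathbf t,-z)$ in reading off $\int\psi^{3m-2}$. The $m=1$ check above fixes that sign.
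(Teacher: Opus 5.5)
Your argument is correct, and it reaches the formula by a genuinely different route from the paper. The Künneth step you flag is fine as you describe it: every mixed term carries a positive-degree class from $M_{\rm act}$, and any such class annihilates the top-degree class $\psi^{3m-2}$. Your $m=1$ check also correctly fixes the $z\mapsto -z$ convention, since the two conventions differ uniformly by $(-1)^a$ in the coefficient of $\psi^a$.

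\emph{The paper's route.} It never looks at individual curve classes. It bounds the cohomological degree of each summand of the local ambient $I$-series at $z=1$ by $2m+k\le 3m$: the numerator has degree at most $2m$ in $H$, and each active denominator contributes at most one $p_i$. It then observes that neither the normalization $e^{-Hg}$ nor the substitution $y=y(Q)=Q+O(Q^2)$ can change this top term. From the tuples with $m$ distinct entries one it reads off $[y^m]K_{3m}=2^m/m!$. Finally it applies Theorem~\ref{thm:extraction} with $a=3m-2$.

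\emph{Your route.} You work geometrically, class by class.
\begin{itemize}
\item Theorem~\ref{thm:supportbound} isolates the classes $\beta=e_{i_1}+\cdots+e_{i_m}$ with distinct indices.
\item The product decomposition and Proposition~\ref{prop:virtualcompare} reduce each invariant to the passive contribution $\bar p^{\,m+1}(-\bar p)^{m-1}\bar p^{\,n-3m}/(n-3m)!$, paired with the single number $\int_{M_{\rm act}}\psi^{3m-2}$.
\item You obtain $\int_{M_{\rm act}}\psi^{3m-2}=(-2)^m$ from the untwisted Fano toric $J$-function of $(\PP^1)^m$.
\end{itemize}

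\emph{Comparison.} The two computations run in parallel. The paper's leading numerator $(-1)^mH^{2m}$ is your passive Euler-class contribution, and its top denominator term $(-2)^m\prod_i p_i$ is exactly your $J$-function coefficient. Your version buys two things:
\begin{itemize}
\item the per-class value $N_{3m-2,\beta}=-2^m(n-m)!/(n-3m)!$;
\item independence from the mixed-twisted mirror theorem and the extraction formula, since you use only the geometric comparison and the classical untwisted mirror theorem for $(\PP^1)^m$.
\end{itemize}
The paper's argument is shorter once that machinery is in place. It stays entirely with the diagonal generating series, and it recovers the needed vanishing from degree counting rather than from the support bound.
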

\begin{proof}
Work with the local ambient $I$-series at $z=1$ and remove its bare
exponential $e^{H\log y}$. For a degree tuple of total degree $m$
and support $k$, the local numerator has degree at most $2m$ in $H$.
Because $p_i^2=0$, expanding each active denominator contributes at most
one power of $p_i$, hence at most $k$ additional cohomological powers.
The total degree is at most $2m+k\leq3m$.
A coefficient of $y^r$ in $e^{-Hg(y)}$ has degree at most $r$ in $H$.
For $r>0$, its product with a term of degree $m-r$ has degree at most
$3(m-r)+r<3m$. Thus normalization cannot change the top coefficient.
Neither can substituting $y(Q)=Q+O(Q^2)$, since lower degrees have
cohomological degree strictly less than $3m$.

The top degree is possible only for tuples with $m$ distinct entries
one. For each such tuple the leading numerator is $(-1)^mH^{2m}$,
and the top denominator term is $(-2)^m\prod_{i:d_i=1}p_i$.
Summing over these subsets gives
$2^mH^{2m}e_m(p)=2^mH^{3m}/m!$.
Apply Theorem~\ref{thm:extraction} with $a=3m-2$.
\end{proof}

\section{Compactification and primary arithmetic}\label{sec:arithmetic}
\subsection{The compact target and its curve lattice}
Use the lines convention
\[
 Z=\PP_{\rm lines}(\mathcal O_{F_n}\oplus K_{F_n}),\quad p:Z\to F_n,\quad
 \xi=c_1(\mathcal O_Z(1)).
\]
Let $D_0$ be the section from the first summand and $D_\infty$ that
from $K_{F_n}$; write $i_0:F_n=D_0\hookrightarrow Z$ and let $\ell_f$ be
the fiber curve class.

\begin{theorem}[Compactification and class comparison]\label{thm:compact}
One has
\[
 Z\setminus D_\infty=X_n,\quad [D_\infty]=\xi,\quad i_0^*\xi=0,\quad
 \mathcal O_Z(D_\infty)|_{D_\infty}=K_{F_n}^{-1},\quad c_1(TZ)=2\xi.
\]
The class $\xi$ is nef. For every positive $\beta$ and every $\ell\geq0$,
\[
 \overline{\mathcal M}_{0,\ell}(X_n,\beta)
 \cong\overline{\mathcal M}_{0,\ell}(Z,i_{0*}\beta)
\]
with their standard obstruction theories, virtual classes, and virtual
structure sheaves. Moreover
\[
 H_2(Z,\Z)=i_{0*}H_2(F_n,\Z)\oplus\Z\ell_f,\qquad\xi\cdot\ell_f=1,
\]
and $k\mathfrak b=i_{0*}\beta$ if and only if $\mathfrak b=i_{0*}\alpha$ and
$k\alpha=\beta$. For such classes, $\mathfrak b$ is effective if and only if
$\alpha$ is effective.
\end{theorem}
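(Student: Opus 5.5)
The plan has three parts: the projective-bundle identities, the identification of moduli with the local ones, and the curve lattice.

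\emph{Projective-bundle identities.} Write $E=\mathcal O_{F_n}\oplus K_{F_n}$. In the lines convention $\mathcal O_Z(-1)$ is the tautological subbundle of $p^*E$. Then $D_\infty=\PP(K_{F_n})$ is the zero locus of the composite $\mathcal O_Z(-1)\to p^*E\to p^*\mathcal O_{F_n}$, so $[D_\infty]=\xi$. The complement of $D_\infty$ consists of lines not contained in $K_{F_n}$; normalizing the first coordinate to $1$ identifies it with $\Tot(K_{F_n})=X_n$. Along $D_0=\PP(\mathcal O)$ the tautological line is trivial, which gives $i_0^*\xi=0$. Along $D_\infty$ it is $K_{F_n}$, and the normal bundle of $D_\infty$ is $\mathcal O_Z(1)|_{D_\infty}=K_{F_n}^{-1}$.

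For the first Chern class, the relative Euler sequence gives $c_1(T_{Z/F_n})=2\xi+p^*c_1(E)=2\xi-p^*H$. Adding $p^*c_1(TF_n)=p^*H$ yields $c_1(TZ)=2\xi$. Since $\mathcal O_Z(1)$ is a quotient of $p^*E^\vee=\mathcal O\oplus\mathcal O(H)$, and both summands are globally generated, $\mathcal O_Z(1)$ is globally generated and hence $\xi$ is nef.

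\emph{Lattice.} The projective bundle formula gives $H^2(Z)=p^*H^2(F_n)\oplus\Z\xi$. Dually, $H_2(Z,\Z)=i_{0*}H_2(F_n,\Z)\oplus\Z\ell_f$, because $p_*i_{0*}=\mathrm{id}$, $\xi\cdot i_{0*}\beta=0$ and $\xi\cdot\ell_f=1$; this uses Proposition~\ref{prop:geometry} for torsion-freeness. The divisibility statement follows by applying $p_*$ and pairing with $\xi$: if $k\mathfrak b=i_{0*}\beta$, then $k(\xi\cdot\mathfrak b)=0$, so the $\ell_f$-component of $\mathfrak b$ vanishes. For effectiveness, $p_*$ preserves effective classes, and $i_{0*}$ of an effective class is effective.

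\emph{Moduli.} Let $f:C\to Z$ have class $i_{0*}\beta$. Then $\deg f^*\xi=0$, while $\xi$ is nef and $D_\infty$ has class $\xi$. Hence no component of $C$ meets $D_\infty$ properly. A component lying inside $D_\infty$ would have negative degree against $\xi|_{D_\infty}=K_{F_n}^{-1}\cdot(\text{normal})$. More simply, $\mathcal O(D_\infty)|_{D_\infty}=-K_{F_n}$ is ample up to $H$, so $\xi\cdot\gamma=H\cdot p_*\gamma>0$ for any curve $\gamma\subset D_\infty$, which is excluded. Therefore $f$ lands set-theoretically in $X_n$.

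The point I expect to be the real obstacle is proving this scheme-theoretically over arbitrary, possibly nonreduced, bases. For this I would use that $f^*\mathcal O(D_\infty)$ has degree $0$ on every component, with its canonical section $\sigma$ nonvanishing at each geometric fiber point. On an Artinian thickening, $\sigma$ is then a unit, because nonvanishing on the closed fiber suffices for $f^{-1}(D_\infty)=\varnothing$ as a closed subscheme, whose support is empty. So $\overline{\mathcal M}_{0,\ell}(X_n,\beta)\to\overline{\mathcal M}_{0,\ell}(Z,i_{0*}\beta)$ is an open immersion that is bijective on points, and hence an isomorphism.

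Since $X_n\subset Z$ is open, $T_Z|_{X_n}=T_{X_n}$. The obstruction complexes $(R\pi_*f^*T)^\vee$ therefore agree with their morphisms to the cotangent complex. It follows that the virtual classes and the virtual structure sheaves coincide, by Remark~\ref{rem:localKexcess}'s construction.
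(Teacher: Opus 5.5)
Your proposal is correct and follows essentially the paper's route. It uses the same projective-bundle computations for $[D_\infty]$, $c_1(TZ)$ and nefness, and the same $\xi$-degree argument that every stable map of class $i_{0*}\beta$ avoids $D_\infty$; the extension to arbitrary bases also works, since a closed subscheme $f^{-1}(D_\infty)$ with empty support is empty. The rest matches as well: $T_Z|_{X_n}=T_{X_n}$ for the obstruction theories, and the lattice and effectivity arguments via $p_*$ and pairing with $\xi$.

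Two small repairs are needed. Drop the stray claim that a component inside $D_\infty$ has \emph{negative} $\xi$-degree, since it is positive, as your next sentence says. Also add the one-line connectivity step the paper includes for contracted components: a maximal contracted subtree mapping into $D_\infty$ would meet a nonconstant component at a node lying in $D_\infty$, which you have already excluded.
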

\begin{proof}
A line with nonzero first coordinate has a unique representative
$(1,v)$, identifying the open complement with $K_{F_n}$.
Projection of the tautological subline to $\mathcal O_{F_n}$ cuts out
$D_\infty$, proving its divisor class and its restriction to the
zero section. On the infinity section the tautological subline is
$K_{F_n}$, giving the displayed positive normal bundle.
The relative Euler sequence gives
$c_1(T_{Z/F_n})=2\xi+p^*c_1(K_{F_n})$; adding $p^*c_1(T_{F_n})$ leaves $2\xi$.
The dual bundle $\mathcal O_{F_n}\oplus K_{F_n}^{-1}$ is globally generated,
and surjects onto $\mathcal O_Z(1)$, so $\xi$ is nef.

A curve not contained in $D_\infty$ has nonnegative intersection with
that effective divisor, and a nonconstant curve contained in it has
strictly positive intersection because $K_{F_n}^{-1}$ is ample.
In a stable map of class $i_{0*}\beta$ the total $\xi$-degree is zero.
Every nonconstant component must therefore avoid $D_\infty$.
A contracted component in $D_\infty$ would connect, through contracted
components, to a nonconstant component meeting it, which is impossible.
Thus every geometric map lands in $X_n$.
For families, maps with image in an open target form an open substack:
the bad locus is the proper image from the universal curve of the
inverse image of $D_\infty$. It has no geometric points by the preceding
argument and hence is empty, over arbitrary bases. The open immersion
of moduli stacks is therefore an isomorphism including nilpotents.
Since $T_Z|_{X_n}=T_{X_n}$, the actual obstruction complexes and their maps
agree; the virtual classes and the virtual structure sheaves agree too.

The projective-bundle formula and the section give the direct-sum
lattice decomposition. Intersecting $k\mathfrak b=i_{0*}\beta$ with $\xi$
annihilates the fiber coefficient, and projection to $F_n$ gives
$k\alpha=\beta$. For effectivity, one implication uses the zero
section. Conversely push an effective cycle of class $i_{0*}\alpha$
to $F_n$; its pushforward is an effective cycle of class $\alpha$.
This is a statement about effective cycles, not an assertion that
all such classes have nonempty genus-zero stable-map moduli.
\end{proof}

\begin{remark}[Positivity and the scope of the arithmetic theorems]
\label{rem:semipositive}
Algebraically, $c_1(TZ)=2\xi$ is nef. The symplectic hypothesis in
Theorem~\ref{thm:IP} below is different, because in complex
dimension $n$ there must be no spherical class $\mathfrak a$ with positive
symplectic area and $3-n\leq c_1(\mathfrak a)<0$.
For $n=4$ the only forbidden value is $-1$, impossible since $c_1(TZ)$
is even. For $n\geq5$, the classes
\[
 \mathfrak a_k=i_{0*}(k e_i)-\ell_f
\]
have $c_1(\mathfrak a_k)=-2$ and positive area for all sufficiently large $k$,
for every fixed Kähler form on $Z$. 
They are spherical, since coordinate curves in $F_n$ exist by Theorem~\ref{thm:degreeone}, as their parameter scheme has nonzero class $h^2$, and the fiber is a sphere.
The image of $\pi_2(Z)\to H_2(Z,\Z)$ is a subgroup, so their indicated difference
is spherical. Thus $Z$ fails this symplectic condition for $n\geq5$.
The existence of these topological classes does not say that they are
represented by holomorphic curves for the algebraic complex structure.
\end{remark}

\subsection{A compact target with trivial canonical bundle}
\begin{theorem}[Calabi--Yau double cover and isolation of the local sector]
\label{thm:CYcover}
There is a smooth connected projective $n$-fold $\widetilde Z$,
a finite flat double cover $r:\widetilde Z\to Z$, and disjoint
divisors $F_{n,+},F_{n,-}\cong F_n$ over $D_0$, such that
\[
 K_{\widetilde Z}\cong\mathcal O_{\widetilde Z},\qquad
 N_{F_{n,\pm}/\widetilde Z}\cong K_{F_n} .
\]
Write $q=p\circ r$ and $j_\pm:F_n\hookrightarrow\widetilde Z$.
For every $\beta>0$ and $\ell\geq0$,
\[
 \overline{\mathcal M}_{0,\ell}(\widetilde Z,j_{+*}\beta)
 \cong\overline{\mathcal M}_{0,\ell}(F_n,\beta)
 \cong\overline{\mathcal M}_{0,\ell}(X_n,\beta),
\]
with the standard obstruction theories on the first and last stacks
identified. The virtual classes and virtual structure sheaves agree.
Every effective integral covering root of $j_{+*}\beta$ is
$j_{+*}\alpha$ with $k\alpha=\beta$; conversely each effective such
$\alpha$ gives a covering root. The same assertion holds modulo torsion.
\end{theorem}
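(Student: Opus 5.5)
The plan is to construct $\widetilde Z$ as the double cover of $Z$ branched along a smooth divisor $B$ in the class $2\xi$, disjoint from $D_0$. Recall from Theorem~\ref{thm:compact} that $K_Z=-2\xi$. For a double cover branched along $B\in|\mathcal O_Z(2L)|$ one has $K_{\widetilde Z}=r^*(K_Z+L)$, so the choice $L=\xi$ gives $K_{\widetilde Z}=r^*(-\xi)$. That is not trivial. I would therefore take $L=2\xi$ instead, with $B\in|4\xi|$: then $K_{\widetilde Z}=r^*(-2\xi+2\xi)=\mathcal O$.

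The branch divisor has to be chosen carefully. The linear system $|4\xi|$ is base point free, since $\xi$ is nef and globally generated, as shown in the proof of Theorem~\ref{thm:compact}. We also have $i_0^*\xi=0$, so $\xi$ is trivial on $D_0$. Take the section $\sigma_0$ of $\mathcal O_Z(\xi)$ cutting out $D_\infty$. It is nonvanishing on $D_0$, so $\sigma_0^4$ is nonvanishing there too. Perturbing $\sigma_0^4$ within the base point free system keeps it nonvanishing on the compact $D_0$, and Bertini gives a smooth member $B$ avoiding $D_0$. Because $B\cap D_0=\emptyset$, the cover is étale over a neighbourhood of $D_0$. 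The restriction of $\mathcal O_Z(2\xi)$ to $D_0$ is trivial, with square root $\mathcal O$, and the defining section restricts to a nonzero constant. So $r^{-1}(D_0)$ splits as two disjoint copies $F_{n,\pm}\cong F_n$. Their normal bundles are $r^*N_{D_0/Z}=K_{F_n}$, because $D_0$ is the zero section of $\Tot(K_{F_n})$.

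Two routine points remain for the construction. Connectedness of $\widetilde Z$ holds because $\mathcal O_Z(2\xi)$ is nontrivial, so the cover is not split globally; if needed, note that $\widetilde Z$ is irreducible since $B\ne\emptyset$. Projectivity and flatness are standard for cyclic covers of a smooth projective variety branched along a smooth divisor.

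For the moduli comparison I would repeat the argument of Theorem~\ref{thm:compact}, using the pulled-back nef class $\tilde\xi=r^*\xi$. A stable map of class $j_{+*}\beta$ has $\tilde\xi$-degree $\xi\cdot i_{0*}\beta=0$. Since $r^*D_\infty$ is an effective divisor with ample normal behaviour along $D_\infty$ (I would pull back the positivity argument), every component avoids $r^{-1}(D_\infty)$. The main obstacle is then to show the map lands in $F_{n,+}$ itself and not merely in $r^{-1}(X_n)$.

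For that step I would argue as follows. The composite $r\circ f$ has class $i_{0*}\beta$ and, by Theorem~\ref{thm:compact}, factors through $X_n$. Theorem~\ref{thm:factorization} then shows it factors through the zero section $D_0$, and this holds over arbitrary bases. So $f$ lands in $r^{-1}(D_0)=F_{n,+}\sqcup F_{n,-}$. By connectedness and the class $j_{+*}\beta$ it lands in $F_{n,+}$. This uses that $j_{+*}\beta\ne j_{-*}\beta$; if these classes coincide in homology, I would instead use connected components of the moduli stack, indexing by the component of $r^{-1}(D_0)$.

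Over a neighbourhood $U$ of $F_{n,+}$, the map $r$ is an étale isomorphism onto a neighbourhood of $D_0$, so $T_{\widetilde Z}|_{F_{n,+}}\cong T_Z|_{D_0}=T_{F_n}\oplus K_{F_n}$. The obstruction complexes therefore coincide with those of $X_n$, since the map factors through the open set $r^{-1}(X_n)\cap U$, identified étale-locally with a neighbourhood of the zero section in $X_n$. Theorem~\ref{thm:virtual} and Remark~\ref{rem:localKexcess} then identify the virtual classes and virtual structure sheaves.

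Finally, for the lattice statement, suppose $k\mathfrak b=j_{+*}\beta$. Apply $r_*$ to get $k\,r_*\mathfrak b=i_{0*}\beta$. Theorem~\ref{thm:compact} then gives $r_*\mathfrak b=i_{0*}\alpha$ with $k\alpha=\beta$, and effectivity is preserved under pushforward. To lift $\mathfrak b$ back to $j_{+*}\alpha$, I would show the relevant part of $H_2(\widetilde Z)$ is controlled by $r_*$ together with an effective-cycle argument. An effective representative of $\mathfrak b$ has $\tilde\xi$-degree zero, so it lies in $r^{-1}(D_0)$, and after that the lifting is immediate. The modulo-torsion version follows the same way.
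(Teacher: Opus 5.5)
Your construction is sound. It differs from the paper's only in using a general member of the base-point-free system $|4\xi|$, nonvanishing on $D_0$ and smooth by Bertini, where the paper writes down the explicit section $u^4+p^*a\,v^4$. The following steps are also fine:
\begin{itemize}
\item the splitting of $r^{-1}(D_0)$ and the normal bundles;
\item the isomorphism $K_{\widetilde Z}\cong\mathcal O_{\widetilde Z}$;
\item the reduction, via $r\circ f$ and Theorem~\ref{thm:factorization}, to families landing in $r^{-1}(D_0)=F_{n,+}\sqcup F_{n,-}$.
\end{itemize}
The gap is that you never show $F_{n,+}$ and $F_{n,-}$ are homologically distinguishable, and both halves of the statement depend on this.

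In the moduli step you note that the argument ``uses that $j_{+*}\beta\ne j_{-*}\beta$''. Your fallback, if the classes coincide, is to pass to a connected component of the moduli stack. That does not prove the theorem. If $j_{+*}\beta=j_{-*}\beta$, then $\overline{\mathcal M}_{0,\ell}(\widetilde Z,j_{+*}\beta)$ would contain a second copy of $\overline{\mathcal M}_{0,\ell}(F_n,\beta)$ from maps into $F_{n,-}$. The asserted isomorphism would then fail, and the invariants of $\widetilde Z$ in that class, which the later applications of Theorems~\ref{thm:chou} and~\ref{thm:IP} use, would be double the local ones.

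Likewise, in the root statement the lifting is not immediate. An effective representative of $\mathfrak b$ inside $r^{-1}(D_0)$ has the form $j_{+*}\alpha_++j_{-*}\alpha_-$. From $k\mathfrak b=j_{+*}\beta$ and $q_*$ you only get $k(\alpha_++\alpha_-)=\beta$ and $j_{+*}(k\alpha_-)=j_{-*}(k\alpha_-)$. You still have to rule out $\alpha_-\ne0$.

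The missing ingredient is an intersection computation with the divisor $F_{n,+}$. Since $N_{F_{n,+}/\widetilde Z}\cong K_{F_n}$ and $F_{n,+}\cap F_{n,-}=\emptyset$,
\[
 F_{n,+}\cdot j_{+*}\gamma=-H\cdot\gamma,\qquad F_{n,+}\cdot j_{-*}\gamma=0,\qquad q_*j_{\pm*}=\mathrm{id}.
\]
\begin{itemize}
\item For the moduli step, take $\gamma=\beta>0$. This gives $j_{+*}\beta\ne j_{-*}\beta$, even modulo torsion, so every geometric fibre's connected domain lands in $F_{n,+}$. Over an arbitrary base, the two sheets pull back to open-and-closed subsets through $\pi_*\mathcal O_C=\mathcal O_S$, so the whole family lies in $F_{n,+}$.
\item For the roots, pair $k\mathfrak b=j_{+*}\beta$ with $F_{n,+}$. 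This gives $H\cdot\alpha_+=H\cdot\beta/k=H\cdot(\alpha_++\alpha_-)$, and ampleness of $H$ forces the effective class $\alpha_-$ to vanish.
\end{itemize}
With this added, your argument coincides with the paper's.

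One minor point: no Zariski neighbourhood of $F_{n,+}$ maps isomorphically onto a neighbourhood of $D_0$. Since $\widetilde Z$ is irreducible, any nonempty open set meets its image under the deck involution $\tau\mapsto-\tau$ away from the ramification, so $r$ is not injective on it. You do not need such a neighbourhood, though. For the obstruction theories it suffices that $dr$ is an isomorphism along $F_{n,+}$, which étaleness there provides.
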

\begin{proof}
Write $\mathcal O_{F_n}(H)=K_{F_n}^{-1}$, which is very ample, and put
$M=\mathcal O_Z(2\xi)=-K_Z$.
Indeed the relative tangent determinant is
$\det T_{Z/F_n}=\mathcal O_Z(2\xi)\otimes p^*K_{F_n}$;
multiplication by $p^*\det T_{F_n}$ cancels the $K_{F_n}$ factor.
Thus, the canonical bundle $K_Z$ is isomorphic to the line bundle $\mathcal{O}_Z(-2\xi)$.
In the lines convention, the two tautological projections give sections
\[
 u\in H^0(Z,\mathcal O_Z(\xi)),\qquad
 v\in H^0(Z,\mathcal O_Z(\xi)\otimes p^*K_{F_n}).
\]
They never vanish simultaneously; $D_\infty=(u=0)$ and $D_0=(v=0)$.

Choose $a\in H^0(F_n,\mathcal O_{F_n}(4H))$ with smooth zero divisor.
Existence can be seen directly, without invoking a smoothness assertion
for an arbitrary semiample system. Embed $F_n$ by the very ample
$\mathcal O_{F_n}(4H)$ in $\PP^N$. If the hyperplane corresponding to a
section is singular at a point $x$, it contains the projective tangent
$(n-1)$-plane $T_xF$. The incidence variety of such pairs has dimension
$n-1+(N-n)=N-1$ and is projective over the dual $\PP^N$. Its image is
therefore a proper closed subset. Hence a hyperplane outside this image
has smooth section. The section is nonempty since an ample divisor on the
positive-dimensional projective variety $F_n$ has positive intersection
with $H^{n-2}$.

The section
\[
 b=u^4+p^*a\,v^4\in H^0(Z,\mathcal O_Z(4\xi))=H^0(Z,M^2)
\]
has smooth nonempty zero divisor $B$ disjoint from $D_0$.
Indeed, on $u\ne0$ its equation is $1+a w^4=0$, where $w=v/u$
is the fiber coordinate of $K_{F_n}$. On its zero set $aw\ne0$,
so the fiber derivative $4aw^3$ does not vanish.
On $v\ne0$ the equation in a local trivialization is $t^4+a=0$.
If $t\ne0$ its fiber derivative is nonzero; if $t=0$, smoothness
of $(a=0)$ gives a nonzero base derivative.
This proves smoothness everywhere, including along infinity.

Define
\[
 \widetilde Z=\operatorname{Spec}_Z(\mathcal O_Z\oplus M^{-1}),
 \qquad M^{-1}\otimes M^{-1}\xrightarrow{\,b\,}\mathcal O_Z.
\]
Locally the equation is $\tau^2=b$; hence the morphism is finite flat
of degree two and the total space is projective.
Away from $B$ it is étale. On $B$, its equation has nonzero base
differential $db$, so $\widetilde Z$ is smooth.
The section $b$ has a zero of order one along a branch component,
so it cannot be a square in the function field after a rational
trivialization of $M$. The generic cover is a field extension.
The algebra is locally free over the integral $Z$, and embeds in that
field, proving that $\widetilde Z$ is integral and thus connected.

Viewed as the hypersurface $\tau^2-b=0$ in $\Tot(M)$,
its canonical bundle is
\[
 K_{\widetilde Z}
  =r^*(K_Z\otimes M^{-1}\otimes M^2)
  =r^*(K_Z\otimes M)=\mathcal O_{\widetilde Z}.
\]
This follows from the tangent determinant sequence for the smooth hypersurface,
just as in Proposition~\ref{prop:geometry}. Equivalently, if $R_b=(\tau=0)$ is
the ramification divisor, the local equation $b=\tau^2$ gives $db=2\tau\,d\tau$
and hence $K_{\widetilde Z}=r^*K_Z\otimes\mathcal O(R_b)$. The tautological
section $\tau$ of $r^*M$ has divisor $R_b$, so this again gives the trivial
canonical line bundle. This computation proves an isomorphism of line bundles,
establishing more than the vanishing of a rational Chern class. Along $D_0$, $u$
trivializes $\mathcal O_Z(\xi)$ and $b=u^4$. Consequently $r^{-1}(D_0)$ is the
disjoint union $\tau=+u^2$ and $\tau=-u^2$, each isomorphic to $F_n$.
\'Etaleness along both components identifies their normal bundles with
$N_{D_0/Z}=K_{F_n}$. In particular
\begin{equation}\label{e:intersection_numbers}
 F_{n,+}\cdot j_{+*}\alpha=-H\cdot\alpha,\qquad
 F_{n,+}\cdot j_{-*}\alpha=0,\qquad q_*j_{\pm*}=\operatorname{id}.
\end{equation}

Now let a family of stable maps have class $j_{+*}\beta$.
Composition with the finite morphism $r$ preserves stability:
exactly the same components are contracted, and a nonconstant map
still has finite automorphism group.
Its class on $Z$ is $i_{0*}\beta$.
Theorem~\ref{thm:compact}, followed by zero-section factorization,
shows that this composed map factors scheme-theoretically through $D_0$.
The original map therefore factors through
$r^{-1}(D_0)=F_{n,+}\amalg F_{n,-}$ over the entire base.
For a geometric fiber the connected domain lies in one component.
The intersection numbers in eq.~\eqref{e:intersection_numbers} and $H\cdot\beta>0$ exclude $F_{n,-}$.
For a family, the two alternatives define open-and-closed subsets of the
base. Equivalently, the two idempotents pull back through
$\pi_*\mathcal O_C=\mathcal O_S$.
Since no geometric fiber lies in $F_{n,-}$, the entire family lies in $F_{n,+}$.
This proves the asserted stack isomorphism over arbitrary bases.

Along $F_{n,+}$ the differential $dr$ is an isomorphism
$j_+^*T_{\widetilde Z}\cong i_0^*T_Z=T_{F_n}\oplus K_{F_n}$.
It identifies the universal stable-map obstruction morphisms,
because they are induced by the same composed universal differential.
For an explicit comparison, let $\mathfrak M$ be the common stable-map
stack, let $\mathfrak B$ be the same smooth prestable-curve stack used
above, and let $\pi:\mathcal C\to\mathfrak M$ be the universal curve.
Composition with $r$ does not stabilize or alter this curve. The
obstruction morphisms fit into the commuting diagram
\begin{equation}\label{eq:CYcoverPOT}
\begin{array}{ccc}
 (R\pi_*f_Z^*T_Z)^\vee&\xrightarrow{\ \sim\ }&
 (R\pi_*f_{\widetilde Z}^*T_{\widetilde Z})^\vee\\[2mm]
 \phi_Z\downarrow&&\downarrow\phi_{\widetilde Z}\\[1mm]
 \mathbb L_{\mathfrak M/\mathfrak B}&\xrightarrow{\ \operatorname{id}\ }&
 \mathbb L_{\mathfrak M/\mathfrak B}.
\end{array}
\end{equation}
The chain rule for the universal differential, followed by the same
relative-duality and trace construction on $\mathcal C$, proves commutativity.
Global resolutions are supplied by \cite[Proposition 5]{BehrendGW}. Restriction
through the open immersion $X_n\subset Z$ makes the same comparison with the
theory of $X_n$. Thus the theories agree with those of \(X_n\), including their
maps to the cotangent complex.
Consequently, the induced isomorphism of stable-map stacks identifies the
perfect obstruction theories and hence the associated virtual classes and
virtual structure sheaves. The standard theory of \(F_n\), by contrast, has the
excess factor described in Theorem~\ref{thm:virtual} and
Remark~\ref{rem:localKexcess}.

Finally suppose $k\gamma=j_{+*}\beta$ and $\gamma$ is effective.
Set $\alpha=q_*\gamma$; then $k\alpha=\beta$.
The torsion-free splitting of $H_2(Z)$ in Theorem~\ref{thm:compact}
gives $r_*\gamma=i_{0*}\alpha$.
Represent $\gamma$ by an effective cycle. Nefness of $r^*\xi$
and zero total degree force every component downstairs into $D_0$,
by the curve argument in that theorem. More explicitly, a complete
irreducible curve of zero $\xi$-degree avoids $D_\infty$ and hence
lies in $X_n$. On its normalization, a nonconstant projection to $F_n$
pulls $K_{F_n}$ back to a negative-degree line bundle, whose section is
zero. A curve projecting to a point cannot be complete and nonconstant
inside an affine-line fiber. Thus every such curve lies in $D_0$;
this argument applies to effective cycles of any geometric genus.
Thus the cycle splits between $F_{n,+}$ and $F_{n,-}$; write its projected
effective classes as $\alpha_+$ and $\alpha_-$.
Equation~\eqref{e:intersection_numbers} gives
$H\cdot\alpha_+=H\cdot\beta/k=H\cdot\alpha$.
Since $\alpha=\alpha_++\alpha_-$, ampleness forces $\alpha_-=0$.
The cycle lies in $F_{n,+}$, so $\gamma=j_{+*}\alpha$.
Conversely the inclusion preserves effectivity.
The proof still works if the initial equality is modulo torsion:
intersection numbers kill torsion, $H_2(Z)$ is free, and
$q_*j_{+*}=\operatorname{id}$ with free $H_2(F_n)$ gives the same
conclusion. In particular there are no additional effective torsion
lifts contributing to the invariants in this sector.
\end{proof}

\begin{lemma}[Formal realization of the local neighborhood]
\label{lem:formalCYsector}
The morphism $r$ induces an isomorphism
\[
 \widehat{\widetilde Z}_{F_{n,+}}\cong\widehat X_{F_n}
\]
of formal completions along the indicated divisors.
\end{lemma}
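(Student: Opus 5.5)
The plan is to show that $r$ is étale in a Zariski neighbourhood of $F_{n,+}$ and that this neighbourhood sits over the open subset $Z\setminus B\supset D_0$. After that, the statement reduces to a general fact about formal completions along divisors with isomorphic normal data. Recall from the proof of Theorem~\ref{thm:CYcover} that the branch divisor $B$ is disjoint from $D_0$. Hence $U=Z\setminus B$ is an open neighbourhood of $D_0$, and $r^{-1}(U)\to U$ is finite étale of degree two. On $D_0$ the section $b$ restricts to $u^4$, which is a nonzero square of the section $u^2$ of $M|_{D_0}$.

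First I would shrink $U$ to the open set $U'=\{u\neq0\}=Z\setminus D_\infty=X_n$. On $U'$ the fiber coordinate is $w=v/u$, and $b=u^4(1+aw^4)$. Here $1+aw^4$ is a unit near the zero section. Inside $X_n$ take the open set $V=\{1+aw^4\ \text{is not in }(-\infty,0]\}$, or more algebraically the complement of $B$. On $V$ the double cover is $\tau^2=u^4(1+aw^4)$.

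Second, after passing to the formal completion of $X_n$ along $F_n=\{w=0\}$, the function $1+aw^4$ has a canonical square root $\sqrt{1+aw^4}=1+\tfrac12 aw^4+\cdots$. This is the binomial series, which converges $w$-adically because $a$ is a global section pulled back from $F_n$ and $w$ is the ideal generator. Therefore the cover splits over $\widehat X_{F_n}$ into the two sheets $\tau=\pm u^2\sqrt{1+aw^4}$. The $+$ sheet is exactly the formal completion $\widehat{\widetilde Z}_{F_{n,+}}$, since its reduction is $\tau=u^2$, which is $F_{n,+}$.

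Third, I would check that these local descriptions glue, and that $r$ restricted to the $+$ sheet is an isomorphism of formal schemes. Everything is defined by global sections: $u$, $v$, $a$, $b$, and the universal binomial series. The sign of the sheet is fixed by the reduction to $F_{n,+}$. So the isomorphism is canonical.

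Alternatively, one can avoid the series altogether. A finite étale map restricts, over the completion along $D_0$, to a finite étale cover of the formal scheme $\widehat Z_{D_0}$. Finite étale covers of a formal scheme are equivalent to those of its reduction $D_0$ (topological invariance of the étale site). Over $D_0$ the cover is trivial: it is $F_{n,+}\amalg F_{n,-}$. Hence the cover of $\widehat Z_{D_0}$ is trivial, and each component maps isomorphically. Finally, $\widehat Z_{D_0}=\widehat X_{F_n}$ because $D_0\subset X_n$ is open in $Z$.

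I expect the only delicate step to be the justification of the topological invariance of finite étale covers for formal completions, meaning the lifting through all infinitesimal neighbourhoods. I would handle it by applying the étale lifting property inductively on the thickenings $\operatorname{Spec}\mathcal O_Z/I_{D_0}^k$. Uniqueness of the lifts gives compatibility, and passing to the inverse limit gives the formal isomorphism.
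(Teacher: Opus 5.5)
Your proposal is correct, and its main route is the paper's: on $u\neq0$ you put $\eta=\tau/u^2$, so the cover becomes $\eta^2=1+aw^4$ with $aw^4$ a global function in the fourth power of the ideal of the zero section; the binomial series gives the unique root with constant term $1$, which splits the completed cover and identifies the $+$ sheet with $\widehat X_{F_n}$. Your alternative via topological invariance of finite \'etale covers (using $B\cap D_0=\emptyset$ and the splitting $r^{-1}(D_0)=F_{n,+}\amalg F_{n,-}$) is also valid and avoids the explicit series, at the cost of citing or reproving the lifting through infinitesimal thickenings.
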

\begin{proof}
On $u\ne0$ write $w=v/u$ and $\eta=\tau/u^2$. The product $a w^4$
is a global regular function, although $w$ is locally the coordinate of
$K_{F_n}$. The equation of the cover is $\eta^2=1+a w^4$. In the completion
along the zero section the unique root with constant term $+1$ is
\[
 \eta=(1+a w^4)^{1/2}
      =\sum_{j\ge0}\binom{1/2}{j}(a w^4)^j.
\]
This converges in the ideal-adic topology and is compatible on every
infinitesimal neighborhood. Uniqueness also follows because the sum of
two roots with constant term $1$ is a unit. Since $a w^4$ is global,
the expressions glue and supply the inverse to the completed morphism $r$.
\end{proof}

\begin{remark}[A compact sector, not an asserted global open embedding]
The scheme $Z$ is an open-embedding compactification of $X_n$.
The variety $\widetilde Z$ is a compact target realizing the local
stable-map sector, with the formal-neighborhood identification just
proved. No isomorphism of an entire Zariski-open neighborhood of $F_{n,+}$
with all of $X_n$ is needed or asserted. The direct family argument in
Theorem~\ref{thm:CYcover} already establishes the sector over arbitrary
bases; the formal calculation is an additional description.
\end{remark}

\medskip\noindent\textbf{Integral roots and the Novikov convention.}\label{par:Novikovroots}
The local transforms below use actual integral classes in the free
lattice $H_2(F_n,\Z)$. When an external formula on $\widetilde Z$ is written
using $H_2(\widetilde Z,\Z)/\mathrm{tors}$, its coefficient is understood
as the sum of the invariants of the integral lifts. For a class in the
sector and for each of its roots, Theorem~\ref{thm:CYcover} proves that
there is at most one effective lift, namely $j_{+*}\alpha$; all other
lifts have zero stable-map invariants. The retraction
$q_*j_{+*}=\operatorname{id}$ also shows that the divisibility index of
$j_{+*}\beta$ agrees with that of $\beta$, both integrally and modulo
torsion. Thus no root is gained or lost on passing to the external
convention. This does not require the entire group
$H_2(\widetilde Z,\Z)$ to be torsion-free.
%------------------------------------------------------------------------
\subsection{The rational primary transform}
\begin{theorem}[Primary transform]\label{thm:integrality}
There are unique rational numbers $b_\beta$, with the zero convention
for noneffective classes, such that
\[
 N_{0,\beta}=\sum_{k\alpha=\beta}\frac{b_\alpha}{k^2},\qquad
 b_\beta=\sum_{k\alpha=\beta}\frac{\mu(k)}{k^2}N_{0,\alpha}.
\]
Here $k$ is a positive integer, $\alpha$ is an integral curve class,
and $\mu$ is the arithmetic Mobius function. With
$N_m=\sum_{|\beta|=m}N_{0,\beta}$ and
$n_m=\sum_{|\beta|=m}b_\beta$, one has the formal identities
\begin{align}
 \Dcal_0^\Delta(Q)&=\sum_{m\geq1}n_m\Li_2(Q^m),\label{eq:Li2}\\
 N_m&=\sum_{k\mid m}\frac{n_{m/k}}{k^2},\qquad
 n_m=\sum_{k\mid m}\frac{\mu(k)}{k^2}N_{m/k}\in\Q.\label{eq:mobius}
\end{align}
These statements alone make no integrality or enumerative assertion.
\end{theorem}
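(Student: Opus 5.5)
The statement is a purely formal Möbius-inversion statement on the monoid of effective integral classes. I will prove it with no geometric input beyond Proposition~\ref{prop:geometry}. That proposition says $H_2(F_n,\Z)\cong\Z^n$ is free, every effective class has nonnegative coordinates, and only finitely many classes have a given $H$-degree.

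\textbf{Step 1: existence and uniqueness of $b_\beta$.} Fix a positive class $\beta$. Its integral roots $\alpha$ with $k\alpha=\beta$ correspond bijectively to divisors $k$ of the gcd $g(\beta)$ of its coordinates, since $H_2$ is free. Thus the relation $N_{0,\beta}=\sum_{k\alpha=\beta}b_\alpha/k^2$ only involves the finite chain $\beta/k$, $k\mid g(\beta)$. Noneffective roots carry $b_\alpha=0$, and by the effectivity clause of Proposition~\ref{prop:geometry} a root of an effective class is automatically in the nonnegative cone. Hence noneffective classes play no role.

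The relation is unitriangular with respect to the partial order by $H$-degree: the $k=1$ term is $b_\beta$, and the others have strictly smaller degree. Induction on $|\beta|$ therefore determines $b_\beta\in\Q$ uniquely. For the explicit inverse, fix a primitive class $\beta_0$ and set $f(k)=N_{0,k\beta_0}$ and $h(k)=b_{k\beta_0}$. The relation reads $f(m)=\sum_{k\mid m}h(m/k)k^{-2}$, which is Dirichlet convolution of $h$ with the completely multiplicative function $k\mapsto k^{-2}$. The Dirichlet inverse of that function is $\mu(k)k^{-2}$, which gives the displayed formula for $b_\beta$.

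\textbf{Step 2: the diagonal series.} Sum $N_{0,\beta}Q^{|\beta|}$ over $\beta>0$ and substitute the relation. This gives $\sum_\alpha b_\alpha\sum_{k\ge1}Q^{k|\alpha|}/k^2=\sum_\alpha b_\alpha\Li_2(Q^{|\alpha|})$. Grouping by $m=|\alpha|$ yields~\eqref{eq:Li2}.

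The rearrangement is legitimate as an identity of formal power series. Each coefficient of $Q^M$ involves only the finitely many pairs $(k,\alpha)$ with $k|\alpha|=M$, by the finiteness bound of Proposition~\ref{prop:geometry}.

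\textbf{Step 3: the degree-level formulas.} Comparing coefficients of $Q^m$ in~\eqref{eq:Li2} gives $N_m=\sum_{k\mid m}n_{m/k}k^{-2}$. One-variable Möbius inversion, with the same Dirichlet inverse as in Step 1, gives the formula for $n_m$, and rationality is immediate.

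The only point needing care is Step 1's identification of roots with divisors of the gcd together with the effectivity convention. I expect no real obstacle; the proof is routine bookkeeping.
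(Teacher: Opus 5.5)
Your proposal is correct and follows the paper's proof: unitriangularity in $H$-degree gives existence and uniqueness, Möbius inversion gives the explicit formula, and termwise summation into $\Li_2$ gives the degree formulas. Your Dirichlet-inverse argument on the ray of a primitive class is equivalent to the paper's direct substitution, which produces the coefficient $r^{-2}\sum_{k\mid r}\mu(k)$.

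One point is glossed over by your phrase ``noneffective classes play no role''. The paper supplies it explicitly: a multiple of an effective class is effective, so a noneffective class has only noneffective roots, and also $N_{0,\alpha}=0$ for such classes. This makes the zero convention consistent with the relation at every divisor along the ray, which your Dirichlet inversion uses.
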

\begin{proof}
In a nonzero nonnegative degree tuple, any integral root has order
dividing its total degree. There are therefore finitely many roots.
The term $k=1$ defines $b_\beta$ inductively. Substitution of the proposed
inverse gives the coefficient
$r^{-2}\sum_{k\mid r}\mu(k)$ in front of $N_{0,\beta/r}$.
It is one for $r=1$ and zero otherwise. If $r>1$, expanding
$\prod_{p\mid r}(1-1)$ proves the last assertion. This proves the
inverse without an external inversion theorem. A multiple of an
effective class is effective, so the zero convention is consistent.
The number of tuples at a fixed total degree is finite. Summation over
them and the defining series $\Li_2(q)=\sum_{k\geq1}q^k/k^2$ give~\eqref{eq:Li2}--\eqref{eq:mobius}.
\end{proof}

The two-loop case with $n=3$ is not covered by the hypotheses used here; its relation to
local mirror symmetry is the subject of Bloch--Kerr--Vanhove~\cite{BKV}. The
comparison between the unmarked $r^{-3}$ prepotential convention of that
setting and the one-primary $r^{-2}$ convention used throughout this paper, and
the separation of the cover exponent from the lower-loop restrictions
on curve lattices and threshold convergence, are carried out
in the companion paper~\cite{PaperII}.
%------------------------------------------------------------------------
\subsection{An exact integral \texorpdfstring{$K$}{K}-class and the quantum-\texorpdfstring{$K$}{K} identity}
Put $\mathcal P_i=p^*\mathcal O_{F_n}(p_i)$ and
\begin{equation}\label{eq:Klift}
 E_\delta=\sum_{|S|=n-2}\prod_{i\in S}(1-[\mathcal P_i^{-1}])
       \in K^0_{\mathrm{alg}}(Z).
\end{equation}
This is an integral virtual vector-bundle class, and its pullback
$\widetilde E_\delta=r^*E_\delta$ is an integral algebraic $K$-class on
$\widetilde Z$.
\begin{lemma}\label{lem:Klift}
Its full Chern character is
$\ch(E_\delta)=p^*(H^{n-2}/(n-2)!)$.
\end{lemma}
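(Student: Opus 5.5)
The plan is to reduce everything to $F_n$ and then to $Y_n=(\PP^1)^n$. Here $\mathcal P_i=p^*\mathcal O_{F_n}(p_i)$, so each $\mathcal P_i$ is a line bundle pulled back along $p:Z\to F_n$, and $\ch$ commutes with pullback. It therefore suffices to prove
\[
 \ch\Bigl(\sum_{|S|=n-2}\prod_{i\in S}(1-[\mathcal O_{F_n}(-p_i)])\Bigr)=\frac{H^{n-2}}{(n-2)!}
\]
in $H^*(F_n,\Q)$. Since $\mathcal O_{F_n}(p_i)$ is the restriction of $\operatorname{pr}_i^*\mathcal O_{\PP^1}(1)$ from $Y_n$, we may also pull back along $i:F_n\hookrightarrow Y_n$. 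The whole computation can then be done in $\mathcal R=H^*(Y_n,\Q)=\Q[p_1,\ldots,p_n]/(p_1^2,\ldots,p_n^2)$ and restricted at the end by $i^*$.

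The key step is a single-factor identity. Because $\ch$ is a ring homomorphism,
\[
 \ch(1-[\mathcal O(-p_i)])=1-e^{-p_i}=p_i ,
\]
where the second equality uses $p_i^2=0$ in $\mathcal R$. This holds already on $Y_n$: $\mathcal O(-p_i)$ is pulled back from $\PP^1$, where $\ch(\mathcal O(-1))=1-[\mathrm{pt}]$. Multiplicativity of $\ch$ then gives
\[
 \ch\Bigl(\prod_{i\in S}(1-[\mathcal P_i^{-1}])\Bigr)=p^*\prod_{i\in S}p_i .
\]
Summing over $|S|=n-2$ and invoking the expansion in Proposition~\ref{prop:geometry} yields $\sum_{|S|=n-2}\prod_{i\in S}p_i=\delta_0=H^{n-2}/(n-2)!$. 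Applying $i^*$ and then $p^*$ gives the claim.

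The only point needing care is that $\ch$ here is the full Chern character in rational cohomology, with no higher-degree corrections. This follows because $p_i^2=0$ holds already on $Y_n$, hence after pulling back to $F_n$ and $Z$, so the series $1-e^{-p_i}$ truncates exactly. Integrality is immediate, since $E_\delta$ is an integer combination of products of classes of line bundles. I expect no genuine obstacle; the one thing to check is that the relation $p_i^2=0$ is used on the ambient factor rather than obtained from any cancellation on $F_n$.
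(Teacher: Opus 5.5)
Your proof is correct and matches the paper's: use $p_i^2=0$ to truncate $\ch(1-[\mathcal P_i^{-1}])=1-e^{-p^*p_i}=p^*p_i$ exactly, then apply multiplicativity of $\ch$ and the square-free expansion of $H^{n-2}/(n-2)!$ from Proposition~\ref{prop:geometry}. Your remark that $p_i^2=0$ is inherited from $Y_n$ through $i^*$ and $p^*$ is the justification the paper leaves implicit.
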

\begin{proof}
The square of each $p^*p_i$ is zero, so
$\ch(1-[\mathcal P_i^{-1}])=1-e^{-p^*p_i}=p^*p_i$ exactly.
Expansion of the products gives the assertion.
\end{proof}
In particular $\ch(\widetilde E_\delta)=q^*\delta_0$ exactly in every
degree. No assertion that an arbitrary integral cohomology class has an
integral inverse Chern character is used. The construction is for the
distinguished insertion, not a general integral-lifting principle.

Define
\[
 \kappa_\alpha=\chi\left(\overline{\mathcal M}_{0,1}(Z,i_{0*}\alpha),
          \ev^*E_\delta\otimes\mathcal O^{\vir}\right)\in\Z.
\]
The virtual structure sheaf and the requisite perfect obstruction
theory are specified in~\cite[Section 2.3 and Proposition 1]{Lee}.
This is an Euler characteristic of a coherent
$K$-class on a proper Deligne--Mumford stack, not an integral on its
coarse space. 
In characteristic zero, its coarse-space pushforward is exact. Étale-locally,
this is the invariants functor for a finite group, and the averaging
idempotent splits the invariants.
Coherent cohomology on
the proper coarse space is finite and bounded. Thus the Euler
characteristic is an integer. Equivalently, it is the true, rather
than the fake, quantum-$K$ invariant. Its definition on the smooth
projective target $Z$ does not require semipositivity. Semipositivity is
used for the external identity below, which is applied on $\widetilde Z$.
Theorem~\ref{thm:CYcover} identifies that target's invariant with
$\kappa_\alpha$, including the insertion and virtual structure sheaf.

The standing target of~\cite{Chou} is a smooth complex projective variety
$M$, assumed in its Section 3 to be semipositive of complex dimension
$d\geq3$; that paper identifies its Theorem 1.1 with the Ionel--Parker
result~. 

\begin{theorem}[Chou, {\cite[Proposition 3.4]{Chou}}]\label{thm:chou}
In that setting let $\beta\ne0$ be a positive integral curve class,
$\gamma\in H^{2(d-K_M\cdot\beta-2)}(M)$ homogeneous, and
$\Gamma=\ch^{-1}(\gamma)\in K^0(M)_\Q$. Then
\[
 \mathrm{GV}_\beta(\gamma)=\mathrm{QK}_\beta(\Gamma)
 +\delta_{K_M\cdot\beta,0}
  \sum_{\substack{r\mid\operatorname{ind}(\beta)\\r>1}}
                          \mu(r)r\,\mathrm{QK}_{\beta/r}(\Gamma),
\]
where $\operatorname{ind}(\beta)=\max\{r\geq1:\beta/r\in H_2(M,\Z)\}$,
$\mathrm{QK}$ is the true one-point genus-zero quantum-$K$ invariant, and
$\mathrm{GV}$ uses primary weight $r^{-2}$ in the zero-Chern sector and
equals $\mathrm{GW}$ in the positive-Chern sector.
\end{theorem}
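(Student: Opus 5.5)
Since this is a quoted external result, the plan is to reconstruct Chou's argument rather than re-derive Gromov--Witten integrality. The idea is to compare the true quantum-$K$ invariant with its cohomological counterpart through Kawasaki--Riemann--Roch on the proper Deligne--Mumford stack $\overline{\mathcal M}_{0,1}(M,\beta)$. The integrality input, Ionel--Parker's theorem for semipositive targets, is then applied only to the untwisted part.

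First, I would write
\[
\mathrm{QK}_\beta(\Gamma)=\chi\bigl(\overline{\mathcal M}_{0,1}(M,\beta),\ev^*\Gamma\otimes\mathcal O^{\vir}\bigr)
\]
and apply the virtual Kawasaki--Riemann--Roch formula (Tonita's version for virtual structure sheaves). This expresses $\mathrm{QK}$ as a sum over the components of the inertia stack.

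The untwisted component contributes
\[
\int_{[\overline{\mathcal M}]^{\vir}}\ev^*\ch(\Gamma)\,\mathrm{Td}(T^{\vir}).
\]
Because $\gamma$ is homogeneous of degree exactly the virtual dimension $d-3+c_1(\beta)+1$ minus zero, only the constant term of the virtual Todd class survives. That term is $\mathrm{GW}_\beta(\gamma)$, and one checks that the degree bookkeeping $\gamma\in H^{2(d-K_M\cdot\beta-2)}$ matches this.

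Next, I would classify the twisted sectors. A one-pointed genus-zero stable map with a nontrivial automorphism $g$ of order $r$ fixing the marking has fixed locus given by maps factoring through $z\mapsto z^r$ on a component. This identifies each twisted sector with a moduli space of maps of class $\beta/r$, so such sectors exist only for $r\mid\operatorname{ind}(\beta)$.

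The contribution of such a sector is an integral over $[\overline{\mathcal M}_{0,1}(M,\beta/r)]^{\vir}$ of $\ev^*\ch(\Gamma)$ against the Lefschetz correction $\ch_g(\lambda_{-1}N^\vee)^{-1}\mathrm{Td}$. When $c_1(\beta)>0$ the insertion degree exceeds the dimension of the fixed locus, so all twisted terms vanish. This gives $\mathrm{GV}=\mathrm{GW}=\mathrm{QK}$ in the positive-Chern sector.

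When $c_1(\beta)=0$, the degree count leaves only the leading term of the moving-normal contribution. Summing this term over the primitive roots of unity of order $r$ produces an arithmetic factor, essentially a Ramanujan sum weighted by $1/r$, times $\mathrm{GW}_{\beta/r}(\gamma)$. Comparing with the definition of $\mathrm{GV}$ by the $r^{-2}$ multiple-cover transform and Möbius-inverting recursively in $\beta/r$ should then yield the displayed correction $\sum_{r>1}\mu(r)r\,\mathrm{QK}_{\beta/r}(\Gamma)$.

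The main obstacle is this last step: computing the moving-normal Lefschetz term exactly and showing that, after inversion, the root-of-unity sums collapse to $\mu(r)r$. I would first verify it in the local model $\mathcal O(-1)^{\oplus 2}\to\PP^1$-type geometry, with $r$-fold covers of a rigid curve, and then argue that the sector contribution is universal in that it depends only on $\mathrm{GW}_{\beta/r}$. Semipositivity with $d\geq3$ enters exactly here, since it is what guarantees, via Ionel--Parker, that $\mathrm{GV}$ and hence each $\mathrm{QK}$ correction term is integral and consistently defined.
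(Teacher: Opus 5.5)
The paper quotes this result from Chou rather than proving it. Its only remark on the proof (Remark~\ref{rem:truefake}) agrees with your first step: the identity-sector term of virtual Kawasaki--Riemann--Roch equals $\mathrm{GW}_\beta(\gamma)$, because $\ch(\Gamma)=\gamma$ is homogeneous of exactly virtual degree.

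\textbf{The twisted-sector fixed locus.} The gap is in the twisted-sector step, which you misdescribe and then do not carry out. On the component through the marking, an automorphism of order $r$ acts as a rotation with \emph{two} fixed points. The marking is one; the other is a free branch point of the quotient map. So the arm-free sector $F$ is essentially a $\mu_r$-gerbe over $\overline{\mathcal M}_{0,2}(M,\beta/r)$, with the second marking at that branch point. It is not $\overline{\mathcal M}_{0,1}(M,\beta/r)$.

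In the zero-Chern sector, $F$ has virtual dimension $d-1$, one more than $\deg\gamma=d-2$. The degree-zero term of the Lefschetz factor therefore contributes a multiple of $\langle\gamma,1\rangle_{0,2,\beta/r}$, which is $0$ by the fundamental class axiom. If only the leading term survived, as you claim, you would get $\mathrm{QK}_\beta=\mathrm{GW}_\beta$. The statement rules this out: it is equivalent to $\mathrm{QK}_\beta=\sum_{k\mid\beta}k\,b_{\beta/k}$, so for primitive $\alpha$ one has $\mathrm{QK}_{2\alpha}-\mathrm{GW}_{2\alpha}=\tfrac74 b_\alpha$. The whole correction must come from the degree-one part of $\mathrm{Td}(T_F^{\vir})\,\ch_g(\lambda_{-1}N^\vee)^{-1}$.

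\textbf{Universality and the constants.} Universality then needs an argument you do not give. The target-dependent first-order classes pair to zero with $\ev_1^*\gamma$: either they are pulled back along forgetting the free branch point, or they have the form $\ev_2^*c_1(T_M)$ with $c_1(T_M)\cdot\beta/r=0$. What remains is $\psi_2$ and the divisor where the two branch points bubble off, and these reduce to $\mathrm{GW}_{\beta/r}(\gamma)$.

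The constants are forced. If $\mathrm{QK}_\beta=\sum_{r\mid\beta}c_r\,\mathrm{GW}_{\beta/r}$, the statement is equivalent to $\sum_{r\mid m}r^2c_r=m^3$. This gives $c_r=J_3(r)/r^2=r\prod_{p\mid r}(1-p^{-3})$, with $J_3$ the Jordan totient. Already $c_2=\tfrac74$ is not $1/r$ times a Ramanujan sum. Deriving $c_r$ from the first-order root-of-unity sums is the actual content of the proof. A check on a local $\mathcal O(-1)^{\oplus2}\to\PP^1$ model plus an appeal to universality does not replace it.

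\textbf{Missing sectors.} Your sector classification is also incomplete. The automorphism may act trivially on subtrees. The marking can sit on a $g$-trivial arm of class $\beta_0$ carrying a rotated stem of class $r\alpha$, and further $g$-trivial arms can hang at far fixed points. Such inertia components exist with $\beta=\beta_0+r\alpha$ and $r\nmid\operatorname{ind}(\beta)$. So ``sectors exist only for $r\mid\operatorname{ind}(\beta)$'' is false; Remark~\ref{rem:truefake} makes the same point for primitive $\beta$.

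These sectors must be shown to contribute zero. Their excess dimension over $\deg\gamma$ is at most $-(r-1)c_1(\alpha)$, where $\alpha$ is the class of the quotient of the rotated part. You therefore need sign control on $c_1$ of the pieces, and in the equality case the forgetful argument above. The same equality case occurs in the positive-Chern sector at $r=2$, $c_1(\beta)=2$. There your strict dimension inequality fails, and it is $\langle\mathrm{pt},1\rangle_{0,2,\beta/2}=0$ that kills the term.

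\textbf{Role of positivity.} Sign control on $c_1$ in these dimension counts, not Ionel--Parker, is where a positivity hypothesis is naturally used. The identity is an equality of rational numbers and needs no integrality input. Integrality is its \emph{output}. The paper's algebraic proof of Corollary~\ref{cor:primaryintegral} is explicitly stated to be independent of Ionel--Parker, and routing the identity through Ionel--Parker would undo that independence.
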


We apply this to $\widetilde Z$, whose canonical bundle is trivial, so that
both the symplectic condition and nonnegative Chern degree on algebraic
curves hold automatically; no extension to a nef-anticanonical target
outside the stated semipositive scope is invoked. The source uses rational
topological $K$-theory, and its statement alone furnishes no integral lift.
Here $E_\delta$ is an integral algebraic $K$-class with exactly homogeneous
full Chern character, the lattices of $F_n$ and $Z$ are free, and for
$\widetilde Z$ we use the torsion-free quotient with the sector
identification of Theorem~\ref{thm:CYcover}, so every contributing root is
unambiguous; for lattices with torsion a root-index convention would need
additional care. Theorem~\ref{thm:QK} applies the statement to
$\widetilde Z$ and not to $Z$, with exact homogeneous lift $r^*E_\delta$.

\begin{theorem}[Quantum-$K$ expression]\label{thm:QK}
For every $n\geq4$,
\begin{equation}\label{eq:QKformula}
 b_\beta=\sum_{k\alpha=\beta}\mu(k)k\,\kappa_\alpha,
 \qquad \kappa_\beta=\sum_{k\alpha=\beta}k\,b_\alpha.
\end{equation}
\end{theorem}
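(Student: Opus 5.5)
The plan is to apply Theorem~\ref{thm:chou} on the target $M=\widetilde Z$ of Theorem~\ref{thm:CYcover} with $d=n$, the class $j_{+*}\beta$, and the exact integral lift $\Gamma=\widetilde E_\delta=r^*E_\delta$. Then rewrite both sides in terms of the local invariants $N_{0,\beta}$ and $\kappa_\beta$, and finish with an arithmetic Möbius inversion.

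\textbf{Checking the hypotheses.} Since $K_{\widetilde Z}\cong\mathcal O_{\widetilde Z}$, the target is semipositive, $K_M\cdot j_{+*}\beta=0$, and the required insertion degree is $2(n-2)$. This is exactly the degree of $q^*\delta_0=\ch(\widetilde E_\delta)$ by Lemma~\ref{lem:Klift}, which is homogeneous. Hence $\ch^{-1}(\gamma)=\widetilde E_\delta$ holds on the nose, and no rational ambiguity in the lift arises.

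\textbf{Identifying the two sides.} By the stack and obstruction-theory identifications of Theorem~\ref{thm:CYcover}, together with Theorem~\ref{thm:compact}, three identities hold:
\begin{itemize}
\item the Gromov--Witten invariant of $\widetilde Z$ with insertion $q^*\delta_0$ in class $j_{+*}\beta$ equals $N_{0,\beta}$, because $q\circ j_+=\mathrm{id}$ pulls the insertion back to $\varpi^*\delta_0$ on $X_n$;
\item the quantum-$K$ invariant $\mathrm{QK}_{j_{+*}\beta}(\widetilde E_\delta)$ equals $\kappa_\beta$, since the virtual structure sheaves agree and $r^*E_\delta$ restricts to $E_\delta|_{D_0}$;
\item the divisibility index of $j_{+*}\beta$ equals that of $\beta$, and every root is $j_{+*}\alpha$ with $k\alpha=\beta$, by the integral-roots paragraph following Theorem~\ref{thm:CYcover}.
\end{itemize}

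I must also check that Chou's $\mathrm{GV}$ in the zero-Chern sector, defined with weight $r^{-2}$, coincides with the transform $b$ of Theorem~\ref{thm:integrality}. It is defined by $\mathrm{GW}_\beta=\sum_{r\mid\beta}r^{-2}\mathrm{GV}_{\beta/r}$, and the three identities above identify it with $b_\beta$. Chou's formula then reads
\[
b_\beta=\sum_{k\alpha=\beta}\mu(k)k\,\kappa_\alpha,
\]
where the $k=1$ term is $\mathrm{QK}_\beta$. This is the first identity in~\eqref{eq:QKformula}.

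\textbf{Inverting.} The second identity in~\eqref{eq:QKformula} follows by Möbius inversion with the completely multiplicative weight $k\mapsto k$. Substituting the first identity into $\sum_{k\alpha=\beta}k\,b_\alpha$ gives, in front of $\kappa_{\beta/r}$, the coefficient $\sum_{k\mid r}k\cdot\mu(r/k)(r/k)=r\sum_{e\mid r}\mu(e)$. This equals $1$ for $r=1$ and $0$ otherwise, exactly as in the proof of Theorem~\ref{thm:integrality}. Finiteness of the root sets makes the sums finite.

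\textbf{Main obstacle.} The hard step is matching conventions: the primary weight and the definition of $\mathrm{GV}$ in Chou's source against our $b_\beta$. The Novikov root convention on $H_2(\widetilde Z)/\mathrm{tors}$ requires that no extra effective lifts contribute. Both points are handled by the sector statement of Theorem~\ref{thm:CYcover} and the integral-roots paragraph, which I would cite explicitly.
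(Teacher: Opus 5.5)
Your proposal is correct and follows essentially the same route as the paper: apply Chou's Proposition~3.4 on $\widetilde Z$ with the exact lift $r^*E_\delta$, then use Theorem~\ref{thm:CYcover} to identify the Gromov--Witten invariants, true quantum-$K$ invariants and covering roots with the local ones (strictly, only the \emph{effective} roots are of the form $j_{+*}\alpha$; the other lifts contribute zero). From there, identify Chou's $r^{-2}$ GV numbers with $b_\beta$ by uniqueness of the transform, and invert using $\sum_{ke=r}k\,\mu(e)\,e=r\sum_{e\mid r}\mu(e)$. One step you leave implicit is stated in the paper: $\ch$ is a rational isomorphism on $K^0_{\mathrm{top}}(\widetilde Z)$, and this is what makes $r^*E_\delta$ equal to $\ch^{-1}(q^*\delta_0)$ in the rational $K$-group that Chou's statement uses.
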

\begin{proof}
On $\widetilde Z$ use the integral algebraic class $r^*E_\delta$.
Its full character is $q^*\delta_0$, with complex degree $n-2$.
Since $\widetilde Z$ is a smooth projective variety, its underlying space
has the homotopy type of a finite CW complex, and the Chern character
$\ch:K^0_{\mathrm{top}}(\widetilde Z)\otimes\Q\to H^{\mathrm{even}}(\widetilde Z,\Q)$
is an isomorphism. Thus the equality of characters identifies the image of
$r^*E_\delta$ with the rational topological class
$\ch^{-1}(q^*\delta_0)$ required by Chou's Proposition~3.4.
The target is smooth projective, dimension $n\geq4$, and $c_1=0$;
both symplectic and algebraic positivity conditions hold automatically.
Thus Theorem~\ref{thm:chou} applies without a nef-scope extension. Theorem~\ref{thm:CYcover} identifies the
moduli and actual obstruction theories for all effective roots with
those on $Z$ and $X_n$. Consequently their virtual structure sheaves
and the insertions agree, so their true quantum-$K$ invariants are
precisely the $\kappa_\alpha$ defined above.

%For complete clarity on the curve-lattice convention, one may work
%in $H_2(\widetilde Z,\Z)/\mathrm{tors}$. 
The retraction $q_*$ to
$H_2(F_n,\Z)=\Z^n$ shows that the divisibility index of $j_{+*}\beta$
is exactly that of $\beta$. The effective-root proof of
Theorem~\ref{thm:CYcover} shows that passing to this quotient introduces
no extra effective class or stable-map component in the sector at hand.
Thus every root in the source formula is exactly accounted for.
The GV coefficients in the cited proposition are defined by the same
one-primary $k^{-2}$ transform as Theorem~\ref{thm:integrality}.
Uniqueness of that rational transform identifies them with $b_\beta$.
The first equality in~\eqref{eq:QKformula} is therefore the cited proposition.
The convolution of $k\mapsto k$ and $k\mapsto\mu(k)k$ at $r$ is
$r\sum_{k\mid r}\mu(k)$, which gives the inverse equality.
This also proves integrality directly from true $K$-theoretic Euler
characteristics, independently of the symplectic integrality route.
\end{proof}

\begin{corollary}[Primary integrality in every loop order]
\label{cor:primaryintegral}
For every $n\geq4$, $b_\beta\in\Z$ and $n_m\in\Z$.
\end{corollary}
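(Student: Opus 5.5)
The plan is to read integrality straight off Theorem~\ref{thm:QK}, with no further geometric input. The first equality in~\eqref{eq:QKformula} writes
\[
 b_\beta=\sum_{k\alpha=\beta}\mu(k)\,k\,\kappa_\alpha .
\]
Here $\mu(k)k\in\Z$. Each $\kappa_\alpha$ is the Euler characteristic of the coherent $K$-class $\ev^*E_\delta\otimes\mathcal O^{\vir}$ on the proper Deligne--Mumford stack $\overline{\mathcal M}_{0,1}(Z,i_{0*}\alpha)$, and so it is an integer. The sum is finite, because the nonzero nonnegative degree tuple $\beta$ has only finitely many integral roots $\alpha$ (proof of Theorem~\ref{thm:integrality}). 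Noneffective roots contribute zero: their moduli stacks are empty, which agrees with the zero convention of Theorem~\ref{thm:integrality}. Hence $b_\beta\in\Z$.

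For $\kappa_\alpha\in\Z$ I would not reprove anything. I would cite the argument given before Theorem~\ref{thm:chou}. The class $E_\delta$ of~\eqref{eq:Klift} is an integral algebraic virtual bundle, and $\mathcal O^{\vir}$ is an honest class in $K_0$ of coherent sheaves by~\cite{Lee}. Coarse pushforward is exact in characteristic zero, and coherent cohomology on the proper coarse space is finite. So $\chi$ is a finite alternating sum of dimensions, hence an integer.

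The second assertion then follows by summing over the finitely many classes of fixed degree $m$ (Proposition~\ref{prop:geometry}):
\[
 n_m=\sum_{|\beta|=m}b_\beta\in\Z .
\]

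The one point needing care is that the $b_\beta$ in Theorem~\ref{thm:QK} are the same numbers as those of Theorem~\ref{thm:integrality}. That is, Chou's GV coefficients on $\widetilde Z$, taken in the zero-Chern sector with weight $r^{-2}$, must match our transform. This identification was already made in the proof of Theorem~\ref{thm:QK} via uniqueness of the rational transform. It also uses the root accounting of Theorem~\ref{thm:CYcover} and the paragraph on integral roots, which shows that no root is gained or lost. So I would only point to those results, and note that $K_{\widetilde Z}$ trivial puts every class in the zero-Chern sector.
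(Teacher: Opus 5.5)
Your proposal is correct and is essentially the paper's algebraic proof: integrality of $b_\beta$ is read off the first identity in~\eqref{eq:QKformula}, since each $\kappa_\alpha$ is a true coherent Euler characteristic on a proper Deligne--Mumford stack, and $n_m\in\Z$ then follows by finite summation at fixed $H$-degree. The paper also records an independent symplectic proof, via Theorem~\ref{thm:IP} on $\widetilde Z$ together with Siebert's comparison, which your argument does not need.
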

\begin{proof}[Algebraic proof]
Theorem~\ref{thm:QK} writes $b_\beta$ as the finite sum
$\sum_{k\alpha=\beta}\mu(k)k\,\kappa_\alpha$ of integers, because the
$\kappa_\alpha$ are true coherent Euler characteristics on proper
Deligne--Mumford stacks. Finite summation at fixed $H$-degree gives
$n_m\in\Z$. This proof uses the compact sector, the exact integral
$K$-lift, and Chou's one-primary identity, but neither a mirror theorem
nor an algebraic--symplectic comparison.
\end{proof}

\begin{theorem}[Ionel--Parker, {\cite[Theorem 9.2 and equations (9.9)--(9.10)]{IP}}]
\label{thm:IP}
Let $(M,\omega)$ be a closed semipositive symplectic manifold of real
dimension $2d\geq6$ and let
$\gamma_1,\ldots,\gamma_k\in H^{\mathrm{even}}(M,\Z)$ be integral
primary insertions of even degree. Define $n_{\mathfrak a,0}$ in the zero-Chern
sector by
\[
 \sum_{\mathfrak a\ne0,\ c_1(\mathfrak a)=0}
   \mathrm{GW}_{\mathfrak a,0}(\gamma_1,\ldots,\gamma_k)q^{\mathfrak a}
 =\sum_{\mathfrak a\ne0,\ c_1(\mathfrak a)=0}
   n_{\mathfrak a,0}(\gamma_1,\ldots,\gamma_k)\sum_{r\geq1}r^{k-3}q^{r\mathfrak a},
\]
and put $n_{\mathfrak a,0}=\mathrm{GW}_{\mathfrak a,0}$ when
$c_1(\mathfrak a)>0$. Then all these $n_{\mathfrak a,0}$ are integers.
\end{theorem}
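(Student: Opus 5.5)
The plan is to reduce the statement to a local, universal computation near each simple $J$-holomorphic sphere. We follow the strategy of Ionel--Parker rather than any algebraic argument, since the target is only assumed to be semipositive symplectic.

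\emph{Step 1: generic almost complex structure and cluster decomposition.} Fix $\omega$-tame $J$ generic in the sense of semipositivity. For real dimension $2d\geq6$, the simple genus-zero $J$-spheres in classes with $c_1(\mathfrak a)=0$ can then be taken to be embedded, pairwise disjoint and, through the given cycles representing the $\gamma_i$, cut out transversally. Semipositivity makes the moduli spaces of multiple covers have the expected boundary codimension. This is because spheres of negative Chern number cannot bubble in families of the relevant dimension, which is exactly the hypothesis excluding $3-d\leq c_1<0$. Following Ionel--Parker, one then perturbs the Cauchy--Riemann equation by a small inhomogeneous term supported near the simple curves. As a result, the zero-Chern invariant $\mathrm{GW}_{\mathfrak a,0}(\gamma_1,\ldots,\gamma_k)$ splits as a finite sum over simple classes $\mathfrak b$ with $r\mathfrak b=\mathfrak a$ of local contributions
\[
 \mathrm{GW}_{\mathfrak a,0}=\sum_{r\mathfrak b=\mathfrak a}\ \sum_{C\in\mathcal M^{*}_{\mathfrak b}} \varepsilon(C)\,c_r(C;\gamma_1,\ldots,\gamma_k),
\]
where $\varepsilon(C)=\pm1$ are the signs of the transversally cut out simple curves. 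In the positive-Chern sector only simple curves contribute, and there $n=\mathrm{GW}$ is integral by transversality.

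\emph{Step 2: universality of the local covering contribution.} Each $c_r(C;\ldots)$ depends only on the normal bundle $N_C$ and on the intersection numbers $\gamma_i\cdot C$. It can therefore be computed in a model: $N_C\cong\mathcal O(-1)^{\oplus2}\oplus\mathcal O^{\oplus(d-3)}$ after deformation, with $c_1(N_C)=-2$. The insertions act through the divisor-type factor $(\gamma_i\cdot r\mathfrak b)=r(\gamma_i\cdot\mathfrak b)$ for degree-two classes, and higher-degree $\gamma_i$ contribute nothing at multiplicity $r>1$ in the zero-Chern sector by dimension. Hence $c_r(C)=r^{k}\,c_r^{0}\prod_i(\gamma_i\cdot\mathfrak b)$, where $c_r^0$ is the unmarked Aspinwall--Morrison contribution $r^{-3}$. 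This is the step that produces the weight $r^{k-3}$.

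\emph{Step 3: inversion.} Comparing with the defining generating series gives
\[
 n_{\mathfrak b,0}=\sum_{C}\varepsilon(C)\prod_i(\gamma_i\cdot\mathfrak b)\in\Z
\]
for simple $\mathfrak b$, and $n_{\mathfrak a,0}=0$ for nonprimitive contributions beyond those already counted. Uniqueness of the transform then yields integrality for every $\mathfrak a$.

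\emph{Main obstacle.} I expect Step~1 to be the hard part, namely constructing perturbations that localize all multiple-cover and bubbled contributions near the simple curves while keeping the count cobordism-invariant. This is where semipositivity and $d\geq3$ enter. In the present paper I would not reprove it: the theorem is used as quoted from \cite[Theorem 9.2]{IP}, and its hypotheses are checked for $\widetilde Z$, where $c_1=0$.
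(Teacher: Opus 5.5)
The paper does not prove this statement. It quotes it from \cite[Theorem 9.2]{IP}, and its own work is only to check the hypotheses for $\widetilde Z$: $c_1=0$, real dimension $2n\geq8$, and an integral insertion $q^*\delta_0$ of complex degree $n-2$. Your last paragraph does the same, so on that point you agree with the paper.

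The sketch before it, however, fails at the step you say ``produces the weight $r^{k-3}$''. In Step~2 you claim that insertions of degree $>2$ contribute nothing at multiplicity $r>1$, and that $c_r(C)=r^{k-3}\prod_i(\gamma_i\cdot\mathfrak b)$. For $c_1(\mathfrak a)=0$ the dimension constraint is $\sum_i\deg_{\C}\gamma_i=d-3+k$, so for $d\geq4$ some insertion always has degree $>2$. Your Step~2 would then kill every multiple-cover contribution, leaving $\mathrm{GW}_{\mathfrak a,0}$ a signed count of simple curves and contradicting the $r^{k-3}$ law you want. The paper itself gives a counterexample. By Theorems~\ref{thm:degreeone}, \ref{thm:degreetwo} and~\ref{thm:CYcover}, on $\widetilde Z$ with $k=1$ and the insertion $q^*\delta_0$ of degree $2(n-2)$, one has $N_{0,2e_i}=N_{0,e_i}/4=(n-1)(n-2)/4$, which is $3/2$ at $n=4$. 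That is not a signed count of simple curves for any $J$. For the integrable structure every stable map of class $2e_i$ covers a coordinate line twice, so $N_{0,2e_i}$ is entirely a double-cover contribution with weight $r^{-2}$. In addition, the factor $\prod_i(\gamma_i\cdot\mathfrak b)$ is undefined for non-divisor insertions, and since Step~1 already forces $C$ to meet the cycles $V_i$, it double counts incidence.

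The bookkeeping that does give $r^{k-3}$ when $d\geq4$ is different. Simple $c_1=0$ spheres move in $(d-3)$-dimensional families, and the constraints select finitely many $k$-marked ones with $u(z_i)\in V_i$. On an $r$-fold cover each marking must go to one of the $r$ preimages of its incidence point, giving $r^k$. The constraints absorb the $d-3$ moving normal directions, and the remaining $\mathcal O(-1)^{\oplus2}$-type part gives the Aspinwall--Morrison factor $r^{-3}$. Thus $c_r(C)=\varepsilon(C)r^{k-3}$, and $n_{\mathfrak b,0}$ is the signed count of constrained simple curves. The product $\prod_i(\gamma_i\cdot\mathfrak b)$ is only what this count becomes for rigid curves with divisor insertions when $d=3$.

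Even with this correction, the claim that $c_r(C)$ ``depends only on $N_C$'' hides the real difficulty. Symplectically the normal data is a real Cauchy--Riemann operator, not a holomorphic bundle. Pulled-back operators on covers can acquire kernel, and simple curves of class $r\mathfrak b$ can bifurcate off the covers. So the local contribution is invariant only as a cluster invariant, and identifying it with the complex model is the substance of the Ionel--Parker argument (or of a superrigidity theorem). Similarly, in Step~1 pairwise disjointness can be arranged only for the finitely many constrained curves. For the unconstrained families, meeting pairs have expected dimension $d-4\geq0$, so genericity does not separate them.
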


Semipositivity here excludes spherical classes $\mathfrak a$ with
$\omega(\mathfrak a)>0$ and $3-d\leq c_1(\mathfrak a)<0$. It is a condition
on a closed smooth manifold and its symplectic form, not only on effective
algebraic curves, and the theorem concerns primary genus-zero invariants.
Its hypothesis holds for $Z$ when $n=4$ and fails for $n\geq5$, by
Remark~\ref{rem:semipositive}. The all-dimensional application below is
instead to $\widetilde Z$, which has $c_1(T\widetilde Z)=0$, real dimension
$2n\geq8$, and integral insertion $q^*\delta_0$ of complex codimension
$n-2$, equal to the one-marked virtual dimension; these check the target,
degree, and insertion hypotheses for every $n\geq4$.

\begin{theorem}[Siebert, {\cite[Theorem~0.1]{Siebert}}]\label{thm:siebert}
Let $M$ be a complex projective manifold, $R\in H_2(M,\Z)$, and $g,k\geq0$.
On the moduli space $C_{R,g,k}(M)$ of genus-$g$, $k$-marked stable maps in
class $R$, the homology class associated to Behrend's algebraic virtual
fundamental class and Siebert's symplectic virtual fundamental class
coincide. In particular their numerical invariants with matching insertions
coincide.
\end{theorem}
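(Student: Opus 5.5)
Siebert's theorem is an external foundational result, quoted in the source's notation, so the plan is to invoke it as a black box rather than reprove it. What has to be checked here is only that its hypotheses fit the targets we use: $\widetilde Z$, $Z$ and $F_n$ are complex projective manifolds, $R$ runs over integral classes, and $g=0$, $k=\ell$. Still, I record how I would organise a proof, since that makes clear what the identification actually says.

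\emph{Common finite-dimensional model.} Over a fixed stable-map stack $C_{R,g,k}(M)$, both constructions start from the same elliptic complex, the linearized $\bar\partial$-operator $D_f$. On the algebraic side its cohomology is $R\pi_*f^*T_M$, which gives Behrend's obstruction theory $E=(R\pi_*f^*T_M)^\vee$. On the symplectic side, Siebert's construction picks an ambient Banach bundle of $(0,1)$-forms with a Fredholm section.

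The first step is to choose, locally on the moduli space, a finite-rank vector bundle $W$ that surjects onto the cokernels of $D_f$. I would take $W$ from a global resolution as in \cite[Proposition 5]{BehrendGW}, after twisting by a sufficiently ample line bundle so that it can also be realised by smooth sections. Both theories then reduce to a finite-dimensional Kuranishi model: a smooth ambient space $U$ (an orbifold chart), a bundle $W\to U$, and a section $\sigma$ with $\sigma^{-1}(0)=C_{R,g,k}(M)$.

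\emph{Identifying the two classes.} The second step is to show that, in this model:
\begin{enumerate}
\item the Behrend--Fantechi class is the refined Gysin pullback $0^!_W[C_{Z/U}]$ of the normal cone;
\item Siebert's class is the localized Euler class of $(W,\sigma)$, computed by perturbing $\sigma$ to a transverse section and taking the homology class of the perturbed zero set.
\end{enumerate}
Fulton's deformation to the normal cone, applied along the family $t^{-1}\sigma$, degenerates the graph of $\sigma$ to the cone. Compatibility of refined Gysin maps with proper pushforward then identifies the two localized Euler classes in Borel--Moore homology of the zero locus. The third step is to glue the local comparisons using functoriality of both constructions under stabilisation $W\subset W'$ (so the comparison does not depend on the choice of $W$), and to pass to numerical invariants by capping with $\psi$-classes and evaluation pullbacks, which both sides treat identically.

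\emph{Main obstacle.} The hard step is the second one. One must show that Behrend's intrinsic normal cone, an algebraic object, agrees with the cone arising from the analytic Kuranishi chart, including orbifold structure and gluing near nodal curves. Concretely, the analytic germ of $C_{R,g,k}(M)$ with its obstruction map must be compared with the algebraic truncated cotangent complex. I would first try to realise both as the zero locus of one algebraic section over an algebraic chart, obtained by embedding into stable maps to $\PP^N$, so that only the transverse-perturbation step remains genuinely analytic. For the purposes of this paper this is exactly what \cite[Theorem~0.1]{Siebert} supplies. We use only its numerical consequence for the primary invariants on $\widetilde Z$ entering Theorem~\ref{thm:IP}.
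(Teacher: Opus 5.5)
Your proposal is correct and matches the paper. The paper gives no proof of this statement: it quotes it as an external foundational result from \cite[Theorem~0.1]{Siebert} and applies it only on the smooth projective Calabi--Yau target $\widetilde Z$, never on a noncompact target, which is exactly your black-box invocation. The Kuranishi-model outline you add is supplementary; note also that for the downstream use with Theorem~\ref{thm:IP} the paper additionally needs the agreement of Siebert's invariants with the Ruan--Tian invariants on semipositive targets, which is a separate input and not part of this statement.
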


The comparison target for the all-dimensional primary-integrality argument
is the smooth projective $\widetilde Z$, whose canonical bundle is trivial;
the local-to-compact moduli and actual obstruction-theory identification is
proved in Theorem~\ref{thm:CYcover}, before this use. No comparison theorem
for a noncompact target is presumed.

\medskip\noindent\textbf{The semipositive numerical convention.}
The symplectic proof of Corollary~\ref{cor:primaryintegral} uses the
semipositive perturbed-map interpretation permitted by
\cite[Remark 1.1]{IP}. Theorem~\ref{thm:siebert} compares Behrend's
algebraic class with Siebert's symplectic construction, and
\cite[Introduction]{SiebertGeneral}
states agreement of Siebert's invariants with the Ruan--Tian invariants on
semipositive targets.
The numerical route used here is
therefore
\[
 \mathrm{GW}^{\mathrm{Behrend}}=\mathrm{GW}^{\mathrm{Siebert}}
 =\mathrm{GW}^{\mathrm{Ruan\text{--}Tian}},
\]
with Ionel--Parker read in its permitted semipositive convention. This
specifies the comparison being used; it does not assert that every
virtual-cycle framework is identified by~\cite[Theorem~0.1]{Siebert}.
The algebraic proof through Theorem~\ref{thm:QK} is independent of it.

\medskip\noindent\textbf{The one-marked case.}
The construction of~\cite{SiebertGeneral} includes the convention that the
unstable moduli of marked domains is a point. Alternatively the one-primary
numerical comparison reduces to three markings. For a nonzero curve class
$A$ and an ample divisor $D$ on the compact target, the divisor equation
gives
$\langle\eta,D,D\rangle_{0,3,A}=(D\cdot A)^2\langle\eta\rangle_{0,1,A}$,
and the factor is nonzero for effective nonzero $A$, so comparison of the
three-point rational invariants implies comparison of the one-point ones.
Dividing by that factor compares invariants; it does not prove one-point BPS
integrality, which uses the one-primary conclusion of
Theorem~\ref{thm:IP} or the algebraic argument already given.

\begin{proof}[Symplectic proof with the semipositive convention]
Theorem~\ref{thm:CYcover} gives the smooth projective $n$-fold
$\widetilde Z$ with trivial canonical bundle. With any Kähler form,
it is a closed symplectic $2n$-manifold and $c_1=0$; hence it is
semipositive in the exact sense of Theorem~\ref{thm:IP}, in every
dimension under consideration.
The insertion $q^*\delta_0$ is integral and has complex codimension
$n-2$, the one-marked virtual dimension. Algebraic and symplectic invariants agree through
Theorem~\ref{thm:siebert} and the semipositive comparison convention
recorded above; this is a specified numerical comparison, not an assertion
equating every virtual-cycle construction.
Apply the one-primary formula of Theorem~\ref{thm:IP} to
$j_{+*}\beta$. Theorem~\ref{thm:CYcover} identifies its invariants,
insertions, and all effective covering roots with the local ones.
Uniqueness of the rational transform identifies its integral
coefficients with $b_\beta$. Finite summation gives $n_m$.
This proof does not use the quantum-$K$ identity.
\end{proof}

\begin{remark}[True and fake invariants, and boundary automorphisms]
\label{rem:truefake}
Let $\mathfrak M_\beta=\overline{\mathcal M}_{0,1}
(\widetilde Z,j_{+*}\beta)$ and use the insertion $\widetilde E_\delta$.
The fake invariant is the identity-sector Todd integral
\[
 \kappa_\beta^{\mathrm{fake}}
 :=\int_{[\mathfrak M_\beta]^{\vir}}
       \ch(\ev^*\widetilde E_\delta)\,
       \operatorname{Td}(T_{\mathfrak M_\beta}^{\mathrm{vir}})
 =N_{0,\beta}.
\]
The equality follows because the full Chern character is homogeneous
of exactly the virtual dimension, so only the constant Todd term
contributes. True quantum $K$-theory instead takes coherent cohomology
on the stack. The nonidentity inertia terms in the virtual
Kawasaki--Riemann--Roch calculation behind Theorem~\ref{thm:chou}
must be retained. Theorems~\ref{thm:integrality} and~\ref{thm:QK} give
\[
 \kappa_\beta-\kappa_\beta^{\mathrm{fake}}
 =\sum_{r\alpha=\beta}(r-r^{-2})b_\alpha.
\]
Primitivity of the total curve class alone does not exclude a boundary
map with a multiply-covered component and nontrivial automorphisms.
For a primitive class the displayed identity asserts that the total
nonidentity correction vanishes, not that every nonidentity stratum
is empty. Trivial stabilizers in the specific classes $e_i$ and
$e_i+e_j$ follow from the separate geometric descriptions in
Sections~\ref{sec:degreeone}--\ref{sec:degreetwo}.
\end{remark}

\begin{remark}[The integral ambient lattice for other insertions]
\label{rem:integralambientlattice}
The compact-sector comparison transfers any insertion
$\eta\in H^{2n-4}(F_n,\Z)$ by $q^*\eta$. Rational ambientness, however,
does not imply that every such integral class is an integral combination
of the square-free monomials used in $E_\delta$. Here is a precise
lattice calculation. Poincar\'e duality, the integral weak-Lefschetz
identification, and the universal coefficient theorem identify
$H^{2n-4}(F_n,\Z)$ with the integral dual of $H^2(F_n,\Z)$, with coordinates
$\eta\mapsto(\int_{F_n}\eta p_j)_{j=1}^n$. For $|S|=n-2$,
\[
 \int_{F_n} p_Sp_j=\int_{Y_n} Hp_Sp_j
 =\begin{cases}1,&j\notin S,\\0,&j\in S.\end{cases}
\]
Thus a monomial with complement $\{a,b\}$ gives $e_a+e_b$.
These vectors generate the index-two sublattice
\[
 \{(v_1,\ldots,v_n)\in\Z^n:\textstyle\sum_i v_i\text{ is even}\}.
\]
Indeed their sums have even parity, whereas differences $e_i-e_j$
and the vectors $2e_i$ are integer combinations of pair sums; these
generate every vector of even total sum. The distinguished $\delta_0$
is itself the sum of these monomials, so its lift is unaffected.
This calculation neither rules out other integral $K$-lifts nor weakens
the primary theorem. It only prevents extending the displayed exact
algebraic lift to arbitrary integral insertions without a further
argument. The symplectic primary theorem concerns integral insertions
more generally; an exact algebraic quantum-$K$ expression for another
insertion must check its own Chern-character lift.
Corollary~\ref{ext:cor:vector} below nevertheless proves integral
coefficients of the full primary cover transform by pairing with integral
duals of coordinate curves.
\end{remark}

\begin{corollary}[The highest logarithmic coefficient]
\label{cor:arithmeticbridge}
Write $a_{n-2}(m)=[Q^m]h_{n-2}(Q)$. Unconditionally,
\[
 a_{n-2}(m)=-mN_m=-m\sum_{k\mid m}\frac{n_{m/k}}{k^2},
 \qquad
 h_{n-2}(Q)=\sum_{m\geq1}m n_m\log(1-Q^m),
\]
and
\[
 \exp(h_{n-2}(Q))=\prod_{m\geq1}(1-Q^m)^{m n_m}.
\]
Here rational powers mean their formal binomial expansions.
The $n_m$ are integral for every $n\geq4$ by
Corollary~\ref{cor:primaryintegral}. The symbol $a_{n-2}$ is distinct from the
coefficient $d_{n-2}$ in the alternative period normalization.
\end{corollary}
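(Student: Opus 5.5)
The plan is to read off $a_{n-2}(m)$ from \eqref{eq:hj} and then substitute the cover transform of Theorem~\ref{thm:integrality}.

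\textbf{Step 1.} By Theorem~\ref{thm:assembly}, $h_{n-2}=-\thQ\Dcal_0^\Delta$. Write $\Dcal_0^\Delta(Q)=\sum_{m\ge1}N_mQ^m$, where $N_m=\sum_{|\beta|=m}N_{0,\beta}$. Each coefficient is a finite sum by Proposition~\ref{prop:geometry}. Since $\thQ Q^m=mQ^m$, the coefficient of $Q^m$ in $h_{n-2}$ is $-mN_m$.

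\textbf{Step 2.} Inserting the first identity in \eqref{eq:mobius}, $N_m=\sum_{k\mid m}n_{m/k}/k^2$, gives the stated divisor-sum expression for $a_{n-2}(m)$.

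\textbf{Step 3.} For the closed form, apply $\thQ$ termwise to \eqref{eq:Li2}. We have $\thQ\Li_2(Q^m)=m\sum_{k}Q^{mk}/k=-m\log(1-Q^m)$. Hence
\[
h_{n-2}=-\thQ\Dcal_0^\Delta=\sum_m mn_m\log(1-Q^m).
\]
This identity holds as formal power series. Each coefficient of $Q^N$ receives contributions only from $m\mid N$, so the rearrangement involves finitely many terms per coefficient and is legitimate. As a cross-check, expanding $\log(1-Q^m)$ recovers $-mN_m$ after the reindexing $N=mk$.

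\textbf{Step 4.} Exponentiate. For formal series without constant term, $\exp\sum_m\log(1-Q^m)^{mn_m}=\prod_m(1-Q^m)^{mn_m}$. The infinite product converges $Q$-adically, since its $m$-th factor is $1+O(Q^m)$. For rational exponents the powers are defined by $\exp(c\log(1-Q^m))$, which equals the formal binomial series.

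\textbf{Step 5.} Integrality of the exponents $mn_m$ follows from Corollary~\ref{cor:primaryintegral}.

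The only point needing care is the formal justification: the termwise differentiation, the rearrangement of the double sum, and the convergence of the infinite product. Each of these is a finite-per-coefficient check, so no step should pose a real obstacle.
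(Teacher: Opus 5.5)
Your proposal is correct and follows the paper's own proof. Both read off $h_{n-2}=-\thQ\Dcal_0^\Delta$ from \eqref{eq:hj}, then apply $\thQ\Li_2(Q^m)=-m\log(1-Q^m)$ termwise to \eqref{eq:Li2}, and justify the formal manipulations by noting that only finitely many terms contribute in each $Q$-degree.
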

\begin{proof}
Use $h_{n-2}=-\theta_Q\Dcal_0^\Delta$ and
$\theta_Q\Li_2(Q^m)=-m\log(1-Q^m)$.
At any fixed degree only finitely many factors contribute.
\end{proof}

\begin{remark}[Interpretation for a Calabi--Yau fivefold]
For $n=5$, $\dim_\C X_n=5$ and the primary insertion is $H^3/6$.
The exponent $k^{-2}$ comes from one primary marking, not from
the dimension of $X_n$. Formula~\eqref{eq:Li2} is a definitionally
invertible transformation of rational invariants. Integrality adds a
separate arithmetic assertion; it does not, by itself, identify the
coefficients with counts of embedded curves or with contributions of
individual covering loci. In particular no weight for the descendants
$\psi^a\delta_a$, $a>0$, follows from this primary transform. No
genus-one or higher-genus covering formula is asserted here.
\end{remark}

\medskip\noindent\textbf{Primary integers and higher-dimensional interpretation.}
The $b_\beta$ are integral genus-zero primary GV-type invariants for the
specified insertion $\delta_0$. The general primary convention with
$\ell$ insertions has cover weight $r^{\ell-3}$, so its one-marked
specialization is $r^{-2}$; the complex codimension of an insertion
is not its marking number. The comparison with $r^{-3}$ on the same
threefold is derived in the companion paper~\cite{PaperII}.

Equation (0.2) of~\cite{PZ}, in its introductory setting of a smooth
projective complex Calabi--Yau $d$-fold, defines primary numbers by
\[
 \sum_{\beta\ne0}N_{0,\beta}(\gamma_1,\ldots,\gamma_k)q^\beta
 =\sum_{\beta\ne0}n_{0,\beta}(\gamma_1,\ldots,\gamma_k)
                         \sum_{r\geq1}\frac{q^{r\beta}}{r^{3-k}},
 \qquad \gamma_i\in H^*(X_n,\Z).
\]
This is a definition and integrality prediction in that source,
not an unconditional theorem about all covering loci.
The fivefold genus-one calculations in that paper require its stated
ideal-geometry framework; none of those calculations is an input here.
The weight for one primary insertion is $r^{-2}$. This convention
is distinct from any assertion about descendant or genus-one covering laws.

The targets have complex dimension four at $n=4$, five at $n=5$, and at
least six at $n\ge6$. The ideal-curve and moving-cover constructions
discussed in~\cite[Sections 0.5 and 1.1]{PZ} have further restrictions
in higher dimensions. Those restrictions are not hypotheses of the
algebraic primary proof above. In particular the proof does not assume
that every simple curve in the zero section is unobstructed, and does
not identify the integers with unsigned embedded-curve counts, a
specified sheaf invariant, or physical BPS-state multiplicities.
The rational descendant description of the full sunset integral is
proved independently; only its highest nonconstant logarithmic
coefficient is singled out here by the primary integer transform.
These restrictions are stated once in Section~\ref{sec:intro} and under the
standing caveats of Section~\ref{sec:scope}; they are not repeated at each
occurrence.

\section{Ambient evaluation classes and minimality}\label{sec:structural}
The following results have proofs independent of the analytic descendant assembly.
The ambientness argument appears in Appendix~\ref{app:structural};
the extension and Fourier-transform arguments used for minimality appear in
Appendix~\ref{app:fourier}.

\begin{theorem}[Ambient one-point theory in Chow groups]\label{thm:fullambient}
For every $n\ge4$, every smooth $F_n\in|\mathcal O_{Y_n}(H)|$, every positive
curve class $\beta$, and every $a\ge0$, the class defined by~\eqref{eq:Cdef} satisfies
\[
 C_{a,\beta}^\lambda\in
 \operatorname{im}\bigl(i^*:\operatorname{CH}^*(Y_n)_{\Q}
           \longrightarrow\operatorname{CH}^*(F_n)_{\Q}\bigr)[\lambda].
\]
Consequently every pairing with the Poincar\'e-orthogonal nonambient
cohomology vanishes. The ambient comparison~\eqref{eq:ambientpair}
identifies the full one-point evaluation classes in cohomology, and hence their
normalized one-point generating series in cohomology.
\end{theorem}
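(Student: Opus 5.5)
The idea is to prove ambientness from the universal hypersurface, as the paper announces, rather than from mirror symmetry.

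Let $\mathbb P=\PP H^0(Y_n,\mathcal O(H))$ and let $U\subset\mathbb P^\circ\times Y_n$ be the universal smooth hypersurface over the open set $\mathbb P^\circ$ of smooth members. By Proposition~\ref{prop:virtualcompare}, applied in families, the relative one-marked moduli $\overline{\mathcal M}_{0,1}(U/\mathbb P^\circ,\beta)$ is the zero stack of the section $\sigma$ of $E_+$ on $\mathbb P^\circ\times M_{Y_n}$. Hence the family classes $\mathcal C^\lambda_{a,\beta}$ are defined on $U$ and restrict fiberwise to $C^\lambda_{a,\beta}$. The relative virtual class is $j^!_{E_+}$ of the (smooth) ambient class, so it is compatible with refined Gysin pullback to each fiber.

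The key step is to compute the total-space class explicitly. Over all of $\mathbb P\times Y_n$, the incidence $\widetilde U=\{(s,y):s(y)=0\}$ is a projective bundle over $Y_n$, because the evaluation $H^0(\mathcal O(H))\to\mathcal O(H)_y$ is surjective. Its Chow ring is therefore generated over $\operatorname{CH}^*(Y_n)$ by the hyperplane class $h$ of $\mathbb P$.

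Similarly, the universal zero stack $\widetilde{\mathcal M}\subset\mathbb P\times M_{Y_n}$ of the tautological section of $\mathcal O_{\mathbb P}(1)\boxtimes E_+$ is itself a projective bundle over $M_{Y_n}$. This holds because $H^0(Y_n,\mathcal O(H))\to H^0(C,f^*\mathcal O(H))=E_+$ is surjective: $f^*$ of the Segre very ample system surjects onto the sections of a genus-zero tree, by Lemma~\ref{lem:tree}. Consequently $\widetilde{\mathcal M}$ is smooth of the expected dimension, and its virtual class is its fundamental class times $e_\lambda(E_-)$.

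Pushing $\psi^a e_\lambda(E_-)$ forward along $\ev:\widetilde{\mathcal M}\to\widetilde U$ gives a class $\widetilde{\mathcal C}$. This class is a polynomial in $h$ with coefficients pulled back from $\operatorname{CH}^*(Y_n)_\Q$: the whole construction is a pullback from $M_{Y_n}\to Y_n$ along the projective bundle structure, and the projective bundle formula applies, with $h$ restricting to the fiber class.

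Now restrict to a point $[s]\in\mathbb P^\circ$. The fiber of $\widetilde U$ is $F_n$, and $h$ restricts to $0$. The refined Gysin restriction of $\widetilde{\mathcal M}$ to $[s]$ is the zero locus of $\sigma_s$, which by Proposition~\ref{prop:virtualcompare} is $[M_{F_n}]^\vir$. Compatibility of proper pushforward with refined Gysin pullback (Fulton) then gives $C^\lambda_{a,\beta}=i^*(\text{constant term in }h)$, which is in the image of $i^*$ on $\operatorname{CH}^*(Y_n)_\Q[\lambda]$.

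The consequences follow formally. A class in $\operatorname{im} i^*$ pairs to zero with the orthogonal complement of $V_\amb$. With this, the ambient pairing~\eqref{eq:ambientpair} and the nondegeneracy in Proposition~\ref{prop:ambient} determine $C$ in cohomology, and hence the normalized generating series.

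The main obstacle is the Gysin compatibility step. One must check that $\{[s]\}\hookrightarrow\mathbb P$ is a regular embedding, so that $[s]^!$ commutes with $\ev_*$. One must also check that $[s]^![\widetilde{\mathcal M}]$ equals $j^!_{E_+}[M_{Y_n}]$, with no excess contribution. I would prove the latter by noting that $\widetilde{\mathcal M}\to\mathbb P$ is the zero locus of a section transverse in the $\mathbb P$-direction. The two refined pullbacks then agree by functoriality of Gysin maps for the Cartesian square of $\widetilde{\mathcal M}\subset\mathbb P\times M_{Y_n}$ over $[s]\times M_{Y_n}$. This is Fulton's commutativity theorem, which needs no smoothness of the fiber moduli.
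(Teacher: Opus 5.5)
Your overall frame matches the paper's. The incidence variety is a projective bundle over $Y_n$, its relative hyperplane class restricts to zero on a fiber, and refined Gysin pullback to a point $[s]$ commutes with $\ev_*$. However, the step that carries your argument is false.

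\textbf{The surjectivity claim fails.} The restriction map $H^0(Y_n,\mathcal O(H))\to H^0(C,f^*\mathcal O(H))=E_+|_{[f]}$ is not surjective in general. Lemma~\ref{lem:tree} does not say it is: it gives $H^1=0$, $h^0=m+1$, global generation, and surjectivity of evaluation at a single point. At a multiple cover the image consists only of pullbacks of sections from the image curve. For $\beta=2e_i$, every map is a double cover of a coordinate line $L$. The image is then the $2$-dimensional space $f^*H^0(L,\mathcal O(1))$, while $\operatorname{rk}E_+=3$.

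\textbf{Consequences for $\widetilde{\mathcal M}$.} The fibers $\PP(\ker)$ of $\widetilde{\mathcal M}\to M_{Y_n}$ jump in dimension, so $\widetilde{\mathcal M}$ is not a projective bundle over $M_{Y_n}$. In class $2e_i$ it is a $\PP^{N-2}$-bundle with $N=\dim\mathbb P$. Its dimension is $N+n$ rather than the expected $N+n-1$. Its fiber over a smooth $[s]$ is $M_{F_{n,s}}(2e_i)$, of dimension $n$, whereas the virtual dimension is $n-1$. So:
\begin{itemize}
\item $\widetilde{\mathcal M}$ is not smooth of the expected dimension;
\item the tautological section is not transverse in the $\mathbb P$-direction;
\item $[s]^![\widetilde{\mathcal M}]$ lies in the wrong dimension and cannot equal $[M_{F_n}]^\vir$.
\end{itemize}
Your proposed resolution of the ``main obstacle'' therefore fails exactly where it is needed. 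Separately, your claim that the virtual class is the fundamental class times $e_\lambda(E_-)$ conflates the $F_n$ and $X_n$ classes.

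\textbf{The repair.} Never use the fundamental class of $\widetilde{\mathcal M}$.
\begin{enumerate}
\item Define the family class as the localized top Chern class $0^!_{\mathcal E,\sigma}[\mathbb P\times M_{Y_n}]$ of $\mathcal E=\mathcal O_{\mathbb P}(1)\boxtimes E_+$. This requires no regularity of $\sigma$.
\item Cap with $\psi^ae_\lambda(E_-)$ and push forward to $\widetilde U$.
\item Apply the projective-bundle formula on $\widetilde U$, not on $\widetilde{\mathcal M}$, to obtain a polynomial in $h$ with coefficients from $\operatorname{CH}^*(Y_n)_\Q$. This part of your proposal is already correct and does not need the false structure on $\widetilde{\mathcal M}$.
\item Commutativity of refined Gysin maps gives $[s]^!\,0^!_{\mathcal E,\sigma}[\mathbb P\times M_{Y_n}]=0^!_{E_+,\sigma_s}[M_{Y_n}]$, with no transversality needed.
\item The zero-locus comparison identifies this with $[M_{F_n}]^\vir$ as a class on $M_{F_n}$ itself. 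Use Proposition~\ref{prop:virtualcompare} via Manolache, or Kim--Kresch--Pantev.
\end{enumerate}
This is precisely Steps~1 and~2 of the paper's argument in Appendix~\ref{app:structural}.

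With that replacement, working over all of $\mathbb P$ rather than the smooth locus $U$ is harmless. It even spares you the paper's localization-sequence lift from $\mathscr F_U$ to $\mathscr F$, since the localized class is already defined over singular hypersurfaces. Your deduction of the cohomological consequences from ambientness and nondegeneracy is fine.
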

\begin{proof}
The universal hypersurface is a projective bundle over $Y_n$.
Chow localization and its projective-bundle formula imply that every
cycle defined over the entire smooth hypersurface family restricts to
an ambient cycle on each fiber. Appendix~\ref{app:structural} constructs
the universal virtual evaluation class and proves its specialization
by refined Gysin maps on the actual stable-map stacks. This verifies
the required universal-cycle hypothesis, including fixed curve classes
and nonreduced moduli spaces. Nondegeneracy of the ambient pairing in
Proposition~\ref{prop:ambient} then gives the full cohomological comparison.
\end{proof}

\begin{proposition}[Irreducibility and the minimal period equation]
\label{thm:minimalperiod}
The operator $P_{n-1}$ of Definition~\ref{def:operator} is irreducible over
$\C(y)$ and has order $n-1$. Consequently every nonzero scalar differential
operator over $\C(y)$ annihilating $A(y)=\sum_{D\ge0}(-1)^Dc_Dy^D$ has order
at least $n-1$, so $P_{n-1}$ is the minimal operator of the period.
\end{proposition}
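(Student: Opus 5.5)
The plan is the route the paper announces in Section~\ref{sec:PF}: go from the Bessel (Kloosterman) connection to its symmetric power, then through two Fourier transforms to $P_{n-1}$, and transport irreducibility along the chain. The seed is the operator $\mathscr S_n$ of Definition~\ref{def:operator}. Its proof in Theorem~\ref{thm:PF} shows that $\mathscr S_n$ annihilates products $f_1\cdots f_n$ of solutions of the Bessel-type equation $\theta_y^2f=-yf$. Hence $\mathscr S_n$ is exactly the operator of $\mathrm{Sym}^n$ of that rank-two connection, which has order $n+1$ with leading coefficient $1$.

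\textbf{Step 1: the seed.} After the substitution $y=-x^2/4$ (up to normalization), $\theta^2+y$ becomes the Bessel operator of order zero. Its differential Galois group over $\C(x)$ is $SL_2$; this is the Katz computation for Kloosterman/Bessel connections \cite{Katz87,Katz90} (see also \cite{FSY}). Since $\mathrm{Sym}^n$ of the standard representation of $SL_2$ is irreducible, the connection $\mathrm{Sym}^n$ is irreducible over $\C(x)$. Irreducibility then descends to $\C(y)$: a sub-connection defined over $\C(y)$ would also be one over $\C(x)$.

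\textbf{Step 2: the transforms.} The proof of Theorem~\ref{thm:PF} relates $\mathscr S_n$ to $\theta_y^2P_{n-1}$ by conjugation with $\Xi=\Gamma(\theta_y+1)^2$. This conjugation multiplies $y^j$ by $\prod_{r=1}^j(\theta_y+r)^2$, which is the effect of two Mellin/Fourier-type (Laplace) transforms in the variable $y$. Step 2 has three parts.
\begin{enumerate}
\item Realize each of the two transforms as a Fourier transform of $\mathcal D$-modules on $\mathbb A^1$, in the variable $1/y$ or $x$ as appropriate.
\item Use the fact that Fourier transform is an exact equivalence. It therefore carries irreducible $\mathcal D$-modules on $\mathbb A^1$ to irreducible ones, provided one works with the correct minimal (intermediate) extensions and not with arbitrary lattices.
\item Identify the image with the $\mathcal D$-module generated by $\theta_y^2P_{n-1}$, and then remove the trivial factor $\theta_y^2$, which contributes only delta-type or constant summands supported at $0$.
\end{enumerate}

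\textbf{Step 3: minimality.} This follows from irreducibility. Let $L\neq0$ annihilate $A$ and have order less than $n-1$. Then $\gcd_{\mathrm{right}}(P_{n-1},L)$ is a nontrivial right factor of $P_{n-1}$ that still annihilates $A\neq0$, contradicting irreducibility. Here $P_{n-1}A=0$ because $A=W_0$, by Theorem~\ref{thm:PF}.

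\textbf{Main obstacle.} The difficulty is in Step 2, in the extensions at $0$ (and $\infty$). Fourier transform preserves irreducibility only for simple holonomic modules. The module $\mathcal D/\mathcal D\,\mathscr S_n$ may differ from the minimal extension of the symmetric power by submodules or quotients supported at $y=0$, and after transform these become constant or $\delta$ pieces that could split off a factor. I would handle this in three moves.
\begin{enumerate}
\item Check directly that $\mathcal D/\mathcal D\mathscr S_n$ has no sections or quotients supported at $0$, using that its indicial polynomial there is $\rho^{n+1}$ with a single unipotent block.
\item Check that each transform has no punctual components, i.e.\ that the singular points at $0$ produce no $\delta$-summands.
\item Show that the only extra factor produced is exactly $\theta_y^2$, by comparing orders: the ranks are $n+1$ versus $n-1$, and by Theorem~\ref{thm:PF} the leading coefficient~\eqref{eq:leadP} is nonvanishing at $y=0$.
\end{enumerate}
These are the extension lemmas deferred to Appendix~\ref{app:fourier}.
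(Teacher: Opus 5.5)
Your Steps 1 and 3 are sound and agree with the paper. The seed is irreducible on $\mathbb G_m$ by Katz's $SL_2$ computation, which the paper quotes as \cite[Proposition~2.7]{FSY}, and minimality is the same right-divisor argument. The gap is in Step 2, at exactly the obstacle you flag, and the remedy you propose would fail.

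\emph{Check (1) is false.} Since $\theta_y=y\partial_y$, every term of $\mathscr S_n=\theta_y^{n+1}+\sum_{j\ge1}y^jU_j(\theta_y)$ is left-divisible by $y$. So $\mathscr S_n=yB$ with $B=\partial_y\theta_y^{n}+\sum_{j\ge1}y^{j-1}U_j(\theta_y)$. In $\mathscr D/\mathscr D\mathscr S_n$ over $\C[y]\langle\partial_y\rangle$, the class of $B$ is nonzero (the Weyl algebra is a domain) but is killed by $y$. Hence the naive presentation always contains the point module $\mathscr DB/\mathscr DyB\cong\mathscr D/\mathscr Dy$. The indicial polynomial $\rho^{n+1}$ only describes the punctured disc and cannot see this.

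\emph{That point module is where your factor $\theta_y^2$ comes from.} Its Fourier transform is $\mathcal O$. The naive transform of $\mathscr D/\mathscr DL$, after $x=1/y$ and the gauge $f=yg$, is presented by $\theta_y\,\mathfrak T(L)$, which is the formal $\Gamma(\theta_y+1)$-conjugate of $L$. Iterating gives your $\theta_y^2P_{n-1}$. But $\mathscr D/\mathscr D\theta_y^2P_{n-1}$ has $\mathscr D/\mathscr D\theta_y^2$ as a submodule with quotient $\mathscr D/\mathscr DP_{n-1}$, so it is not simple. On $\mathbb G_m$ that submodule is a rank-two unipotent connection, not something supported at $0$. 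Consequently ``Fourier preserves simplicity'' says nothing about this module. Comparing the orders $n+1$ and $n-1$ also cannot show that the quotient $\mathscr D/\mathscr DP_{n-1}$ is simple, unless you control every composition factor of the naive modules across $y=0$. That control itself requires the lemma you are missing.

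The missing idea is to divide by $y$ before transforming. Take $L=\theta^r+\sum_{j\ge1}y^jp_j(\theta)$ with simple restriction to $\mathbb G_m$. Then $\mathscr D/\mathscr D(y^{-1}L)$ is already the intermediate extension:
\begin{itemize}
\item $B=y^{-1}L$ acts injectively on $\mathscr D/\mathscr Dy=\C[\partial_y]e$, because the coefficient of $\partial_y^{N+1}e$ in $B\partial_y^Ne$ is $(-N-1)^{r-1}\neq0$. This excludes a quotient supported at $0$.
\item The transpose $B^*$ also acts injectively there, which shows that $y$ acts injectively on $\mathscr D/\mathscr DB$.
\end{itemize}
This is Lemma~\ref{irr:lem:extension}. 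The Fourier transform of this module, restricted to $x\neq0$, inverted and gauged, is presented by
\[
\mathfrak T(L)=\theta^{r-1}+\sum_jy^jp_j(\theta)\prod_{a=1}^{j-1}(\theta+a)
\]
(Lemma~\ref{irr:lem:transform}). The order drops by exactly one and there is no left factor to remove. By $\deg U_j\le n+1-2j$, the first output again has the admissible shape with constant term $\theta^n$. So the two lemmas apply a second time and return $P_{n-1}$ itself. The drop from rank $n+1$ to $n-1$ is produced by the transforms, not by cancelling $\theta_y^2$.
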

\begin{proof}
Write $S_n=\mathscr S_n=\theta^{n+1}+\sum_{j\ge1}y^jU_j(\theta)$ as in Definition~\ref{def:operator}. Its cyclic module on $\mathbb G_m$ is the $n$th symmetric power of the rank-two connection $\theta^2+y=0$.
The monomial basis satisfies $\theta v_k=(n-k)v_{k+1}-kyv_{k-1}$, which gives exactly the defining recursion.
 The recursion makes \(v_0\) cyclic; the leading coefficient of \(S_n\) in
ordinary derivatives is \(y^{n+1}\), a unit on \(\mathbb G_m\), so the induced
surjection from its cyclic module to the rank-\((n+1)\) symmetric power is an
isomorphism. To identify this symmetric power with the connection considered
in~\cite{FSY}, first take the rank-two Kloosterman connection
\(\mathrm{Kl}_2\) of~\cite[Proposition~2.4(1)]{FSY}. In their coordinate \(z\),
it is the connection associated with
$
 (z\partial_z)^2-z.
$
After the change of coordinate \(z=-y\), one has
\(z\partial_z=y\partial_y=\theta\), so this operator becomes
$
 \theta^2+y.
$
Thus the rank-two connection occurring here is precisely the pullback of
\(\mathrm{Kl}_2\) under \(z=-y\). Consequently, the rank-\((n+1)\) connection
defined by \(S_n\) is the \(n\)-th symmetric power
\(\operatorname{Sym}^n\mathrm{Kl}_2\), after the same coordinate change.
Proposition~2.7 of~\cite{FSY} states that
\(\operatorname{Sym}^k\mathrm{Kl}_2\) is irreducible for every \(k\geq0\).
It follows that the cyclic module of \(S_n\) on \(\mathbb G_m\) is
irreducible.

For an operator $L=\theta^r+\sum_{j\ge1}y^jp_j(\theta)$, Lemma~\ref{irr:lem:extension} identifies $\mathscr D_{\mathbb A^1}/\mathscr D_{\mathbb A^1}(y^{-1}L)$ as a simple extension whenever the module on $\mathbb G_m$ is simple. Lemma~\ref{irr:lem:transform} identifies its Fourier transform, after restriction, inversion of the coordinate, and the gauge change $f=yg$, with
\[
 \mathfrak T(L)=\theta^{r-1}
   +\sum_{j\ge1}y^jp_j(\theta)\prod_{a=1}^{j-1}(\theta+a).
\]
A positive-order output is again simple. Apply this twice to $S_n$. The first output has order $n$, constant coefficient $\theta^n$, and nonconstant coefficients $U_j(\theta)\prod_{a=1}^{j-1}(\theta+a)$, by the bound $\deg U_j\le n+1-2j$. The second output is exactly $P_{n-1}$ and has order $n-1$. It is therefore irreducible over $\mathbb C(y)$.

Theorem~\ref{thm:PF} gives $P_{n-1}A=0$, and $A(0)=1$. The minimal annihilator of $A$ is a positive-order right divisor of $P_{n-1}$, so irreducibility forces its order to be $n-1$.
\end{proof}

This statement is logically independent of everything else in this paper, and
nothing in Sections~\ref{sec:geometry}--\ref{sec:checks} depends on it. It is
recorded only so that the order-$(n-1)$ equation certified in
Theorem~\ref{thm:PF} is known to be the minimal one for the period; for
minimal Picard--Fuchs operators of Feynman integrals in general
see~\cite{ZhouSurvey,LairezVanhove,KlemmEtAl,BFKNS}.

For any fixed proposed exponent, a cover transform of rational descendant
data is formally invertible over $\Q$. That fact alone is not an
integrality theorem or an enumerative interpretation. A proposed
descendant law must specify an additional arithmetic or geometric
assertion, with its insertion normalization and any boundary corrections;
a finite check can address only that specified assertion.

\section{Normalization dictionary and analytic asymptotics}\label{sec:dictionary}
\subsection{The local mirror map and the physical operator}
\begin{remark}[Which coefficient defines the mirror map]
On the diagonal, in our convention $I=1+\tau/z+O(z^{-2})$,
\[
 I^{0,\lambda}_{X_n}=1+\frac{HR-\lambda g(y)}{z}+O(z^{-2}).
\]
Thus $R$ is the divisor flat coordinate; its unit companion is
$-\lambda g(y)$, not $-\lambda R$. Equivalently these occur in the
$z^0$ coefficient of $zI$, not of $I$. The identity $A=\theta_yR$
identifies the derivative of this local mirror map with the fundamental
period of the quantum-Serre family. No identity of $\theta_QK_2$ with
$R$ is asserted.
\end{remark}

\begin{proposition}[Change to the physical variable]\label{prop:physicaloperator}
Let $\theta_s=s\partial_s$ and let $M_s$ denote multiplication by $s$.
Define the order-$n-1$ scalar operator
\[
 \mathscr L_{n-1}=(-1)^{n-1} P_{n-1}(-1/s,-\theta_s)\circ M_s.
\]
Then $\mathscr L_{n-1} I_{\circleddash}^{(n-1)}=-n!$. Its coefficients in powers of
$\partial_s$ are polynomials in $s$, and its leading coefficient is
\[
 s^{n}\prod_{0\leq j<n/2}\left(1-\frac{(n-2j)^2}{s}\right).
\]
The period $\pi(s)=-yA(y)$, $y=-1/s$, is homogeneous for this operator.
\end{proposition}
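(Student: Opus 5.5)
The plan is to obtain the proposition from~\eqref{eq:physicalPF} by a change of variables. Put $y=-1/s$, so that $s\partial_s=-y\partial_y$, that is, $\theta_y=-\theta_s$. A polynomial operator $P_{n-1}(y,\theta_y)$, written with powers of $y$ on the left, becomes $P_{n-1}(-1/s,-\theta_s)$ with powers of $-1/s$ on the left. Next, Theorem~\ref{thm:laplace} gives $P_{n-1}\bigl(I^{(n-1)}_\circleddash/y\bigr)=(-1)^{n-1}n!$, and $1/y=-s$, so $I/y=-M_sI$. Hence
\[
(-1)^{n-1}P_{n-1}(-1/s,-\theta_s)M_s I=(-1)^{n-1}\cdot(-1)\cdot(-1)^{n-1}n!=-n!.
\]
This gives the inhomogeneous equation. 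The identity is first an identity of convergent germs at $s=-\infty$. It extends to the whole domain of analyticity by the identity theorem, because both sides are analytic on $\C\setminus[n^2,\infty)$ by Proposition~\ref{prop:analyticdomain}.

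For the coefficients, rewrite $\theta_s$ through $s\partial_s$. The term $y^jU_j(\theta_y)$ in $P_{n-1}$ then becomes $(-1)^js^{-j}$ times a polynomial in $s\partial_s$ of order at most $n-1$. Composing with $M_s$ keeps the order at $n-1$. We use the weight bound from Theorem~\ref{thm:PF}: in Definition~\ref{def:operator} the $j$-th coefficient of $P_{n-1}$ has degree at most $n-1$ in $\theta$, and $j\le (n+1)/2$. Each $(s\partial_s)^k$ equals $s^k\partial_s^k$ plus lower-order terms. So every coefficient of $\partial_s^k$ is a Laurent polynomial whose most negative power of $s$ is $s^{-j_{\max}}$.

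To get genuine polynomials we may have to multiply by a power of $s$. The claimed leading coefficient $s^n\prod(1-(n-2j)^2/s)$ has degree $n-\lceil n/2\rceil\ge0$, which suggests that no normalization is needed. We check this directly. The coefficient of $\partial_s^{n-1}$ comes only from the $\theta^{n-1}$-parts of the terms. By~\eqref{eq:leadP} these parts add up to $\prod_j(1+(n-2j)^2y)=\prod_j(1-(n-2j)^2/s)$. Multiplying by $(-1)^{n-1}(-1)^{n-1}s^{n-1}$, coming from $(-\theta_s)^{n-1}$, and by the extra factor $s$ from $M_s$, gives the stated leading coefficient.

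The lower coefficients are where the real work lies. One has to verify that they carry no negative powers of $s$, which means bounding the order of $y^jU_j(\theta)\prod_{r<j}(\theta+r)^2$ against $j$. If that fails, multiplying by $s^{\lfloor (n+1)/2\rfloor}$ would be needed, and then "leading coefficient" should be read up to this normalization. I expect this bookkeeping to be the main obstacle.

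Finally, homogeneity of $\pi=-yA$. We have $M_s\pi=-s\cdot(-1/s)\cdot(-yA)\cdot$ hmm, more cleanly $M_s(-yA)=-s\cdot(1/s)A=-A$, up to sign. So $\mathscr L_{n-1}\pi=\pm P_{n-1}A=0$ by Theorem~\ref{thm:PF}, since $W_0=A$ is a homogeneous solution.
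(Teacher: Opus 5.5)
Your derivation of $\mathscr L_{n-1}I^{(n-1)}_\circleddash=-n!$ from~\eqref{eq:physicalPF}, using $\theta_y=-\theta_s$ and $1/y=-s$, is correct. So is your leading-coefficient computation.

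\textbf{The gap: polynomiality of the coefficients.} The proposition asserts that the coefficients are polynomials in $s$ with no extra power of $s$. You leave this open, and you even consider multiplying by $s^{\lfloor(n+1)/2\rfloor}$, which would prove a weaker statement.

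The route you suggest cannot settle it. Bounding the order of $y^jU_j(\theta)\prod_{r<j}(\theta+r)^2$ against $j$ does not control negative powers of $s$, because those come from the \emph{low}-order part in $\partial_s$. For example, with Stirling numbers $S(k,r)$,
$s^{1-j}\theta_s^k=\sum_{r=1}^kS(k,r)\,s^{r+1-j}\partial_s^r$,
which contains $s^{2-j}\partial_s$ for every $k\ge1$. No upper bound on the order removes that term.

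\textbf{The missing idea.} Use the factor $\prod_{r=1}^{j-1}(\theta_y+r)^2$ that Definition~\ref{def:operator} builds into $P_{n-1}$.
\begin{itemize}
\item Write the $j$-th term as $y^jQ_j(\theta_y)$, with $Q_j$ divisible by that product.
\item Since $F(\theta_s)\,s=s\,F(\theta_s+1)$, this term contributes $\pm s^{1-j}Q_j(-\theta_s-1)$ to $\mathscr L_{n-1}$.
\item $Q_j(-\theta_s-1)$ is divisible by $\prod_{k=0}^{j-2}(\theta_s-k)^2$, so it vanishes at $\theta_s=0,1,\dots,j-2$.
\item Hence its forward differences $\Delta^rQ_j(-\cdot-1)(0)$ vanish for $r\le j-2$. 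Its expansion in the falling-factorial basis $\theta_s(\theta_s-1)\cdots(\theta_s-r+1)=s^r\partial_s^r$ therefore starts at $r=j-1$.
\item Every resulting term is $s^{r+1-j}\partial_s^r$ with $r+1-j\ge0$. The $j=0$ term, $s(\theta_s+1)^{n-1}$, is already polynomial.
\end{itemize}
For instance, at $n=5$, $j=3$ one has $Q_3=225(\theta+1)^2(\theta+2)^2$, and
$s^{-2}\theta_s^2(\theta_s-1)^2=s^2\partial_s^4+4s\partial_s^3+2\partial_s^2$.
This is the paper's argument.

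\textbf{A minor slip.} $\pi=-yA=A/s$, so $M_s\pi=A$ exactly, not $-A$. Your conclusion $\mathscr L_{n-1}\pi=(-1)^{n-1}P_{n-1}A=0$ is unaffected.
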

\begin{proof}
For any $f$ the operator is equivalently
\[
 (\mathscr L_{n-1} f)(s)=(-1)^{n}
 \left[P_{n-1}(y,\theta_y)\frac{f(-1/y)}y\right]_{y=-1/s}.
\]
Use~\eqref{eq:physicalPF} and $P_{n-1}A=0$ for the source and period.
The leading coefficient follows from~\eqref{eq:leadP} and the leading
term of $\theta_s^{n-1}=s^{n-1}\partial_s^{n-1}+\cdots$.
For polynomiality, write each nonconstant $y$-term in the algorithm as
$y^jQ_j(\theta_y)$, where $Q_j$ is divisible by
$\prod_{r=1}^{j-1}(\theta_y+r)^2$.
Composing with $M_s$ gives a constant multiple of
$s^{1-j}Q_j(-\theta_s-1)$. The latter polynomial in $\theta_s$ vanishes
at $0,\ldots,j-2$. Its expansion in the falling-factorial basis
$\theta_s(\theta_s-1)\cdots(\theta_s-r+1)=s^r\partial_s^r$
therefore starts at $r=j-1$. All powers $s^{r+1-j}$ are nonnegative.
The $j=0$ term is already polynomial.
\end{proof}

For a second useful period normalization put $\pi=-yA$ and define
$d_j(m)=(-1)^{n}[Q^m]h_j(Q)$. Then
\[
 I_{n-1}^{\rm rat}=\pi(s)\left[-n\,(-R)^{n-1}+
             \sum_{m\geq1}\sum_{j=0}^{n-2}d_j(m)Q^mR^j\right],
 \qquad d_{n-2}(m)=(-1)^{n-1} mN_m,
\]
where $I_{n-1}^{\rm rat}$ means the $\gamma_0$ contribution, not a claim
of algebraic independence of zeta values. Direct differentiation gives
$dR/ds=-\pi$. This fixes the dictionary without a logarithmic branch ambiguity.

\subsection{Fixing the homogeneous ambiguity in the zeta terms}\label{sec:zetaambiguity}
For $0\leq r\leq n-2$ define a fully normalized homogeneous basis
\[
 \mathfrak f_r(s)=(-1)^{r+1}yW_r(y),\qquad y=-1/s.
\]
This definition specifies all power-series coefficients, not just a
leading logarithmic term; in particular $\mathfrak f_0=\pi$.
From the signed Gamma formula the coefficient of $\mathfrak f_r$ in
the Gamma correction to the rational particular solution is exactly
\[
 \alpha_r^{\rm can}=-n!\gamma_{n-1-r}.
\]
Indeed the corresponding physical term is
$n!(-1)^r\gamma_{n-1-r}yW_r=-n!\gamma_{n-1-r}\mathfrak f_r$.
Since $\gamma_3=2\zeta(3)/3$, this gives
\[
 \alpha_{n-4}^{\rm can}=-\frac{2n!}{3}\zeta(3).
\]
At $n=4$ this is $-16\zeta(3)$ multiplying $\pi$; at $n=5$ it is
$-80\zeta(3)$ multiplying $\mathfrak f_1$. 
A full basis, rather than just the leading powers of $\log y$, is needed to
specify these coefficients, since lower homogeneous additions alter the
answer. Equation~\eqref{eq:boundary} fixes this freedom.

\subsection{The first correction at large Euclidean momentum}

\begin{proposition}[Uniform first correction at large Euclidean momentum]
\label{prop:firstcorrection}
Put $\ell_T=\log T$ and
\[
 \mathsf F_{n-1}(\ell_T)=n![t^{n-1}]\exp(\ell_Tt)
     \exp\left(\sum_{r\geq1}\frac{2\zeta(2r+1)}{2r+1}t^{2r+1}\right).
\]
Primes below denote derivatives in $\ell_T$. Then, as $T\to+\infty$,
\[
 T I_{\circleddash}^{(n-1)}(-T)=\mathsf F_{n-1}(\ell_T)+\frac{\mathsf C_{n-1}(\ell_T)}T
                  +O\bigl(T^{-2}(1+\ell_T)^{n-1}\bigr),
\]
where
\[
 \mathsf C_{n-1}=-n\mathsf F_{n-1}+(2n-2)\mathsf F_{n-1}'-(n-4)\mathsf F_{n-1}''-2\mathsf F_{n-1}'''.
\]
In particular, writing $\ell=\log T$ for these examples,
\begin{align*}
 \mathsf F_3&=4\ell^3+16\zeta(3),\\
 \mathsf C_3&=-16\ell^3+72\ell^2-48-64\zeta(3),\\
 \mathsf F_4&=5\ell^4+80\zeta(3)\ell,\\
 \mathsf C_4&=-25\ell^4+160\ell^3-60\ell^2-240\ell
                   -400\zeta(3)\ell+640\zeta(3).
\end{align*}
\end{proposition}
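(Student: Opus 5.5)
The plan is to read the expansion directly off the signed Laplace/Gamma formula~\eqref{eq:physicalGamma} of Theorem~\ref{thm:laplace}, truncated after Novikov degree $D=1$. With $y=1/T$ that formula gives
\[
 T\,I_{\circleddash}^{(n-1)}(-T)=\int_{F_n}\epsilon\bigl(\Ish(1/T,1)\bigr)\,G(H),
\]
where $G(H)=\widehat\Gamma_{X_n}|_{F_n}=\sum_k\gamma_kH^k$ by Proposition~\ref{prop:Gamma}. I will split $\Ish$ in~\eqref{eq:Isharp} according to the total degree $D$, treat $D=0$ and $D=1$ exactly, and bound the tail $D\geq2$.

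\emph{Degree zero.} The $D=0$ term is $y^H$. Since $\log y=-\ell_T$, applying $\epsilon$ turns it into $e^{H\ell_T}$. Using $\int_{F_n}H^{n-1}=n!$ and $H^n=0$ on $F_n$, we get
\[
 \int_{F_n}e^{H\ell_T}G(H)=n!\,[t^{n-1}]\,e^{\ell_Tt}G(t)=\mathsf F_{n-1}(\ell_T).
\]
This is the leading term, and it agrees with~\eqref{eq:boundary}.

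\emph{Degree one.} The $D=1$ terms come from $\mathbf d=e_i$ and equal $-y^{1+H}(H+1)^2/(p_i+1)^2$. After applying $\epsilon$ this becomes $-T^{-1}e^{H\ell_T}(1-H)^2(1-p_i)^{-2}$. Because $p_i^2=0$, the last factor is $1+2p_i$ exactly, so summing over $i$ gives $n+2H$. Expanding,
\[
 -(1-H)^2(n+2H)=-n+(2n-2)H-(n-4)H^2-2H^3 .
\]
Under the pairing, multiplication of $e^{H\ell_T}G(H)$ by $H^k$ is the same as applying $\partial_{\ell_T}^k$ to $\mathsf F_{n-1}$. This produces $\mathsf C_{n-1}/T$ with exactly the stated coefficients.

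\emph{The tail $D\geq2$.} I will reuse the termwise estimate from the proof of Theorem~\ref{thm:laplace}. At fixed cohomological degree, the degree-$D$ coefficient is bounded by $(n^2)^D$ times a fixed power of $1+\log(D+1)+\ell_T$. Summing $T^{-D}$ over $D\geq2$ then gives $O\bigl(T^{-2}(1+\ell_T)^{n-1}\bigr)$ for $T$ large. The logarithmic power is at most $n-1$ because only $H^j$ with $j\leq n-1$ survive on $F_n$. The main point needing care is that this bound must hold uniformly in $\ell_T$; the proof of Theorem~\ref{thm:laplace} already tracks the $|\log T|$ dependence, so I expect no new difficulty there.

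\emph{Examples.} For $n=4$ we have $G=1+\tfrac23\zeta(3)t^3$, which gives $\mathsf F_3=4\ell^3+16\zeta(3)$. For $n=5$ the same $G$ gives $\mathsf F_4=5\ell^4+80\zeta(3)\ell$. The formulas for $\mathsf C_3$ and $\mathsf C_4$ then follow by differentiating these and substituting into the expression for $\mathsf C_{n-1}$, which is routine.
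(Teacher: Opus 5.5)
Your proposal is correct and is essentially the paper's own proof. You truncate the signed Gamma formula~\eqref{eq:physicalGamma} after Novikov degree one, use the degree-one coefficient $-(1+H)^2(n-2H)$, which becomes $-n+(2n-2)H-(n-4)H^2-2H^3$ after $\epsilon$, trade $H^k$ for $\partial_{\ell_T}^k$ acting on $\mathsf F_{n-1}$, and bound the $D\geq2$ tail by the convergent-germ estimate from Theorem~\ref{thm:laplace}; your write-up just spells out the tail's logarithmic power more explicitly than the paper does.
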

\begin{proof}
The degree-one coefficient of $e^{-H\log y}\Ish(y,1)$ is
$-(1+H)^2\sum_i(1+p_i)^{-2}=-(1+H)^2(n-2H)$.
After the involution $\epsilon$ it becomes
$-n+(2n-2)H-(n-4)H^2-2H^3$.
Extract the top coefficient against $e^{H\ell_T}\widehat\Gamma_{X_n}$
and use $H^j e^{H\ell_T}=\partial_{\ell_T}^j e^{H\ell_T}$.
The remaining terms of the convergent logarithmic germ are bounded by
a constant times $T^{-2}(1+\ell_T)^{n-1}$ after multiplication by $T$.
\end{proof}

\section{Independent consistency checks}\label{sec:checks}
The statements below are proved in the preceding sections. The finite
computations recorded here were carried out independently, in exact
rational arithmetic where the objects are rational and in
high-precision floating point where they are transcendental. They are
verifications, not ingredients of any proof.

\medskip\noindent\textbf{The operator and its certificate.}
The algorithm of Definition~\ref{def:operator}, implemented as in
Appendix~\ref{app:code}, returns the operators of
Example~\ref{ex:operators} for $n=4,5$. For $n=6$ it returns an operator
of order five whose coefficient of $\theta^5$ is
$1+56y+784y^2+2304y^3$, in agreement with~\eqref{eq:leadP}, which reads
$(1+36y)(1+16y)(1+4y)$ for $n=6$. The certificate~\eqref{eq:PFcertificate} was verified in the square-free algebra
$\mathcal R$ for $n=4,5,6$. By expanding $\Ish$ to order $y^6$ determines
the coefficients $R_D$ of the proof of Theorem~\ref{thm:PF} for
$D\leq6-\deg_yP_{n-1}$, and each of them vanishes identically for $D>0$,
with $R_0=H^{n-1}$.

\medskip\noindent\textbf{Mirror extraction against direct geometry.}
Computing $K_j$, the multivariable mirror map, and the pairing~\eqref{eq:extraction} from the $I$-series alone reproduces, for
$n=4$ and $n=5$, the invariants obtained geometrically in
Sections~\ref{sec:degreeone}--\ref{sec:degreetwo}, class by class and
not only in the diagonal sum:
\[
 N_{0,e_i}=(n-1)(n-2),\quad N_{1,e_i}=-2(n-1)(n-2),\quad N_{a\geq2,e_i}=0,
\]
\[
 N_{0,2e_i}=\frac{(n-1)(n-2)}4,\qquad N_{0,e_i+e_j}=-2(n-1)(n-2).
\]
The sharp bound of Proposition~\ref{prop:sharpbound} was checked at
$m=2$, $n=6$, where $[Q^2]\Dcal_4^\Delta=-1440$ as predicted.

\medskip\noindent\textbf{Integrality.}
The diagonal primaries and their transforms~\eqref{eq:mobius} were
computed to $m=8$ at $n=4$ and to $m=6$ at $n=5$:
\[
 n_m(n{=}4)=24,\,-72,\,264,\,-1488,\,9696,\,-69240,\,528072,\,-4238880,
\]
\[
 n_m(n{=}5)=60,\,-240,\,1400,\,-12480,\,131220,\,-1529520 .
\]
All are integers, as Corollary~\ref{cor:primaryintegral} asserts, while
the underlying $N_m$ are rational numbers, with at $n=4$ the denominators
$3,2,25,3,49,8$ for $m=3,\ldots,8$. The degree-two entries agree with
$n_2=-(n-1)^2n(n-2)$ from Theorem~\ref{thm:degreetwo}.

\medskip\noindent\textbf{An independent evaluation of the amplitude.}
The left-hand side of the identity is evaluated without any of the machinery
of this paper, using the classical Bessel-moment form of the equal-mass sunset
family (see~\cite{BBBG,BorweinSalvy,ZhouSurvey} and the references there):
\begin{equation}\label{eq:Bessel}
 I_{\circleddash}^{(n-1)}(-T)=2^{n-1}\int_0^\infty rK_0(r)^nJ_0(\sqrt T\,r)\,dr
 \qquad(T\geq0).
\end{equation}
This follows from $$K_0(r)=\frac12\int_0^\infty e^{-x-r^2/(4x)}dx/x$$ together
with the Gaussian Hankel integral
$$\int_0^\infty rJ_0(kr)e^{-ar^2}dr=(2a)^{-1}e^{-k^2/(4a)}.$$
With
$a=\frac14\sum_ix_i^{-1}=1/(4W)$ the prefactor collapses to $W$ and the
exponential to $e^{-TW}$, which is~\eqref{eq:Schwinger}. The interchange of
the $r$- and $x$-integrations is justified by the Fubini--Tonelli theorem:
since $|J_0|\leq1$, Tonelli's theorem bounds the double integral of the
absolute value by the finite $T=0$ integral $I_{\circleddash}^{(n-1)}(0)$,
so Fubini's theorem applies. The truncation tail has
an explicit rigorous bound. From
$K_0(r)=\int_0^\infty e^{-r\cosh\tau}d\tau$ and $\cosh\tau\geq1+\tau^2/2$ one
gets $K_0(r)\leq\sqrt{\pi/(2r)}\,e^{-r}$, so for $B\geq1$ the omitted part of~\eqref{eq:Bessel} over $r>B$, prefactor included, is at most
\begin{equation}\label{eq:Besseltail}
 \frac{2^{n-1}}n\left(\frac\pi2\right)^{n/2}B^{1-n/2}e^{-nB}.
\end{equation}
The integrand has integrable behavior $r\log^nr$ at zero, and a certified
value also requires an error bound for the quadrature on $[0,B]$.
Reducing the $n$-dimensional Schwinger integral to this one-dimensional one is
what makes forty digits reachable. Apart from
Remark~\ref{rem:sTaylorcorrection}, which is not used in any proof, nothing
else in the paper uses it. The same representation was used to
check~\eqref{eq:sTaylorcorrection} against high-precision moments for
$4\leq n\leq7$ and $m\leq320$. After dividing $a_m$ by the right-hand side
of~\eqref{eq:sTaylorcorrection} without its remainder, $m^2$ times the
deviation from $1$ is essentially constant for $20\leq m\leq320$ (about
$1.13$, $1.94$, $3.00$ and $4.36$ for $n=4,5,6,7$), as the stated
$O(m^{-2})$ remainder predicts.

\medskip\noindent\textbf{The analytic identity.}
The representation~\eqref{eq:Bessel} was evaluated at forty-digit working precision and compared numerically with the right-hand sides of~\eqref{eq:physicalGamma}
 and of the assembly~\eqref{eq:allloop}:
\begin{center}
\begin{tabular}{llll}
\toprule
$n$ & $T$ & $I_{\circleddash}^{(n-1)}(-T)$ & agreement\\\midrule
$4$ & $50$  & $5.17515590864132484\ldots$ & $35$ digits\\
$4$ & $100$ & $4.08241320270405960\ldots$ & $40$ digits\\
$5$ & $100$ & $26.9629554067580390\ldots$ & $40$ digits\\
$6$ & $200$ & $167.462014631377349\ldots$ & $40$ digits\\
\bottomrule
\end{tabular}
\end{center}
The reported agreement is observed numerical agreement, 
the digits shown agree numerically.
It uses the descendant series, the mirror map, and the
$\zeta(3)$ and $\zeta(5)$ terms of~\eqref{eq:Gammapoly} together.
Deleting the degree factor $(-1)^D$ of~\eqref{eq:laplace-degree-sign}
destroys the agreement, namely at $n=4$, $T=100$ the unsigned variant gives
$4.1143\ldots$ instead of $4.0824\ldots$, which is why
Remark~\ref{rem:analytic-source-sign} treats that factor separately
from the two overall signs. Finally the first correction of
Proposition~\ref{prop:firstcorrection} was checked at $n=4$. In particular, with
$\ell=\log T$, the quantity $TI_3(-T)-\mathsf F_3(\ell)$ equals
$-0.6514761$ at $T=10^4$ and $-0.1499680$ at $T=10^5$, against
$\mathsf C_3(\ell)/T=-0.6518210$ and $-0.1499766$, the residuals scaling as the
stated $O(T^{-2}(1+\ell)^{n-1})$.

\section{Full primary classes and integral cover transforms}
\label{ext:sec:primary}
For a positive curve class $\beta$, let
$D_\beta=C^0_{0,\beta}\in H^2(F_n,\Q)$ be the proper primary evaluation class
of~\eqref{eq:Cdef}. By Proposition~\ref{prop:geometry}, the classes
$p_1,\ldots,p_n$ form the natural integral coordinate basis in degree two, so
write
\[
 D_\beta=\sum_{i=1}^n d_{\beta,i}p_i,\qquad
 N_{0,\beta}(b)=\int_FD_\beta b,
 \qquad b\in H^{2n-4}(F_n,\Q).
\]
This full vector carries more information than the single scalar obtained from
the distinguished insertion $\delta_0=H^{n-2}/(n-2)!$. We use the curve-lattice
coordinates of Proposition~\ref{prop:geometry}, so
$\mathbf x^\beta:=\prod_i x_i^{p_i\cdot\beta}$.
\begin{theorem}[Dimension-uniform primary extraction]\label{ext:thm:primary}
Let $h_j=\sum_{\ell=1}^j1/\ell$, $h_0=0$, and
\[
 g(\mathbf{x})=\sum_{\mathbf d\ne0}g_{\mathbf d}\mathbf{x}^{\mathbf d},
 \qquad
 g_{\mathbf d}=\frac{(-1)^{|\mathbf d|}|\mathbf d|!(|\mathbf d|-1)!}
                         {\prod_i(d_i!)^2}.
\]
With $\Theta_x=\sum_i x_i\partial_{x_i}$, set
\[
 f_i=\tfrac12g^2+
 \sum_{\mathbf d\ne0}g_{\mathbf d}
       \left(2h_{d_i}-2h_{|\mathbf d|}+\frac1{|\mathbf d|}\right)
                      \mathbf{x}^{\mathbf d}.
\]
For every $n\geq4$ and every positive curve class $\beta$,
\begin{equation}
 d_{\beta,i}=[\mathbf{x}^\beta]\,
       f_i e^{-|\beta|g}(1+\Theta_xg). \label{ext:primaryextract}
\end{equation}
\end{theorem}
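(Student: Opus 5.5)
The plan is to run the multivariable versions of Theorem~\ref{thm:mirror} and Lemma~\ref{lem:division} at $\lambda=0$, without restricting to the diagonal, and read off the $z^{-2}$ coefficient. By~\eqref{eq:Bdef} with $\mu=\lambda=0$, the $z^{-2}$ part of $B^0-1$ equals $-H\sum_{\beta>0}\mathbf Q^\beta\mathsf E^{0,0}_{0,\beta}$, where $\mathsf E^{0,0}_{0,\beta}\in H^2(Y_n,\Q)$.

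\textbf{Step 1: identify $D_\beta$ with $i^*\mathsf E^{0,0}_{0,\beta}$.} By~\eqref{eq:pushC}, $i_*D_\beta=H\,\mathsf E^{0,0}_{0,\beta}$. In degree two, $i^*$ is an isomorphism $H^2(Y_n)\to H^2(F_n)$ by Proposition~\ref{prop:ambient}, since $2<n-1$. Pairing against ambient classes in~\eqref{eq:ambientpair}, which is a nondegenerate pairing, then gives $D_\beta=i^*\mathsf E^{0,0}_{0,\beta}$. Hence $d_{\beta,i}$ is the $p_i$-coefficient of $\mathsf E^{0,0}_{0,\beta}$.

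By Lemma~\ref{lem:linearlefschetz} with $r=1<n/2$, multiplication by $H$ is injective from degree one to degree two in $\mathcal R$. So $\mathsf E^{0,0}_{0,\beta}$ is recovered uniquely from the complex-degree-two part $\Phi_2$ of $[z^{-2}](B^0-1)$ by writing $\Phi_2=-H\sum_i c_ip_i$ and taking $c_i$. No symmetrization is used, because we keep all $n$ variables.

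\textbf{Step 2: expand $B^0$ to second order.} At $\lambda=0$ we have $\log Q_i=\log x_i+g$ and $\sum_ip_i=H$, so
\[
B^0=e^{-Hg/z}\sum_{\mathbf d}\mathbf x^{\mathbf d}\,\frac{\prod_{r=0}^{D-1}(-H-rz)\prod_{r=1}^{D}(H+rz)}{\prod_i\prod_{r=1}^{d_i}(p_i+rz)^2}.
\]
For $\mathbf d\ne0$ the summand is $-H\,g_{\mathbf d}z^{-1}\bigl(1+z^{-1}L_{\mathbf d}+O(z^{-2})\bigr)$, where $L_{\mathbf d}$ is the linear logarithmic derivative. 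Explicitly,
\[
L_{\mathbf d}=H\bigl(h_{D-1}+h_D\bigr)-2\sum_ih_{d_i}p_i,
\]
and $h_{D-1}=h_D-1/D$.

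Collecting $z^{-2}$ terms of degree two gives three contributions:
\begin{itemize}
\item $\tfrac12H^2g^2$, from the square of the exponential;
\item $+H^2g^2$, from the cross term $(-Hg/z)(-Hg/z)$;
\item $-H\sum_{\mathbf d}g_{\mathbf d}L_{\mathbf d}\mathbf x^{\mathbf d}$.
\end{itemize}
Rewrite $H^2=H\sum_ip_i$ and collect the coefficient of $-Hp_i$. This should produce $f_i$, with the $\tfrac12 g^2$, the $-2h_D$, and the $1/D$ terms arising from $H=\sum_ip_i$ and the $-2h_{d_i}$ term arising from the $p_i$-part of $L_{\mathbf d}$. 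Fixing the signs and the exact combination here is the bookkeeping I expect to be the main obstacle. I would check it against Theorem~\ref{thm:degreeone}, where $d_{e_i,\cdot}$ must reproduce $N_{0,e_i}=(n-1)(n-2)$ after pairing with $\delta_0$, and against Theorem~\ref{thm:degreetwo}.

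\textbf{Step 3: change variables from $\mathbf x$ to $\mathbf Q$.} We need $[\mathbf Q^\beta]$ of a series $F(\mathbf x)$, where $Q_i=x_ie^{g(\mathbf x)}$. Apply multivariable Lagrange inversion in logarithmic form:
\[
[\mathbf Q^\beta]F=[\mathbf x^\beta]\,F\,\mathbf x^\beta\mathbf Q^{-\beta}\det\Bigl(\frac{\partial\log Q_j}{\partial\log x_k}\Bigr).
\]
Here $\mathbf Q^{-\beta}\mathbf x^\beta=e^{-|\beta|g}$. The Jacobian is $I+\mathbf 1\,(\partial_{\log x_k}g)_k$, a rank-one perturbation of the identity, so its determinant is $1+\Theta_xg$. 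This yields~\eqref{ext:primaryextract}. I would justify the residue formula formally, degree by degree, since all series involved lie in $\Q[[\mathbf x]]$ with $Q_i=x_i(1+O(\mathbf x))$.
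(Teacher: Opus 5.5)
Your route is the paper's: read the primary series off the $z^{-2}$ coefficient of \eqref{eq:Bdef} at $\lambda=0$, expand the seed to first cohomological order, and pass from $\mathbf x$ to $\mathbf Q$ by logarithmic Lagrange inversion with the rank-one Jacobian determinant $1+\Theta_xg$. Dividing by $H$ directly after setting $\lambda=0$, using injectivity of $H$ on degree one, is a harmless simplification of the monic division in Lemma~\ref{lem:division}, since the kernel $\Q H^n$ only matters in top degree. Your Steps 1 and 3, and your formula for $L_{\mathbf d}$, are fine.

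Step 2, however, contains a concrete sign error that goes beyond unfinished bookkeeping. The $r=0$ factor $-H$ of $\prod_{r=0}^{D-1}(-H-rz)$ combines with the sign $(-1)^{D-1}$ of the other $D-1$ factors to give $(-1)^DH$. So the $\mathbf d\ne0$ summand is $+Hg_{\mathbf d}z^{-1}\bigl(1+z^{-1}L_{\mathbf d}+O(z^{-2})\bigr)$, not $-Hg_{\mathbf d}z^{-1}(\cdots)$.

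The sign is forced by \eqref{eq:Bdef}. The $z^{-1}$ coefficient of $B^0$ is $-Hg$ plus the leading seed terms, and it vanishes only with the plus sign; your sign would leave $-2Hg$.

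With the correct sign, the cross term is $(-Hg/z)(+Hg/z)=-H^2g^2/z^2$ and the seed contributes $+H\sum_{\mathbf d}g_{\mathbf d}L_{\mathbf d}\mathbf x^{\mathbf d}$. Using $H^2=H\sum_ip_i$ and $h_{D-1}+h_D=2h_D-1/D$, this gives
\[
 [z^{-2}](B^0-1)=-\tfrac12H^2g^2+H\sum_{\mathbf d\ne0}g_{\mathbf d}L_{\mathbf d}\mathbf x^{\mathbf d}=-H\sum_ip_if_i .
\]
With your three signs, the coefficient of $-Hp_i$ is instead $-\tfrac32g^2+\sum_{\mathbf d}g_{\mathbf d}(2h_D-1/D-2h_{d_i})\mathbf x^{\mathbf d}$, which is not $f_i$. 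Already in degree one it gives $D_{e_j}=-(H-2p_j)$, hence $N_{0,e_j}=-(n-1)(n-2)$, contradicting Theorem~\ref{thm:degreeone}. Once this sign is corrected, your Steps 1--3 yield \eqref{ext:primaryextract} exactly as in the paper.
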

\begin{proof}
By Lemma~\ref{lem:division} and~\eqref{eq:Bdef}, the $z^{-2}$ coefficient
of the divided normalized series at $\lambda=0$ is the multivariable primary
evaluation series $\sum_{\beta>0}D_\beta\mathbf Q^\beta$, after restriction
to $F_n$. It may be computed in the square-free ambient ring before that
restriction. The linear part of the Euler-divided seed is
$g_{\mathbf d}\{(2h_{|\mathbf d|}-1/|\mathbf d|)H
-2\sum_i h_{d_i}p_i\}$. Normalization by $e^{-Hg}$ and the
term $(1-e^{-Hg})/H$ give the divisor-valued germ $\sum_i f_ip_i$.

It remains only to convert from $\mathbf x$ to $\mathbf Q$. For
$Q_i=x_ie^{g(\mathbf x)}$, multivariable coefficient extraction gives, for
any scalar germ $f$ with zero constant term,
\[
 [\mathbf Q^\beta]f(\mathbf x(\mathbf Q))
 = [\mathbf x^\beta]f(\mathbf x)e^{-|\beta|g(\mathbf x)}
      \det\!\left(\frac{\partial\log Q_i}{\partial\log x_j}\right)_{i,j}.
\]
The logarithmic Jacobian is a rank-one perturbation of the identity, so the
matrix determinant lemma gives
$\det(\partial\log Q_i/\partial\log x_j)=1+\Theta_xg$.
Applying this componentwise to $\sum_i f_ip_i$ proves
\eqref{ext:primaryextract}.
\end{proof}

As a check against the direct geometry of Sections~\ref{sec:degreeone} and
\ref{sec:degreetwo}, formula~\eqref{ext:primaryextract} gives
\[
 D_{e_i}=H-2p_i,\qquad
 D_{2e_i}=\frac{H-2p_i}{4},\qquad
 D_{e_i+e_j}=-2(H-p_i-p_j)\quad(i\ne j).
\]
Pairing with $\delta_0$ reproduces exactly the degree-one and degree-two
primary invariants proved there. More generally, on the coordinate ray
$\mathbf x=xe_i$ one has $g=-\log(1+x)$ and hence $Q=x/(1+x)$. The active
coefficient function is
\[
 \frac12\log^2(1+x)+\Li_2(-x),
\]
while every passive coefficient is its negative. The germ identity
$\Li_2(-Q/(1-Q))=-\Li_2(Q)-\tfrac12\log^2(1-Q)$ therefore gives
\begin{equation}\label{ext:coordinate-ray-primary}
 D_{r e_i}=\frac{H-2p_i}{r^2}\qquad(r\geq1).
\end{equation}
This all-degree ray formula is consistent with the first two displayed checks.
It also gives
$B_{e_i}=H-2p_i$ and $B_{r e_i}=0$ for every $r>1$, in agreement with the
degree-two calculation.

Here and below $r\mid\beta$ means that $\beta/r$ is an integral curve
class. Define
\[
 B_\beta=\sum_{r\mid\beta}\frac{\mu(r)}{r^2}D_{\beta/r},
 \qquad D_\beta=\sum_{r\mid\beta}\frac1{r^2}B_{\beta/r}.
\]
\begin{corollary}[Integral primary vector]
\label{ext:cor:vector}
Under the hypotheses of Theorem~\ref{thm:IP}, every coefficient
of $B_\beta$ in the basis $p_1,\ldots,p_n$ is integral, for every $n\geq4$.
More generally, for integral even primary insertions whose total degree has the
required dimension, the local genus-zero transform with $\ell$ markings and
cover weight $r^{\ell-3}$ is integral.
\end{corollary}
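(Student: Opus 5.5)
The plan is to reduce both assertions to Theorem~\ref{thm:IP} on the compact Calabi--Yau double cover $\widetilde Z$ of Theorem~\ref{thm:CYcover}. We test the rational class $B_\beta$ against integral dual classes and use uniqueness of the rational cover transform.

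\emph{Reduction to scalars.} By Remark~\ref{rem:integralambientlattice}, the pairing $\eta\mapsto(\int_{F_n}\eta\,p_j)_j$ identifies $H^{2n-4}(F_n,\Z)$ with the integral dual of $H^2(F_n,\Z)=\bigoplus_i\Z p_i$. Choose integral classes $\eta_1,\ldots,\eta_n\in H^{2n-4}(F_n,\Z)$ with $\int_F\eta_kp_j=\delta_{jk}$; these are the integral duals of the coordinate curves. Then $d_{\beta,k}=\int_FD_\beta\eta_k=N_{0,\beta}(\eta_k)$. Since $B_\beta$ is defined by the M\"obius-type transform with weight $r^{-2}$ applied coefficientwise, its $k$-th coefficient is exactly the one-primary cover transform $b_\beta(\eta_k)=\sum_{r\mid\beta}\mu(r)r^{-2}N_{0,\beta/r}(\eta_k)$. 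The inversion argument of Theorem~\ref{thm:integrality} applies verbatim to any insertion.

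\emph{Transfer to $\widetilde Z$.} The insertion $q^*\eta_k$ is integral of complex codimension $n-2$, which equals the one-marked virtual dimension. Theorem~\ref{thm:CYcover} identifies the stack $\overline{\mathcal M}_{0,1}(\widetilde Z,j_{+*}\beta)$, together with its obstruction theory and virtual class, with the local stack. We also need $\ev^*q^*\eta_k=\ev_{X_n}^*\varpi^*\eta_k$. This holds because on the sector the evaluation factors through $F_{n,+}$ and $q\circ j_+=\mathrm{id}$. Hence $\mathrm{GW}_{j_{+*}\beta}(q^*\eta_k)=N_{0,\beta}(\eta_k)$.

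The same theorem shows that every effective root of $j_{+*}\beta$ is $j_{+*}\alpha$ with $k\alpha=\beta$, and that divisibility indices agree; the paragraph on integral roots handles torsion. Noneffective lifts contribute zero, so the Ionel--Parker transform with $k=1$ (weight $r^{-2}$) on the $c_1=0$ sector of $\widetilde Z$ coincides term by term with $b_\beta(\eta_k)$. The algebraic invariants agree with the symplectic ones via Theorem~\ref{thm:siebert} and the semipositive convention. Since $\widetilde Z$ has $c_1=0$ and real dimension $2n\ge8$, Theorem~\ref{thm:IP} gives integrality.

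\emph{General statement.} For $\ell$ integral even insertions $\gamma_1,\ldots,\gamma_\ell\in H^{\mathrm{even}}(F_n,\Z)$ of the right total degree, I would repeat the argument with $q^*\gamma_i$. The $\ell$-marked stack identification in Theorem~\ref{thm:CYcover} holds for all $\ell$, and the inversion with weight $r^{\ell-3}$ is again unique over $\Q$.

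The main obstacle I expect is the bookkeeping of the compact-sector comparison for arbitrary insertions rather than $\delta_0$. Two points need care. First, the pullback classes $q^*\eta_k$ must pair with the virtual class exactly as $\varpi^*\eta_k$ does, with no contribution from $F_{n,-}$. Second, the root sets on $H_2(\widetilde Z,\Z)/\mathrm{tors}$ must match those on $\Z^n$. Both points are already settled in Theorem~\ref{thm:CYcover} and do not depend on the insertion, so I would cite them directly.
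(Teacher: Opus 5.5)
Your proposal is correct and follows essentially the paper's route. You take integral classes $\eta_i\in H^{2n-4}(F_n,\Z)$ dual to $p_1,\ldots,p_n$, pull them back by $q$ to $\widetilde Z$, use Theorem~\ref{thm:CYcover} to identify moduli, obstruction theories, insertions and effective roots, and apply Theorem~\ref{thm:IP} to the $i$th coefficient of $B_\beta$. The only difference is cosmetic: you obtain the $\eta_i$ from the lattice duality of Remark~\ref{rem:integralambientlattice}, while the paper takes them as Poincar\'e duals of actual coordinate fibers supplied by Theorem~\ref{thm:degreeone}, and these are the same classes.
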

\begin{proof}
By Theorem~\ref{thm:degreeone}, for each $i$ the class $e_i$ is represented by
a coordinate fiber $C_i\cong\PP^1$ contained in $F_n$. Choose one such fiber
and let $\eta_i\in H^{2n-4}(F_n,\Z)$ be its integral Poincar\'e dual. Then
\[
 \int_Fp_j\eta_i=\int_{C_i}p_j=\delta_{ij}.
\]
Pull all insertions back by $q:\widetilde Z\to F_n$. The compact target is smooth projective
Calabi--Yau, and the compact-sector comparison identifies all marked virtual
theories and effective covering roots. Theorem~\ref{thm:IP}
therefore applies to the local sector. The transform with insertion
$\eta_i$ is precisely the $i$th coefficient of $B_\beta$.
No exact integral inverse Chern character for $\eta_i$ is assumed.
\end{proof}
Integral ambient monomial
pairings alone do not suffice, because their vectors $e_i+e_j$ generate
the index-two lattice of even total coordinate sum. Integral
pairwise sums allow every coefficient to be a half-integer.
The integral dual curves remove this ambiguity. This result does
not prove integrality of arbitrary descendant transforms.

\section{All-loop endpoint reduction}\label{ext:endpoints}
The analytic statement in this section is valid for every integer $n\geq1$;
the sunset applications in the body of the paper have $n\geq4$. Put
\begin{equation}\label{ext:G}
 G(H)=\exp\left(\sum_{r\geq1}\frac{2\zeta(2r+1)}{2r+1}H^{2r+1}\right),
 \qquad d=\left\lfloor\frac{n-1}{2}\right\rfloor.
\end{equation}
The ambient primitive normalization is specified in Section~\ref{ext:setup}.

\begin{theorem}[All-loop endpoint reduction]\label{ext:main}
For every integer $n\geq1$, all ambient primitive endpoint limits
$C_j(\infty)$, $0\leq j\leq n$, exist along the positive Euclidean axis.
There are uniquely determined real numbers $b_1,\ldots,b_d$ such that
\begin{equation}\label{ext:mainformula}
 \sum_{j=0}^{n}C_j(\infty)H^j
  =G(H)\left(1+\sum_{j=1}^{d}b_jH^j\right)\pmod{H^{n+1}}.
\end{equation}
The upper-half-plane contour identities used in the proof are equivalent
to the vanishing of the coefficients of $H^{d+1},\ldots,H^n$ in the
quotient on the right. As an affine linear system in $n$ finite-part
moments, they have rank $n-d=\lfloor n/2\rfloor+1$.
\end{theorem}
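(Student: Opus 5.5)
The plan is to realise the endpoint constants as regularised Mellin moments of a positive integral. Then factor out the universal Gamma part using the same Laplace mechanism as in Theorem~\ref{thm:laplace}. The remaining freedom is then cut down by contour rotation in the upper half-plane.

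\textbf{Step 1: existence of the limits.} Following the normalisation of Section~\ref{ext:setup}, I would write each ambient primitive $C_j(y)$ as a coefficient of $H^j$ in a series of the form $y^{-H}\times(\text{solution of the ambient system})$. I would then pass to the endpoint $y\to+\infty$ along the positive axis, which corresponds to $Q\uparrow Q_\infty$ by Theorem~\ref{thm:torus}. Existence would follow as in the proof of Theorem~\ref{thm:torus}. The positive representations (Proposition~\ref{prop:analyticdomain} and the torus formula) give integrands that are monotone in $y$ after subtracting the logarithmic growth carried by $y^{H}$. Monotone or dominated convergence together with Tonelli then yields finite limits. These are expressed as finite-part moments $m_k=\mathrm{f.p.}\int(\log\,\cdot)^k\,d\nu$, $0\le k\le n-1$, of a fixed positive density.

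\textbf{Step 2: factoring out $G(H)$.} Inserting the Mellin form of the Laplace integral, I expect $\sum_j C_j(\infty)H^j$ to take the form
\[
 \frac{\Gamma(1+H)\,\prod_i\Gamma(1+2p_i)}{\Gamma(1-H)}\Big|_{\text{sym}}\cdot M(H),\qquad M(H)=\sum_k \frac{(-H)^k}{k!}\,m_k ,
\]
up to the linear $\gamma_E$ term. By the computation of Proposition~\ref{prop:Gamma} this Gamma ratio is exactly $G(H)$ modulo $H^{n+1}$. The quotient $P(H)=G(H)^{-1}\sum_jC_j(\infty)H^j$ is then an affine expression $1+\sum_{k\ge1}(\text{affine in }m_1,\dots,m_k)H^k$, and the constant term $1$ comes from the normalisation $C_0=1$.

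\textbf{Step 3: the contour identities and the rank count.} I would rotate the radial or Laplace contour from $(0,\infty)$ to a ray in the upper half-plane. Since the phase $\phi$ and the torus measure are real and the only singularities lie on the negative $y$-axis, Cauchy's theorem gives a relation in which $y^{-H}$ acquires the factor $e^{\mathrm i\pi H}$. Taking imaginary parts, and using $G(H)G(-H)=1$ because the exponent is odd, should give an identity of the shape $\operatorname{Im}\bigl(e^{\mathrm i\pi H}P(H)\,\cdot\,\text{explicit}\bigr)=0$ modulo $H^{n+1}$. I would then show by triangularity in $H$ that this identity is equivalent to the vanishing of the coefficients of $H^{d+1},\dots,H^n$ in $P$. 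The coefficient of $H^k$ involves $m_k$ with a nonzero leading factor, and the even and odd parts decouple. This gives $n-d$ independent affine equations on the $n$ moments, hence rank $\lfloor n/2\rfloor+1$. The surviving coefficients are then defined to be $b_1,\dots,b_d$, and they are unique because $P$ is determined by the limits.

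\textbf{Main obstacle.} I expect the hardest part to be justifying the contour rotation for finite-part (regularised) moments. The arcs at $0$ and $\infty$ must vanish with the nilpotent logarithmic weights, and the regularisation must commute with the rotation. I would first handle this by working with $y^{-\varepsilon-H}$ for a small real $\varepsilon>0$, where all integrals converge absolutely. I would then prove the identity there and let $\varepsilon\to0$ coefficientwise, using that only finitely many $H$-degrees occur. A secondary risk is that the precise form of the rotated identity is not the one I guessed. In that case the rank statement would instead be obtained by computing the Jacobian of the system in the monomials $m_k$ directly.
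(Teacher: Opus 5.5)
There is a genuine gap, and it sits in Step 3, which carries the whole content of the theorem. You never specify which $H$-valued integral representation of the primitives is being rotated. Rotating the Laplace variable of Proposition~\ref{prop:analyticdomain} acts on the sunset integral. By Theorem~\ref{thm:laplace} that integral is a single Gamma-paired combination of the $W_j$, so it cannot give separate relations among the $n$ constants $C_j(\infty)$.

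The features you attribute to the guessed identity (imaginary parts, $G(H)G(-H)=1$, decoupling of even and odd parts) point to a parity-type constraint on the quotient. The theorem instead asserts a degree truncation. Already for $n=4$ the quotient is $1+R_*H$ with $R_*=C_1(\infty)=-\mathbb E\log\mathfrak q\neq0$, which has no parity; what must be proved is that the coefficients of $H^2,H^3,H^4$ vanish.

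The missing idea is the Bessel structure of the ambient series. Modulo $p^2$ one has $J_{2p}=J_0+\pi pY_0$, so $\sum_jH^j\mathscr B_j(r)=\prod_iJ_{2p_i}(r)$. Lemma~\ref{ext:endpoint} then writes the endpoint series as $P(H)\bigl(1+2H\sum_jM_jH^j\bigr)$, with $M_j$ the subtracted Mellin moments of $J_0^{n-j}Y_0^j$. The rotation is in the Bessel variable $r$, applied to $(J_0+\mathrm iY_0)^k(J_0-\mathrm iY_0)^{n-k}$. This product stays bounded on the upper half-plane exactly when $2k\ge n$, and that, not parity, is the source of the count $n-\lceil n/2\rceil+1=\lfloor n/2\rfloor+1$. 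The boundary value $Y_0(-r+\mathrm i0)=Y_0(r)+2\mathrm iJ_0(r)$ produces the left-hand side.

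An operator identity in $D=d/dt$ then turns the $k$th identity into the jet condition $[A(D)p_k](\mathrm i)=0$, where $p_k(t)=(1+\mathrm it)^k(1-\mathrm it)^{n-k}$. Since $p_k$ vanishes to order exactly $k$ at $t=\mathrm i$, descending from $k=n$ forces $a_n=\cdots=a_{\lceil n/2\rceil}=0$, and conversely. Your fallback of computing the Jacobian directly presupposes these identities, so it does not close the gap.

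Steps 1 and 2 are also not right as stated. The torus formula of Theorem~\ref{thm:torus} represents only $A$ and $g$, hence only $C_1=R$. There is no fixed positive density whose logarithmic moments give $C_j(\infty)$ for $j\ge2$; the paper's moments are of signed Bessel products. Existence of the limits comes instead from dominated convergence with the positive kernel $w_x(r)=r\sqrt xK_1(r\sqrt x)\in(0,1]$, which tends to $1$ as $x=1/y\downarrow0$. A local-exponent analysis of $P_{n-1}$ at $y=\infty$ might be an alternative route.

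Your Gamma ratio is also inverted. Since $\prod_i\Gamma(1+2p_i)=e^{-2\gamma_EH}$, one gets $\Gamma(1+H)\prod_i\Gamma(1+2p_i)/\Gamma(1-H)=e^{-4\gamma_EH}G(H)^{-1}$, not $G(H)$. The factor that actually appears is $e^{-2\gamma_EH}/\Gamma(1+H)^2=G(H)\sin(\pi H)/(\pi H)$. The extra $\sin(\pi H)/(\pi H)$ is essential: after the substitution $v=\pi H$, it combines with $1-e^{2\mathrm iD}$ in the operator identity to leave the clean jet conditions.
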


The rank statement is a statement about these contour equations.
It does \emph{not} assert algebraic independence, transcendence, or
minimality of the actual constants $b_j$ under all possible arithmetic
relations. It does prove that adding more linear combinations of the
same contour identities cannot determine the remaining parameters.

\subsection{Ambient normalization and Bessel endpoint formula}\label{ext:setup}
Work in the square-free algebra
$\mathcal A_n=\mathbb C[p_1,\ldots,p_n]/(p_1^2,\ldots,p_n^2)$,
with $H=\sum p_i$ and $H^{n+1}=0$. Symmetric homogeneous elements of degree
$j$ are multiples of $H^j$. Set $x=1/y$, $y>0$, and define the germ
\begin{equation}\label{ext:ambient}
 \mathcal W(H,y)=\sum_{j=0}^{n}H^jW_j(y)
 =e^{H\log y}\sum_{\mathbf a\geq0}(-y)^{|\mathbf a|}
 \frac{\prod_{r=1}^{|\mathbf a|}(H+r)^2}
      {\prod_{i=1}^{n}\prod_{r=1}^{a_i}(p_i+r)^2}.
\end{equation}
The primitives are normalized by
\begin{equation}\label{ext:normalization}
 C_0=1,\qquad \theta_y C_{j+1}=W_j,\qquad
 C_{j+1}(y)-\frac{\log^{j+1}y}{(j+1)!}\longrightarrow0\quad(y\downarrow0),
 \qquad \theta_y=y\frac{d}{dy}.
\end{equation}
Use the positive-axis continuation of this germ throughout. These primitives
are the functions denoted $J_j$ in the proof of
Corollary~\ref{cor:realcontinuation}, and they both satisfy the same differential
relations and the same normalization at $y=0$. In particular
$C_1=R$, and the normalized ambient coefficients of~\eqref{eq:Kdef} satisfy
\begin{equation}\label{ext:CKrelation}
 K_k=\sum_{j=0}^k\frac{(-R)^{k-j}}{(k-j)!}C_j,\qquad 2\leq k\leq n.
\end{equation}
Thus the endpoint limits below apply directly to the mirror coordinate and
to the descendant coefficients used earlier in the paper.

Let $\gamma$ be Euler's constant, $r_0=2e^{-\gamma}$,
$\ell(r)=\log(r/2)+\gamma$, and
\begin{equation}\label{ext:B}
 \mathscr B_j(r)=\frac{\pi^j}{j!}J_0(r)^{n-j}Y_0(r)^j,\qquad
 M_j=\int_0^{r_0}\frac{\mathscr B_j(r)-(2\ell(r))^j/j!}{r}\dd r
               +\int_{r_0}^{\infty}\frac{\mathscr B_j(r)}r\dd r.
\end{equation}
These are convergent subtracted integrals:
$\mathscr B_j-(2\ell)^j/j!=O(r^2(1+|\log r|)^n)$ at zero and
$\mathscr B_j=O(r^{-n/2})$ at infinity. Changing the subtraction scale would change
the constants, so it is part of the definition.

\begin{lemma}\label{ext:endpoint}
With $P(H)=e^{-2\gamma H}/\Gamma(1+H)^2$, one has
\begin{align}
 \mathcal W(H,y)&=P(H)x\int_0^\infty rK_0(r\sqrt x)
                         \sum_{j=0}^nH^j\mathscr B_j(r)\dd r,\label{ext:W}\\
 \sum_{j=0}^n C_j(\infty)H^j
 &=P(H)\left(1+2H\sum_{j=0}^{n-1}M_jH^j\right)\pmod{H^{n+1}},\label{ext:C}\\
 P(H)&=G(H)\frac{\sin\pi H}{\pi H}.\label{ext:gamma}
\end{align}
In particular,
$C_1(\infty)=2M_0$ and $C_2(\infty)=2M_1-\zeta(2)$.
\end{lemma}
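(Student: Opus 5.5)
The plan is to prove the three displays in the order \eqref{ext:gamma}, \eqref{ext:W}, \eqref{ext:C}, since the gamma identity supplies the constant that the Bessel computation produces.

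\emph{Step 1: the identity \eqref{ext:gamma}.} Write $\Gamma(1+H)^{-2}=\Gamma(1+H)^{-1}\Gamma(1-H)^{-1}\cdot\Gamma(1-H)/\Gamma(1+H)$. The reflection formula gives $\Gamma(1+H)\Gamma(1-H)=\pi H/\sin\pi H$. From the Taylor series of $\log\Gamma(1+w)$ used in Proposition~\ref{prop:Gamma},
\[
 \log\frac{\Gamma(1-H)}{\Gamma(1+H)}=2\gamma H+\sum_{r\ge1}\frac{2\zeta(2r+1)}{2r+1}H^{2r+1}.
\]
The factor $e^{-2\gamma H}$ in $P$ cancels the linear term, and what remains is $G(H)\sin(\pi H)/(\pi H)$. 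The stated values of $C_1(\infty)$ and $C_2(\infty)$ then follow from \eqref{ext:C} by expanding $P=1-\zeta(2)H^2+O(H^3)$. Here one uses that $G$ has no $H$ or $H^2$ term and that the linear terms of $\sin(\pi H)/(\pi H)$ vanish.

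\emph{Step 2: the representation \eqref{ext:W}.} This works factor by factor in $\mathcal A_n$. Because $p_i^2=0$, one has $\sum_jH^j\mathscr B_j=\prod_i\bigl(J_0(r)+p_i\pi Y_0(r)\bigr)$. Indeed, the coefficient of $e_j(p)=H^j/j!$ in this product is $J_0^{n-j}(\pi Y_0)^j$. Next, identify $J_0+p\,\pi Y_0$ as a Frobenius solution of Bessel's equation with nilpotent exponent. Using $Y_0=\tfrac2\pi(\log(r/2)+\gamma)J_0+\text{(harmonic-sum series)}$ and differentiating $\sum_d(-r^2/4)^{d}(r/2)^{2p}/\Gamma(d+1+p)^2$ at $p=0$, I expect
\[
 J_0(r)+p\,\pi Y_0(r)=c(p)\sum_{d\ge0}\frac{(-1)^d(r/2)^{2d+2p}}{\Gamma(d+1+p)^2},
 \qquad c(p)=e^{2\gamma p}\ \text{mod }p^2 .
\]
With $p_i^2=0$ this gives $\prod_ic(p_i)=e^{2\gamma H}$. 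Multiplying out turns the integrand into a sum over $\mathbf a$ of $(r/2)^{2(|\mathbf a|+H)}$ divided by the Gamma factors $\prod_i\Gamma(a_i+1+p_i)^2$.

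For $m=|\mathbf a|+H$, the Mellin integral $x\int_0^\infty rK_0(r\sqrt x)(r/2)^{2m}\dd r=x^{-m}\Gamma(m+1)^2$ produces $\Gamma(|\mathbf a|+1+H)^2=\Gamma(1+H)^2\prod_{r\le|\mathbf a|}(H+r)^2$. The $\Gamma(1+H)^2$ and $e^{2\gamma H}$ are absorbed by $P(H)$. The residual $\prod_i\Gamma(1+p_i)^{-2}$ combines with the denominators to give $\prod_i\prod_{r\le a_i}(p_i+r)^{-2}$. This reproduces \eqref{ext:ambient} with $y=1/x$.

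Termwise integration is justified for $x>n^2$: the Bessel series is dominated by $I_0$-type growth $e^{nr}$, while $K_0(r\sqrt x)$ decays like $e^{-r\sqrt x}$. Both sides are then analytic in $x>0$, with the right side controlled by $|J_0|,|Y_0|=O(r^{-1/2})$, so the identity extends to all $x>0$ along the positive-axis continuation.

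\emph{Step 3: the endpoint limits \eqref{ext:C}, and the main obstacle.} The key is the primitive. With $u=r\sqrt x$, the identity $(uK_1)'=-uK_0$ gives $\theta_x\bigl(uK_1(u)\bigr)=-\tfrac12u^2K_0(u)$. Since $\theta_y=-\theta_x$, this yields
\[
 rxK_0(r\sqrt x)=\theta_y\Bigl(\tfrac2r\,u K_1(u)\Bigr).
\]
So a primitive of \eqref{ext:W} is $P(H)\int_0^\infty\tfrac2r\,uK_1(u)\sum_jH^j\mathscr B_j\dd r$. This is divergent at $r=0$ and must be regularized by the subtraction $(2\ell)^j/j!$ of \eqref{ext:B}, exactly as in the definition of $M_j$. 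The subtracted part has to be computed in closed form. On $[0,r_0]$, the integral $\int_0^{r_0}\tfrac2r\,uK_1(u)(2\ell)^j/j!\,\dd r$ should equal the polynomial $\log^{j+1}y/(j+1)!$ (times the factor $H$ and the matching power of $P^{-1}$) plus an error that vanishes as $y\downarrow0$. The Gamma-function constants from the Mellin transform of $uK_1$ cancel precisely because $r_0=2e^{-\gamma}$, i.e.\ $\ell(r_0)=0$.

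This fixes the normalization \eqref{ext:normalization}, and the $H^0$ term $1$ comes from $C_0$. Letting $y\to\infty$ (so $x\to0$) and using $uK_1(u)\to1$ pointwise, dominated convergence gives $\int\tfrac2r(\cdots)=2M_j$. For dominated convergence one uses that $uK_1$ is bounded by $1$ and the integrability bounds stated after \eqref{ext:B}. This gives \eqref{ext:C}, with the extra factor $H$ coming from $\theta_yC_{j+1}=W_j$.

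I expect the main obstacle to be this bookkeeping at $r\to0$: showing the regularized primitive satisfies \eqref{ext:normalization} with no leftover constant. I would handle it by computing $\int_0^\infty u^{2s-1}uK_1(u)\dd u=2^{2s-1}\Gamma(s)\Gamma(s+1)$ and expanding at $s=0$ to verify the cancellation explicitly.
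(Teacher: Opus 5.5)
Your overall route is the paper's: the reflection formula for \eqref{ext:gamma}; the nilpotent-order Bessel series with a Mellin transform of $K_0$ for \eqref{ext:W}, which is the paper's two-Laplace-integral computation read backwards; and, for \eqref{ext:C}, the weight $w_x=r\sqrt x\,K_1(r\sqrt x)$ with $\theta_yw_x=xr^2K_0(r\sqrt x)/2$, finite parts at $r_0$, and dominated convergence as $x\to0$. However, two constants are mishandled.

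The minor one is in Step~2: $c(p)\equiv1$, not $e^{2\gamma p}$. Your own derivative gives $\sum_d(-1)^d\frac{(r/2)^{2d}}{(d!)^2}\bigl(2\log(r/2)-2\psi(d+1)\bigr)$, and $-\psi(d+1)=\gamma-\sum_{k\le d}1/k$ already produces the $\gamma$ in $Y_0$. Equivalently, $\Gamma(d+1+p)^2\equiv d!\,\Gamma(d+1+2p)$ modulo $p^2$, so the series is exactly $J_{2p}(r)=J_0(r)+\pi pY_0(r)$. The factor $e^{2\gamma H}$ that $P(H)$ absorbs comes instead from $\prod_i\Gamma(1+p_i)^{-2}=e^{2\gamma H}$. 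You silently treated that factor as $1$ when combining it with the denominators. The two slips cancel, so \eqref{ext:W} comes out right. But a consistent computation with your $c(p)$ would produce $e^{4\gamma H}$.

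The genuine gap is the integration constant in Step~3, which is exactly where the $\zeta(2)$ of the lemma comes from. Since the subtraction in $M_j(x)$ does not depend on $x$, $2P(H)\sum_jM_j(x)H^j$ is a primitive of $\mathcal W$, but only up to a constant. You propose to take $C_0=1$ plus $H$ times this primitive. The integral $\int_0^{r_0}\frac2r\,uK_1(u)(2\ell)^j/j!\,\dd r$ you want to evaluate diverges. What the Mellin formula $\int_0^\infty u^{2s}K_1(u)\dd u=2^{2s-1}\Gamma(s)\Gamma(1+s)$ actually yields, using $e^{2H\ell}=(r/r_0)^{2H}$ and $\int_0^{r_0}(r/r_0)^{2H}\dd r/r=1/(2H)$, is the identity
\[
 1+2H\Bigl(\int_0^{r_0}\frac{(w_x-1)e^{2H\ell}}{r}\dd r
 +\int_{r_0}^{\infty}\frac{w_xe^{2H\ell}}{r}\dd r\Bigr)
 =e^{2\gamma H}\Gamma(1+H)^2y^H=P(H)^{-1}y^H ,
\]
which is exact for every $x>0$. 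So the Gamma constants do not cancel; they reproduce $P(H)^{-1}$. The vanishing error comes only from $\int_0^\infty w_x\bigl(\mathscr B_j-(2\ell)^j/j!\bigr)\dd r/r=O\bigl(y(1+|\log y|)^n\bigr)$, because $w_x$ localizes at $r\lesssim\sqrt y$. Multiplying by $P(H)$ gives $2HP(H)\sum_jM_j(x)H^j=y^H-P(H)+o(1)$ as $y\downarrow0$. Hence the series satisfying \eqref{ext:normalization} is $P(H)\bigl(1+2H\sum_jM_j(x)H^j\bigr)$, whose constant term is $P(H)$, not $1$. Your assembly, $1$ from $C_0$ plus $H$ times the primitive, violates the normalization by $1-P(H)=\zeta(2)H^2+O(H^3)$ and would give $C_2(\infty)=2M_1$. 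You obtained $2M_1-\zeta(2)$ in Step~1 only by reading it off the target formula \eqref{ext:C}.
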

\begin{proof}
Modulo $p^2$, the Bessel defining series and its order derivative give
\[
 \sum_{a\geq0}\frac{(-z)^a}{\prod_{r=1}^a(p+r)^2}
 =\Gamma(1+2p)z^{-p}J_{2p}(2\sqrt z),\qquad
 J_{2p}(r)=J_0(r)+\pi pY_0(r).
\]
The derivative used here is~\cite[\S10.2(i), Eq.~(10.2.2), and \S10.15(iii), Eq.~(10.15.5)]{ext:DLMF}.
Represent $\Gamma(|\mathbf a|+1+H)^2$ by two Laplace integrals in $u,v$.
The powers of $yuv$ cancel, while
$\prod_i\Gamma(1+2p_i)=e^{-2\gamma H}$.
Termwise integration is legitimate for $0<y<n^{-2}$, using a majorant
with exponential part $e^{-u-v+2n\sqrt{yuv}}$ and polynomial logarithmic
factors. The product of the $n$ Bessel factors has coefficient $\mathscr B_j$ in
front of $H^j$. Integrating in $u$ at fixed $uv$, then setting
$r=2\sqrt{yuv}$, gives \eqref{ext:W}. Its exponentially convergent integral
continues the germ for every $x>0$.

For the primitive, set $w_x(r)=r\sqrt x K_1(r\sqrt x)$ and let $M_j(x)$ be
\eqref{ext:B} with $\mathscr B_j$ replaced by $w_x\mathscr B_j$, retaining the
same subtraction. Since $\theta_y w_x=xr^2K_0(r\sqrt x)/2$, the expression
\[
 P(H)\left(1+2H\sum_{j=0}^{n-1}M_j(x)H^j\right)
\]
has the required derivative.
This follows from the Mellin-transform identity
\begin{equation}
  2H\int_0^\infty w_x(r)e^{2H\ell(r)}\frac{\mathrm dr}{r}
  = e^{2\gamma H}\Gamma(1+H)^2x^{-H},
  \qquad \Re H>0,
\end{equation}
which is obtained from the standard Mellin transform
\cite[Eq.~(10.43.19)]{ext:DLMF}.
The Laurent pole of the integral is
$1/(2H)$; its finite coefficients use exactly the subtraction at $r_0$.
Multiplication by $P(H)$ gives $y^H$. The difference from the leading
small-$r$ terms is $O(y(1+|\log y|)^n)$, so \eqref{ext:normalization} is satisfied.
Finally $0<w_x\leq1$ and $w_x\to1$ as $x\downarrow0$.
After subtraction, near zero a uniform dominating function is
$Cr(1+|\log r|)^{n+1}$; at infinity use $|\mathscr B_j(r)|/r=O(r^{-1-n/2})$.
Dominated convergence proves \eqref{ext:C}. Expansion of $\log\Gamma$ and
Euler's sine product prove \eqref{ext:gamma} and the stated first coefficients.
\end{proof}

\subsection{The contour system and its exact rank}
For $k=\lceil n/2\rceil,\ldots,n$, put
\[
 T_k(J,Y_n)=(J+\mathrm i Y_n)^k(J-\mathrm i Y_n)^{n-k}.
\]
The finite-part contour identity needed below is
\begin{equation}\label{ext:contour}
 \operatorname{FP}\int_0^\infty
 \frac{T_k(J_0,Y_0)-T_k(J_0,Y_0+2\mathrm i J_0)}r\dd r
 =\mathrm i\int_0^\pi T_k(1,2\mathrm i\theta/\pi)\dd\theta.
\end{equation}
Here finite part is defined by \eqref{ext:B}, extended linearly; all Bessel
functions on the left have argument $r$.

To justify \eqref{ext:contour}, integrate over an upper half-annulus.
On the negative axis $J_0(-r+\mathrm i0)=J_0(r)$ and
$Y_0(-r+\mathrm i0)=Y_0(r)+2\mathrm i J_0(r)$ \cite[\S10.11]{ext:DLMF}.
The large-arc contribution tends to zero, because  the Hankel estimates
\cite[\S10.17]{ext:DLMF} bound it by
$Ca^{-n/2}\exp[-(2k-n)a\sin\theta]\dd\theta$, where $a$ is the radius.
This includes the balanced case $2k=n$. These bounds are uniform on the
upper semicircle, using continuation of the asymptotic formulas at its
ends. On the clockwise small arc, replace $J_0$ by $1$ and $Y_0$ by
$2(\ell(\epsilon)+\mathrm i\theta)/\pi$; the error tends to zero. Taking the
constant coefficient in the resulting polynomial in $\ell(\epsilon)$
gives \eqref{ext:contour}. This explains both its sign and its finite-part
normalization. The numerator's $Y_n^n$ coefficient cancels, so only
$M_0,\ldots,M_{n-1}$ occur.

\subsubsection{The differential-functional transformation}
The rank can be computed without an $n$-dependent elimination. Define
\[
 m_j=\frac{M_j}{\pi^{j+1}},\qquad
\mathfrak f_n(v)=\sum_{j=0}^{n-1}m_jv^j,
 \qquad
 A(v)=\frac{\sin v}{v}\bigl(1+2v\mathfrak f_n(v)\bigr)\pmod{v^{n+1}}.
\]
Write $A(v)=\sum_{j=0}^na_jv^j$, with $a_0=1$.
By Lemma~\ref{ext:endpoint}, $A(v)$ is the reduced endpoint series evaluated
at $H=v/\pi$; thus $b_j=\pi^ja_j$.
Let $\Lambda(t^j)=j!m_j$ for $0\leq j<n$ and $D=d/dt$.
For any polynomial of degree at most $n-1$,
$\Lambda(q)=[\mathfrak f_n(D)q](0)$.

For $p_k(t)=(1+\mathrm i t)^k(1-\mathrm i t)^{n-k}$, dividing~\eqref{ext:contour} by
$\pi$ gives
\begin{equation}\label{ext:functional}
 \Lambda\bigl(p_k(t)-p_k(t+2\mathrm i)\bigr)
       =\mathrm i\int_0^1p_k(2\mathrm i u)\dd u.
\end{equation}
All operator series below are finite when applied to polynomials.
The definition of $A$ gives, through the needed order,
\begin{align*}
 \mathfrak f_n(D)(1-e^{2\mathrm i D})
 &=\frac{D A(D)/\sin D-1}{2D}(1-e^{2\mathrm i D})\\
 &=-\mathrm i e^{\mathrm i D}A(D)+\frac{e^{2\mathrm i D}-1}{2D}.
\end{align*}
The last term at $t=0$ equals
$\mathrm i\int_0^1p_k(2\mathrm i u)\dd u$ and cancels the right-hand side of
\eqref{ext:functional}. Consequently the entire contour system is equivalent to
\begin{equation}\label{ext:jet}
 [A(D)p_k](\mathrm i)=0,\qquad \lceil n/2\rceil\leq k\leq n.
\end{equation}
There is no division by a differential operator with a kernel here:
the quotients denote their ordinary formal power series, with the
removable factors canceled before evaluation.

\begin{proof}[Proof of Theorem~\ref{ext:main}]
At $t=\mathrm i+z$,
\[
 p_k(\mathrm i+z)=(\mathrm i z)^k(2-\mathrm i z)^{n-k}.
\]
It has a zero of order exactly $k$, and
$p_k^{(k)}(\mathrm i)=k!\,\mathrm i^k2^{n-k}\neq0$.
Starting with $k=n$, equation \eqref{ext:jet} forces $a_n=0$.
Descending in $k$ forces
$a_{n-1}=\cdots=a_{\lceil n/2\rceil}=0$.
Conversely these vanishings make all equations \eqref{ext:jet} hold, because
all their lower derivatives vanish. They impose precisely $n-d$
independent conditions; the coefficients $a_1,\ldots,a_d$ are arbitrary
in the solution space. The transformation from $m_0,\ldots,m_{n-1}$ to
$a_1,\ldots,a_n$ is affine triangular with diagonal entries $2$, so
it preserves rank and dimension. Lemma~\ref{ext:endpoint} gives existence of
the actual endpoint limits and identifies their quotient with $A$.
This proves \eqref{ext:mainformula} and the rank statement for every $n$.
\end{proof}

For the two cases used most often below, write $g_3=2\zeta(3)/3$ and
$g_5=2\zeta(5)/5$. For $n=4$, with $R_*=C_1(\infty)$,
\[
 (C_1,C_2,C_3,C_4)_\infty=(R_*,0,g_3,g_3R_*).
\]
For $n=5$, with $R_*=C_1(\infty)$ and $S_*=C_2(\infty)$,
\[
 (C_1,C_2,C_3,C_4,C_5)_\infty=(R_*,S_*,g_3,g_3R_*,g_3S_*+g_5).
\]
These are direct specializations of the universal factorization
\eqref{ext:mainformula}, not additional assumptions on the endpoint constants.

\appendix

\section{A symbolic implementation of the operator algorithm}\label{app:code}

The Python code used to perform the calculations is permanently archived
on Zenodo~\cite{vanhove_sunset_gw_2026} at
\begin{center}
\href{https://doi.org/10.5281/zenodo.23361560}{\texttt{doi:10.5281/zenodo.23361560}}
\end{center}
Development and maintenance continue in the GitHub repository
\begin{center}
\url{https://github.com/pierrevanhove/Sunset-Gromov-Witten}
\end{center}

\medskip

The following Python code uses only the standard library and exact
rational arithmetic. 

\medskip

An operator is stored as a dictionary $(j,k)\mapsto c_{jk}$
for $\sum c_{jk}y^j\theta^k$. Left multiplication by $\theta$ uses
$\theta y^j=y^j(\theta+j)$; left multiplication by $y$ increments $j$.
The code is an implementation of a proved finite construction and is
not used to guess an operator from a series.
\lstset{style=pythonstyle}
\begin{lstlisting}[language=Python,basicstyle=\ttfamily\tiny,breaklines=true]
from fractions import Fraction as Fn
from math import factorial, comb

# (j,k) denotes y**j theta**k, with y on the left.  Signs are already in the
# recursion (Definition labelled def:operator), so no (-1)**j is applied at the end.
def add(a,b):
    c=a.copy()
    for key,v in b.items(): c[key]=c.get(key,Fn(0))+v
    return {key:v for key,v in c.items() if v}
def scale(a,c): return {key:c*v for key,v in a.items() if c*v}
def theta_left(a):
    c={}
    for (j,k),v in a.items():
        c=add(c,{(j,k+1):v,(j,k):j*v})
    return c
def y_left(a): return {(j+1,k):v for (j,k),v in a.items()}
def right_theta_plus(a,r):
    c={}
    for (j,k),v in a.items():
        c=add(c,{(j,k+1):v,(j,k):r*v})
    return c
def sunset_operator(n):
    T=[{(0,0):Fn(1)},{(0,1):Fn(1,n)}]
    for k in range(1,n):
        T.append(scale(add(theta_left(T[k]),
            scale(y_left(T[k-1]),k)),Fn(1,n-k)))
    U=scale(add(theta_left(T[n]),scale(y_left(T[n-1]),n)),factorial(n))
    # Exact normalization sanity check; the all-degree proof is in the text.
    assert {key:v for key,v in U.items() if key[0]==0}=={(0,n+1):Fn(1)}
    P={(0,n-1):Fn(1)}
    for j in sorted({j for j,k in U if j}):
        term={(j,k):v for (jj,k),v in U.items() if jj==j}
        for r in range(1,j):
            term=right_theta_plus(right_theta_plus(term,r),r)
        P=add(P,term)
    return P


print(sunset_operator(4))
print(sunset_operator(5))
\end{lstlisting}

\section{String, dilaton and divisor identities for the twisted theory}\label{app:forget}
This appendix derives the string and divisor identities used in the text, in
the form needed for our twists. Nothing here is quoted; the point of the
derivation is that the index twists pull back under forgetting a marking.
Let $V$ be a smooth projective variety, $\beta\ne0$, and let the twist
be an invertible multiplicative class of an index bundle pulled back
under forgetting a marking, as is the case for our two index bundles.
For insertions $\alpha_i$ and integers $a_i\geq0$, the identities are
\[
 \left\langle1,\prod_i\tau_{a_i}(\alpha_i)\right\rangle_{0,\beta}^{\rm tw}
 =\sum_{i:a_i>0}
 \left\langle\tau_{a_i-1}(\alpha_i)
                \prod_{j\ne i}\tau_{a_j}(\alpha_j)\right\rangle_{0,\beta}^{\rm tw},
\]
\[
 \begin{split}
 \left\langle D,\prod_i\tau_{a_i}(\alpha_i)\right\rangle_{0,\beta}^{\rm tw}
 ={}&(D\cdot\beta)
          \left\langle\prod_i\tau_{a_i}(\alpha_i)\right\rangle_{0,\beta}^{\rm tw}\\
 &+\sum_{i:a_i>0}
 \left\langle\tau_{a_i-1}(D\alpha_i)
           \prod_{j\ne i}\tau_{a_j}(\alpha_j)\right\rangle_{0,\beta}^{\rm tw},
 \end{split}
\]
where $D\in H^2(V,\Q)$ is a divisor.
For the forgetful universal-curve morphism $p$, write $D_i$ for the
boundary section where the new and $i$th markings lie on a contracted
three-pointed twig. Then $\psi_i=p^*\psi_i+D_i$,
$D_iD_j=0$ for $i\ne j$, and $D_i^2=-p^*\psi_iD_i$.
The latter is the normal bundle of the section, the tangent rather
than the cotangent line. Expansion gives
$\psi_i^a=p^*\psi_i^a+D_i p^*\psi_i^{a-1}$ for $a>0$.
On $D_i$ the two evaluation maps agree. Pushing down, $p_*D_i=1$,
$p_*1=0$, and $p_*\ev_{\rm new}^*D=D\cdot\beta$.
These identities give the formulas, with the virtual universal-curve
pullback in place of an ordinary fundamental class. That pullback is
the base-change construction of the stable-map theory of
\cite{BehrendGW,Manolache}; the index twists
pull back because contraction of a degree-zero twig preserves their
derived cohomology. The calculation is therefore on the actual
Deligne--Mumford stacks and includes the contracted-twig locus.
The unstable degree-zero terms in $J$ are its specified normalization
$z\mathbf1+\tau$. Inserting arbitrarily many units exponentiates to
$e^{u/z}$, and divisor insertions give the displayed exponential and
Novikov change in the mirror proof.

\section{Ambientness of the one-point evaluation classes}\label{app:structural}
\subsection{Statements and conventions}

Fix $n\geq4$. All varieties, schemes, and stacks below
are over $\C$. Set
\[
 Y_n=(\PP^1)^n,\qquad H=p_1+\cdots+p_n,\qquad
 F_n\in|\mathcal O_{Y_n}(H)|\ \text{smooth},\qquad i:F_n\hookrightarrow Y_n.
\]
Here $p_i$ is the pullback of the hyperplane class from the $i$th factor.
Adjunction gives $K_{F_n}\simeq\mathcal O_{F_n}(-H)$; write $X_n=\Tot(K_{F_n})$.
The notation $\CH^*(-)_{\Q}$ means Chow groups with rational
coefficients, with the intersection-ring convention for smooth targets.
Cohomology means rational singular cohomology unless specified otherwise.

For a positive effective curve class $\beta\in H_2(F_n,\Z)$, let
\[
 m=H\cdot\beta>0,\qquad M_{F_n}=\overline{\mathcal M}_{0,1}(F_n,\beta).
\]
This is the proper Deligne--Mumford stack of stable maps, with its usual
perfect obstruction theory and virtual class. On its universal curve
write $\pi$ for the projection and $f$ for the universal map. The bundle
\[
 \mathcal V=R^1\pi_*f^*\mathcal O_{F_n}(-H)
\]
has rank $m-1$. Let $\lambda$ be the equivariant first Chern class of the
weight-one fiber-scaling character. For a rank-$r$ bundle $E$ our
convention is
\[
 e_\lambda(E)=\sum_{j=0}^r\lambda^{r-j}c_j(E).
\]
For every integer $a\geq0$, define the Chow-valued evaluation class
\begin{equation}\label{ap:eq:C}
 C_{a,\beta}^\lambda
 = (\ev_{F_n})_*\bigl(\psi^a e_\lambda(\mathcal V)
                  \cap[M_{F_n}]^\vir\bigr)
 \in\CH^*(F_n)_{\Q}[\lambda].
\end{equation}
The same notation will denote its cohomological realization. All
pushforwards and virtual classes are those of stacks; no coarse-space
substitution is made.

\begin{theorem}[Ambientness, including a Chow strengthening]\label{ap:thm:ambient}
For every $n\geq4$, every smooth $F_n$ as above, every positive effective
$\beta$, and every $a\geq0$,
\begin{equation}\label{ap:eq:chowambient}
 C_{a,\beta}^\lambda\in
 \im\bigl(i^*:\CH^*(Y_n)_{\Q}\longrightarrow\CH^*(F_n)_{\Q}\bigr)[\lambda].
\end{equation}
Consequently $C_{a,\beta}^\lambda\in V_\amb[\lambda]$, where
$V_\amb=i^*H^*(Y_n,\Q)$. Every pairing of this class with the
Poincar\'e-orthogonal complement of $V_\amb$ vanishes coefficientwise
in $\lambda$.
\end{theorem}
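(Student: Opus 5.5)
We follow the strategy sketched in the proof of Theorem~\ref{thm:fullambient}: spread the construction over the universal hypersurface and use the fact that this family is a projective bundle over $Y_n$.

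\textbf{The universal family.} Let $\mathbb P=\PP(H^0(Y_n,\mathcal O(H))^\vee)$ be the linear system and $\mathcal F\subset\mathbb P\times Y_n$ the universal hypersurface. Since $\mathcal O(H)$ is very ample, hence globally generated, the projection $\mathcal F\to Y_n$ is a projective bundle, namely the hyperplane bundle of sections vanishing at a point. The projective-bundle formula then gives
\[
 \CH^*(\mathcal F)_\Q=\bigoplus_k h^k\cdot\CH^*(Y_n)_\Q ,
\]
where $h$ is pulled back from $\mathbb P$. Let $U\subset\mathbb P$ be the open locus of smooth members and $\mathcal F_U$ its restriction. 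By Chow localization, restriction $\CH^*(\mathcal F)\to\CH^*(\mathcal F_U)$ is surjective. Restricting further to a fiber $F_n=\mathcal F_u$ by the refined Gysin map of the point $u\hookrightarrow U$ kills $h$. Hence \emph{every} class on $\mathcal F_U$ restricts into $i^*\CH^*(Y_n)_\Q$.

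\textbf{The universal virtual class.} On the relative stable-map stack $\mathcal M_U=\overline{\mathcal M}_{0,1}(\mathcal F_U/U,\beta)$ the curve class $\beta$ is locally constant: the curve lattice is identified with $\Z^n$ via Proposition~\ref{prop:geometry}, and monodromy acts trivially since the identification factors through $Y_n$. We would construct the relative obstruction theory $(R\pi_*f^*T_{\mathcal F_U/U})^\vee$ and its virtual class exactly as in Proposition~\ref{prop:virtualcompare}. That is, $\mathcal M_U$ is the zero stack of the section $\sigma$ of $E_+$ on $\overline{\mathcal M}_{0,1}(Y_n,\beta)\times U$ induced by the tautological section. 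Manolache's virtual pullback then gives $[\mathcal M_U]^\vir=j^!_{E_+}[\overline{\mathcal M}_{0,1}(Y_n)\times U]^\vir$. The bundle $\mathcal V$ globalizes as $j^*E_-$, and $\psi$ and $\ev$ globalize as well. Set
\[
 \mathcal C=(\ev)_*\bigl(\psi^ae_\lambda(\mathcal V)\cap[\mathcal M_U]^\vir\bigr)\in\CH^*(\mathcal F_U)_\Q[\lambda].
\]

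\textbf{Specialization.} We would show that the refined Gysin pullback $u^!$ along $u\in U$ takes $[\mathcal M_U]^\vir$ to $[M_{F_n}]^\vir$. This follows from functoriality of virtual pullback: $u^!$ commutes with $j^!_{E_+}$ because the zero-locus square is Cartesian and $u$ is a regular embedding of a point in the smooth $U$. It also takes $\mathcal V,\psi,\ev$ to their fiber versions, and $u^!$ commutes with proper pushforward. Thus $C_{a,\beta}^\lambda=u^!\mathcal C$, which is ambient by the first paragraph. This is~\eqref{ap:eq:chowambient}.

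The cohomological consequences follow formally. The cycle map sends $i^*\CH^*(Y_n)$ into $V_\amb$, and by Proposition~\ref{prop:ambient} the pairing on $V_\amb$ is nondegenerate. Pairings of $V_\amb$ with its orthogonal complement therefore vanish, coefficient by coefficient in $\lambda$.

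\textbf{Main obstacle.} We expect the hardest step to be the specialization claim $u^![\mathcal M_U]^\vir=[M_{F_n}]^\vir$ on possibly nonreduced, non-smooth moduli. We would prove it by realizing everything on the smooth ambient $\overline{\mathcal M}_{0,1}(Y_n,\beta)\times U$, where $Y_n$ is convex, so that both sides are refined Euler-class pullbacks $0^!_{E_+}$ of normal cones. We would then invoke the compatibility of $0^!$ with $u^!$ from Fulton's commutativity of refined Gysin maps. This avoids any need for flatness of $\mathcal M_U\to U$.
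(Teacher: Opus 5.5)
Your proposal is correct and takes essentially the same route as the paper. In both, the universal hypersurface is a projective bundle over $Y_n$, handled by Chow localization and the projective-bundle formula. The universal class is the localized Euler class of the tautological section $\sigma$ on $\overline{\mathcal M}_{0,1}(Y_n,\beta)\times U$. Specialization comes from commutativity of refined Gysin maps. The fiber is identified with $[M_{F_n}]^{\vir}$ through the zero-locus comparison: the paper cites Kim--Kresch--Pantev and names Manolache's formalism as the alternative you use. The argument ends with compatibility of specialization with proper pushforward.

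The only slip is cosmetic. The section $\sigma$ lives in $\operatorname{pr}_1^*E_+\otimes\operatorname{pr}_2^*\mathcal O_U(1)$, not in $E_+$ itself. This is harmless, because the twist becomes a constant line on each fiber.
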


The statements concern fixed
curve classes and arbitrary loop number; no diagonal summation or
restriction to a finite set of loop orders is imposed on the proofs.

\subsection{Universal hypersurfaces and ambient Chow classes}

Let $V=H^0(Y_n,\mathcal O_{Y_n}(H))$, so $\dim V=2^n$, and let
$B=\PP(V)$ parametrize one-dimensional subspaces of sections. Put
$N=\dim B=2^n-1$. The universal hypersurface is
\[
 \mathscr F=\{(x,[s])\in Y_n\times B:s(x)=0\},
 \qquad q:\mathscr F\to B,\quad p:\mathscr F\to Y_n.
\]
Let $U\subset B$ be the nonempty open set of smooth hypersurfaces and
$\mathscr F_U=q^{-1}(U)$. Bertini's theorem gives nonemptiness.
The space $U(\C)$ is path connected in the classical topology.
Indeed the projective line through any two distinct points of $U$
is not contained in $B\setminus U$, and meets this algebraic closed
set in only finitely many points; the complement of those points
in the projective line is path connected. Fundamental groups and
monodromy below use this classical topology.

\begin{lemma}[Restriction of a universally defined cycle]\label{ap:lem:universal}
For every $s\in U$ and every $\alpha\in\CH^*(\mathscr F_U)_{\Q}$,
the restriction $\alpha|_{F_{n,s}}$ belongs to
$i_s^*\CH^*(Y_n)_{\Q}$.
\end{lemma}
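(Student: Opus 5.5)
The plan is to use the structure of $\mathscr F$ as a projective bundle over $Y_n$, and then the compatibility of Chow restriction with the inclusion of a fiber. The projection $p:\mathscr F\to Y_n$ has fiber over $x\in Y_n$ the hyperplane $\PP(\ker(\mathrm{ev}_x))\subset B$. Since $\mathcal O_{Y_n}(H)$ is globally generated (indeed very ample, via the Segre embedding), the evaluation map $V\otimes\mathcal O_{Y_n}\to\mathcal O_{Y_n}(H)$ is surjective. Its kernel $K$ is therefore a vector bundle of rank $2^n-1$, and $\mathscr F\cong\PP(K)$ is a projective bundle over $Y_n$ of relative dimension $N-1$. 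The projective-bundle formula then gives
\[
 \CH^*(\mathscr F)_\Q=\bigoplus_{k=0}^{N-1}p^*\CH^*(Y_n)_\Q\cdot\zeta^k,\qquad \zeta=q^*c_1(\mathcal O_B(1)).
\]
Here $\zeta$ is the restriction of the tautological class of $B$, which is the relative hyperplane class of $\PP(K)\subset Y_n\times\PP(V)$.

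Next I pass to $\mathscr F_U$. The localization sequence $\CH_*(\mathscr F\setminus\mathscr F_U)\to\CH_*(\mathscr F)\to\CH_*(\mathscr F_U)\to0$ shows that restriction $\CH^*(\mathscr F)_\Q\to\CH^*(\mathscr F_U)_\Q$ is surjective. Since $\mathscr F$ is smooth, its Chow groups are graded by codimension and have the intersection-ring convention. Hence every $\alpha\in\CH^*(\mathscr F_U)_\Q$ can be written as $\sum_k p^*a_k\,\zeta^k|_{\mathscr F_U}$ with $a_k\in\CH^*(Y_n)_\Q$.

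Now I restrict to a fiber. For $s\in U$, the fiber $F_{n,s}=q^{-1}(s)$ is the inclusion $\iota_s:F_{n,s}\hookrightarrow\mathscr F_U$, and $p\circ\iota_s=i_s$. Because $\mathscr F_U\to U$ is smooth, $\iota_s$ is a regular embedding, so the Gysin pullback $\iota_s^*$ is a ring homomorphism compatible with flat pullback and Chern classes. Then $\iota_s^*p^*a_k=i_s^*a_k$. Also $\iota_s^*\zeta=c_1(q^*\mathcal O_B(1)|_{F_{n,s}})=0$, since this line bundle is pulled back from the point $s$ and hence is trivial on the fiber. Only the $k=0$ term survives, and $\alpha|_{F_{n,s}}=i_s^*a_0\in i_s^*\CH^*(Y_n)_\Q$.

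The main obstacle is making precise the meaning of ``restriction'' for $\alpha$ on the non-proper $\mathscr F_U$. I take it as the refined Gysin pullback along the regular embedding of a point $s\hookrightarrow U$, equivalently along the lci fiber inclusion. The point to verify is that this refined pullback agrees on $p^*$-classes with $i_s^*$ and kills $\zeta$. The first agreement follows from the functoriality $(p\iota_s)^*=\iota_s^*p^*$ of Gysin maps for lci morphisms between smooth varieties. The second follows from the fact that Chern classes commute with pullback. Connectedness of $U$ is not needed for this lemma; it is only used later, for monodromy.
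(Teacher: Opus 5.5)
Your proposal is correct and follows essentially the same route as the paper. You realize $\mathscr F$ as the projective bundle $\PP_{Y_n}(\mathcal K)$ of the evaluation kernel, lift $\alpha$ to $\CH^*(\mathscr F)_{\Q}$ by the localization sequence, and expand the lift by the projective-bundle formula in powers of $q^*c_1(\mathcal O_B(1))$. You then note that restriction along the regular fiber embedding (regular since $q$ is smooth over $U$) kills every positive power of that class, leaving $i_s^*\alpha_0$.
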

\begin{proof}
The evaluation map of the globally generated line bundle is surjective:
\[
 0\longrightarrow\mathcal K\longrightarrow V\otimes\mathcal O_{Y_n}
 \longrightarrow\mathcal O_{Y_n}(H)\longrightarrow0.
\]
The kernel $\mathcal K$ is a vector bundle of rank $N$.
The incidence variety is the bundle of lines in $\mathcal K$:
$\mathscr F=\PP_{Y_n}(\mathcal K)$. We always use the lines convention:
$\PP_{Y_n}(\mathcal K)=\operatorname{Proj}_{Y_n}\operatorname{Sym}(\mathcal K^\vee)$.
In the quotient convention used in \cite[Tag~02TX]{Stacks}, this is
the projective bundle associated with $\mathcal K^\vee$; in both
descriptions $\mathcal O(1)$ is the dual tautological line.
In particular it is a smooth projective
variety, including above the locus of singular hypersurfaces. Its
relative hyperplane class is
\[
 \xi=c_1(\mathcal O_{\PP_{Y_n}(\mathcal K)}(1))
     =q^*c_1(\mathcal O_B(1)).
\]
The Chow localization sequence makes
$\CH^*(\mathscr F)_{\Q}\to\CH^*(\mathscr F_U)_{\Q}$ surjective.
Choose a lift $\bar\alpha$ of $\alpha$.
By the projective-bundle formula, every such lift can be written
\begin{equation}\label{ap:eq:pb}
 \bar\alpha=\sum_{j=0}^{N-1}\xi^j p^*\alpha_j,
 \qquad \alpha_j\in\CH^*(Y_n)_{\Q}.
\end{equation}
These two intersection-theoretic inputs are, respectively, the usual
localization sequence and the projective-bundle formula; see
\cite[Section 1.8 and Theorem 3.3]{Fulton}, and
\cite[Tag~02TX]{Stacks} for the latter.

Since $q|_{F_{n,s}}$ is constant, $\xi|_{F_{n,s}}=0$. Restricting~\eqref{ap:eq:pb} gives $\alpha|_{F_{n,s}}=i_s^*\alpha_0$.
The restriction is a refined pullback. The fiber inclusion is a regular
embedding because $q$ is smooth over $U$. Thus the argument applies
even when a chosen representative cycle does not meet $F_{n,s}$ properly.
\end{proof}

\begin{remark}
The lemma concerns cycles that exist on the entire universal family
over $U$. An arbitrary cycle on one special fiber need not have this
property. This is exactly the issue that must be checked for~\eqref{ap:eq:C}; it is not enough to invoke permutation symmetry or to
declare that a class is deformation invariant.
\end{remark}

\subsection{Constructing the universal evaluation class}
\label{ap:sec:universalclass}

We use rational intersection theory on separated Deligne--Mumford
stacks as developed by Vistoli~\cite{Vistoli}, together with the
normal-cone and vector-bundle constructions of Kresch~\cite{Kresch}.
In particular refined Gysin maps, localized top Chern classes, Chern
class operations, and proper pushforward are stack operations.
The proper evaluation morphism to a scheme need not be representable;
its rational pushforward includes the stabilizer multiplicities.
This is why the rational Deligne--Mumford theory, rather than just
integral pushforward for representable morphisms, is specified.

\subsubsection{The curve class and the bundles}

Integral weak Lefschetz for the smooth ample hypersurface
\cite[Corollary~7.3]{Milnor}, with
$\dim F_n=n-1\geq3$, identifies
\[
 H_2(F_{n,s},\Z)\xrightarrow{\sim}H_2(Y_n,\Z)=\Z^n.
\]
Here $Y_n$ is embedded by its Segre embedding and $F_{n,s}$ is a hyperplane
section. Milnor's homology isomorphism applies in degrees less than
$n-1$, and $2<n-1$. Thus its hypotheses and its integral, rather than
only rational, conclusion apply in the required degree.
The tuple $\mathbf d=(d_1,\ldots,d_n)$, with
$d_i=p_i\cdot\beta$, therefore specifies the curve class on every
fiber unambiguously. It is fixed under monodromy. Its entries are
nonnegative and $\sum_i d_i=m>0$.
We work with the proper Deligne--Mumford stack
\[
 M_{Y_n}=\overline{\mathcal M}_{0,1}(Y_n,\mathbf d).
\]

We record the required cohomology facts on connected nodal genus-zero
curves. Such a curve has a tree as dual graph. A line bundle of
nonnegative degree on every component has zero $H^1$. To see that remove a leaf,
use the normalization sequence, and use surjectivity of evaluation at
the attaching point on a nonnegative-degree line bundle on $\PP^1$.
For a line bundle of nonpositive component degrees and negative total
degree, every global section is zero. It vanishes on negative-degree
components and then on all degree-zero components by propagation
through the connected tree. Riemann--Roch on a genus-zero curve now
gives constant dimensions in both applications below.

Consequently the universal curve and map over $M_{Y_n}$ give vector bundles
\begin{equation}\label{ap:eq:Epm}
 E_+=\pi_*f^*\mathcal O_{Y_n}(H),\quad\rk E_+=m+1;
 \qquad E_-=R^1\pi_*f^*\mathcal O_{Y_n}(-H),\quad\rk E_-=m-1.
\end{equation}
These assertions hold over the entire moduli stack, not only on its
reduced locus. To see this, work on a scheme atlas $S\to M_{Y_n}$.
The pulled-back universal curve $\pi_S:C_S\to S$ is proper, flat,
and finitely presented. Apply \cite[Tag~0A1H, Lemma~36.30.1]{Stacks}
with the perfect object $f_S^*\mathcal O_{Y_n}(\pm H)$ and the flat
finitely presented sheaf $\mathcal O_{C_S}$. The derived pushforward
is perfect and commutes with arbitrary base change. Its geometric
fibers have cohomology only in degree $0$ for the positive line
bundle and only in degree $1$ for the negative one. A perfect complex
with this property is locally a finite locally free module in that
single degree. In a local two-term representative, fiberwise
surjectivity or injectivity of the differential makes it locally
split, leaving the kernel or cokernel as a vector bundle.
The ranks are $m+1$ and $m-1$ by Riemann--Roch.

The canonical identifications descend from the atlas and give
\[
 R\pi_*f^*\mathcal O_{Y_n}(H)\simeq E_+[0],\qquad
 R\pi_*f^*\mathcal O_{Y_n}(-H)\simeq E_-[-1].
\]
For every base change, including a nonreduced closed substack, the
derived base-change formula therefore gives the ordinary pullback
of these vector bundles in the indicated degree. In particular
$E_-|_{M_{F_{n,s}}}=R^1\pi_{s*}f_s^*\mathcal O_{F_{n,s}}(-H)=\mathcal V_s$
on the actual, possibly nonreduced, stable-map stack.

The same nonnegative-degree argument applied to
$f^*T_{Y_n}=\bigoplus_i f^*\mathcal O_{Y_n}(2p_i)$ gives $H^1(C,f^*T_{Y_n})=0$.
Thus $M_{Y_n}$ is smooth because its relative deformation theory over the smooth
Artin stack of prestable marked genus-zero curves is unobstructed.
Its dimension is $n-2+2m$. Smoothness here is smoothness of a
Deligne--Mumford stack, not a claim that its coarse space is smooth.

\subsubsection{The zero stack and its virtual class}

On $M_{Y_n}\times U$, the tautological section defining the universal
hypersurface induces a section
\[
 \sigma\in H^0\bigl(M_{Y_n}\times U,
      \operatorname{pr}_1^*E_+\otimes\operatorname{pr}_2^*\mathcal O_U(1)
                         \bigr).
\]
Write $\mathscr M=Z(\sigma)$. For a family of maps over any scheme,
vanishing of this section is precisely vanishing of the pulled-back
defining equation on the universal curve. Hence $\mathscr M$ is the
relative stable-map stack to $\mathscr F_U/U$ with degree $\mathbf d$,
as an equality of moduli problems, including nonreduced bases. The
composition with $F_{n,s}\hookrightarrow Y_n$ preserves stability and
automorphisms. In particular $\mathscr M\to U$ is proper, and there is
a proper evaluation morphism
\[
 \mathrm{Ev}:\mathscr M\longrightarrow\mathscr F_U.
\]

Put $\mathcal A=M_{Y_n}\times U$ and
$\mathcal E=\operatorname{pr}_1^*E_+\otimes\operatorname{pr}_2^*\mathcal O_U(1)$.
The zero section $0_{\mathcal E}:\mathcal A\to\mathcal E$ is a
regular embedding of codimension $m+1$. Pull its refined Gysin map
back by $\sigma:\mathcal A\to\mathcal E$, and denote the resulting
localized map by $0^!_{\mathcal E,\sigma}$. Define
\begin{equation}\label{ap:eq:familyclass}
 [\mathscr M]^\vir:=0^!_{\mathcal E,\sigma}[\mathcal A]
 \in\CH_{n-3+m+N}(\mathscr M)_{\Q}.
\end{equation}
Neither $\sigma$ nor $\mathscr M\hookrightarrow\mathcal A$ is
assumed regular. Only the zero section of the vector bundle is
regular, which is sufficient for this definition.

We now prove the specialization identity in two distinct steps.

\medskip\noindent\textbf{Step 1: refined Gysin base change.}
Let $\iota_s:\{s\}\hookrightarrow U$ be the codimension-$N$
regular embedding. Its bivariant refined pullback is defined on
any stack over $U$; we denote all these maps by $s^!$.
In particular it is defined on $\mathscr M$, although the fiber
inclusion $\mathscr M_s\hookrightarrow\mathscr M$ need not itself
be regular. Commutativity of refined Gysin maps gives
\begin{align}
 s^![\mathscr M]^\vir
 &=s^!0^!_{\mathcal E,\sigma}[\mathcal A]\notag\\
 &=0^!_{\mathcal E_s,\sigma_s}s^![\mathcal A]
  =0^!_{E_+,\sigma_s}[M_{Y_n}].\label{ap:eq:gysinfiber}
\end{align}
The last equality uses smoothness of $\mathcal A\to U$ and the
fact that $\mathcal O_U(1)|_s$ is a constant line, so that after
choosing a nonzero vector in it $\mathcal E_s$ is identified with
$E_+$. The localized class is unchanged by this choice.
These are ordinary refined-Gysin compatibilities, as in
\cite[Theorems 6.2 and 6.4]{Fulton}, in the rational
Deligne--Mumford intersection theory specified above. No flatness
of $\mathscr M\to U$ enters this step.

\medskip\noindent\textbf{Step 2: identification with the stable-map virtual class.}
We apply Kim--Kresch--Pantev \cite[Introduction, formula (2),
and Theorem 1]{KKP}. In its convex-bundle application, the refined
zero-section pullback of the ambient stable-map virtual class
is the sum of the virtual classes of stable maps into the smooth
zero locus, over curve classes with the prescribed ambient image.
Here the smooth projective ambient variety is $Y_n$, the convex bundle
is $\mathcal O_{Y_n}(H)$, and its section has smooth zero locus $F_{n,s}$.
Convexity on all nodal genus-zero maps was proved above.
Moreover $M_{Y_n}$ is smooth of expected dimension, so its standard
virtual class equals $[M_{Y_n}]$.
The curve-class map is injective by integral weak Lefschetz, so
the sum in that theorem has only the single possible class
$\beta_s$. Its conclusion is
\begin{equation}\label{ap:eq:KKPfiber}
 0^!_{E_+,\sigma_s}[M_{Y_n}]=[M_{F_{n,s}}]^\vir
 \quad\text{in }\CH_*(M_{F_{n,s}})_{\Q}.
\end{equation}
This is an equality holding on the zero stack itself, rather than merely after
pushforward to $M_{Y_n}$.

The obstruction-theory compatibility in this application comes
from the exact normal sequence
\[
 0\longrightarrow T_{F_{n,s}}\longrightarrow T_{Y_n}|_{F_{n,s}}
 \xrightarrow{\,ds\,}\mathcal O_{F_{n,s}}(H)\longrightarrow0.
\]
Derived pushforward along the universal curve and dualization give
the triangle over the Artin stack of prestable marked curves.
The morphism is induced by the derivative of the same defining
section, so this is the compatible diagram required by the theorem,
and not only an identity of $K$-classes. One may restrict the prestable
curve stack to a finite-type smooth open containing the relevant
images. The stable-map stacks are finite-type Deligne--Mumford
stacks with quotient stratifications, as required for the
intersection-theoretic construction. The compatible-triple
formalism of \cite[Theorem 4.8 and Corollary 4.9]{Manolache}
gives the same comparison.

Combining~\eqref{ap:eq:gysinfiber} and~\eqref{ap:eq:KKPfiber} proves
\begin{equation}\label{ap:eq:fiberVFC}
 s^![\mathscr M]^\vir=[M_{F_{n,s}}]^\vir.
\end{equation}

\begin{remark}[Relative and absolute obstruction theories]
For the zero stack, the relative two-term complex over $U$ is
$[\mathcal E^\vee|_{\mathscr M}\to
\Omega_{\mathcal A/U}|_{\mathscr M}]$, in degrees $[-1,0]$.
Its fiber gives the zero-locus obstruction theory used in~\eqref{ap:eq:KKPfiber}. The absolute complex instead contains
$\Omega_{\mathcal A}$ and hence the parameter-space cotangent
directions from $\Omega_U$. Its ordinary restriction to a fiber
is not the absolute obstruction theory of $M_{F_{n,s}}$.
The conclusion needed here is the refined equality~\eqref{ap:eq:fiberVFC}, proved by the two steps above.
\end{remark}

\subsubsection{Restriction and the proof of ambientness}

Pull $E_-$ and the cotangent-line class from $M_{Y_n}$ to $\mathscr M$ and
form
\begin{equation}\label{ap:eq:universalC}
 \mathscr C_a^\lambda
 =\mathrm{Ev}_*\bigl(\psi^a e_\lambda(E_-)
                          \cap[\mathscr M]^\vir\bigr)
 \in\CH^*(\mathscr F_U)_{\Q}[\lambda].
\end{equation}
The target $\mathscr F_U$ is smooth, so its Chow homology is identified
with its codimension-graded Chow ring. The expression is a polynomial
in $\lambda$ of degree at most $m-1$.

Proper pushforward commutes with refined fiber pullback. Chern classes
and $\psi$ also commute with it. More precisely, the evaluation
square over $\iota_s$ is cartesian, and $q:\mathscr F_U\to U$
is smooth, so $j_s:F_{n,s}\hookrightarrow\mathscr F_U$ is a regular
embedding. Proper-pushforward compatibility identifies its Gysin
pullback of~\eqref{ap:eq:universalC} with the evaluation pushforward
of $s^!$ of the capped class on $\mathscr M$.
Applying~\eqref{ap:eq:fiberVFC} therefore
gives, coefficientwise,
\begin{equation}\label{ap:eq:restrictC}
 j_s^*\mathscr C_a^\lambda=C_{a,\beta_s}^\lambda.
\end{equation}
Lemma~\ref{ap:lem:universal} now proves~\eqref{ap:eq:chowambient}.
Applying the cycle-class map proves all cohomological assertions in
Theorem~\ref{ap:thm:ambient}. This completes its proof.

\begin{remark}[Relative canonical bundles]
Adjunction on the universal family gives
$K_{\mathscr F_U/U}=\mathcal O_{Y_n}(-H)|_{\mathscr F_U}\otimes q^*\mathcal O_U(1)$.
We deliberately used the globally fixed line bundle
$\mathcal O_{Y_n}(-H)$ in~\eqref{ap:eq:universalC}. On a fiber the extra factor
is a constant one-dimensional vector space, and has zero Chern class;
the resulting Euler classes agree with~\eqref{ap:eq:C}. Thus no
trivialization of $\mathcal O_U(1)$ over the parameter space is being
assumed.
\end{remark}

\begin{remark}[A uniform lift]
Choosing one lift of each coefficient of $\mathscr C_a^\lambda$ to
$\mathscr F$ in Lemma~\ref{ap:lem:universal} produces a polynomial
$\alpha_{a,\mathbf d}(\lambda)\in\CH^*(Y_n)_{\Q}[\lambda]$ such that
$i_s^*\alpha_{a,\mathbf d}(\lambda)=C_{a,\beta_s}^\lambda$ for every
$s\in U$. The ambient polynomial need not be unique, but the argument
works simultaneously for all smooth fibers.
\end{remark}

\subsection{Monodromy and the full one-point comparison}

There is also a cohomological proof of ambientness, using the
global class~\eqref{ap:eq:universalC} and invariant cycles.

\begin{proposition}\label{ap:prop:invariants}
For every $k$ and $s\in U$,
\[
 H^k(F_{n,s},\Q)^{\pi_1(U,s)}=i_s^*H^k(Y_n,\Q).
\]
\end{proposition}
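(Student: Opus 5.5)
The plan is to show that the image $i_s^*H^k(Y_n,\Q)$ sits inside the invariants, and conversely that every monodromy invariant class is ambient. I will use the global invariant cycle theorem for the smooth projective family $\mathscr F\to B$ together with the projective-bundle structure $\mathscr F=\PP_{Y_n}(\mathcal K)$ from Lemma~\ref{ap:lem:universal}. The easy inclusion comes first. Classes $i_s^*b$ with $b\in H^k(Y_n,\Q)$ are restrictions of $p^*b$ from the total space $\mathscr F_U$, and restrictions of global classes on the total space of a locally trivial family are invariant under $\pi_1(U,s)$. Here $q|_{\mathscr F_U}$ is smooth and proper, hence a $C^\infty$ fiber bundle by Ehresmann.

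For the reverse inclusion I will invoke Deligne's global invariant cycle theorem (Hodge II, Théorème 4.1.1). It applies to a smooth projective compactification $\overline{\mathscr F}$ of $\mathscr F_U$, and $\mathscr F$ itself serves, being smooth projective as shown in Lemma~\ref{ap:lem:universal}. The theorem states that the restriction $H^k(\mathscr F,\Q)\to H^k(F_{n,s},\Q)^{\pi_1(U,s)}$ is surjective. By the projective-bundle formula, $H^k(\mathscr F,\Q)=\bigoplus_j\xi^j p^*H^{k-2j}(Y_n,\Q)$, the cohomological analogue of~\eqref{ap:eq:pb}. Restricting to a fiber kills $\xi=q^*c_1(\mathcal O_B(1))$, since $q$ is constant on $F_{n,s}$. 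The image is therefore exactly $i_s^*H^k(Y_n,\Q)$, and the two inclusions give equality.

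The main obstacle is checking the hypotheses of the invariant cycle theorem in precisely the form cited. It requires a smooth projective variety, here $\mathscr F$, and a Zariski open $U$ over which the morphism is smooth, with $s\in U$. Here $U$ is the smooth-hypersurface locus, nonempty by Bertini and connected as argued above. The local system is $R^kq_*\Q$ on $U$, and the invariants of the fiber cohomology under $\pi_1(U,s)$ are the global sections of that local system.

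As a cross-check, not needed for the proof, I would confirm that for $k\neq n-1$ the statement reduces to the Lefschetz isomorphisms of Proposition~\ref{prop:ambient}, so only the middle degree has content. There the conclusion says the vanishing-cycle part has no invariants, which is consistent with irreducibility of monodromy on vanishing cohomology (Lefschetz theory for the very ample system $|\mathcal O(H)|$ via Segre). This gives an independent route if one prefers to avoid Deligne's theorem.
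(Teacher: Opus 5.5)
Your proposal is correct and is essentially the paper's proof. The paper applies the global invariant cycle theorem to the smooth projective total space $\mathscr F\to B$, smooth and proper over $U$ (citing Voisin's Theorem 3.3 where you cite Deligne), then uses the projective-bundle decomposition in powers of $\xi$ and the vanishing of $\xi$ on a fiber. Your separate treatment of the easy inclusion and the Lefschetz cross-check are harmless additions.
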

\begin{proof}
Apply the global invariant-cycle theorem to
$q:\mathscr F\to B$. Both spaces are smooth projective varieties,
$B$ is connected, $q$ is surjective, and its restriction over $U$ is
smooth and proper. The theorem identifies the invariant subspace
with the image of $H^k(\mathscr F,\Q)$ in $H^k(F_{n,s},\Q)$; see
\cite[Theorem 3.3]{Voisin}. The cohomological projective-bundle
formula writes every class on $\mathscr F$ in powers of $\xi$ with
coefficients from $Y_n$. Restricting to $F_{n,s}$ kills every positive power
of $\xi$. This proves the assertion.
\end{proof}

For each coefficient of $\lambda$, the cycle class of
$\mathscr C_a^\lambda$ is a global cohomology class on
$\mathscr F_U$. Its restrictions to the fibers form a flat section
of $R^kq_*\Q$. Namely, on a contractible analytic trivializing neighborhood,
the fiber inclusions are homotopic, so the restricted class is
constant under parallel transport. Equation~\eqref{ap:eq:restrictC}
therefore makes $C_{a,\beta_s}^\lambda$ monodromy invariant.
Proposition~\ref{ap:prop:invariants} proves its cohomological
ambientness. The fixed degree tuple ensures that monodromy
preserves the individual curve class, not just a sum of classes.

The global invariant-cycle theorem supplies a real step in this
argument. It identifies the image on a smooth fiber of cohomology
of the open family with the image from its smooth projective
compactification. Restriction
$H^*(\mathscr F)\to H^*(\mathscr F_U)$ need not be surjective;
the cohomological argument does not use the surjectivity available
for Chow groups in Lemma~\ref{ap:lem:universal}.

For odd $n$, let $V_{\mathrm{na}}\subset H^{n-1}(F_{n,s},\Q)$ be the
Poincar\'e-orthogonal complement of the ambient middle part. The ambient
pairing is nondegenerate, so Proposition~\ref{ap:prop:invariants} gives
$V_{\mathrm{na}}^{\pi_1(U,s)}=0$. This includes the 18-dimensional
nonambient middle summand for $n=5$. It does not identify that summand
with the entire Lefschetz-primitive cohomology.

For clarity, nondegeneracy on the ambient part follows from weak and
hard Lefschetz and Poincar\'e duality. If $k<n-1$, weak Lefschetz identifies
$H^k(F_n)$ with $H^k(Y_n)$, and hard Lefschetz on $F_n$ supplies the dual
ambient classes in degree $2n-2-k$. In middle degree, multiplication by
$H$ gives the isomorphism $H^{n-1}(Y_n)\to H^{n+1}(Y_n)$ and hence the
nondegenerate form $\int_YabH$. The odd ambient groups are zero.

\begin{corollary}[Upgrading an ambient comparison]\label{ap:cor:fullJ}
Suppose an ambient class $E_{a,\beta}^\lambda\in H^*(Y_n)[\lambda]$
satisfies
\[
 \int_{F_n} i^*b\,C_{a,\beta}^\lambda
 =\int_{F_n} i^*b\,i^*E_{a,\beta}^\lambda
 \quad\text{for all }b\in H^*(Y_n).
\]
Then $C_{a,\beta}^\lambda=i^*E_{a,\beta}^\lambda$ in cohomology.
Thus a comparison against ambient insertions determines the full
one-point evaluation classes. It also determines every generating
series formed from them, in particular the one-point $J$-series
once its normalization is fixed.
\end{corollary}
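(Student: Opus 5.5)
The plan is to reduce the cohomological statement to Poincar\'e duality on $F_n$, splitting $H^*(F_n,\Q)$ into ambient and nonambient parts. I treat each coefficient of $\lambda$ separately. Put $\Delta=C_{a,\beta}^\lambda-i^*E_{a,\beta}^\lambda$; the goal is $\Delta=0$ in $H^*(F_n,\Q)[\lambda]$.

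\emph{Step 1: $\Delta$ is ambient.} By Theorem~\ref{ap:thm:ambient}, $C_{a,\beta}^\lambda$ lies in $V_\amb[\lambda]$. Trivially $i^*E_{a,\beta}^\lambda$ lies there too, so $\Delta\in V_\amb[\lambda]$.

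\emph{Step 2: $\Delta$ pairs to zero with $V_\amb$.} By hypothesis, $\int_{F_n}i^*b\,\Delta=0$ for every $b\in H^*(Y_n)$. This says exactly that $\Delta$ is orthogonal to all of $V_\amb=i^*H^*(Y_n)$.

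\emph{Step 3: conclude with nondegeneracy.} By Proposition~\ref{prop:ambient}, the Poincar\'e pairing restricted to $V_\amb$ is nondegenerate. This is also recalled just above via weak and hard Lefschetz, and, in middle degree, the form $\int_YabH$ together with Lemma~\ref{lem:linearlefschetz}. An element of $V_\amb$ orthogonal to $V_\amb$ is therefore zero, so $\Delta=0$. No pairing with the nonambient complement is needed, since Step 1 already places $\Delta$ in $V_\amb$. Equivalently, $\Delta$ already pairs to zero with the orthogonal complement by Theorem~\ref{ap:thm:ambient}, so it pairs to zero with everything and vanishes by full Poincar\'e duality on $F_n$.

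\emph{Step 4: generating series.} The one-point $J$-series, once its normalization (the $z\mathbf 1+\tau$ part) is fixed, has positive-degree coefficients that are finite $\Q[\lambda]$-linear combinations of the classes $C_{a,\beta}^\lambda$, indexed by $z^{-a-2}\mathbf Q^\beta$. Applying Step 3 coefficientwise determines the whole series, and hence every series formed from these coefficients.

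The only real obstacle is making sure that nondegeneracy holds degree by degree. This matters because the pairing couples degree $k$ with degree $2n-2-k$, and $\Delta$ may have components in several degrees. Splitting $\Delta$ into homogeneous pieces handles this: for each $k$ I use the perfect pairing $i^*H^k(Y_n)\times i^*H^{2n-2-k}(Y_n)\to\Q$ supplied by Proposition~\ref{prop:ambient}, and in particular its middle-degree case when $n$ is odd.
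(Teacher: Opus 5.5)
Your proof is correct and is essentially the paper's argument. The difference is ambient by Theorem~\ref{ap:thm:ambient} and orthogonal to $V_\amb$ by hypothesis, so it vanishes by nondegeneracy of the Poincar\'e pairing on $V_\amb$ (Proposition~\ref{prop:ambient}), applied coefficientwise in $\lambda$ and then in the Novikov and descendant variables; your degree-by-degree remark is a harmless elaboration since the pairing is graded, though note that your ``equivalently'' variant via full Poincar\'e duality on $F_n$ tacitly uses the same nondegeneracy to obtain the splitting $V_\amb\oplus V_\amb^\perp$.
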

\begin{proof}
The difference is ambient by Theorem~\ref{ap:thm:ambient}, and is
orthogonal to the ambient space by the displayed equality.
Nondegeneracy makes the difference zero. Apply this coefficientwise
in $\lambda$ and then in the Novikov and descendant variables.
\end{proof}

This conclusion does not remove the need to retain an ambient lift
when dividing by $\lambda-H$. In particular a top coefficient that
vanishes after restriction to $F_n$ can still contribute after division.
Ambientness and that extraction issue are distinct statements.

\subsection{Dependencies of the ambientness proof}

The argument above uses ordinary intersection theory and the stable-map
virtual class. It uses no mirror theorem, no compact Calabi--Yau
interpretation and no differential-equation input. Ambientness follows from
the universal family, without assuming that a fixed hypersurface admits
coordinate permutations as automorphisms. The separate differential argument
for minimality has its own, independent dependencies; see
Proposition~\ref{thm:minimalperiod}.

\section{Extension and Fourier-transform lemmas for minimality}\label{app:fourier}
Work with left modules over
\[
 \mathscr D=\C[t]\langle\partial_t\rangle,
 \qquad \mathscr D^\times=\C[t,t^{-1}]\langle\partial_t\rangle.
\]
Localization at $t$ is the usual exact restriction from the affine line to $\mathbb G_m$. All modules in the argument are algebraic modules on these smooth curves. No statement about arbitrary schemes or stacks is used.

We first record a point-module fact in a form that makes the extension argument
explicit. Let
\[
 \delta_0=\mathscr D/\mathscr D t=\C[\partial_t]e,\qquad te=0.
\]
Its basis $e_k=\partial_t^ke$ satisfies
\begin{equation}\label{irr:eq:delta}
 te_k=-ke_{k-1},\qquad \theta e_k=-(k+1)e_k.
\end{equation}
%It is simple. 
More generally, a module on which $t$ acts locally nilpotently is a direct sum of copies of $\delta_0$. This particular point-support statement can be proved directly
because  the map
\[
 \C[\partial_t]\otimes_\C\ker t\longrightarrow M,
 \qquad \partial_t^k\otimes h\longmapsto\partial_t^kh,
\]
is surjective by induction on the nilpotence order of an element. If $t^Nm=0$, subtract
$(-1)^{N-1}\partial_t^{N-1}(t^{N-1}m)/(N-1)!$ to reduce that order. It is injective by applying the highest relevant power of $t$ to a finite relation. Consequently every nonzero such module has a quotient isomorphic to $\delta_0$.

\begin{lemma}[The extension used in each transform]\label{irr:lem:extension}
Let $r\ge1$ and
\[
 L=\theta^r+\sum_{j\ge1}t^jp_j(\theta),\qquad p_j\in\C[\theta].
\]
Suppose $\mathscr D^\times/\mathscr D^\times L$ is nonzero and simple. Then
\[
 B=t^{-1}L=\partial_t\theta^{r-1}
             +\sum_{j\ge1}t^{j-1}p_j(\theta)\in\mathscr D,
 \qquad M=\mathscr D/\mathscr D B
\]
is simple. More precisely, $M$ has neither a nonzero submodule nor a nonzero quotient supported at $t=0$, and restricts to $\mathscr D^\times/\mathscr D^\times L$.
\end{lemma}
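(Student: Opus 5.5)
The plan has three steps. First show that $M$ restricts to $\mathscr D^\times/\mathscr D^\times L$. Then exclude submodules and quotients supported at $t=0$. Finally deduce simplicity by a standard dévissage.

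\textbf{Step 1 (restriction).} Since $\theta=t\partial_t$, we have $L=\theta^r+\sum_j t^jp_j(\theta)=t\partial_t\theta^{r-1}+\sum_jt^jp_j(\theta)=tB$. Over $\mathscr D^\times$ the element $t$ is a unit, so $\mathscr D^\times B=\mathscr D^\times L$. Localization is exact, hence $M[t^{-1}]=\mathscr D^\times/\mathscr D^\times L$.

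\textbf{Step 2 (no quotient supported at $0$).} By the point-module fact above, any nonzero quotient of $M$ supported at $0$ has a further quotient $\delta_0$. So it suffices to show $\operatorname{Hom}(M,\delta_0)=0$, i.e. that no nonzero $v\in\delta_0$ satisfies $Bv=0$. Write $v=\sum_k c_ke_k$ and use \eqref{irr:eq:delta}. Then $\theta$ acts on $e_k$ by $-(k+1)$, and $t$ lowers the index. Hence $Bv$ has top component $\partial_t\theta^{r-1}$ applied to the top term $c_Ke_K$, giving $(-(K+1))^{r-1}c_Ke_{K+1}$. The terms $t^{j-1}p_j(\theta)$ do not raise the index. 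This top coefficient is nonzero, so no such $v$ exists.

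\textbf{Step 2 (no submodule supported at $0$).} Here the plan is to use holonomic duality. The dual of the cyclic module $\mathscr D/\mathscr D B$ is $\mathscr D/\mathscr D B^*$, with $B^*$ the formal adjoint. Under $\partial_t^*=-\partial_t$ and $\theta^*=-\theta-1$, the adjoint $B^*$ has the same shape: leading term $\pm\theta'^{\,r-1}\partial_t$, and lower terms $t^{j-1}(\cdots)$. Alternatively, compute directly. A submodule supported at $0$ contains a nonzero element killed by $t$. One shows that $\ker(t)$ on $M$ vanishes using the explicit basis of $M$, namely the classes of $t^a$ and $\partial_t^b$ modulo $B$; this is a standard division by the monic leading symbol $t\partial_t^r$ in weighted degree. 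A delta submodule is equivalent, via duality, to a delta quotient of the dual, which is excluded as above. The main obstacle is getting this adjoint shape right, including the shift $\theta\mapsto-\theta-1$. The leading coefficient then becomes a product of nonzero integers at the relevant eigenvalues $-(k+1)$, and for the adjoint these are shifted values like $k$. This needs care: one must check nonvanishing at $k=0$, or handle the $\theta^{r-1}$ factor via the $\partial_t$ placement.

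\textbf{Step 3 (simplicity).} Let $0\ne N\subsetneq M$. By exactness, $N[t^{-1}]$ is a submodule of the simple module $M[t^{-1}]$. If it is $0$, then $N$ is supported at $0$, which Step 2 excludes. If it is everything, then $M/N$ is supported at $0$ and nonzero, also excluded. Hence $M$ is simple.
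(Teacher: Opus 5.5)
Your proposal is correct. Step 1, the quotient half of Step 2, and Step 3 match the paper's argument. The genuine difference is how you exclude a submodule supported at $t=0$.

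You use holonomic duality. Since $\mathscr D$ is a domain, $0\to\mathscr D\xrightarrow{\cdot B}\mathscr D\to M\to 0$ is a free resolution, so $\mathbb D M\cong\mathscr D/\mathscr D B^*$. Exactness of $\mathbb D$ and $\mathbb D\delta_0\cong\delta_0$ then turn a nonzero submodule supported at zero into a $\delta_0$-quotient of $\mathscr D/\mathscr D B^*$, i.e.\ a nonzero $v\in\delta_0$ with $B^*v=0$.

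The paper deliberately avoids duality and proves directly that $t$ is injective on $M$. If $tU=VB$, transposing gives $U^*t=B^*V^*$. Applying this to $e$ and using injectivity of $B^*$ on $\delta_0$ gives $V^*e=0$, so $V^*=Wt$ and $V=tW^*$; cancelling $t$ in the domain yields $U=W^*B$.

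Both routes reduce to the same computation, injectivity of $B^*$ on $\delta_0$. Yours is shorter given standard holonomic duality and makes the intermediate-extension picture transparent. The paper's is self-contained: it needs only that the Weyl algebra is a domain and that the annihilator of $e$ is $\mathscr D t$.

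The check you left open goes through, with no exceptional value at $k=0$. Since $\partial_t\theta=(\theta+1)\partial_t$, one has $(\partial_t\theta^{r-1})^*=(-\theta-1)^{r-1}(-\partial_t)=(-1)^r\partial_t\theta^{r-1}$. Hence
\[
B^*=(-1)^r\partial_t\theta^{r-1}+\sum_{j\ge1}t^{j-1}p_j(-\theta-j).
\]
For $v=\sum_{k\le N}c_ke_k$ with $c_N\neq0$, the coefficient of $e_{N+1}$ in $B^*v$ is $(-1)^r(-(N+1))^{r-1}c_N\neq0$, exactly as for $B$. The shifted eigenvalue $k$ you worried about appears only if $-\theta-1$ is applied to $e_k$ before $\partial_t$, which is the wrong order.

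Your alternative direct computation via a basis of normal forms is unnecessary. As written it also misidentifies the leading term of $B$: its $\partial_t^r$-coefficient is $t^{r-1}$ when $\deg p_j<r$, not $t$. The paper's transpose trick is the clean elementary substitute for it.
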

\begin{proof}
The order in which the factors are written is important. Let $v=\sum_{k=0}^Nc_ke_k\in\delta_0$ with $c_N\ne0$. The first term of $Bv$ has coefficient
\[
 c_N(-N-1)^{r-1}\ne0
\]
at $e_{N+1}$. Each remaining term $t^{j-1}p_j(\theta)v$ has degree at most $N$ in $\partial_t$. Thus $B$ acts injectively on $\delta_0$, and
\begin{equation}\label{irr:eq:noquot}
 \operatorname{Hom}_{\mathscr D}(M,\delta_0)=0.
\end{equation}

Let $*$ denote formal transpose, such that $t^*=t$, $\partial_t^*=-\partial_t$, and $(UV)^*=V^*U^*$. Since $\theta^*=-\theta-1$, one has
\begin{equation}\label{irr:eq:transpose}
 B^*=(-1)^r\partial_t\theta^{r-1}
       +\sum_{j\ge1}t^{j-1}p_j(-\theta-j).
\end{equation}
The same highest-degree calculation shows that $B^*$ acts injectively on $\delta_0$.

This also proves directly, without invoking holonomic duality, that
multiplication by $t$ is injective on $M$. Indeed, suppose $t[U]=0$. Then
$tU=VB$ for some $V\in\mathscr D$, whence
\[
 U^*t=B^*V^*.
\]
Apply this equality to $e\in\delta_0$. Injectivity of $B^*$ gives $V^*e=0$, so
$V^*=Wt$ for some $W\in\mathscr D$. Transposition yields $V=tW^*$. Since the
Weyl algebra is a domain, cancellation in $tU=tW^*B$ gives $U=W^*B$ and $[U]=0$.

It follows that $M$ has no nonzero $t$-torsion submodule. It has no
nonzero quotient supported at zero either. Indeed, such a quotient would
be a nonzero module with locally nilpotent $t$ and hence would have a
quotient $\delta_0$, contrary to~\eqref{irr:eq:noquot}.

Finally let $N\subset M$ be a nonzero submodule. Its localization is nonzero, by
the absence of $t$-torsion, and therefore is the entire localization of $M$ by
the assumed simplicity. Thus $M/N$ is supported at zero, and must vanish. Hence
$M$ is simple.
\end{proof}

\begin{remark}
In the language of holonomic modules, the lemma identifies this presentation
with the intermediate extension at zero. Using $L$ itself as a polynomial
Weyl-algebra presentation, rather than $t^{-1}L$, would not justify that
assertion.
\end{remark}

We shall also use a simple localization observation. A simple module stays
simple after a nonzero localization. Indeed, the kernel of its localization is a
submodule and must be zero if the localization is nonzero. Any nonzero submodule
of the localized module meets the original module after clearing a denominator;
the intersection is a nonzero submodule and hence is the whole original module.
This proves simplicity after localization, both at $t$ and at all nonzero
polynomials.

\subsection{One Fourier step}
Use the Weyl-algebra isomorphism
\[
 \mathscr F:\C[t]\langle\partial_t\rangle\longrightarrow
              \C[x]\langle\partial_x\rangle,
 \qquad t\longmapsto-\partial_x,\quad
 \partial_t\longmapsto x.
\]
It preserves the commutation relation, since $[x,-\partial_x]=1$, and transports
simple modules to simple modules. This is the algebraic Fourier transform.

\begin{lemma}[Explicit transformed operator]\label{irr:lem:transform}
For $L$ as in Lemma~\ref{irr:lem:extension}, put
\begin{equation}\label{irr:eq:transform}
 \mathfrak T(L)=\theta_y^{r-1}
       +\sum_{j\ge1}y^jp_j(\theta_y)
                         \prod_{a=1}^{j-1}(\theta_y+a).
\end{equation}
If $\mathscr D^\times/\mathscr D^\times L$ is simple and $\mathfrak T(L)$ has positive differential order, then
\[
 \C[y,y^{-1}]\langle\partial_y\rangle\big/
 \C[y,y^{-1}]\langle\partial_y\rangle\mathfrak T(L)
\]
is simple, as is its differential-field localization over $\C(y)$.
\end{lemma}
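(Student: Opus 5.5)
The plan is to compute the Fourier transform of the simple module of Lemma~\ref{irr:lem:extension} explicitly and then simplify it by operations that preserve simplicity. These operations are localization at the origin, the coordinate inversion $y=1/x$, left multiplication by a unit of $\C[y,y^{-1}]$, and a gauge change. By Lemma~\ref{irr:lem:extension}, $M=\mathscr D/\mathscr DB$ with $B=t^{-1}L$ is simple. Since $\mathscr F$ is an algebra isomorphism, $\mathscr F(M)=\C[x]\langle\partial_x\rangle/\C[x]\langle\partial_x\rangle\,\mathscr F(B)$ is simple as well.

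\textbf{Step 1: transform $B$.} With $t\mapsto-\partial_x$ and $\partial_t\mapsto x$, one gets $\mathscr F(\theta)=\mathscr F(t\partial_t)=-\partial_xx=-\theta_x-1$. Hence
\[
 \mathscr F(B)=x(-\theta_x-1)^{r-1}+\sum_{j\ge1}(-\partial_x)^{j-1}p_j(-\theta_x-1).
\]

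\textbf{Step 2: localize at $x$ and invert the coordinate.} Localizing at $x$ gives the cyclic module over $\C[x,x^{-1}]\langle\partial_x\rangle$ with the same generator. In the coordinate $y=1/x$ we have $\theta_x=-\theta_y$ and $-\partial_x=y\theta_y$. The commutation rule $\theta_yy=y(\theta_y+1)$ gives $(y\theta_y)^k=y^k\prod_{a=0}^{k-1}(\theta_y+a)$. Multiplying the operator on the left by the unit $y$ does not change the cyclic module, and yields
\[
 (\theta_y-1)^{r-1}+\sum_{j\ge1}y^j\prod_{a=0}^{j-2}(\theta_y+a)\,p_j(\theta_y-1).
\]

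\textbf{Step 3: gauge change.} The substitution $f=yg$ conjugates $\theta_y$ to $\theta_y+1$, since $O(\theta_y)\,y=y\,O(\theta_y+1)$. Under this substitution the displayed operator becomes exactly $\mathfrak T(L)$ of~\eqref{irr:eq:transform}. The gauge change is an isomorphism of $\C[y,y^{-1}]\langle\partial_y\rangle$-modules.

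\textbf{Step 4: simplicity.} It remains to check that the localizations involved are nonzero. The module presented by $\mathfrak T(L)$ over $\C[y,y^{-1}]\langle\partial_y\rangle$ has rank equal to the order of $\mathfrak T(L)$, which is positive by hypothesis, so it is nonzero. Since it is isomorphic to the localization of $\mathscr F(M)$ at $x$, that localization is nonzero. The localization observation recorded before the lemma then shows that this localization of the simple module $\mathscr F(M)$ is simple. Localizing further at all nonzero polynomials gives simplicity over $\C(y)$ by the same observation.

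\textbf{Main obstacle.} I expect the delicate point to be bookkeeping rather than any concept. One must keep operators in the stated left-$y$ normal form, and one must check that presentation commutes with localization, namely that the cyclic module of the transformed operator over the Laurent ring really is the localization of $\mathscr F(M)$. Particular care is needed to track the shifts in $\theta$ coming from $\theta^*$-type identities, from the inversion, and from the gauge step, so that the product $\prod_{a=1}^{j-1}(\theta_y+a)$ comes out exactly and not shifted by one.
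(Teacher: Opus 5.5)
Your proposal is correct and follows the paper's proof essentially step for step. You transform $B=t^{-1}L$, restrict to $x\neq0$, invert $x=1/y$, apply the gauge change $f=yg$ together with left multiplication by the unit $y$, and deduce simplicity from the localization observation using the positive order of $\mathfrak T(L)$. The only difference is that you multiply by $y$ before the gauge conjugation rather than after, which yields the same final operator $\mathfrak T(L)$.
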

\begin{proof}
By Lemma~\ref{irr:lem:extension}, $M=\mathscr D/\mathscr D(t^{-1}L)$ is simple. Its Fourier transform has cyclic presentation by
\begin{equation}\label{irr:eq:fourierB}
 \mathscr F(B)=x(-\theta_x-1)^{r-1}
          +\sum_{j\ge1}(-\partial_x)^{j-1}p_j(-\theta_x-1).
\end{equation}
Restrict to $x\ne0$ and invert the coordinate, $x=1/y$. Then
\[
 \theta_x=-\theta_y,\qquad -\partial_x=y\theta_y,
 \qquad (y\theta_y)^k=y^k\prod_{a=0}^{k-1}(\theta_y+a).
\]
Thus \eqref{irr:eq:fourierB} becomes
\[
 C=y^{-1}(\theta_y-1)^{r-1}
     +\sum_{j\ge1}y^{j-1}
       \left(\prod_{a=0}^{j-2}(\theta_y+a)\right)p_j(\theta_y-1).
\]
The rational gauge change $f(y)=y\,g(y)$ replaces $C$ by $y^{-1}Cy$. Multiplying on the left by the unit $y$ then gives precisely
\[
 y\bigl(y^{-1}Cy\bigr)=\mathfrak T(L).
\]
These equalities take place in the localized Weyl algebra, with all products in
their stated order. In particular they identify the cyclic modules.

Positive differential order implies that the final cyclic module has nonzero generic rank. Thus the restriction of $\mathscr F(M)$ to $x\ne0$ was nonzero. The localization observation, coordinate inversion, and the gauge isomorphism prove simplicity. The same observation applies on passing to $\C(y)$.
\end{proof}

\end{document}